\documentclass[10pt]{amsart}

\usepackage[utf8]{inputenc} 
\usepackage[T1]{fontenc}   % encodage T1 (accents, césure correcte)
\usepackage{lmodern}       % fontes vectorielles Latin Modern
\usepackage{microtype}     % meilleure justification (optionnel mais utile)
\usepackage[french,english]{babel}

\usepackage{geometry}
\geometry{hmargin=2.9cm,vmargin=2.8cm}

\usepackage{amsmath}
\usepackage{amsfonts}
\usepackage{amssymb}
\usepackage{enumitem}
\usepackage{hyperref}
\usepackage{dsfont}

%\usepackage{natbib}
%\setcitestyle{square,numbers}
%\bibliographystyle{abbrvnat}

\usepackage{url}
\usepackage{xcolor}
\colorlet{darkgreen}{green!50!black}
\usepackage{amsbsy}
\usepackage{amsmath}

\hypersetup{
    colorlinks=true,
    urlcolor=blue,
    linkcolor=blue,
    breaklinks=true,
    citecolor=darkgreen
}

\usepackage{graphicx}
\usepackage{caption}
\usepackage{subcaption}

\newcommand{\m}{\tilde x_b}
\newcommand{\M}{{x_b}}
\let  \tilde = \widetilde
\def\R{{\mathbb R}}

\def\C{{\mathbb C}}

\def\e{{\epsilon}}
\def\1{{1\!\!\!1}}

\def\E{{\mathbb E}}

\def\P{{\mathbb P}}

\newcommand{\fc}{\mathds{1}}
\newcommand\inter[1]{\overset{{}_\circ}{#1}}

\let \phi=\varphi

\newcommand{\be}{\begin{equation}}
\newcommand{\ee}{\end{equation}}
\setcounter{tocdepth}{1}
\numberwithin{equation}{section}

\newtheorem{theorem}{Theorem}
\newtheorem{prop}{Proposition}[section]
\newtheorem{cor}[prop]{Corollary}
\newtheorem{lemma}[prop]{Lemma}

\newtheorem{hypothesis}{Hypothesis}

\theoremstyle{definition} % le texte est en romain (droit)
\newtheorem{defi}[prop]{Definition}

\theoremstyle{remark}     % texte droit, titre en italique léger
\newtheorem{rem}[prop]{Remark}
\newtheorem{notation}[prop]{Notation}

\usepackage{oubraces}

\begin{document}

\title[On the Green's functions and  Martin boundary of a diffusion in a discontinuous medium]{On the Green's functions and  Martin boundary structure of a planar diffusion in a discontinuous layered medium}
%two dimensional skew Brownian motion}

\author{Sandro Franceschi}
\address{Institut Polytechnique de Paris, T\'el\'ecom SudParis, Laboratoire SAMOVAR, 19 place Marguerite Perey, 91120 Palaiseau, France}
\email{sandro.franceschi@telecom-sudparis.eu}
%\urladdr{www.math.sc.edu/$\sim$howard} % Delete if not wanted.

\author{Irina Kourkova}
\address{Sorbonne Universite, Laboratoire de Probabilités, Statistiques et Modélisation, 
UMR 8001, 4 place Jussieu, 75005 Paris, France. 
}
\email{irina.kourkova@sorbonne-universite.fr}

\author{Maxence Petit}
\address{Sorbonne Universite, Laboratoire de Probabilités, Statistiques et Modélisation, 
UMR 8001, 4 place Jussieu, 75005 Paris, France. }
\email{maxence.petit@ens-rennes.fr}

\thanks{This project has received funding from Agence Nationale de la Recherche, ANR JCJC programme 
under the Grant Agreement ANR-22-CE40-0002 (ANR RESYST).}

\begin{abstract} We consider a two-dimensional diffusion process in a two-layered plane, governed by distinct covariance matrices in the upper and lower half-planes and by two drift vectors pointed away from the $x$-axis.
We first analyze the case where the generator of the process is in divergence form, that is, when the flux is continuous across the interface. Then we extend the study to a broader class of processes whose behavior at the interface forms an oblique two-dimensional analogue of the skew Brownian motion.

We provide a detailed theoretical analysis of this transient process. Our main results are as follows:
(i) we derive explicit Laplace transforms of the Green's functions;
(ii) we compute exact asymptotics of the Green's functions along all possible trajectories in the plane; (iii) 
%which allows us to characterize 
We determine all positive harmonic functions, identifying the full and minimal Martin boundaries, which turn out to be distinct.
The nonminimality of the Martin boundary is a noteworthy phenomenon
for diffusions with regular coefficients.

  To obtain an analytical description of the process, we fully develop a three-variable version of the so-called kernel method by deriving and exploiting a functional equation involving unknown Laplace transforms of Green's functions and two known
  kernels $\gamma_+(x,y)$ and $\gamma_{-}(x,z)$.
  The introduction of independent auxiliary variables $y$ and $z$, associated with each half-plane, is a key idea.

%which turns out to be strictly smaller. %than the full one. 
%The non-minimality of the Martin boundary is an interesting %and genuinely 
%new 
%phenomenon, 
%as such a behavior does not generally occur 
%for diffusions with regular coefficients.%, whether free or confined in cones.
%diffusions confined to a single cone.

%Our analysis relies on an extension of the so-called kernel method. 
%To obtain a full analytical description of the diffusion, we develop a three-variable version of the kernel method by deriving and exploiting a functional equation involving the Laplace transforms of Green's functions. 
%In contrast with the usual setting involving a single kernel, our model naturally gives rise to two distinct kernels, $\gamma^+(x,y)$ and $\gamma^-(x,z)$. 
%corresponding to the quadratic forms of the covariance matrices and the linear drifts in the two half-planes. 
%The introduction of independent auxiliary variables $y$ and $z$, associated with each region, is a key idea.
%, previously fruitful in large-deviation analyses of random walks with discontinuities on hyperplanes \cite{IAIgnatyuk_1994} and in the study of doubly two-sided birth-and-death processes \cite{Miyazawa2009}. Here, 

%This leads to a full analytical description of the diffusion model in question.
\end{abstract}
\maketitle
\setcounter{tocdepth}{1} 
\tableofcontents

\section{Introduction and main results}

\subsection{Context, goals and strategy}

The study of diffusion processes in multi-dimensional discontinuous or layered media is a challenging problem. 
Diffusion processes with discontinuous coefficients arise naturally in various applied contexts, such as geophysics, ecology, and astrophysics, where the medium may be composed of layers with different diffusion and advection properties (see \cite{Lejay_2006,Lejay2011} and references therein).
In such models, discontinuities of the diffusion matrix act as permeable interfaces, leading to complex boundary behaviors. %that cannot be interpreted in a simple probabilistic way.

In \cite{Lejay2011}, Lejay proposed a simulation algorithm for 2-dimensional diffusion processes in layered media. More precisely, the process is governed by different diagonal covariance matrices in the upper  and lower half-plane of  $\R^2$, the value of the singular drift on the axis is chosen to have a generator in the divergence form. 
In the present paper, we consider the generalisation of this process with arbitrary covariance matrices $\Sigma^+$ and $\Sigma^-$  in the upper and lower half-planse and drifts $\mu^+$ and $\mu^-$ directed away from the $x$-axis. This model can be interpreted as a “bang-bang” type process, switching between two regimes, a structure reminiscent of stochastic control models~\cite{Shreve1981}. 
Our main focus is on the case where the generator of the process is in divergence form, that is, when the flux is continuous across the interface. In the last section of this paper, we also consider a broader class of processes whose behavior can be viewed as a natural two-dimensional generalization of a Skew Bang-Bang Brownian Motion (SBBBM), with oblique interaction at the interface. 

A rich literature exists for skew Brownian motion in dimension one, see the foundational paper by Harrison and Shepp \cite{HarrisonShepp1981} and the survey by Lejay \cite{Lejay_2006}. See also Fernholz et al. for a detailed introduction and study of one-dimensional SBBBM
\cite{FernholzIchibaKaratzas2013} and Lenotre who studies and simulates this type of process \cite{Lenotre2015}.
Some results also exist in higher dimensions, a recent proof of weak existence and uniqueness was provided by Atar and Budhiraja~\cite{AtarBudhiraja2015}.
The first study of multidimensional skew Brownian motion was conducted by Portenko who constructed a diffusion process whose infinitesimal generator involves a singular drift concentrated on a smooth hypersurface~$S$~\cite{Portenko1979a,Portenko1979b}.
%A skew oblique penetration at the interface of two hyperplanes
Zaitseva studied a stochastic differential equation describing a multidimensional Brownian motion 
with an oblique skew penetration through a hyperplane, under the assumption that both  the skewness coefficient and the direction are constant \cite{Zaitseva2005}. The extension to the general case of an oblique skew diffusion with non-constant coefficients remains a challenging open question \cite{AryasovaPavlyukevichPilipenko2024,PortenkoKopytko2012}.

The skew Brownian motion is deeply linked to diffusion processes generated by
\begin{equation}\label{eq:L_general}
\mathcal{L}
= \frac{\rho(x)}{2}\,\frac{d}{dx}\!\left(a(x)\,\frac{d}{dx}\right)
+ \mu(x)\,\frac{d}{dx},
\end{equation}
where $a$ and $\rho$ have discontinuities~\cite{Etore2006}. 
Indeed, the diffusion process $X$ generated by~$\mathcal{L}$ can be described as the solution of an SDE with local time:
\begin{equation}\label{eq:SDE_local_time}
dX_t = \sigma(X_t)\, dW_t + \mu(X_t)\,dt + \int_{\mathbb{R}} \nu(dx)\, dL_t^x(X),
\end{equation}
where $\sigma^2=\rho a$ and $\nu$ is a finite signed measure whose mass is concentrated at the points where $a$ or $\rho$ are discontinuous. See Legall's work \cite{Legall1984} in dimension one and Stroock's work \cite{Stroock1988} in higher dimension. The process studied in this paper satisfies an equation of type~\eqref{eq:SDE_local_time}.

From a structural viewpoint, our model also connects with discrete analogues such as random walks or queueing processes evolving under different dynamics in distinct regions. The semi-martingale reflecting Brownian motion (SRBM) in a cone and its discrete counterparts are now well understood in dimension two and beyond (see, e.g., \cite{FIM17,Williams95}), but models involving different generators in several cones sharing a common boundary have been much less studied. Among discrete examples, the joining-the-shortest-queue (JSQ) model (studied by Kurkova and Suhov \cite{Kurkova2003}, Kobayashi et al. \cite{Kobayashi2013}) and the doubly two-sided quasi-birth-and-death (DQBD) process (studied by Miyazawa \cite{Miyazawa2009QBD,Miyazawa2009,Miyazawa2015}) display related features, although symmetries sometimes reduce them to single-kernel functional equations. In contrast, our diffusion model genuinely involves two distinct kernels corresponding to the two half-planes. 
Some time-space continuous models for the multi-level single server queue are also studied in %recent papers 
\cite{kobayashi2025, miyazawa2024multilevelRBM}.

\medskip
Our aim is to provide a detailed theoretical analysis of the transient process presented above.
The main results of the present paper are as follows:

\begin{enumerate}[label=(\roman*)]
\item\label{(i)} We make explicit the Laplace transforms of the Green's functions. 
\item\label{(ii)} We derive the exact asymptotics of the Green's functions along all possible trajectories in $\mathbb{R}^2$.
\item\label{(iii)} As a consequence, we obtain explicit expressions for all positive harmonic functions and determine the full Martin boundary. We also identify the minimal Martin boundary, which turns out to be strictly smaller than the full one.
\end{enumerate}
The nonminimalty of the full Martin boundary reveals a genuinely new phenomenon, in contrast with  known results for diffusions with regular coefficients, whether free or confined in cones, see \cite{ernst_franceschi_asymptotic_2021,Franceschi_2024}. This phenomenon can be also expected for diffusions in two-dimensional cones with special assumptions on drifts making the process escape to infinity along the boundaries, see \cite{KurMa, IgKurRa} for the proof in the case of the analogous discrete random walks. However, the minimal Martin boundary in the last example is reduced to only two points, while for our process it is homeomorphic to a union of arcs of a circle.  
 
\medskip
The main tool of our analysis is a version of the so-called kernel method usually used to study random processes in cones; see Zhao \cite{Zhao2022} for a survey. The kernel $\gamma(x, y)$ of the SRBM is usually given by the sum of one half of the quadratic form associated with the covariance matrix and the linear form corresponding to the drift. In our model, we consider two kernels, $\gamma^+(x, y)$ and $\gamma^-(x, z)$, which share the same first variable and have distinct second variables. The crucial idea of our method is to introduce the independent variables $y$ and $z$, which correspond to the two half-planes.

This approach was first fruitfully applied in the large deviation analysis of random walks in $\mathbb{Z}^{d+1}$ with a discontinuity on a hyperplane; as discussed by Ignatyuk et al. \cite{IAIgnatyuk_1994}. It was also employed in \cite{Miyazawa2009}, where the exponential decay rate of the stationary distribution for a DQBD process along the axes is determined by certain extremal points of the domains bounded by two kernels.

In this paper, we extend the kernel method to three variables by fully developing it through the derivation and exploitation of a functional equation involving the Laplace transforms of Green's functions, see \eqref{eq fonctionnelle} below. To the best of our knowledge, this is the first example of a three-variable implementation of the method, leading to a complete analytical characterization of the model.

\medskip
\textbf{Detailed outline of the article}
In Section~\ref{sec:defprocess}, we introduce the main process. In Section~\ref{subsec:defprocess2}, we define the Green's functions and state the three-variable kernel functional equation for their Laplace transforms. In Section~\ref{sec:main_results} we state our main results. In Section~\ref{sec:main_results1}, we first present  result~\ref{(i)}, providing explicit formulas for the Laplace transforms of the Green's functions in Theorem~\ref{prop:laplace explicitee}. Then, in Section~\ref{sub:theorems}, we state result~\ref{(ii)}, which provides the exact asymptotics of the Green's functions $g^{z_0}$ along all trajectories in the plane $\mathbb{R}^2$. In other words, we consider the limit of $g^{z_0}(r\cos\alpha, r\sin\alpha)$ as $r \to \infty$ and $\alpha \to \alpha_0$, where $\alpha_0 \in [0, 2\pi)$ and $z_0 \in \mathbb{R}^2$ denotes the initial state. 
Theorem \ref{alpha=0} states the asymptotics along the $x$-axis, when $\alpha=0$ or $\pi$. We obtain asymptotics of the form
$$
g^{z_0}(r, 0) \underset{r\to+\infty}{\sim} C_0 f_0(z_0)e^{-r\M}r^{-3/2}
$$
where $C_0$ is a constant, $f_0$ a harmonic function, and $x_b$ an explicit branching point of one of the two-valued functions $Y^{\pm}(x)$ or $Z^{\pm}(x)$ defined by the kernels as $\gamma_+(x, Y^{\pm}(x))=0$ and $\gamma_{-}(x, Z^{\pm}(x))=0$ respectively, see  Definition~\eqref{def:xb}.
We then highlight two key explicit angles for the asymptotic study, $$\alpha_b \text{ and } \widetilde \alpha_b \in [0,2\pi]$$
which are related to branching points of $Y^{\pm}(x)$ and $Z^{\pm}(x)$
as in Definitions~\eqref{def:alphabb} and \eqref{def:alphabbtilde}.
These angles define the domain $\mathcal M$ which takes the following form depending on the configuration (see Figure~\ref{anglesalphab} below) :
$$
\mathcal M= 
[\alpha_b, \tilde \alpha_b]\cup [\pi,2\pi]
\text{ or }
[0, \tilde \alpha_b] \cup [\pi,\alpha_b]
\text{ or }
[0,\pi]\cup [\tilde\alpha_b, \alpha_b]
\text{ or }
[\alpha_b, \pi] \cup [\tilde\alpha_b, 2\pi] .
$$
%which define two regions : we call them $\mathcal M \subset [0,2\pi]$ and $[0,2\pi]\backslash \mathcal M$ 
%When $\alpha_0 \in \inter{\mathcal M},$ %When $\alpha_b <\pi$ and $\widetilde \alpha_b >\pi$,
Theorem~\ref{thm:1} states the typical asymptotic result: when $\alpha_0 \in \inter{\mathcal M} $, %(and symmetrically for $\inter{\mathcal M}, \cap (\pi, 2\pi)$),
we have
$$
   g^{z_0}(r\cos(\alpha), r\sin(\alpha)) \underset{r \to \infty \atop \alpha \to \alpha_0}{\sim}
   C(\alpha_0) h_{\alpha_0} (z_0) r^{-1/2}
   {e^{-r\big(\cos(\alpha)\, x(\alpha) + \sin(\alpha)\times\, \substack{y(\alpha) \text{ when } \alpha_0 \in (0,\pi) \\ \text{or} \\ 
   \, z(\alpha) \text{ when } \alpha_0 \in  (\pi,2\pi)} \big)}}
$$
where $C(\alpha)$ is a constant depending on $\alpha$, $h_\alpha$ is a harmonic function, and $x(\alpha)$, $y(\alpha)$ and $z(\alpha)$ are explicit points depending on $\alpha$. 
Theorem~\ref{thm:2} concerns the asymptotics as $\alpha\in\mathcal M$ tends to $\alpha_0=0$ or $\pi$. In this case a competition between $\alpha$ and $1/r$ appears in the asymptotics, which take the form
$$
g^{z_0}(r\cos(\alpha), r\sin(\alpha))
\underset{r\to\infty \atop\alpha\to0, \alpha \in \mathcal M}{\sim}
 C_0f_0(z_0)\left(\kappa\alpha + r^{-1} \right) r^{-1/2} {e^{-r \big( \cos(\alpha)x(\alpha) + \sin(\alpha)
 \times\, \substack{y(\alpha) \text{ when } \alpha>0 \\ \text{or} \\ 
 z(\alpha) \text{ when } \alpha<0}
 \big)}}.
$$
Theorem~\ref{thm3} gives the asymptotics for $\alpha_0= \alpha_b$ or $\tilde{\alpha}_b$, which involve a linear combination of two harmonic functions, $h_{\alpha_b}$ and $f_0$, with coefficients $C$ depending on the rate at which $\alpha$ converges to $\alpha_0$ as $r \to \infty$. We obtain an asymptotic expression of the form
$$
 g^{z_0}(r\cos(\alpha), r\sin(\alpha))
    \underset{r\to\infty \atop \alpha\to\alpha_b}{\sim} \left( C(\alpha_b) h_{\alpha_b}(z_0) + C f_0(z_0) \right) r^{-1/2}{e^{-r\big(\cos(\alpha)x(\alpha) + \sin(\alpha)
   \times\,  \substack{y(\alpha) \text{ when } \alpha_b \in  (0,\pi) \\ \text{or} \\ z(\alpha) \text{ when } \alpha_b \in  (\pi,2\pi) }
    \big)}}.
    $$
This is where the difference between the full Martin boundary and the minimal one appears.
Theorem~\ref{thm:4} deals with the asymptotics %outside the red arc angles.
when $\alpha_0\notin \mathcal M$ (or $\alpha_0 = 0, \pi$ and $\alpha \to \alpha_0$ while $\alpha \notin \mathcal M$).
We obtain an asymptotic expression of the form
$$
g^{z_0}(r\cos(\alpha), r\sin(\alpha))
\underset{r\to\infty \atop\alpha\to\alpha_0, \alpha\notin \mathcal M }{\sim}  C'_{br}(\alpha)f_0(z_0)r^{-3/2}e^{-r\big(\cos(\alpha)x(\alpha_b) + \sin(\alpha)
\times\, \substack{y(\alpha_b) \text{ when } \alpha \in  (0,\pi)   \\ \text{or} \\ z(\alpha_b) \text{ when } \alpha \in  (\pi,2\pi)  }
\big)}.
$$
In Section~\ref{subsec:results_Martin}, we state result~\ref{(iii)} in Theorem~\ref{thm5}, concerning the harmonic functions, the full Martin boundary, and the minimal Martin boundary which is homeomorphic to $\mathcal M$. %$$[\alpha_b, \pi) \cup [\tilde\alpha_b, 2\pi).$$

In Section~\ref{sec:process_functionalequation}, we study the process in greater detail, establish its existence and uniqueness in the case where the generator is in divergence form and prove the main functional equation. In Section~\ref{sec:alpha=0}, we prove Theorem~\ref{prop:laplace explicitee} using the functional equation from which we derive Theorem~\ref{alpha=0} 
applying Tauberian theorems; in Section~\ref{sec:proofstheorems}, we prove Theorems~\ref{thm:1}–\ref{thm:4}, the proofs use variants of the saddle-point method; and in the short Section~\ref{sec:5}, we prove Theorem~\ref{thm5} which follows from the asymptotic results. Finally, in Section~\ref{sec:skewdiff}, we extend our analysis to a two-dimensional oblique skew diffusion, where the flux is not necessarily continuous:
substantially new results under this assumption 
are found in Theorems \ref{thm:7}, \ref{thm:8} and \ref{th3new}.

\subsection{Planar skew bang-bang diffusion with piecewise constant coefficients}\label{sec:defprocess}
\label{subsec:defprocess1}
%\subsection{Definition of the process}\label{subsec:defprocess1}

%Let $(\Omega, \mathcal{F},(\mathcal{F}_t)_{t\geq 0})$ be a filtered probability space.  
We now define a two-dimensional generalization of the SBBBM, which acts obliquely at the interface between two half-planes. The diffusion coefficients and the drift vector are piecewise constant, taking different values in the upper and lower half-planes.
Let $\Sigma^+, \Sigma^-$ be covariance matrices and $\mu^+, \mu^-$ vectors in $\mathbb{R}^2$. We define the matrices $\Sigma(y)$ and $\sigma(y)$ and the vector $\mu(y)$ such that
$$
\Sigma(y)= \sigma(y)\sigma^\top (y) = \Sigma^- \fc_{y<0} + \Sigma^+ \fc_{y\geq0} 
\quad\text{and}\quad 
\mu(y) = \mu^- \fc_{y<0} + \mu^+ \fc_{y\geq0}.
$$  
Let $q = (q_1,q_2) \in \mathbb{R}\times(-1,1)$. This vector represents a singular drift on the axis $\{y = 0\}$. The parameter $q_2=2\beta-1$ corresponds to the classical skewness intensity at the boundary, when the process hits the axis $y = 0$, it restarts on the $y > 0$ side with probability $\beta=(1+q_2)/2$, and on the $y < 0$ side with probability $1-\beta=(1-q_2)/2$. The parameter $q_1$ fixes the singular drift component along the axis and thus introduces the oblique direction of the skew behaviour at the interface. 
\begin{defi}[Planar skew bang-bang diffusion with piecewise constant coefficients]
We define a planar skew bang-bang diffusion associated with coefficients $(\Sigma^+, \Sigma^-, \mu^+, \mu^-)$ and with the skew vector $q$, as a continuous adapted process $(Z_t)_{t\geq0} = (A_t, B_t)_{t\geq0}$ on $\mathbb{R}^2$, 
    such that $Z$ can be expressed %$\mathbb{P}_{(a,b)}$-
    almost surely as
    %with a family of probability measures $(\mathbb{P}_{(a,b)})_{(a,b)\in\mathbb{R}^2}$ on $(\Omega, \mathcal{F},(\mathcal{F}_t)_{t\geq 0})$,
    \begin{equation}\label{EDS}
        Z_t= (A_t,B_t) = (a_0,b_0) + \int_0^t {\sigma}(B_s)\, dW_s + \int_0^t \mu(B_s)\, ds + q L^0_t(B),\quad t \geq 0,
    \end{equation}
    where:
    \begin{enumerate}
    \item[(i)] $z_0=(a_0,b_0)\in\mathbb{R}^2$ is the starting point;
        \item[(ii)] $(W_t)_{t\geq0}$ is an adapted two-dimensional standard Brownian motion;
        \item[(iii)] %Almost surely, 
      $(L^0_t(B))_{t\geq0}$ is the symmetric local time at $0$ of $B$ defined by
        %the quantity
        \begin{equation}
            L^0_t(B) = \lim_{\varepsilon \to 0} \frac{1}{2\varepsilon} \int_0^t \fc_{[-\varepsilon,\varepsilon]}(B_s)\, ds .
        \end{equation}
        %exists in $\mathbb{R}$ for all $t \geq 0$.
    \end{enumerate}
   % The process $(L^0_t(B))_{t\geq0}$ is called the symmetric local time at $0$ of $Y$.
\end{defi}

\begin{figure}
    \centering
    \includegraphics[width=0.31\linewidth]{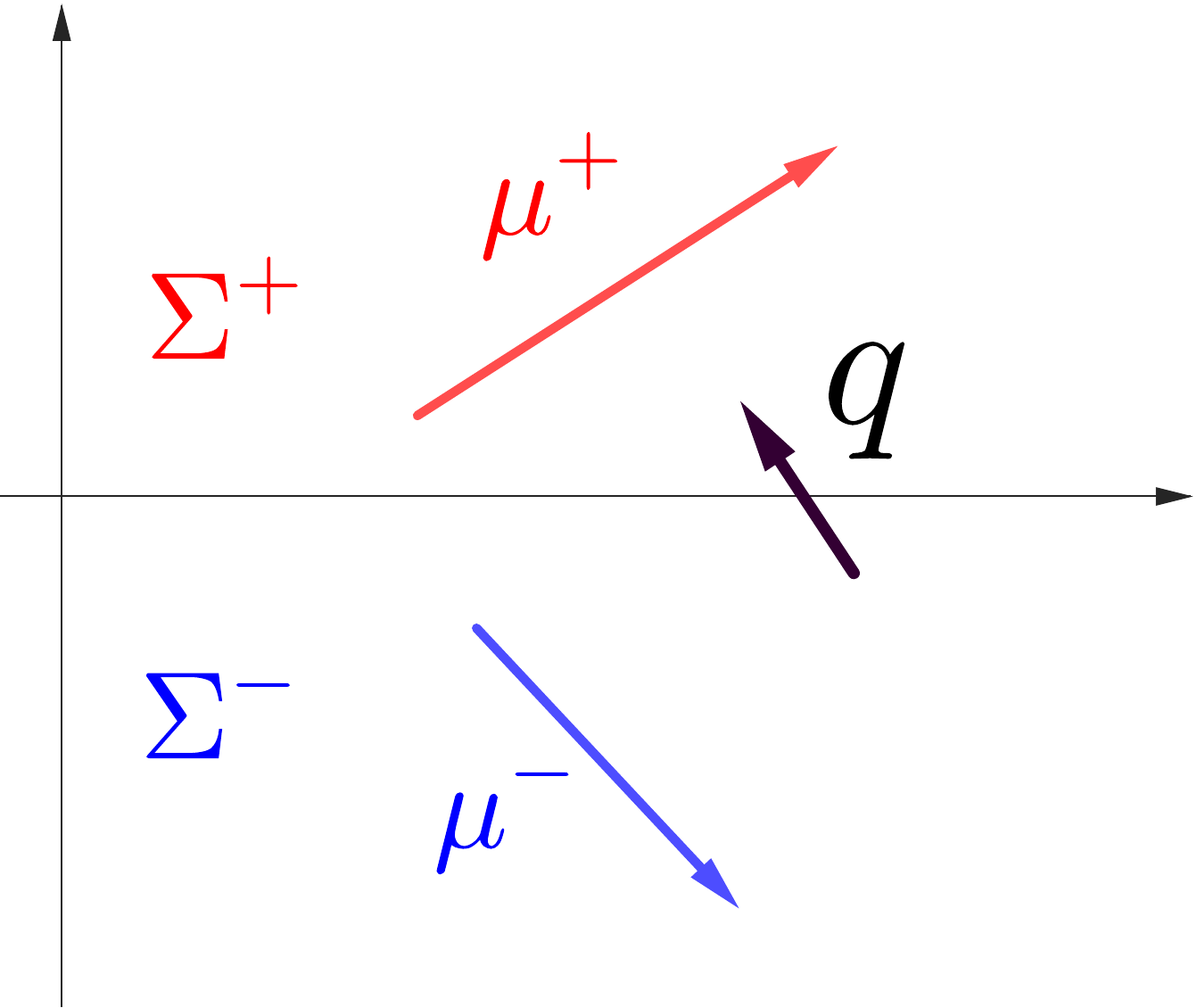}
    \includegraphics[width=0.3\linewidth]{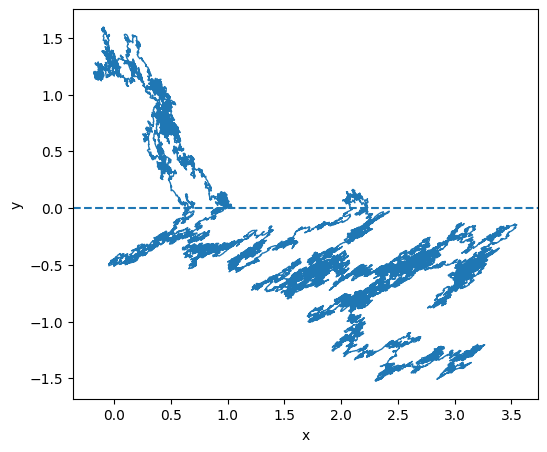}
    \caption{Parameters and path of the process. Locally, the process $Z$ behaves like a planar Brownian motion with covariance $\Sigma^+$ (resp. $\Sigma^-$) and drift $\mu^+$ (resp. $\mu^-$) in the upper (resp. lower) half-plane. When the process hits the axis $\{y = 0\}$, it is subject to a singular drift in the direction of $q$.}
    \label{parametres}
\end{figure}

Let $q_0$ be the vector defined by
\begin{equation}\label{eq:q}
q_0 = 
\left(
\frac{\Sigma_{12}^+ - \Sigma_{12}^-}{\Sigma_{22}^+ + \Sigma_{22}^-} 
,
\frac{\Sigma_{22}^+ - \Sigma_{22}^-}{\Sigma_{22}^+ + \Sigma_{22}^-} 
\right).
\end{equation}
In all sections of the article, except Section~\ref{sec:skewdiff}, we will assume that
$$
q=q_0.
$$
In this case, the generator, understood in the sense of Strook \cite[Theorem II.3.1]{Stroock1988}, can be written in divergence form; see \eqref{divergence-form} below. This form is relevant in physical applications~\cite{Lejay2011}. 
The transmission conditions then ensure the continuity of the probability flux across the boundary, see the partial differential equation \eqref{eq:pdeharm}. 
%The generator is understood in the sense given by Strook in \cite[Theorem II.3.1]{Stroock1988}.
%Vérifier et trouver l'autre EDP satisfaite pour le flux
The following proposition is proved in Section~\ref{sub:existence}.
\begin{prop}[Existence, uniqueness and divergence form]\label{prop:existence_deuxnoyaux}
%Let $(B_t)_{t\geq0}$ be a %adapted 
%standard two-dimensional Brownian motion. 
%on some filtered probability space $(\Omega, \mathcal{F},(\mathcal{F}_t)_{t\geq 0}, \P)$. 
%Then, for all $(a,b) \in \R^2$, 
There exists a unique solution $Z_t$ (both pathwise and in law) to the stochastic differential equation~\eqref{EDS} associated with the skew vector $q=q_0$.
%$(B_t)_{t\geq0}$ and with parameters $(\Sigma^+, \Sigma^-, \mu^+, \mu^-)$ and   
This process defines a strong Markov process 
%and is unique in law.
%Uniqueness also holds in law. 
%We denote by $\P_{(a,b)}$ the law of the process starting from $(a,b)$. Then, the family $(\P_{(a,b)})_{(a,b)\in\R^2}$ defines a strong Markov process. 
%Furthermore, the generator of the Markov process can 
and its generator can be expressed as follows:
\begin{equation}\label{divergence-form}
    \mathcal{L} = \frac{1}{2}\nabla\cdot (\Sigma \nabla) + \mu\cdot\nabla .
\end{equation}
\end{prop}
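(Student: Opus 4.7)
The natural strategy is to decouple the two components and exploit the fact that the dynamics of the vertical coordinate $B$ is autonomous, while $A$ is reconstructed pathwise from $B$ and the driving Brownian motion. In the first step I would read off from \eqref{EDS} the scalar SDE satisfied by $B$, namely
\begin{equation*}
dB_t \;=\; \sigma_{2\cdot}(B_t)\,dW_t + \mu_2(B_t)\,dt + q_{0,2}\,dL^0_t(B),
\end{equation*}
where $\sigma_{2\cdot}(B_t)\,dW_t$ is a scalar Itô integral with quadratic variation $\Sigma_{22}(B_t)\,dt$. The skewness coefficient $q_{0,2}=(\Sigma^+_{22}-\Sigma^-_{22})/(\Sigma^+_{22}+\Sigma^-_{22})\in(-1,1)$ is exactly the value for which the one-dimensional divergence-form generator is obtained. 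Existence and pathwise uniqueness for such a 1D SDE with piecewise constant diffusion, drift and local-time coefficient are classical, following from Le Gall~\cite{Legall1984} (or equivalently from the Harrison--Shepp--type construction via skew Brownian motion with piecewise constant parameters, as used in \cite{Lejay2011,Etore2006}); thus $B$ admits a unique strong solution, which is moreover a strong Markov process.

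Once $B$ is constructed, the first coordinate is defined by the explicit formula
\begin{equation*}
A_t \;=\; a_0 + \int_0^t \sigma_{1\cdot}(B_s)\,dW_s + \int_0^t \mu_1(B_s)\,ds + q_{0,1}\,L^0_t(B),
\end{equation*}
which immediately yields pathwise existence and uniqueness of the pair $Z=(A,B)$, hence also uniqueness in law. The strong Markov property of $Z$ is then inherited: since the solution map is a measurable functional of the initial condition and the Brownian path, and $B$ itself is strong Markov with $L^0_\cdot(B)$ being an additive functional of $B$, the classical argument (shift the Brownian motion at a stopping time and invoke pathwise uniqueness) gives the strong Markov property of $Z$.

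The last task is to identify the generator. I would apply the Itô--Tanaka formula to $f(Z_t)$ for a test function $f$ that is $C^2$ on each closed half-plane and has continuous restrictions to $\{y=0\}$ from both sides. In the interior of each half-plane the bounded-variation part equals $\bigl(\tfrac12\Sigma^\pm_{ij}\partial_{ij}f+\mu^\pm\cdot\nabla f\bigr)(Z_s)\,ds$, which agrees with $\tfrac12\nabla\cdot(\Sigma\nabla f)+\mu\cdot\nabla f$ there. On the interface the local-time term contributes
\begin{equation*}
\tfrac12\bigl[\Sigma^+_{2j}\partial_j f(\cdot,0^+)-\Sigma^-_{2j}\partial_j f(\cdot,0^-)\bigr]\,dL^0_t(B)
\;+\; q_0\cdot\nabla f(A_t,0)\,dL^0_t(B),
\end{equation*}
where the first bracket comes from occupation-time/Tanaka-type computations applied with the piecewise constant coefficient $\sigma$. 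For $f$ in the domain of the divergence-form operator the transmission condition $\Sigma^+\nabla f(\cdot,0^+)\cdot e_2=\Sigma^-\nabla f(\cdot,0^-)\cdot e_2$ holds, and one checks by a direct algebraic manipulation that the specific choice $q=q_0$ defined in \eqref{eq:q} is precisely the vector that makes the residual local-time contribution vanish; hence the bounded-variation part of $f(Z_t)$ reduces to $\int_0^t\mathcal L f(Z_s)\,ds$, identifying $\mathcal L$ with the divergence-form operator in the sense of \cite[Thm.~II.3.1]{Stroock1988}.

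\textbf{Main obstacle.} The routine point is the one-dimensional SDE with local time, for which Le Gall's machinery applies directly. The genuine work is the transmission computation at $\{y=0\}$: one must track carefully the Tanaka-type occupation-density identities for the process $B$ with piecewise constant diffusion $\Sigma_{22}$, and verify that the specific value of $q_0$, including the tangential component $q_{0,1}$ generated by the off-diagonal jump $\Sigma^+_{12}-\Sigma^-_{12}$, exactly cancels the boundary term produced by the distributional derivative $\nabla\cdot(\Sigma\nabla f)$. This algebraic identification of $q_0$ with the continuity of the probability flux is the heart of the proposition.
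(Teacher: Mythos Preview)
Your existence and uniqueness argument is essentially identical to the paper's: decouple the autonomous one-dimensional SDE for $B$, invoke Le~Gall~\cite{Legall1984} for pathwise uniqueness, reconstruct $A$ by an explicit integral formula, and deduce uniqueness in law and the strong Markov property by standard Yamada--Watanabe-type reasoning.

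The identification of the generator, however, proceeds by a genuinely different route. The paper does \emph{not} compute the boundary cancellation directly via It\^o--Tanaka. Instead it argues by approximation: it regularises $\Sigma$ and $\mu$ into continuous piecewise-affine coefficients $\Sigma^n,\mu^n$, writes the corresponding smooth-coefficient SDE (with extra drift terms $\tfrac12\frac{d}{dy}\Sigma^n_{12}$ and $\tfrac12\frac{d}{dy}\Sigma^n_{22}$), observes that its generator is manifestly $\mathcal L_n=\tfrac12\nabla\cdot(\Sigma^n\nabla)+\mu^n\cdot\nabla$, and then invokes two convergence results---Stroock~\cite{Stroock1988} for convergence in law of the divergence-form processes, and an $L^1$ convergence argument \`a la Lejay~\cite{Lejay2011} for the solutions of the SDEs---to identify the limits. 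Your approach is more direct and makes the role of $q_0$ transparent as the vector cancelling the interface contribution; the paper's approximation approach avoids having to justify a two-dimensional It\^o--Tanaka formula for piecewise $C^2$ functions, which the authors explicitly flag elsewhere (Section~\ref{sub:detailedproof}) as ``not straightforward to use'' in this setting. Both arguments are sound; yours requires more care with the occupation-density computation at the interface, theirs outsources that difficulty to existing convergence theorems.
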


For the remainder of this article, we will work under the following hypothesis regarding the drift:
\begin{equation}\label{hyp:drift}
    \mu_1^+ > 0, \quad \mu_2^+ > 0,\quad \mu_1^- > 0,\quad \mu_2^- < 0,
\end{equation}
see Figure \ref{parametres}. In particular, this hypothesis means that the process $Z_t=(A_t,B_t)$ is transient and eventually no longer touches the horizontal axis. The process $B$ is a one dimensional skewed bang-bang process of parameter $q_2$ at $0$ with positive drift on the nonnegative real axis and negative drift on the negative real axis.
% Indeed, if $\psi(y) = \frac{y}{\sqrt{\Sigma_{22}^+}}\fc_{y>0} + \frac{y}{\sqrt{\Sigma_{22}^-}}\fc_{y<0},$ then $\psi(B)$ is a drifted Skew Brownian motion (see \cite[Proposition $2$]{Lejay2011}) of parameter $q_2$ at $0$ with positive drift on the nonnegative real axis and negative drift on the negative real axis and $Y$ is therefore transient.

\subsection{Green's functions and functional equation}\label{subsec:defprocess2}

%\begin{prop}[Transition density]
%Let $z_0=(a_0,b_0) \in \R^2$ be the starting point. 
%%$t > 0$, the law of $Z_t$ under $\P_{z_0}$ 
%For all $t>0$, the law of $Z_t$ admits a density $p_t^{z_0}(z)$ with respect to the Lebesgue measure on $\R^2$. Furthermore, the function $(t,z_0,z) \longmapsto p_t^{z_0}(z)$ is continuous on $(0, +\infty) \times \R^2 \times \R^2$.
%\end{prop}
We have just seen that, when $q=q_0$, the generator can be expressed in divergence form. From Strook \cite[Theorem II.3.8]{Stroock1988}, we deduce that for all $t>0$, the law of $Z_t$ has a density $p_t^{z_0}(z)$ with respect to the Lebesgue measure on $\R^2$. Furthermore, the function $(t,z_0,z) \longmapsto p_t^{z_0}(z)$ is continuous on $(0, +\infty) \times \R^2 \times \R^2$. 
\begin{defi}[Green's measures and Laplace transforms]
For $z_0=(a_0,b_0) \in \R^2$, we define the Green's measure on the plane $\R^2$ by
\begin{equation}
G(z_0,S) = \E_{z_0}\left[\int_0^{+\infty}\fc_{S}(Z_t)\,dt\right] = \iint_S g^{z_0}(a,b)\,da \,db, \quad \forall S \in \mathcal{B}(\R^2)
\end{equation}
where its density $g^{z_0}$ is called the Green's function and is equal to
\begin{equation}
g^{z_0}(a,b) = \int_0^{+\infty} p_t^{z_0}(a,b)\,dt, \quad \forall (a,b)\in\mathbb{R}^2.
\end{equation}
%The functions $g^{z_0}$ are called Green's functions. 
For $x, y, z \in \R$, we also define the Laplace transforms of $G(z_0,\cdot)$ restricted to the half-planes $\R \times (0,+\infty)$ and $\R \times (-\infty,0)$ by
$$
\phi_+^{z_0}(x,y) = \E_{z_0}\left[\int_0^{+\infty} e^{xA_t + yB_t} \fc_{B_t > 0}\,dt\right]
=\int_{\mathbb{R}}\int_0^\infty g^{z_0}(a,b)e^{xa+yb} 
\,da \,db, 
$$
$$
\phi_-^{z_0}(x,z) = \E_{z_0}\left[\int_0^{+\infty} e^{xA_t + zB_t} \fc_{B_t < 0}\,dt\right]
=\int_{\mathbb{R}}\int_{-\infty}^0 g^{z_0}(a,b)e^{xa+zb} 
\,da \,db.
$$
Finally, we define the Green's measure on the axis $\R \times \{0\}$  by
\begin{equation}\label{eq:H(z0,A)}
H(z_0, S) = \E_{z_0}\left[\int_0^{+\infty} \fc_S(A_t)\,dL^0_t(B)\right], \quad \forall S \in \mathcal{B}(\R)
\end{equation}
and its Laplace transform $\phi^{z_0}$ as
\begin{equation}\label{def:phi}
\phi^{z_0}(x) = \E_{z_0}\left[\int_0^{+\infty} e^{xA_t}\,dL^0_t(B)\right].
\end{equation}
\end{defi}

For $x, y, z \in \C$, we set:
\begin{equation}\label{kernels}
\begin{cases}
        \gamma_+(x, y) = \frac{1}{2}(x, y)\cdot\Sigma^+(x, y) + (x, y)\cdot\mu^+ = \frac{1}{2}(\Sigma^+_{11}x^2 + 2\Sigma^+_{12}xy + \Sigma_{22}^+y^2) + \mu_1^+ x + \mu^+_2 y\\
        \gamma_-(x, z) = \frac{1}{2}(x, z)\cdot\Sigma^-(x, z) + (x, z)\cdot\mu^- = \frac{1}{2}(\Sigma^-_{11}x^2 + 2\Sigma^-_{12}xz + \Sigma_{22}^-z^2) + \mu_1^- x + \mu^-_2 z\\
\gamma(x,y,z) = q_1x + \frac{1}{2}(y(1 + q_2) + z(q_2 - 1)).
    \end{cases}
\end{equation}
These polynomials are the coefficients of the following three-variable kernel functional equation, which is at the heart of this article. The following proposition is proven in Sections~\ref{sub:formalproof} and~\ref{sub:detailedproof}.
\begin{prop}[Functional Equation]\label{prop:eqfonc}
Suppose that \eqref{hyp:drift} holds. 
%and that the initial point $z_0 = (a_0, b_0)$ satisfies $b_0 \neq 0$. \fs{C'est vraiment étrange cette restriction. C'est sur qu'on peut pas s'en passer ? Ou alors la camoufler plus bas dans la preuve mais pas la mettre ici ?}
Then, there exists $\eta > 0$ such that for all $x \in (-\eta, 0)$, $y < 0$, and $z > 0$, the Laplace tranforms $\phi^{z_0}_-(x, z)$, $\phi^{z_0}_+(x, y)$, and $\phi^{z_0}(x)$ are finite and satisfy the following functional equation:
\begin{equation}\label{eq fonctionnelle}
\gamma_-(x,z)\phi^{z_0}_-(x,z) + \gamma_+(x,y)\phi^{z_0}_+(x,y) +\gamma(x,y,z)\phi^{z_0}(x) = -e^{xa_0 + yb_0\fc_{b_0> 0} + zb_0 \fc_{b_0<0}}.
\end{equation}
\end{prop}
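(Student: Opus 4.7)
My plan is to derive \eqref{eq fonctionnelle} via an Itô--Tanaka argument applied to the single continuous test function
\[
\psi(a,b) := e^{xa}\bigl(e^{yb}\fc_{b\ge 0}+e^{zb}\fc_{b<0}\bigr),
\]
which is $C^2$ on each open half-plane but whose $b$-derivative jumps at the interface $\{b=0\}$. The motivation is that $\psi$ is the unique continuous exponential test function that is an eigenfunction of the local generator on each side: $\mathcal{L}\psi=\gamma_+(x,y)\psi$ on $\{b>0\}$ and $\mathcal{L}\psi=\gamma_-(x,z)\psi$ on $\{b<0\}$. Applying the formula to $\psi(Z_t)$, taking $\E_{z_0}$, and letting $t\to\infty$ should produce the three terms of \eqref{eq fonctionnelle} from the bulk time integrals, from the singular/jump contribution at the interface, and from a transience-driven vanishing limit at infinity.

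\textbf{Finiteness and limit at infinity.} Under \eqref{hyp:drift}, both $\mu_1^+$ and $\mu_1^-$ are strictly positive, so a standard exponential martingale bound (comparing $A_t$ with $a_0$ plus a linear positive drift and a martingale with bounded quadratic variation rate) shows that $\E_{z_0}[e^{xA_t}]$ decays exponentially in $t$ for some $x\in(-\eta,0)$ with $\eta>0$ small. Combined with $e^{yB_t}\fc_{B_t>0}\le 1$ (since $y<0$) and $e^{zB_t}\fc_{B_t<0}\le 1$ (since $z>0$), this gives finiteness of $\phi_+^{z_0}(x,y)$ and $\phi_-^{z_0}(x,z)$, and forces $\psi(Z_t)\to 0$ in $L^1$. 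Finiteness of $\phi^{z_0}(x)$ follows by bounding $\E_{z_0}\!\left[\int_0^\infty e^{xA_s}\,dL^0_s(B)\right]$, using that the marginal $B$ is a one-dimensional skew bang-bang process with bounded expected local time growth at $0$ combined with the exponential decay of $e^{xA_t}$.

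\textbf{Itô--Tanaka and identification of the boundary term.} Apply the Itô--Tanaka formula to $\psi(Z_t)$. On $\{B_s\neq 0\}$ the ordinary Itô calculus yields a local martingale $M_t$ plus the time integral
\[
\int_0^t\bigl[\gamma_+(x,y)\psi(Z_s)\fc_{B_s>0}+\gamma_-(x,z)\psi(Z_s)\fc_{B_s<0}\bigr]\,ds.
\]
The interface generates a local-time integral with two sources: the singular drift $q\,dL^0_s$ carried by \eqref{EDS} contributes $\nabla\psi(A_s,0)\cdot q$ (under the appropriate $\beta$-weighted convention for $\partial_b\psi$ at $0$), while the jump of $\partial_b\psi$ at $\{b=0\}$ adds the Tanaka correction $\tfrac12\bigl(\partial_b\psi(\cdot,0^+)-\partial_b\psi(\cdot,0^-)\bigr)\,dL^0_s$. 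Plugging in $\partial_a\psi(a,0)=xe^{xa}$, $\partial_b\psi(a,0^+)=y\,e^{xa}$, $\partial_b\psi(a,0^-)=z\,e^{xa}$, and using the divergence-form choice $q=q_0$ to make the weights consistent, these two contributions collapse into exactly $\gamma(x,y,z)\,e^{xA_s}\,dL^0_s$. Taking expectation, localizing to kill the martingale part, and letting $t\to\infty$ with $\E_{z_0}[\psi(Z_t)]\to 0$ yields the left-hand side $-\psi(z_0)=-e^{xa_0+yb_0\fc_{b_0>0}+zb_0\fc_{b_0<0}}$, and Fubini together with dominated convergence on the three integrals produces the right-hand side of \eqref{eq fonctionnelle}.

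\textbf{Main obstacle.} The delicate point is the rigorous identification of the coefficient of $dL^0_s(B)$. A naive Itô formula does not apply because $\partial_b\psi$ jumps at the interface and because the SDE itself carries a local-time drift $q L^0_t(B)$; one must use an Itô--Tanaka formula for functions that are only $C^2$ on each half-plane, and reconcile the skew parameter $q_2$ with the $\beta$-weighted convention for one-sided derivatives at $b=0$. It is precisely the algebraic identity $q=q_0$, i.e.\ the in-divergence-form condition, that makes the boundary contributions collapse to the clean polynomial $\gamma(x,y,z)$; without it the three-variable kernel equation would not close. A secondary technical point is to justify that the local-martingale part is a genuine martingale (or has vanishing expectation) on $[0,\infty)$, which is handled by localization combined with the exponential decay bound on $\E_{z_0}[e^{xA_t}]$ from Step 1.
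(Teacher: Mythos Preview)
Your approach is essentially the paper's ``short'' formal proof (Section~\ref{sub:formalproof}): apply It\^o to the same test function $\psi$ and read off the three kernel terms. However, there are two issues.

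\textbf{A conceptual error about the role of $q=q_0$.} You assert that the boundary contributions collapse to $\gamma(x,y,z)$ \emph{because} $q=q_0$, and that ``without it the three-variable kernel equation would not close''. This is wrong. The coefficient of $dL^0_s(B)$ comes out to $q_1x+\tfrac{q_2(y+z)}{2}+\tfrac{y-z}{2}=\gamma(x,y,z)$ for \emph{any} $q\in\R\times(-1,1)$: the $\tfrac{y+z}{2}$ arises from the symmetric local time convention (not from a $\beta$-weighting tied to $q_0$), and the $\tfrac{y-z}{2}$ is the Tanaka jump correction. The paper indeed establishes \eqref{eq fonctionnelle} for general $q$ in Section~\ref{sec:skewdiff}. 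The divergence-form assumption $q=q_0$ is used elsewhere: it guarantees the Aronson estimates \eqref{Aronson} on $p_t^{z_0}$, which in turn justify the detailed limiting arguments and the link \eqref{aaaaa} between $\phi^{z_0}$ and $g^{z_0}(\cdot,0)$.

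\textbf{Technical gaps the paper addresses explicitly.} The paper does attempt your route and flags it as problematic: ``an It\^o--Tanaka formula exists in dimension~2 \ldots\ however, it is not straightforward to use'' (Section~\ref{sub:detailedproof}). It therefore replaces your direct appeal to It\^o--Tanaka by a smoothing $f_\epsilon$ of $\psi$ (equation~\eqref{fe}) and passes to the limit $\epsilon\to 0$ using the Aronson bounds. Second, your finiteness argument treats the three Laplace transforms separately and is vague for $\phi^{z_0}(x)$ (``bounded expected local time growth''). The paper instead proves exponential moments of $L^0_\infty(B)$ (Lemmas~\ref{exppart1}--\ref{lem:4.4}), uses them to show $\E_{z_0}[e^{xA_t}]\to 0$ for $x\in(-\eta,0)$ (Lemma~\ref{transience}), and then deduces simultaneous finiteness of all three transforms from the finite-$t$ identity \eqref{tfixe} by choosing $(x,y,z)$ so that $\gamma_\pm$ and $\gamma$ are all negative. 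Your ``exponential martingale bound'' on $A_t$ sidesteps exactly the place where the local-time drift $q_1 L^0_t(B)$ enters and where the paper invests real effort.
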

The polynomials $\gamma_+(x,y)$ and $\gamma_-(x,z)$ are called the kernels. We are now going to define some important quantities that will be useful for stating the results.
We first determine the complex branches $Y^\pm(x)$ and $Z^\pm(x)$ which satisfy $\gamma_+(x,Y^\pm(x))=0$ and $\gamma_-(x,Z^\pm(x))=0$. By elementary considerations, these branches are given by
\begin{equation}\label{Ypm}
Y^{\pm} (x)= \frac{1}{\Sigma^+_{22}}\Big(-\Sigma^+_{12} x -\mu^+_2 \pm \sqrt{({\Sigma^+_{12}}^2 -\Sigma^+_{11} \Sigma^+_{22}) x^2 + 2  (\mu^+_2 \Sigma^+_{12}- \mu^+_1 \Sigma^+_{22})x + {\mu^+_2}^2} \Big)
\end{equation}
and
\begin{equation}\label{Zpm}
Z^\pm(x) = \frac{1}{\Sigma^-_{22}}\Big(-\Sigma^-_{12} x -\mu^-_2 \pm \sqrt{({\Sigma^-_{12}}^2 -\Sigma^-_{11} \Sigma^-_{22}) x^2 + 2  (\mu^-_2 \Sigma^-_{12}- \mu^-_1 \Sigma^-_{22})x + {\mu^-_2}^2} \Big).
\end{equation}
Furthermore, $Y^\pm(x)$ have branching points
$x^+_{min} < 0$ and $x^+_{max} > 0$ given by
\begin{equation*}
x^+_{min} = \frac{\mu^+_2\Sigma^+_{12} - \mu^+_1\Sigma^+_{22} - \sqrt{D_1}}{\det(\Sigma^+)}, \quad x^+_{max} = \frac{\mu^+_2\Sigma^+_{12} - \mu_1^+\Sigma^+_{22} + \sqrt{D_1}}{\det(\Sigma^+)}
\end{equation*}
with $D_1 = (\mu^+_2\Sigma^+_{12} - \mu_1{\Sigma^+_{22}}^2)^2 + {\mu^+_2}^2\det(\Sigma^+)$. Similarly, $Z^\pm(x)$ have branching points
$x^-_{min} < 0$ and $x^-_{max} > 0$ with symmetric formulas. Finally, we denote the maximin and the minimax values of the branching points as follows:
\begin{equation}\label{def:xb}
    \m= \max(x^+_{min}, x^-_{min}) < 0, \quad \M = \min(x^+_{max}, x^-_{max}) > 0 .
\end{equation}
These points are illustrated in Figure \ref{fig:rrr}.

\begin{figure}[hbtp]
\centering
     \begin{subfigure}[b]{0.45\textwidth} 
\includegraphics[scale=0.3]{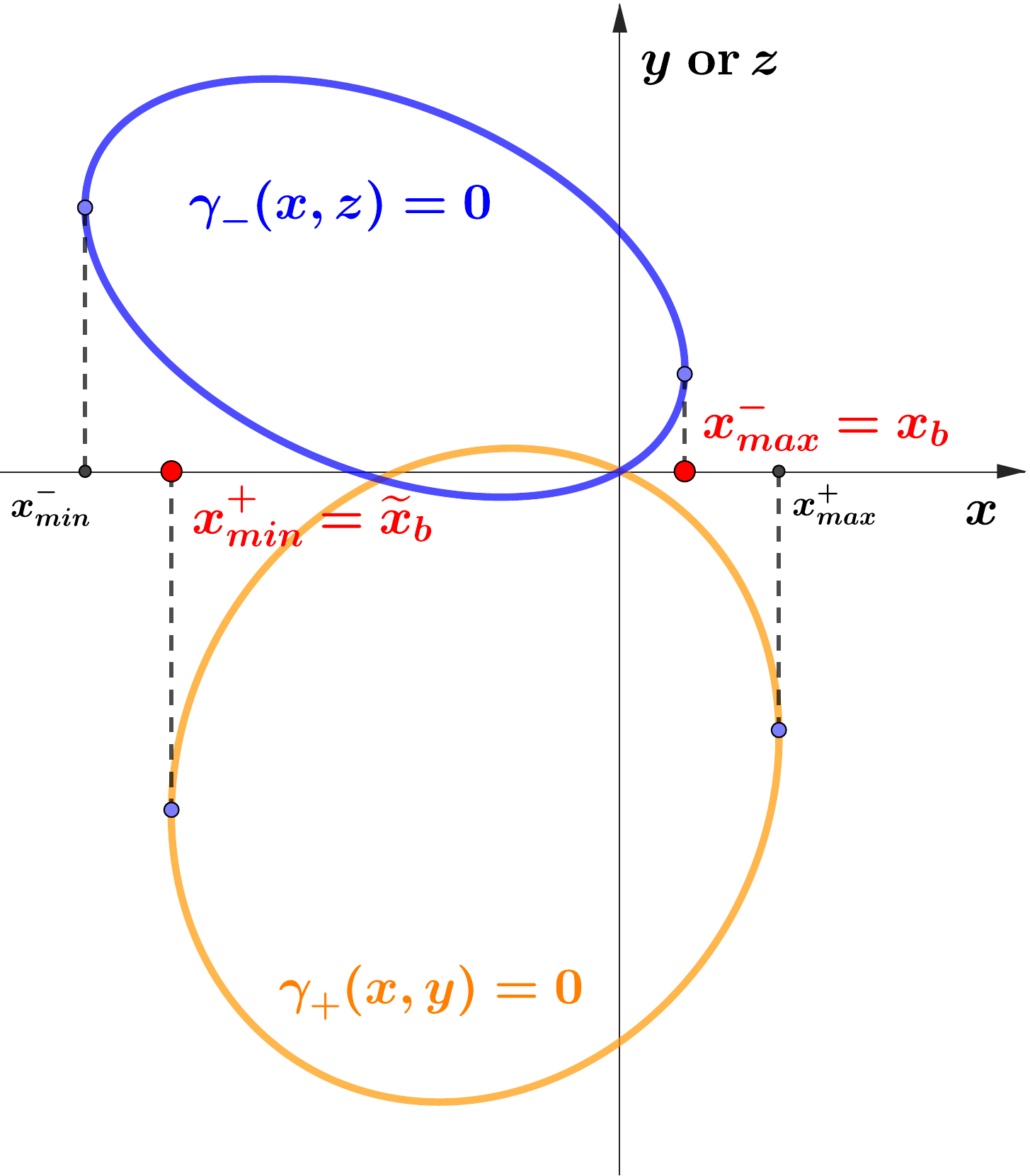}
\caption{Branching points $\m$ and $\M$.}
\label{111}
     \end{subfigure}
     \hfill
     \begin{subfigure}[b]{0.45\textwidth}
\includegraphics[scale=0.3]{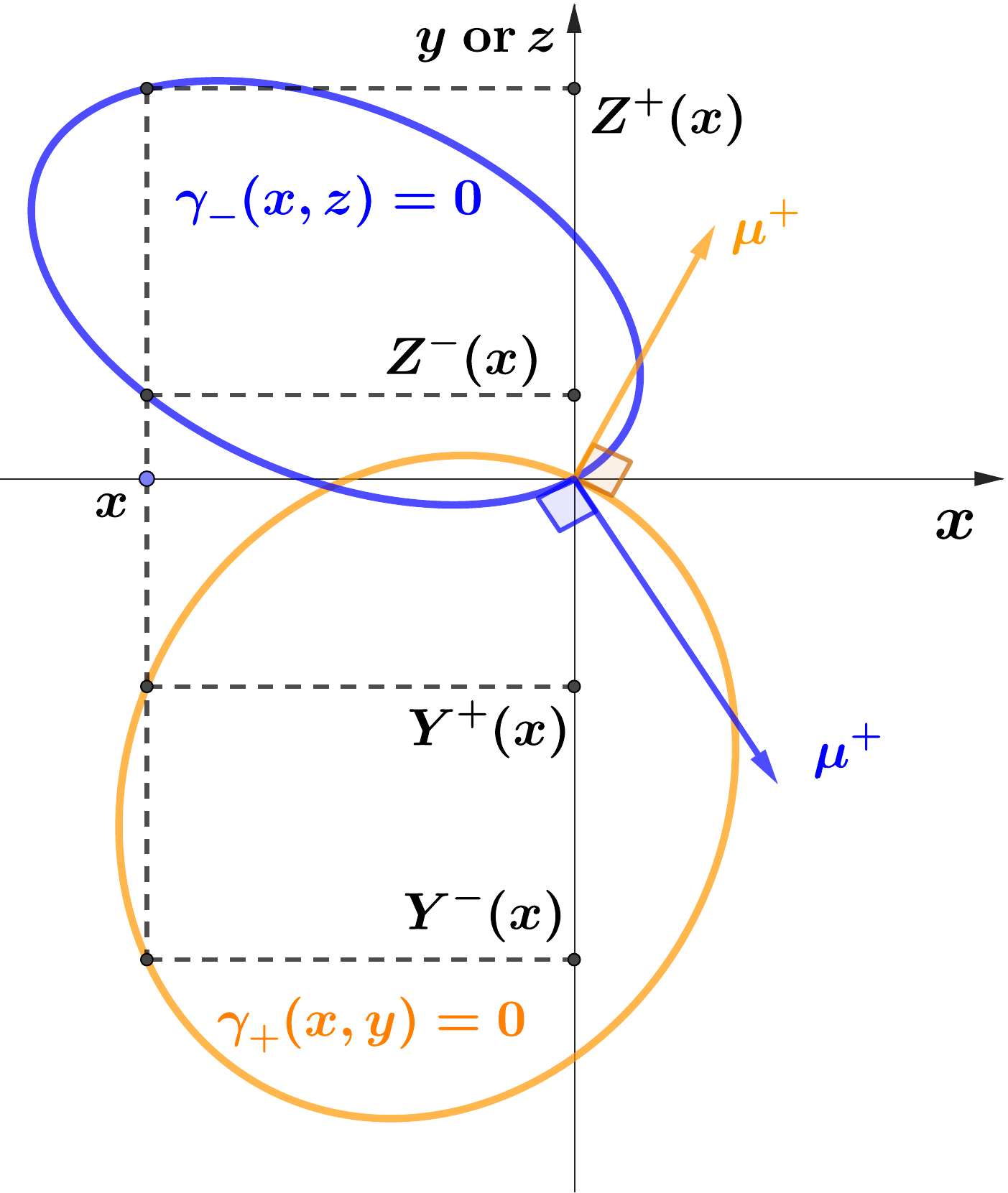}
\caption{Branches $Y^\pm(x)$ and $Z^\pm(x)$.}
\label{Z+Y-}
     \end{subfigure}
\caption{Ellipses $\{(x,y)\in\mathbb{R}^2:\gamma_+(x,y) = 0\}$ and $\{(x,z)\in\mathbb{R}^2:\gamma_-(x,z) = 0\}$.}
\label{fig:rrr}
\end{figure}

\subsection{Main results} \label{sec:main_results}

\subsubsection{Explicit expression of the Laplace transforms}
\label{sec:main_results1}

The first theorem provides explicit formulas for the Laplace transforms $\phi^{z_0}$ and $\phi^{z_0}_\pm$. The proof is given in Section~\ref{sec:alpha=0} by substituting $y = Y^-(x)$ and $z = Z^+(x)$ into the functional equation \eqref{eq fonctionnelle} to cancel the kernels.
\begin{theorem}\label{prop:laplace explicitee}
%Suppose that $z_0 = (a_0, b_0)$ satisfies $b_0 \neq 0$. \fs{idem, j'aimerais bien supprimer ça} Then, 
The Laplace transform $\phi^{z_0}(x)$ extends to a holomorphic function on $\C \backslash \big( (-\infty,\m] \cup [\M, +\infty) \big)$ and can be expressed as follows:
\begin{equation}\label{laplace explicitee}
\phi^{z_0}(x) = -\frac{e^{xa_0 + Y^-(x)b_0\fc_{b_0> 0} +  Z^+(x)b_0\fc_{b_0<0}}}{\gamma(x,Y^-(x), Z^+(x))}.
\end{equation}
The points $\M$ and $\m$ are the branching points of $\phi^{z_0}$.
Furthermore, 
\begin{equation}\label{laplace explicite}
\phi^{z_0}_+(x,y) = \frac{-\gamma(x,y,Z^+(x))\phi^{z_0}(x) - e^{xa_0 + yb_0 \fc_{b_0 > 0} + z b_0 \fc_{b_0 < 0}}}{\gamma_+(x,y)}.
\end{equation}
Similarly, a symmetric expression holds for $\phi^{z_0}_-$.
\end{theorem}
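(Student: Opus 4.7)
The plan is to exploit the functional equation~\eqref{eq fonctionnelle} from Proposition~\ref{prop:eqfonc} by substituting, for the two independent auxiliary variables, exactly the values that annihilate both kernels simultaneously. Setting $y = Y^-(x)$ makes $\gamma_+(x, Y^-(x)) = 0$, and setting $z = Z^+(x)$ makes $\gamma_-(x, Z^+(x)) = 0$. Before substituting I have to check that these values lie in the validity range ($y < 0$, $z > 0$) of Proposition~\ref{prop:eqfonc}. Using the explicit formulas~\eqref{Ypm}--\eqref{Zpm}, at $x=0$ the discriminants reduce to $(\mu_2^\pm)^2$, giving $Y^-(0) = -2\mu_2^+/\Sigma_{22}^+ < 0$ and $Z^+(0) = -2\mu_2^-/\Sigma_{22}^- > 0$ under the drift hypothesis~\eqref{hyp:drift}; by continuity along the real segment $(\m, \M)$, these inequalities persist on a neighborhood of $0$, which we may intersect with $(-\eta, 0)$.

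With these substitutions, the first two terms in~\eqref{eq fonctionnelle} vanish and we are left with
$$
\gamma(x, Y^-(x), Z^+(x))\, \phi^{z_0}(x) \;=\; -\,e^{x a_0 + Y^-(x) b_0 \fc_{b_0 > 0} + Z^+(x) b_0 \fc_{b_0 < 0}},
$$
which yields formula~\eqref{laplace explicitee} on a small real interval around $0$. The right-hand side of~\eqref{laplace explicitee} defines a meromorphic function on $\C \setminus ((-\infty, \m] \cup [\M, +\infty))$: the square-root discriminants appearing in $Y^\pm$ and $Z^\pm$ vanish only at $x_{min}^\pm$ and $x_{max}^\pm$, and with the standard cuts $(-\infty, x_{min}^\pm]$ and $[x_{max}^\pm, +\infty)$, the selected branches $Y^-$ and $Z^+$ are simultaneously holomorphic precisely on the stated domain. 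Uniqueness of analytic continuation then extends the identity~\eqref{laplace explicitee} to the entire open set.

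The main technical step will be verifying that $\gamma(x, Y^-(x), Z^+(x))$ does not vanish on $\C \setminus ((-\infty, \m] \cup [\M, +\infty))$, so that the explicit expression is genuinely holomorphic. Since $\phi^{z_0}$ is the Laplace transform of a positive Borel measure on $\R$ (see~\eqref{eq:H(z0,A)}--\eqref{def:phi}), it is holomorphic in the interior of its strip of convergence and cannot develop poles there; together with monotone-convergence control of $\phi^{z_0}(x)$ for real $x \in (-\eta, 0)$ and the behavior as $\Re(x) \to \M^{-}$ (which is forced by the branching of $Y^-$ or $Z^+$), this rules out real zeros of the denominator inside the open domain, and complex zeros are then excluded by using that the holomorphic extension through the formula must coincide with the Laplace transform in its strip of convergence. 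The claim that $\m$ and $\M$ are genuine branching points of $\phi^{z_0}$ is then read off from the factor $e^{Y^-(x) b_0}$ or $e^{Z^+(x) b_0}$ in the numerator, after a short case analysis on whether $\M$ equals $x_{max}^+$ or $x_{max}^-$ and on the sign of $b_0$ (when $b_0 = 0$ the branching is inherited from the denominator).

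The formula~\eqref{laplace explicite} for $\phi_+^{z_0}$ is obtained analogously, by substituting only $z = Z^+(x)$ into~\eqref{eq fonctionnelle}: this kills the $\gamma_-$-term, and using the already derived expression for $\phi^{z_0}(x)$ one solves the resulting linear equation algebraically for $\phi_+^{z_0}(x,y)$. The symmetric formula for $\phi_-^{z_0}$ follows from the dual substitution $y = Y^-(x)$.
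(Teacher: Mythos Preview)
Your derivation of the formula~\eqref{laplace explicitee} by the double substitution $y=Y^-(x)$, $z=Z^+(x)$, and of~\eqref{laplace explicite} by the single substitution $z=Z^+(x)$, matches the paper exactly. The analytic continuation step is also the same.

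The one genuine gap is your argument that the denominator $\gamma(x,Y^-(x),Z^+(x))$ never vanishes on $\C\setminus\big((-\infty,\m]\cup[\M,+\infty)\big)$. Your reasoning is circular: you invoke the fact that a Laplace transform of a positive measure is holomorphic on its strip of convergence, but you never establish that this strip is actually $\{\m<\Re(x)<\M\}$. If the denominator had a real zero at some $x^*\in(0,\M)$, then the right-hand side of~\eqref{laplace explicitee} would have a pole there, the Laplace transform $\phi^{z_0}(x)$ would diverge as $x\to x^{*-}$, and the abscissa of convergence would simply be $x^*$ rather than $\M$. Nothing in your argument excludes this scenario. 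Crucially, your argument nowhere uses the specific value $q=q_0$; but the paper shows in Section~\ref{sec:skewdiff} that for other values of $q$ the denominator \emph{does} vanish on $(\m,\M)$ and $\phi^{z_0}$ \emph{does} have a pole there. So any purely abstract Laplace-transform argument is doomed.

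The paper's proof is instead a direct computation: using $q=q_0$ from~\eqref{eq:q} and the explicit formulas~\eqref{Ypm}--\eqref{Zpm}, one finds
\[
\gamma(x,Y^-(x),Z^+(x))=\frac{1}{\Sigma_{22}^++\Sigma_{22}^-}\Big(-\mu_2^++\mu_2^- - \sqrt{\cdots} - \sqrt{\cdots}\Big),
\]
where the two radicands are the discriminants appearing in~\eqref{Ypm} and~\eqref{Zpm}. Since $-\mu_2^++\mu_2^-<0$ by~\eqref{hyp:drift} and both square roots have nonnegative real part on the cut plane, the real part of $\gamma(x,Y^-(x),Z^+(x))$ is strictly negative everywhere on $\C\setminus\big((-\infty,\m]\cup[\M,+\infty)\big)$, and in particular it never vanishes.
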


\subsubsection{Asymptotics of Green's functions}\label{sub:theorems}
%$$\;$$
%\vspace{-0cm}
%\textbf{Notation.} 
Before stating the main asymptotic theorems, a bit more notation needs to be introduced.
For $\alpha \in (0, \pi)$, we define
\begin{equation}\label{xalphayalpha}
    (x(\alpha), y(\alpha)) = \underset{\gamma_+(x,y) = 0}{\textnormal{argmax}} \left(\cos(\alpha) x + \sin(\alpha) y \right),
\end{equation}
and for $\alpha \in (\pi, 2\pi)$, we set
\begin{equation}
    (x(\alpha), z(\alpha)) = \underset{\gamma_-(x,z) = 0}{\textnormal{argmax}} \left(\cos(\alpha) x + \sin(\alpha) z \right).
\end{equation}
See Figure~\ref{fig:col} for a geometric interpretation. In the rest of this article, these will be the key saddle points we look at.
Note that the functions
\[
\alpha \in (0, \pi) \longmapsto (x(\alpha), y(\alpha)) \in \{(x,Y^+(x)) \;:\; x^+_{min} < x < x^+_{max}\}
\]
and
\[
\alpha \in (\pi, 2\pi) \longmapsto (x(\alpha), z(\alpha)) \in \{(x,Z^-(x)) \;:\; x^-_{min} < x < x^-_{max}\}
\]
are $C^\infty$ diffeomorphisms.  
Note also that $\mu^+$ is orthogonal to the ellipse $\{(x,y) \;:\; \gamma_+(x,y) = 0\}$ at $(0,0)$, see Figure~\ref{Z+Y-}. Hence, the direction $\alpha_{\mu^+} = \arctan(\mu_2^+ / \mu_1^+) \in (0, \pi/2)$ of the drift $\mu^+$ satisfies
$(x(\alpha_{\mu^+}), y(\alpha_{\mu^+})) = (0,0).$
Similarly, if $\alpha_{\mu^-} := \arctan(\mu_2^- / \mu_1^-) + 2\pi \in (3\pi/2, 2\pi)$
is the direction of the drift $\mu^-$, then $
(x(\alpha_{\mu^-}), z(\alpha_{\mu^-})) = (0,0).$

%\fs{pourquoi y a t il des ellipses en pointillés ?}
\begin{figure}[hbtp]
\centering
     \begin{subfigure}[b]{0.45\textwidth} 
        \includegraphics[scale=0.25]{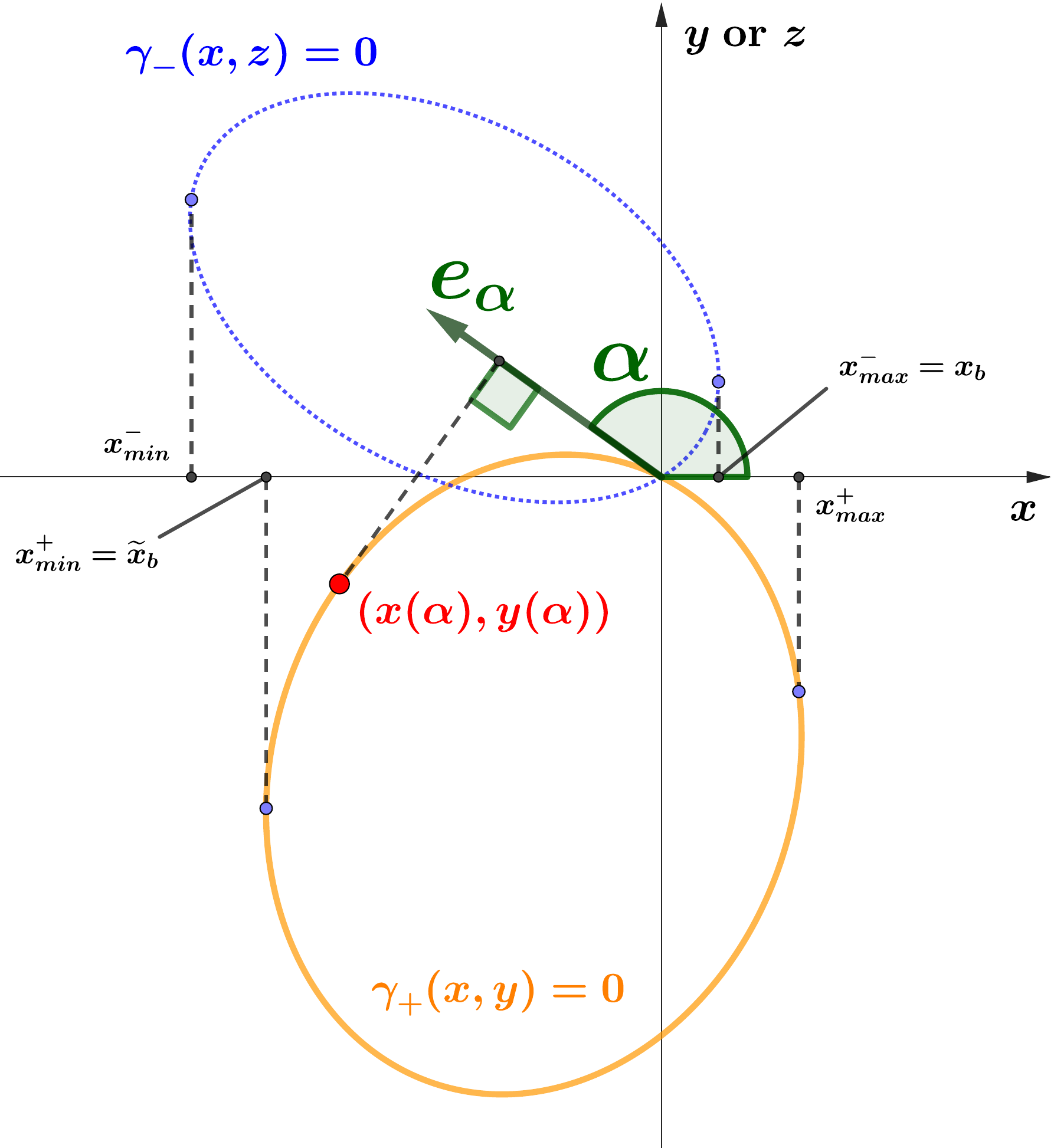}
        \caption{If $\alpha \in (0,\pi)$, the construction is done on the ellipse $\{(x,y) \,:\, \gamma_+(x,y) = 0\}$.}
     \end{subfigure}
     \hfill
     \begin{subfigure}[b]{0.45\textwidth}
        \includegraphics[scale=0.25]{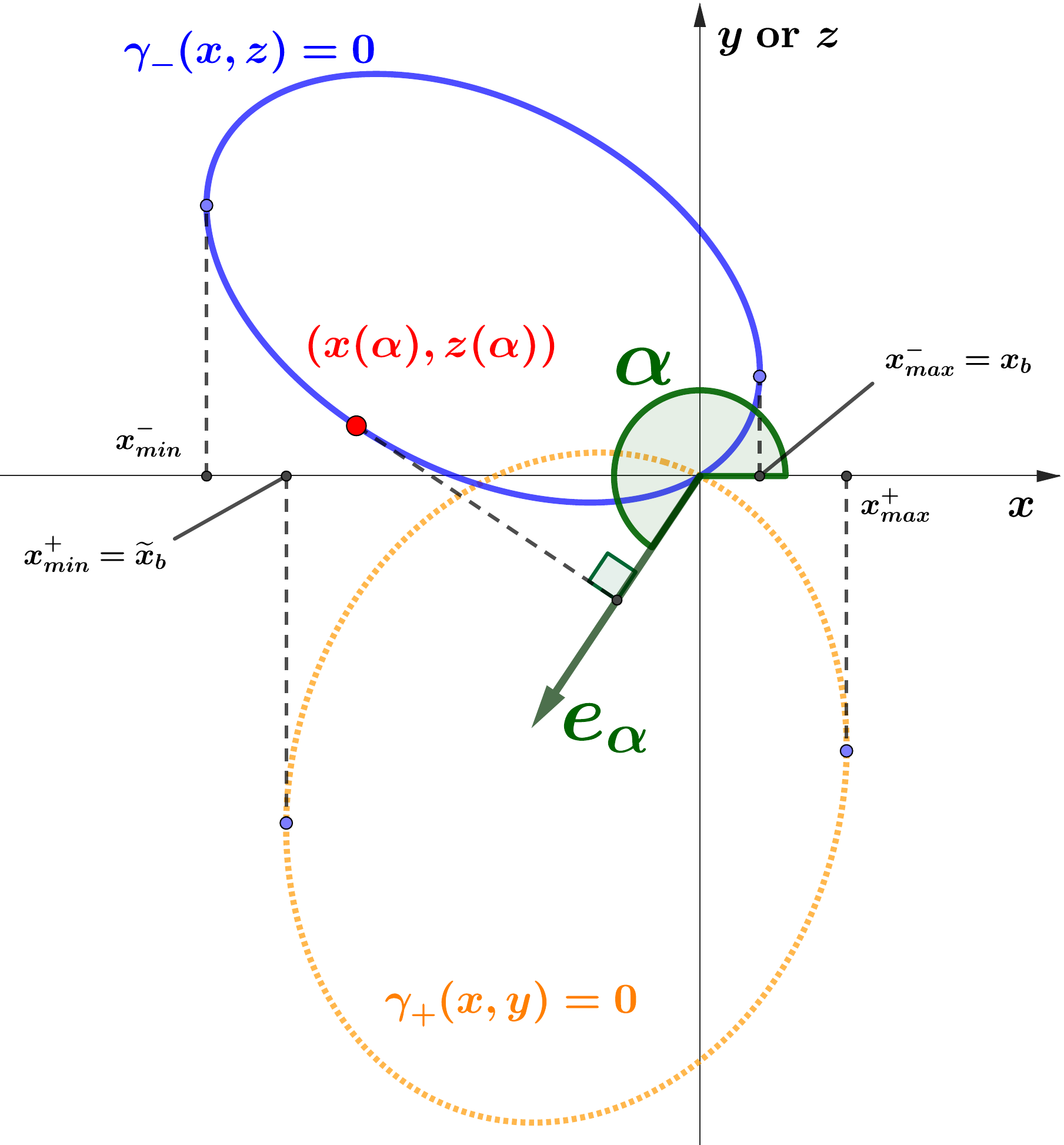}
        \caption{If $\alpha \in (\pi,2\pi)$, the construction is done on the ellipse $\{(x,z) \,:\, \gamma_-(x,z) = 0\}$.}
     \end{subfigure}
\caption{Geometric construction of $(x(\alpha), y(\alpha))$ and $(x(\alpha), z(\alpha))$. The vector $e_\alpha$ is defined by $e_\alpha = (\cos(\alpha), \sin(\alpha))$.}
\label{fig:col}
\end{figure}

If $x_{max}^+ > x_{max}^-$, % (resp. $x_{max}^+ < x_{max}^-$)
we define $\alpha_b$ as the unique angle $\alpha_b \in (0,\pi)$ % (resp. $\alpha_b \in (\pi, 2\pi))$
such that
\begin{equation}\label{def:alphabb}
    (x(\alpha_b), y(\alpha_b)) = (\M, Y^+(\M)), % \quad (resp. \quad x(\tilde\alpha_b) = \m).
\end{equation}
see Figure~\ref{fig:defalphabA}. In this case, we have $0 < \alpha_b < \alpha_{\mu^+}$ since $0 = x(\alpha_{\mu^+}) < \M$.
%\fs{Il faudrait avoir une formule explicite pour $\alpha_b$ ou sa tangente ou son cosinus}

If $x_{max}^+ < x_{max}^-$, we define $\alpha_b$ as the unique angle $\alpha_b \in (\pi, 2\pi)$ such that
\begin{equation}\label{def:alphabbtilde}
    (x(\alpha_b), z(\alpha_b)) = (\M, Z^-(\M)). % \quad (resp. \quad x(\tilde\alpha_b) = \m).
\end{equation}
In this case, we have $\alpha_{\mu^-} < \alpha_b < 2\pi$ since $0 = x(\alpha_{\mu^-}) < \M$.

\begin{figure}[hbtp]
\centering
     \begin{subfigure}[b]{0.45\textwidth} 
        \includegraphics[scale=0.26]{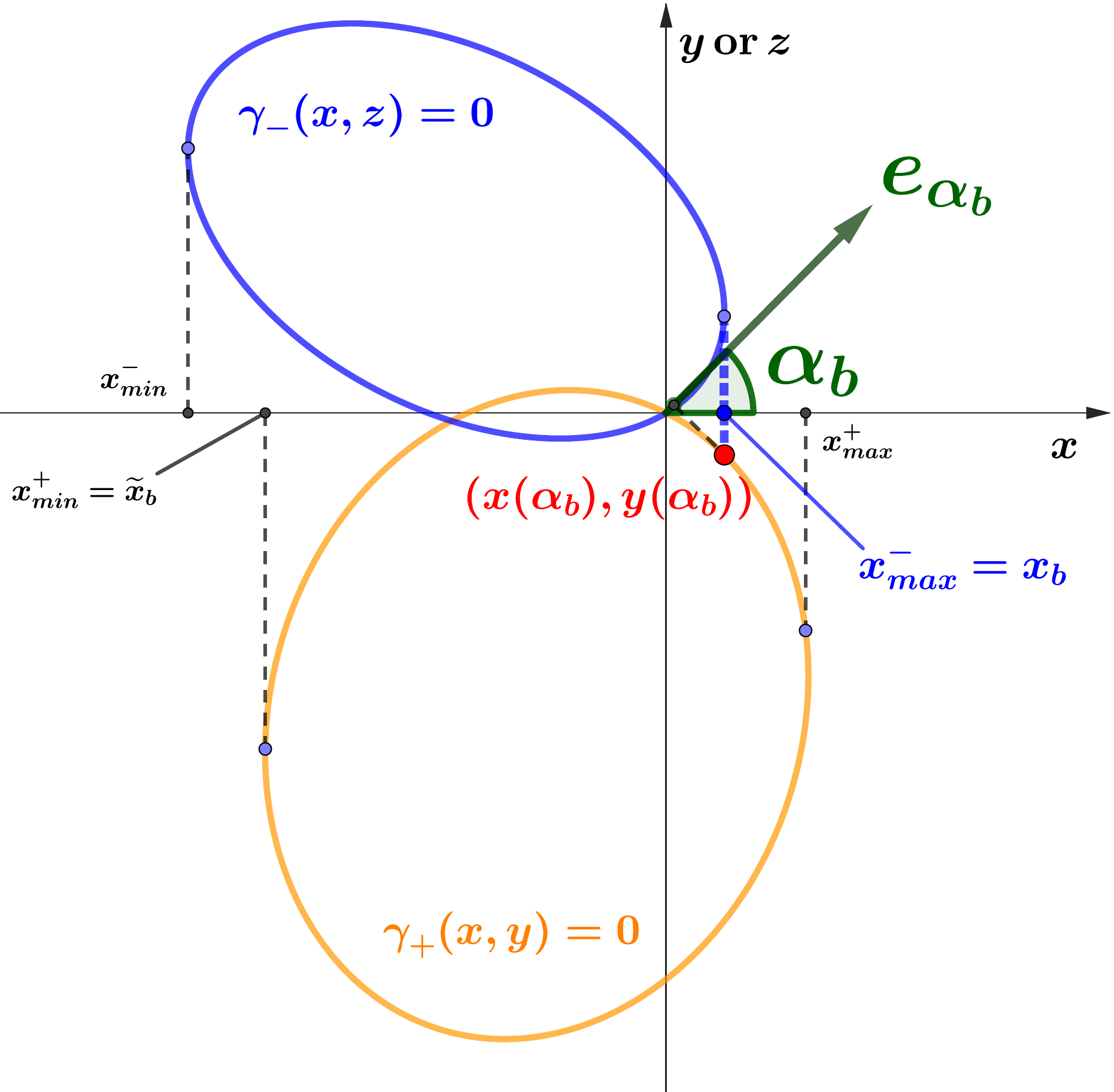}
        \caption{Definition of $\alpha_b$: here $\M = x_{max}^- = x(\alpha_b)$.}
        \label{fig:defalphabA}
     \end{subfigure}
     \hfill
     \begin{subfigure}[b]{0.45\textwidth}
        \includegraphics[scale=0.26]{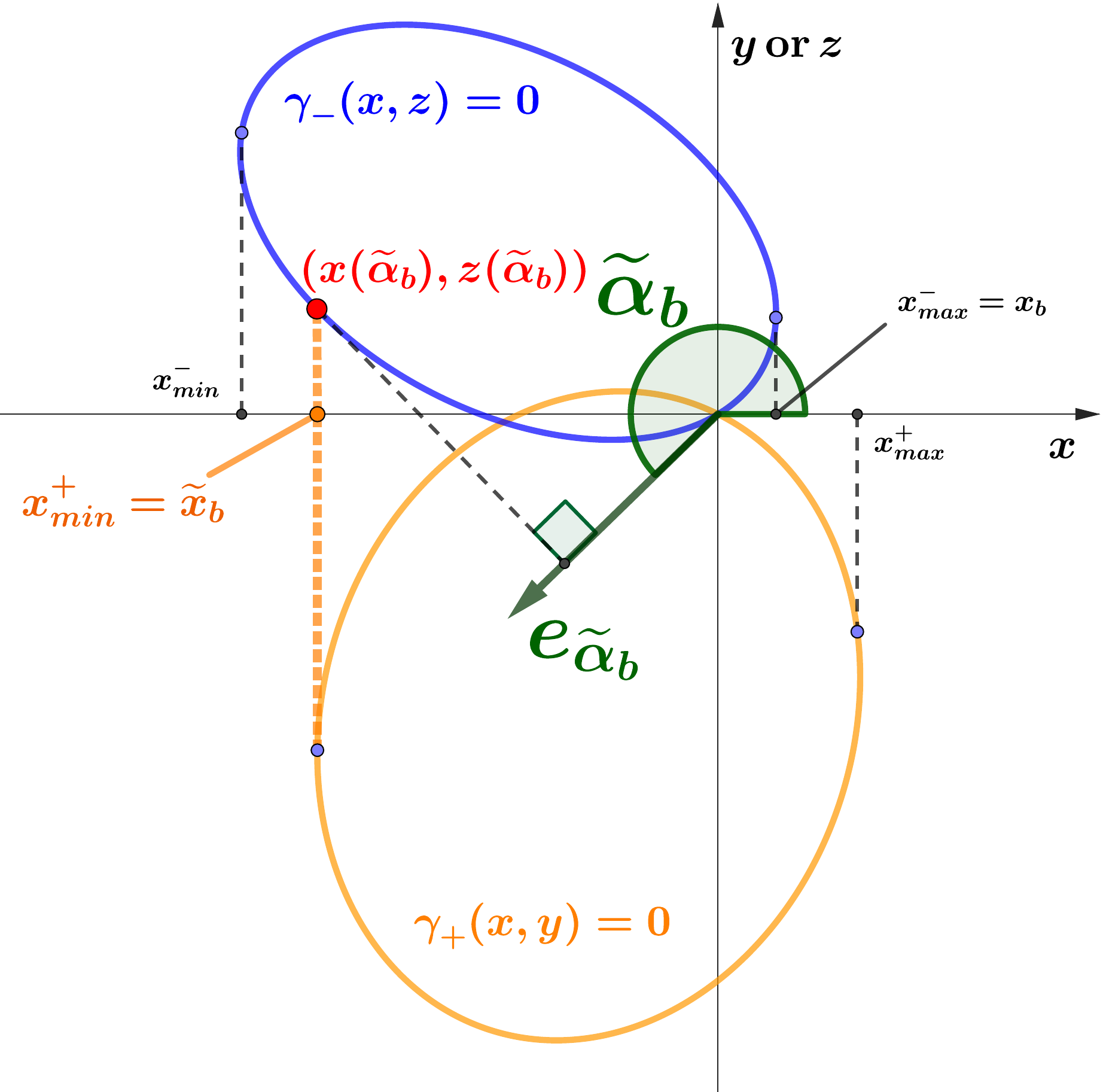}
        \caption{Definition of $\tilde\alpha_b$: here $\m = x_{min}^+ = x(\tilde\alpha_b)$.}
        \label{fig:defalphabB}
     \end{subfigure}
\caption{Definitions of $\alpha_b$ and $\tilde \alpha_b$ in the cases $\M = x_{max}^-$ and $\m = x_{min}^+$.}
\end{figure}

Next, considering $\m$, we similarly define $\tilde{\alpha}_b$ by the condition $x(\tilde{\alpha}_b) = \m$, where $\tilde{\alpha}_b$ belongs to $(0,\pi)$ if $x_{min}^+ < x_{min}^-$ and to $(\pi, 2\pi)$ if $x_{min}^+ > x_{min}^-$, see Figure~\ref{fig:defalphabB}. Figure~\ref{anglesalphab} provides an overview of all possible cases: $A, B, C,$ and $D$. Then, we define %$\mathcal{M} \subset [0,2\pi]$ 
$$
\mathcal M=\{\alpha\in [0,2\pi] :\m \leqslant x(\alpha) \leqslant \M \}
$$
which can be expressed as follows for the cases shown in Figure~\ref{anglesalphab}:
$$
\mathcal M = \begin{cases} 
[\alpha_b, \tilde \alpha_b]\cup [\pi,2\pi] \quad &\textnormal{in the case}\; A :\quad x_{max}^+ > x_{max}^- \;\;\textnormal{and}\;\;
x_{min}^+ < x_{min}^- ,\\
[0, \tilde \alpha_b] \cup [\pi,\alpha_b]\quad &\textnormal{in the case}\; B : \quad x_{max}^+ < x_{max}^- \;\;\textnormal{and}\;\;
x_{min}^+ < x_{min}^-  ,\\
[0,\pi]\cup [\tilde\alpha_b, \alpha_b] \quad &\textnormal{in the case}\; C : \quad x_{max}^+ < x_{max}^- \;\;\textnormal{and}\;\;
x_{min}^+ > x_{min}^-  ,\\
[\alpha_b, \pi] \cup [\tilde\alpha_b, 2\pi] \quad &\textnormal{in the case}\; D :\quad x_{max}^+ > x_{max}^- \;\;\textnormal{and}\;\;
x_{min}^+ > x_{min}^-. 
\end{cases}
$$
The set $\mathcal M$ is represented by the red arcs in Figure~\ref{anglesalphab}. As we will see in Theorem~\ref{thm5}, this is the minimal Martin boundary.

\begin{figure}[hbtp]
\centering
\begin{tabular}{|c|c|c|c|}
\hline
Case A&
Case B&
Case C&
Case D\\

\hline

$x_{max}^+ > x_{max}^- $ & $x_{max}^+ < x_{max}^-$ & $x_{max}^+ < x_{max}^-$ & $x_{max}^+ > x_{max}^-$ \\

$x_{min}^+ < x_{min}^-$ & $x_{min}^+ < x_{min}^-$ & $x_{min}^+ > x_{min}^-$ & $x_{min}^+ > x_{min}^-$ \\
\hline
& & & \\
\includegraphics[scale=0.35]{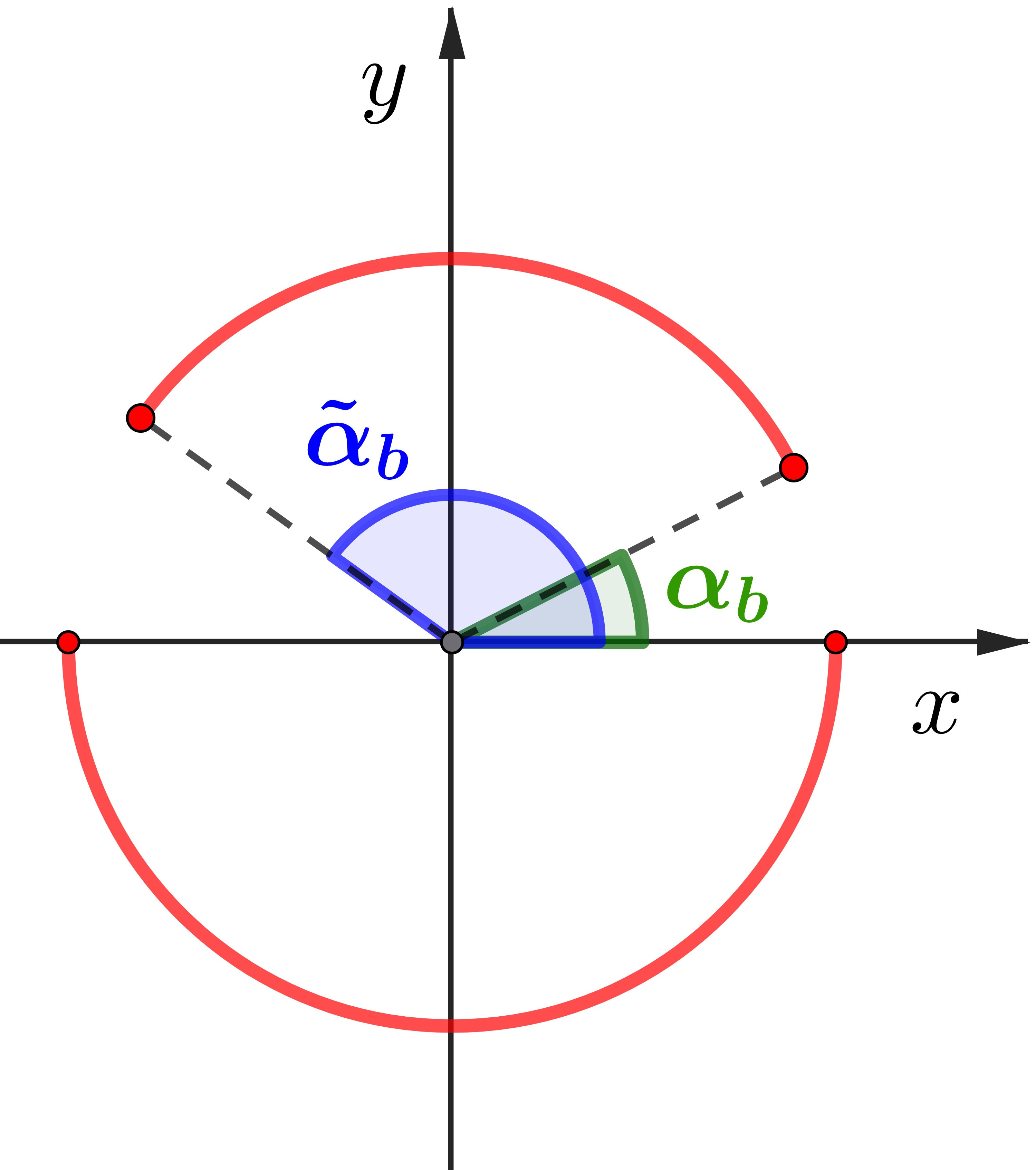} & \includegraphics[scale=0.35]{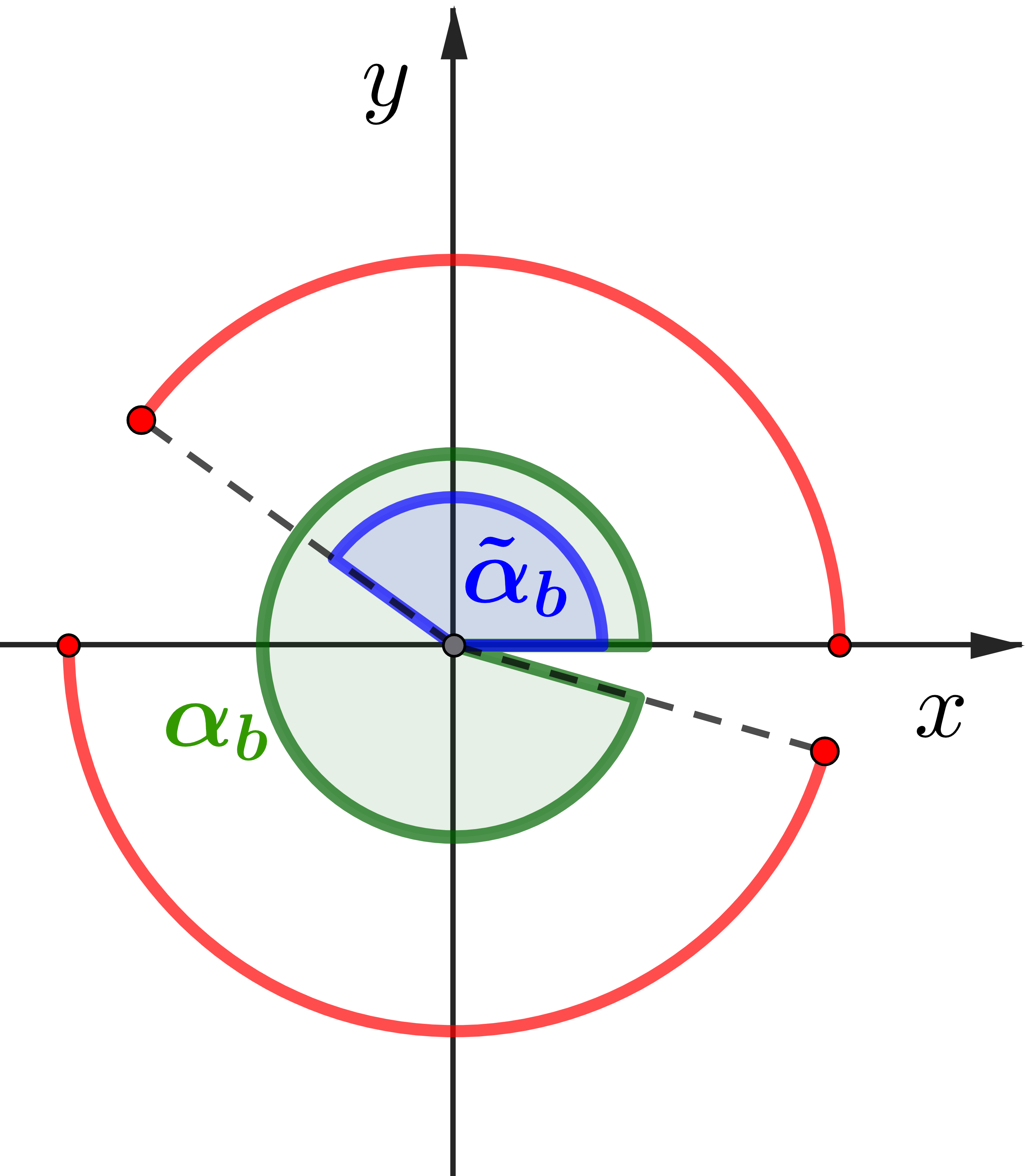} & \includegraphics[scale=0.35]{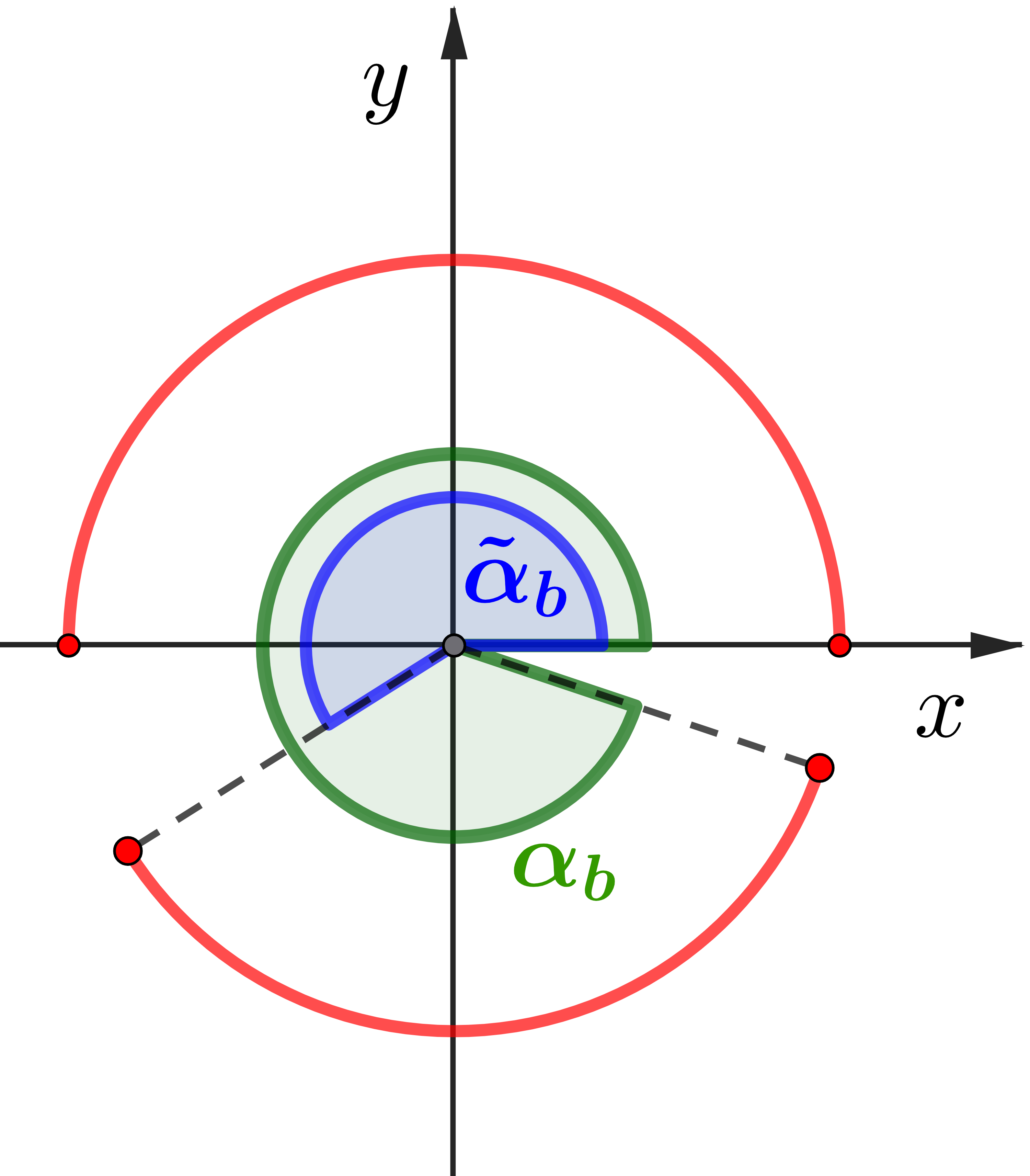} & \includegraphics[scale=0.35]{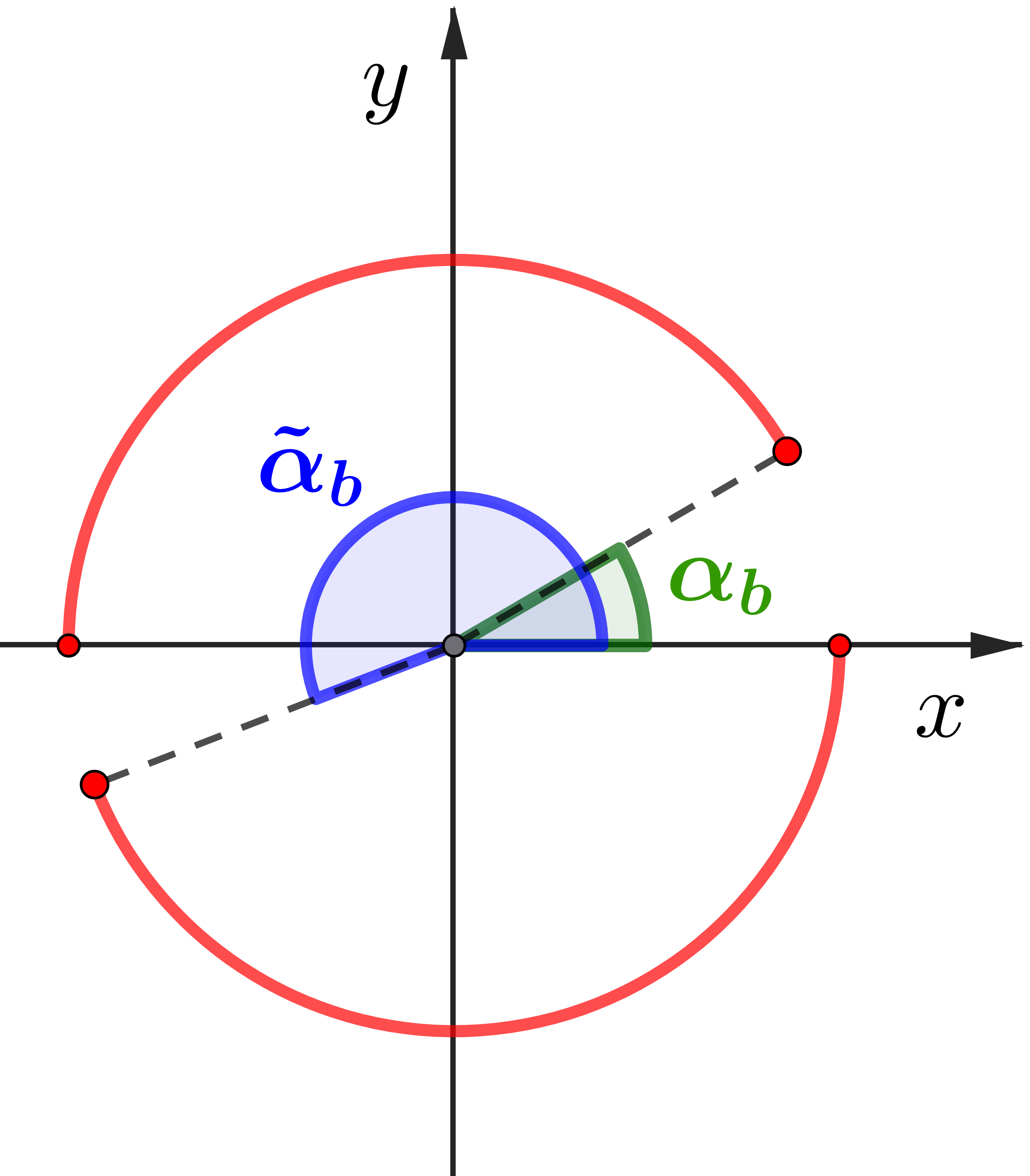} \\
\hline
\end{tabular}
\caption{Angles $\alpha_b$ and $\tilde{\alpha}_b$ depending on the parameters. The red arcs $\mathcal M$ correspond to the minimal Martin boundary.}
\label{anglesalphab}
\end{figure}

\textbf{Main asymptotics theorems.}
We now state the asymptotic behaviour of the Green's functions, $g^{z_0}(r\cos(\alpha), r\sin(\alpha))$, as $r \to +\infty$ and $\alpha \to \alpha_0$. Figure~\ref{fig:soleil} summarises the different cases covered by each theorem and indicates the associated colours.
In Theorem~\ref{alpha=0} (brown arrows), we consider the case where the angle $\alpha$ is fixed at $\alpha_0 \in \{0, \pi\}$. In Theorem~\ref{thm:1} (red arrows), we focus on directions where $\alpha \to \alpha_0$ with $\alpha_0 \in \inter{\mathcal M}$ . In Theorem~\ref{thm:2} (purple arrows), we study the case when $\alpha \to 0$ or $\pi$ but remaining within the red arc zones $\inter{\mathcal M}$. Theorem~\ref{thm3} (blue arrows) treats the asymptotic behavior as $\alpha \to \alpha_b$ or $\alpha \to \tilde{\alpha}_b$. Finally, Theorem~\ref{thm:4} (green arrows) provides asymptotics as $\alpha \to \alpha_0$ with $\alpha_0 \in \left([0,2\pi\textnormal{]}\backslash \mathcal M\right)$, and for $\alpha \to 0$ or $\pi$ while remaining outside $\mathcal M$.

\begin{figure}[hbtp]
    \centering
    \includegraphics[width=0.8\linewidth]{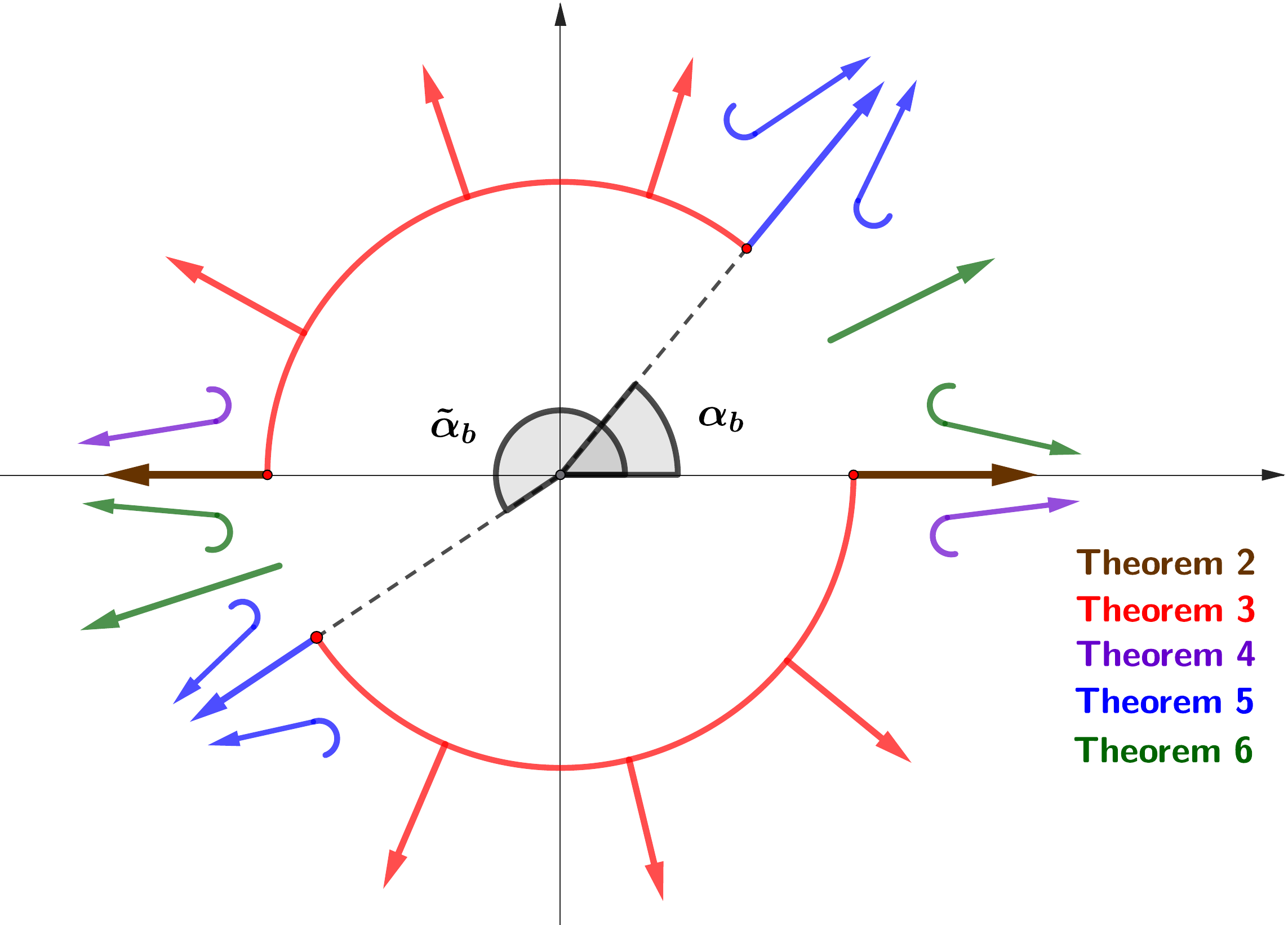}
    \caption{Direction of asymptotics: Theorems~\ref{alpha=0} to \ref{thm:4} (case D). Rounded arrows indicate that the angle $\alpha$ approaches the corresponding direction $\alpha_0$ only from one side, i.e., either $\alpha > \alpha_0$ or $\alpha < \alpha_0$, depending on the arrow. For example, the green rounded arrow on the right means $\alpha \to 0$ with $\alpha > 0$.}
    \label{fig:soleil}
\end{figure}

\begin{figure}[hbtp]
    \centering
    \includegraphics[width=0.6\linewidth]{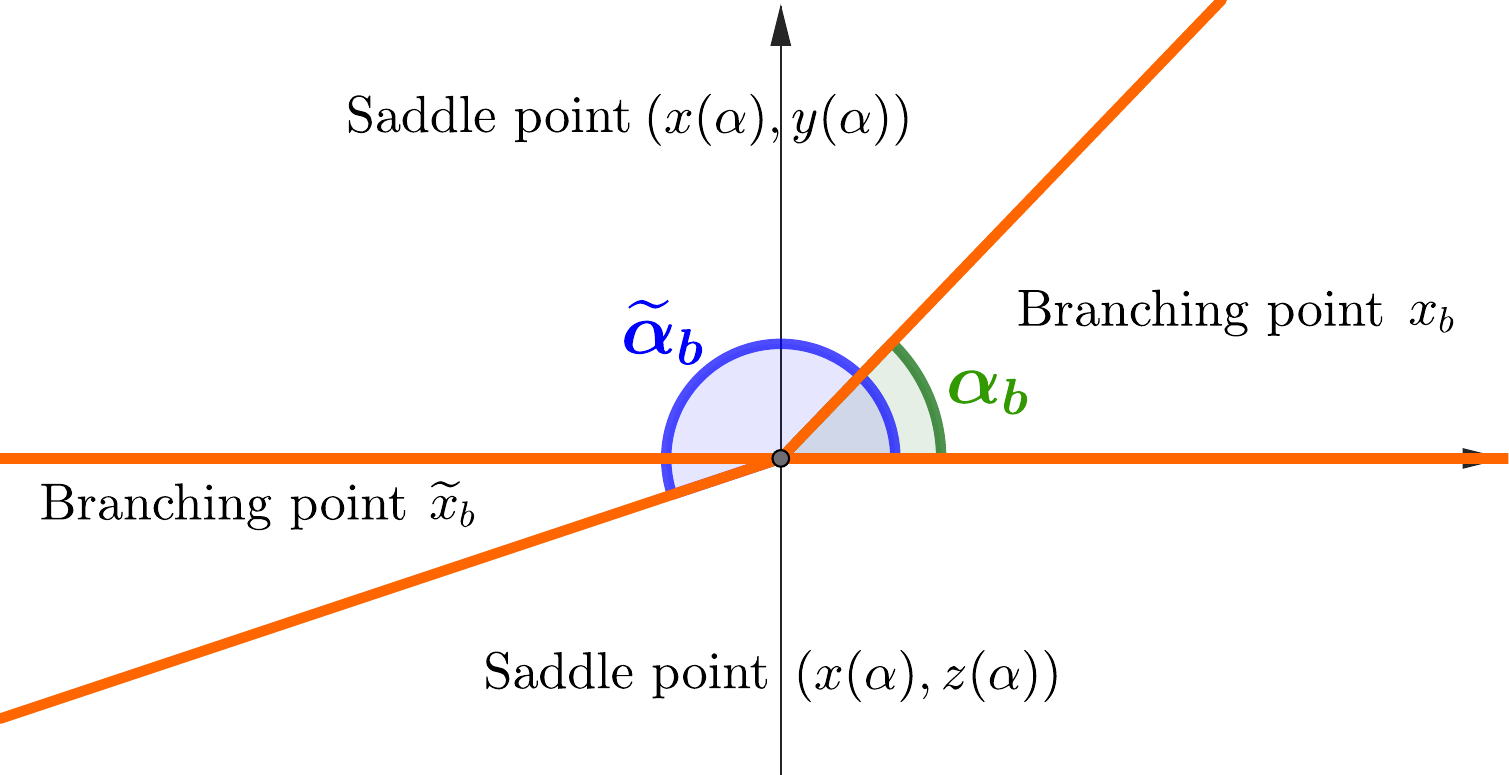}
    \caption{Type of asymptotics according to directions: saddle-point type or branching point.}
    \label{fig:typepasymptotique}
\end{figure}

The first theorem follows from a Tauberian theorem applied to Laplace transforms, based on the explicit expression~\eqref{laplace explicite}. The proof is given in Section~\ref{sec:alpha=0}.

\begin{theorem}[Asymptotics, case $\alpha = 0, \pi$]\label{alpha=0}
Let $z_0 \in \R^2$. The following asymptotics hold:
\begin{equation}\label{as:alpha=0}
g^{z_0}(r, 0) \underset{r\to+\infty}{\sim} C_0 f_0(z_0)\frac{e^{-r\M}}{r^{3/2}}, \quad\quad g^{z_0}(-r, 0) \underset{r\to+\infty}{\sim} C_\pi f_\pi(z_0)\frac{e^{-r\m}}{r^{3/2}},
\end{equation}
where $C_0, C_\pi > 0$ and $f_0(z_0), f_\pi(z_0) > 0$ are defined as follows.
The constant $C_0$ is given by:
\begin{equation} \label{eq:C_0}
C_0 =
\begin{cases}
    -\dfrac{\sqrt{\det(\Sigma^+)(\M - x_{min}^+)}}{\sqrt{\pi} \Sigma_{22}^+ (\Sigma_{22}^+ + \Sigma_{22}^-) \gamma(\M, Y^-(\M), Z^+(\M))} > 0 \quad &\text{if } \M = x_{max}^+ < x_{max}^- \\[2ex]
    -\dfrac{\sqrt{\det(\Sigma^-)(\M - x_{min}^-)}}{\sqrt{\pi} \Sigma_{22}^- (\Sigma_{22}^+ + \Sigma_{22}^-) \gamma(\M, Y^-(\M), Z^+(\M))} > 0 \quad &\text{if } \M = x_{max}^- < x_{max}^+.
\end{cases}
\end{equation}

Moreover, $f_0(z_0)$ is defined as follows:
\begin{itemize}
    \item If $\M = x_{max}^+ < x_{max}^-$ (cases B and C of Figure~\ref{anglesalphab}), then
    \begin{equation} \label{f(z_0)col}
    f_0(a_0, b_0) =
    \begin{cases}
    \left(b_0 - \dfrac{\Sigma_{22}^+}{(\Sigma_{22}^+ + \Sigma_{22}^-) \gamma(\M, Y^-(\M), Z^+(\M))}\right)
    e^{\M a_0 + b_0 Y^\pm(\M)} &\text{if } b_0 \geq 0 \\[2ex]
    \dfrac{-\Sigma_{22}^+}{(\Sigma_{22}^+ + \Sigma_{22}^-) \gamma(\M, Y^-(\M), Z^+(\M))} 
    e^{\M a_0 + b_0 Z^+(\M)} &\text{if } b_0 < 0
    \end{cases}
    \end{equation}

    \item If $\M = x_{max}^- < x_{max}^+$ (cases A and D of Figure~\ref{anglesalphab}), then
    \begin{equation} \label{f(z_0)branch}
    f_0(a_0, b_0) =
    \begin{cases}
    \dfrac{-\Sigma_{22}^-}{(\Sigma_{22}^+ + \Sigma_{22}^-) \gamma(\M, Y^-(\M), Z^+(\M))}
    e^{\M a_0 + b_0 Y^-(\M)} &\text{if } b_0 \geq 0 \\[2ex]
    \left(-b_0 - \dfrac{\Sigma_{22}^-}{(\Sigma_{22}^+ + \Sigma_{22}^-) \gamma(\M, Y^-(\M), Z^+(\M))}\right)
    e^{\M a_0 + b_0 Z^\pm(\M)} &\text{if } b_0 < 0
    \end{cases}
    \end{equation}
\end{itemize}

The expressions for $C_\pi$ and $f_\pi(z_0)$ are symmetric.
\end{theorem}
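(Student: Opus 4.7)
The plan is to apply a classical transfer (Tauberian) theorem for square-root branch singularities to a one-variable Laplace transform extracted from Theorem~\ref{prop:laplace explicitee}. More precisely, I would identify
$$\psi^{z_0}(x) \;:=\; \int_\R g^{z_0}(a,0)\, e^{xa}\, da$$
in closed form by letting $y \to -\infty$ in the explicit expression \eqref{laplace explicite} of $\phi^{z_0}_+(x,y)$: the kernel $-y\, e^{yb}$ on $b>0$ acts as an approximate Dirac mass at $0^+$, so $-y\, \phi^{z_0}_+(x,y) \to \int_\R g^{z_0}(a,0^+)\, e^{xa}\, da$. Using the leading behaviours $\gamma_+(x,y) \sim \tfrac{\Sigma^+_{22}}{2} y^2$ and $\gamma(x,y,Z^+(x)) \sim \tfrac{\Sigma^+_{22}}{\Sigma^+_{22}+\Sigma^-_{22}}\, y$, this limit equals $\tfrac{2}{\Sigma^+_{22}+\Sigma^-_{22}}\, \phi^{z_0}(x)$. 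The same quantity is obtained symmetrically from $\phi^{z_0}_-$ as $z \to +\infty$, which together with the continuity of $g^{z_0}$ (inherited from Stroock's regularity of $p_t^{z_0}$) yields
$$\psi^{z_0}(x) \;=\; \frac{-2\, e^{x a_0 + Y^-(x)\, b_0\, \fc_{b_0>0} + Z^+(x)\, b_0\, \fc_{b_0<0}}}{(\Sigma^+_{22} + \Sigma^-_{22})\, \gamma\bigl(x, Y^-(x), Z^+(x)\bigr)}.$$

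Next, I would compute the singular expansion of $\psi^{z_0}$ at its rightmost real singularity $\M$. Exactly one of the two algebraic branches $Y^-$ and $Z^+$ has a square-root branch point at $\M$, depending on the sign of $x^+_{max} - x^-_{max}$. Factoring the discriminants under the square roots of \eqref{Ypm}--\eqref{Zpm} as $-\det(\Sigma^\pm)(x - x^\pm_{min})(x - x^\pm_{max})$, that branch expands, for instance in cases B/C where $\M = x^+_{max}$, as
$$Y^-(x) \;=\; Y^-(\M) \;-\; \frac{\sqrt{\det(\Sigma^+)(\M - x^+_{min})}}{\Sigma^+_{22}}\, \sqrt{\M - x} \;+\; O(\M - x),$$
with the analogous expansion of $Z^+$ in cases A/D. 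Substituting this into both the exponential numerator (which depends on $Y^-$ only when $b_0>0$) and the denominator $\gamma(x,Y^-(x),Z^+(x))$ of $\psi^{z_0}$, and using the nonvanishing of $\gamma(\M,Y^-(\M),Z^+(\M))$ which follows from \eqref{hyp:drift}, one obtains
$$\psi^{z_0}(x) \;=\; \psi^{z_0}(\M) \;+\; B\, \sqrt{\M - x} \;+\; o\bigl(\sqrt{\M - x}\bigr), \qquad x \to \M^-,$$
where the coefficient $B$, after routine simplification, equals $-2\sqrt{\pi}\, C_0\, f_0(z_0)$ in each of the four configurations recorded in \eqref{eq:C_0}--\eqref{f(z_0)branch}; the two subcases $b_0 \geq 0$ and $b_0 < 0$ of $f_0$ reflect whether the expansion of $Y^-$ (or $Z^+$) also contributes a singular part to the numerator.

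Finally, I would invoke the transfer theorem: deforming the inverse Laplace contour around the cut $[\M,+\infty)$ and collecting the discontinuity of $\sqrt{\M - x}$ across it gives $g^{z_0}(r,0) \sim -\tfrac{B}{2\sqrt{\pi}}\, r^{-3/2} e^{-r\M}$ as $r \to +\infty$, which is the first asymptotic in \eqref{as:alpha=0}. The second asymptotic follows from the entirely symmetric analysis at the leftmost branch point $\m$, with $Y^+$ and $Z^-$ replacing $Y^-$ and $Z^+$. The main technical obstacle is to justify the transfer theorem rigorously in this setting: one must verify that $\psi^{z_0}$ extends holomorphically to a neighbourhood of $\M$ slit along $[\M,+\infty)$, that $\M$ is the only singularity on that vertical line, and that $\psi^{z_0}$ satisfies adequate decay on shifted vertical contours. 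All three points can be read off from the explicit formula above and from the description of $Y^\pm, Z^\pm$ as algebraic functions with branching exactly at $x^\pm_{min}, x^\pm_{max}$, as stated right after Theorem~\ref{prop:laplace explicitee}.
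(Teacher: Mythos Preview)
Your proposal is correct and follows essentially the same route as the paper: obtain the one-variable Laplace transform of $g^{z_0}(\cdot,0)$ in closed form, compute its square-root expansion at $\M$ (this is the paper's Lemma~\ref{lem:DLphi}), and invoke a Tauberian theorem for two-sided Laplace transforms (Lemma~\ref{lapinv} in the appendix). The only minor difference is that you recover the identity $\psi^{z_0} = \tfrac{2}{\Sigma^+_{22}+\Sigma^-_{22}}\,\phi^{z_0}$ via an Abelian limit $y\to-\infty$ in \eqref{laplace explicite}, whereas the paper reads it off directly from the local-time representation of $\phi^{z_0}$ established in Lemma~\ref{Tlocal_densite}.
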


The following theorem gives the asymptotics in the most typical case when $\alpha \in \inter{\mathcal M}$. It is proved at the end of Section~\ref{sub:saddle} using the saddle point method on a Riemann surface after extending the Laplace transforms.

\begin{theorem}[Asymptotics for $\alpha \to \alpha_0$, $\alpha_0 \in \inter{\mathcal M}$]\label{thm:1}
Let $z_0 \in \R^2$. Then, if $\alpha_0 \in \inter{\mathcal{M}}\cap (0, \pi)$, we have
\begin{equation}\label{cas_1}
g^{z_0}(r\cos(\alpha), r\sin(\alpha))
\underset{r\to\infty \atop\alpha\to\alpha_0}{\sim}
C^+(\alpha_0)h_{\alpha_0}(z_0)  \frac{e^{-r(\cos(\alpha)x(\alpha) + \sin(\alpha)y(\alpha))}}{\sqrt{r}}
\end{equation}
and if $\alpha_0 \in \inter{\mathcal{M}}\cap(\pi, 2\pi)$, the following asymptotics hold
\begin{equation}\label{cas_1'}
g^{z_0}(r\cos(\alpha), r\sin(\alpha))
\underset{r\to\infty \atop\alpha\to\alpha_0}{\sim}
C^-(\alpha_0)h_{\alpha_0}(z_0)  \frac{e^{-r(\cos(\alpha)x(\alpha) + \sin(\alpha)z(\alpha))}}{\sqrt{r}}
\end{equation}
where $C^\pm(\alpha)$ and $h_{\alpha_0}(z_0)$ are defined as follows. The coefficient $C^\pm(\alpha)$ is given by:
\begin{align}\label{C(alpha)}
C^\varepsilon(\alpha) &= \frac{1}
{ \sqrt{2\pi  \left( \Sigma^\varepsilon_{11}\sin^2(\alpha) - 2\Sigma^\varepsilon_{12} \sin(\alpha)\cos(\alpha) + \Sigma^\varepsilon_{22} \cos^2(\alpha) \right)} }
\sqrt{ \frac{ \sin(\alpha) }{ \partial_y \gamma_\varepsilon(x(\alpha), y(\alpha)) } }, \quad \varepsilon \in \{+, -\}.
\end{align}

Furthermore, $h_\alpha(z_0)$ is defined as follows:
\begin{itemize}
    \item If $\alpha \in \mathcal{M}\cap(0, \pi),$ then
    \begin{equation} \label{Harm1}
    h_\alpha(z_0) =
    \begin{cases}
    e^{a_0x(\alpha) + b_0y(\alpha)} - \dfrac{\gamma(x(\alpha), y(\alpha), Z^+(x(\alpha)))}{\gamma(x(\alpha), Y^-(x(\alpha)), Z^+(x(\alpha)))} e^{a_0x(\alpha) + b_0Y^-(x(\alpha))} & \text{if } b_0 \geq 0 \\[2ex]
    \left(1 - \dfrac{\gamma(x(\alpha), y(\alpha), Z^+(x(\alpha)))}{\gamma(x(\alpha), Y^-(x(\alpha)), Z^+(x(\alpha)))}\right) e^{a_0x(\alpha) + b_0Z^+(x(\alpha))} & \text{if } b_0 < 0
    \end{cases}
    \end{equation}

    \item If $\alpha \in {\mathcal{M}}\cap(\pi, 2\pi)$, then
    \begin{equation} \label{Harm1'}
    h_\alpha(z_0) =
    \begin{cases}
    \left(1 - \dfrac{\gamma(x(\alpha), Y^-(x(\alpha)), z(\alpha))}{\gamma(x(\alpha), Y^-(x(\alpha)), Z^+(x(\alpha)))}\right) e^{a_0x(\alpha) + b_0Y^-(x(\alpha))} & \text{if } b_0 \geq 0 \\[2ex]
    e^{a_0x(\alpha) + b_0z(\alpha)} - \dfrac{\gamma(x(\alpha), Y^-(x(\alpha)), z(\alpha))}{\gamma(x(\alpha), Y^-(x(\alpha)), Z^+(x(\alpha)))} e^{a_0x(\alpha) + b_0Z^+(x(\alpha))} & \text{if } b_0 < 0
    \end{cases}
    \end{equation}
\end{itemize}
\end{theorem}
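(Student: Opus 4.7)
The strategy is to invert the double Laplace transform given by Theorem~\ref{prop:laplace explicitee} and then to apply the saddle point method in one complex variable. Assume $\alpha_0\in\inter{\mathcal M}\cap(0,\pi)$, so that the trajectory enters the upper half-plane and the relevant transform is $\phi_+^{z_0}$; the case $\alpha_0\in(\pi,2\pi)$ is symmetric using $\phi_-^{z_0}$ and the branch $Z^-$. Starting from the inversion formula
\begin{equation*}
g^{z_0}(a,b)=\frac{1}{(2\pi i)^2}\int_{c-i\infty}^{c+i\infty}\!\!\int_{d-i\infty}^{d+i\infty}\phi_+^{z_0}(x,y)\,e^{-xa-yb}\,dy\,dx,
\end{equation*}
with $c<0$ small (inside the strip of convergence) and $d$ chosen so that $Y^-(c)<d<Y^+(c)$, I would first evaluate the inner $y$-integral by residues. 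Since $\gamma_+(x,y)=\tfrac{1}{2}\Sigma_{22}^+(y-Y^+(x))(y-Y^-(x))$, closing the $y$-contour to the right (justified for $b>0$ by the decay of $e^{-yb}$) captures only the simple pole at $y=Y^+(x)$.

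Using the explicit expression for $\phi^{z_0}(x)$, in particular the identity $\gamma(x,Y^-(x),Z^+(x))\,\phi^{z_0}(x)=-e^{xa_0+Y^-(x)b_0}$ for $b_0\ge 0$ (and the analogous identity with $Z^+(x)b_0$ in the exponent for $b_0<0$), a direct simplification shows that this residue equals $-H(x;z_0)/\partial_y\gamma_+(x,Y^+(x))$, where the function $H$ coincides with $h_\alpha(z_0)$ of \eqref{Harm1} upon specializing $x=x(\alpha)$, $Y^+(x)=y(\alpha)$. The problem is thereby reduced to the one-dimensional contour integral
\begin{equation*}
g^{z_0}(r\cos\alpha,r\sin\alpha)=-\frac{1}{2\pi i}\int_{c-i\infty}^{c+i\infty}\frac{H(x;z_0)}{\partial_y\gamma_+(x,Y^+(x))}\,e^{-r\psi_\alpha(x)}\,dx,\qquad\psi_\alpha(x):=\cos(\alpha)x+\sin(\alpha)Y^+(x),
\end{equation*}
interpreted on the appropriate sheet of the Riemann surface of $Y^+$. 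A standard one-variable saddle-point analysis finishes the argument: the critical equation $\psi_\alpha'(x)=\cos\alpha+\sin\alpha\,(Y^+)'(x)=0$ is exactly the stationarity condition defining $(x(\alpha),y(\alpha))$ in \eqref{xalphayalpha}; the hypothesis $\alpha_0\in\inter{\mathcal M}$ places the saddle in $(\m,\M)$, strictly between the branch points of $Y^+$, so it is simple. Implicit differentiation of $\gamma_+(x,Y^+(x))=0$ gives
\begin{equation*}
|\psi_\alpha''(x(\alpha))|=\frac{\Sigma_{11}^+\sin^2\alpha-2\Sigma_{12}^+\sin\alpha\cos\alpha+\Sigma_{22}^+\cos^2\alpha}{\sin\alpha\cdot\partial_y\gamma_+(x(\alpha),y(\alpha))},
\end{equation*}
and the classical Gaussian estimate reassembles the prefactor $\bigl(\partial_y\gamma_+\bigr)^{-1}\bigl(2\pi r|\psi_\alpha''|\bigr)^{-1/2}$ into exactly the constant $C^+(\alpha_0)$ of \eqref{C(alpha)}, multiplied by $h_{\alpha_0}(z_0)r^{-1/2}$ and the desired exponential. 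Uniformity as $\alpha\to\alpha_0$ follows from the smooth dependence of the saddle location, of $\psi_\alpha''$, and of $H$ on $\alpha$ in a neighborhood of $\alpha_0$.

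The main difficulty is the rigorous justification of the contour deformations. One must follow the branch cuts of $Y^+$ issuing from $x_{\min}^+$ and $x_{\max}^+$ on the Riemann surface, verify that the zero of $\gamma(x,Y^-(x),Z^+(x))$ appearing in the denominator of $\phi^{z_0}(x)$ is not crossed when the vertical line $\mathrm{Re}(x)=c$ is pushed to the steepest descent contour through $x(\alpha)$ (or that any residue so generated contributes a strictly smaller exponential), and control the decay of $\phi_+^{z_0}(x,y)e^{-xa-yb}$ at infinity in both complex variables so that the contour manipulations are licit. For $\alpha_0\in\inter{\mathcal M}$ these obstructions are harmless because the saddle sits strictly inside $(\m,\M)$ and away from all singularities, which is exactly why a single clean asymptotic term arises; precisely at $\alpha_0\in\{\alpha_b,\widetilde\alpha_b\}$ the saddle coalesces with a branch point, producing the additional term in Theorem~\ref{thm3} and, ultimately, the non-minimality of the full Martin boundary.
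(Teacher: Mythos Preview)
Your approach is essentially the same as the paper's: reduce the double Laplace inversion to a single contour integral by computing the $y$-residue at $Y^+(x)$ (the paper's Lemma~\ref{double_simple}), shift the vertical line to the steepest-descent contour through the saddle $x(\alpha)$ (the paper's Lemma~\ref{lem:chemin}), and apply the parameterised saddle-point method (the paper's Lemma~\ref{lem:contribsaddle}), with the tails along the shifted vertical line controlled separately (the paper's Lemma~\ref{lem:neglig1}). One small imprecision: closing the $y$-contour to the right for the piece $e^{xa_0+yb_0}/\gamma_+(x,y)$ requires $b>b_0$, not merely $b>0$; the paper handles this by splitting the integrand into $I_1^{z_0}+I_2^{z_0}$ and giving alternative representations \eqref{I2''}--\eqref{I2'''} of $I_2^{z_0}$ for the remaining cases, though in the asymptotic regime $b=r\sin\alpha\to\infty$ this is immaterial.
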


The functions $h_\alpha$ are in fact harmonic functions, see Theorem~\ref{thm5} later.

The next theorem states the asymptotics of Green's functions as $\alpha \to 0, \pi$ and $\alpha\in\inter{\mathcal M}$.
It establishes the missing connection between Theorem~\ref{alpha=0} and Theorem~\ref{thm:1}. To clarify this point further,
let us note that in  \eqref{cas_1}
the constant term $C(\alpha_0)h_{\alpha_0}(z)\to 0$  as $\alpha_0 \to 0$. 
Hence, in the case $\alpha_0=0$, we need to expand the asymptotics \eqref{cas_1}
up to the second term. The saddle-point method yields : 

%Note that $\alpha_0 = 0$ into \eqref{cas_1}, we obtain the equivalence $g^{z_0}(r\cos(\alpha), r\sin(\alpha))\sim 0 $  which is useless.
%\underset{r \to \infty \atop \alpha \to 0}{\sim} 0 \]
%Hence, we need to develop \eqref{cas_1} up to the second term of the %asymptotic expansion :
%The saddle point method actually yields a full asymptotic expansion of $g^{z_0}(r\cos(\alpha), r\sin(\alpha))$ with arbitrarily many terms. 
%For $\alpha_0 = 0$, the first two terms are given by:
    \[
        g(r\cos(\alpha), r\sin(\alpha)) \underset{r \to \infty \atop \alpha \to 0,\, \alpha \geq 0}{\sim}
        \left(C^+(\alpha)\, h_\alpha(z_0) + \frac{C_0 f_0(z_0)}{r} \right)
        \frac{e^{-r(\cos(\alpha)\, x(\alpha) + \sin(\alpha)\, y(\alpha))}}{\sqrt{r}},
    \]
    where $$C^+(\alpha)\, h_\alpha(z_0) \underset{\alpha \to 0}{\sim} \kappa C_0 f_0(z_0)\, \alpha$$  for some constant $\kappa > 0$ (see \eqref{eq:memefonctionharm}).
The first term corresponds to Theorem~\ref{thm:1}, while the second corresponds to Theorem~\ref{alpha=0}. As $r \to \infty$ and $\alpha \to 0$, we observe a competition between these two contributions. It is this competition that is established in the following theorem, proved at the end of Section~\ref{sub:saddle}.

%Now, we state the corresponding theorem for the asymptotics of Green's functions as $\alpha \to 0, \pi$ along the red arcs $\mathcal M$.
\begin{theorem}[Asymptotics as $\alpha \to 0, \pi$ along $\mathcal M$]\label{thm:2}
Let $z_0 \in \R^2$ and let $\alpha_0 = 0$.
\begin{itemize}
    \item Assume that $x_{max}^+ < x_{max}^-$ (cases B and C of Figure~\ref{anglesalphab}). Then, for $\alpha \geq 0$ we have
\begin{equation}\label{cas_2}
g^{z_0}(r\cos(\alpha), r\sin(\alpha))
\underset{r\to\infty \atop\alpha\to0, \alpha \geq 0}{\sim}
 C_0f_0(z_0)\left(\kappa\alpha + \frac{1}{r} \right) \frac{e^{-r(\cos(\alpha)x(\alpha) + \sin(\alpha)y(\alpha))}}{\sqrt{r}}.
\end{equation}
where $\kappa > 0$ is an explicit constant (cf. \eqref{eq:kappa} below), $C_0$ is given by \eqref{eq:C_0}, and $f_0(z_0)$ is given by \eqref{f(z_0)col}. Moreover,
\[
\left.\partial_\alpha \left(C(\alpha)h_{\alpha}(z_0)\right)\right|_{\alpha = 0^+} = \kappa C_0f_0(z_0).
\]
    
\item Assume now that $x_{max}^+ > x_{max}^-$ (cases A and D of Figure~\ref{anglesalphab}). Then, for $\alpha \leq 0$,
\begin{equation}\label{cas_2'}
g^{z_0}(r\cos(\alpha), r\sin(\alpha))
\underset{r\to\infty \atop\alpha\to0, \alpha \leq 0}{\sim}
 C_0f_0(z_0)\left(\kappa|\alpha| + \frac{1}{r} \right) \frac{e^{-r(\cos(\alpha)x(\alpha) + \sin(\alpha)z(\alpha))}}{\sqrt{r}},
\end{equation}
where $f_0(z_0)$ is given by \eqref{f(z_0)branch}. Moreover,
\[
\left.\partial_\alpha \left(C(\alpha)h_{\alpha}(z_0)\right)\right|_{\alpha = 0^-} = \kappa C_0f_0(z_0).
\]
\end{itemize}

The symmetric result holds for $\alpha_0 = \pi$.
\end{theorem}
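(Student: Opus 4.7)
The plan is to refine the saddle-point analysis of Theorem~\ref{thm:1} to the next order in the regime where the saddle point $x(\alpha)$ collides with the branch point $\M$ of $\phi^{z_0}$ as $\alpha\to 0^+$, and to identify the correction with the branch-point contribution of Theorem~\ref{alpha=0}. I focus on the case $\alpha_0=0$ with $\M=x_{max}^+<x_{max}^-$ (cases B and C); the complementary subcase and the symmetric treatment at $\alpha_0=\pi$ are analogous.

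Starting from the inverse Laplace transform of $\phi_+^{z_0}$ and the explicit formulas of Theorem~\ref{prop:laplace explicitee}, I would evaluate the $y$-integral by the residue at $y=Y^+(x)$, reducing to a one-variable contour integral in $x$ with exponent $-r\Phi_\alpha(x)=-r(\cos(\alpha)\,x+\sin(\alpha)\,Y^+(x))$ and an amplitude that inherits a $\sqrt{\M-x}$ branch singularity at $\M$ from both $Y^+(x)$ and $\phi^{z_0}(x)$. I would then split the amplitude into a regular part and a singular part near $\M$ and treat them separately: deforming through the saddle $x(\alpha)$ for the regular part, and into a Hankel loop around $\M$ for the singular part. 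The regular part produces the Gaussian saddle-point estimate $C^+(\alpha)h_\alpha(z_0)\,r^{-1/2}e^{-r\Phi_\alpha(x(\alpha))}$ already obtained in Theorem~\ref{thm:1}; the singular part, evaluated on the Hankel loop after the change of variables $\M-x=s/r$, yields $C_0 f_0(z_0)\,r^{-3/2}e^{-r\Phi_\alpha(\M)}$, reproducing at $\alpha=0$ the asymptotic of Theorem~\ref{alpha=0}. In the relevant regime $\alpha=O(1/\sqrt{r})$ one has $\Phi_\alpha(\M)-\Phi_\alpha(x(\alpha))=O(\alpha^2)=O(1/r)$, so the two exponential factors can be identified up to a multiplicative $1+o(1)$, producing \eqref{cas_2}.

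The constant $\kappa>0$ is then pinned down by the Taylor expansion $C^+(\alpha)h_\alpha(z_0)=\kappa C_0 f_0(z_0)\,\alpha+O(\alpha^2)$ as $\alpha\to 0^+$, which, together with the identification above, simultaneously yields $\partial_\alpha(C^+(\alpha)h_\alpha(z_0))|_{\alpha=0^+}=\kappa C_0 f_0(z_0)$. The computation uses the local expansions $\M-x(\alpha)\sim\lambda\alpha^2$ and $Y^+(x(\alpha))-Y^+(\M)\sim\lambda'\alpha$ near the branch point (the constants $\lambda,\lambda'$ coming from $Y^+(x)-Y^+(\M)\sim c\sqrt{\M-x}$ with $c=\sqrt{\det\Sigma^+(\M-x_{min}^+)}/\Sigma_{22}^+$), combined with the explicit formulas \eqref{C(alpha)} and \eqref{Harm1}. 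The companion identity \eqref{cas_2'} in cases A and D is obtained by the same argument with $Z^-$ and the kernel $\gamma_-$ playing the roles of $Y^+$ and $\gamma_+$; the case $\alpha_0=\pi$ is treated in exactly the same way after replacing $\M$ by $\m$.

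The main obstacle is ensuring uniformity over the joint regime $r\to\infty$, $\alpha\to 0$, including the transition $\alpha\asymp 1/r$ (and the borderline $\alpha\asymp 1/\sqrt{r}$), where the two contributions have comparable size and the second derivative $\Phi_\alpha''(x(\alpha))$ blows up so that the standard Laplace-type estimates are not directly applicable at the coalescence of saddle and branch point. One must either invoke a uniform asymptotic expansion \emph{\`a la} Chester--Friedman--Ursell involving the complementary error function (truncated to the order needed for the two-term formula), or split the contour into a steepest-descent piece bounded away from $\M$ and a local Hankel piece around $\M$, gluing them with a matched asymptotic and controlling the error in the overlap annulus; either approach leads to the same final two-term expansion.
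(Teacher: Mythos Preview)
Your intuition that the result interpolates between the saddle-point contribution of Theorem~\ref{thm:1} and the branch-point contribution of Theorem~\ref{alpha=0} is correct, but the paper avoids the coalescence obstacle you flag rather than confronting it head-on. The key step (Lemma~\ref{lem:colalpha=0}) is the change of variable $x=X^+(y)$: in the $y$-plane the saddle $y(\alpha)$ tends to $Y^{\pm}(x_{max}^+)$ as $\alpha\to 0^+$, and this is a \emph{regular} point of $X^+$ (whose branch points are $y_{min},y_{max}$). The only remaining suspect factor is $Y^-(X^+(y))$ hidden in $\phi^{z_0}$, but Vi\`ete's relation $Y^+(x)Y^-(x)=(\Sigma_{11}^+x^2+2\mu_1^+x)/\Sigma_{22}^+$ gives $Y^-(X^+(y))=(\Sigma_{11}^+X^+(y)^2+2\mu_1^+X^+(y))/(\Sigma_{22}^+\, y)$, holomorphic near $Y^{\pm}(x_{max}^+)\ne 0$; and $Z^+(X^+(y))$ is regular there since $x_{max}^+<x_{max}^-$. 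Hence the entire integrand is holomorphic at the saddle for all $\alpha\in[0,\eta]$, and the standard parameterised saddle-point method delivers directly a uniform two-term expansion
\[
\Big(\tilde c_0(\alpha)+\frac{\tilde c_1(\alpha)}{r}+o(1/r)\Big)\,\frac{e^{-r(\cos\alpha\,x(\alpha)+\sin\alpha\,y(\alpha))}}{\sqrt r}
\]
with $\tilde c_0,\tilde c_1$ continuous at $0$. Then $\tilde c_0(0)=0$ and $\tilde c_0(\alpha)\sim\kappa C_0 f_0(z_0)\alpha$ follow from~\eqref{Harm1} and~\eqref{C(alpha)}, while $\tilde c_1(0)=C_0 f_0(z_0)$ is read off by setting $\alpha=0$ and comparing with Theorem~\ref{alpha=0}.

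Your splitting-plus-Hankel route runs into a real difficulty beyond bookkeeping: in cases B and C the \emph{phase} itself, not just the amplitude, is singular at $\M$ through $\sin(\alpha)Y^+(x)$. After the substitution $\M-x=s/r$ the Hankel piece acquires an extra factor $e^{-c\sin(\alpha)\sqrt{rs}}$, so it neither reduces to a clean $\Gamma$-integral nor carries the saddle-point exponential $e^{-r\Phi_\alpha(x(\alpha))}$ of~\eqref{cas_2}; it carries $e^{-r\Phi_\alpha(\M)}$, and the mismatch $r(\Phi_\alpha(x(\alpha))-\Phi_\alpha(\M))\asymp r\alpha^2$ is not $o(1)$ in the full regime $r\to\infty$, $\alpha\to 0$. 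The CFU mechanism you invoke is designed for two coalescing saddles with a regular phase, which is not the geometry here. The paper's change of variable is precisely the substitution that regularises the phase and removes the coalescence; once seen, no uniform-asymptotic machinery is needed.
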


Now, let us consider the asymptotics as $\alpha \to \alpha_b$ (and $\tilde{\alpha}_b$). If $\alpha$ remains within $\mathcal M$, then the asymptotics are given by those of Theorem~\ref{thm:1}. However, this is no longer the case in the parts outside $\mathcal M$. The asymptotic behaviour arises from the competition between the saddle point $(x(\alpha), y(\alpha))$ and the branching point $(x(\alpha_b), y(\alpha_b)) $ and thus depends on the speed of convergence of $\alpha \to \alpha_b$ relative to $r \to \infty$.
The proof can be found at the end of Section~\ref{sub:alphab}.

\begin{theorem}[Asymptotics for $\alpha \to \alpha_b, \tilde{\alpha}_b$]\label{thm3}
Let $z_0 \in \R^2$ and let $\alpha_0 = \alpha_b$. Assume $x_{max}^+ > x_{max}^-$, i.e., $\alpha_b \in (0,\pi)$ (cases A and D of Figure~\ref{anglesalphab}). Define
\begin{equation}\label{eq:K+2}
    K_+^2 = \frac{\Sigma_{22}^+\big(y(\alpha_b) - Y^-(x(\alpha_b))\big)}{4|\sin(\alpha_b)| \left( \Sigma^+_{11}\sin^2(\alpha_b) - 2\Sigma^+_{12} \sin (\alpha_b) \cos (\alpha_b) + \Sigma^+_{22} \cos^2(\alpha_b) \right)}.
\end{equation}
Then:
\begin{itemize}
    \item[$(i)$] If $\alpha > \alpha_b$, then \eqref{cas_1} holds.
    \item[$(ii)$] If $\alpha < \alpha_b$ and $r(\alpha_b - \alpha)^2$ is bounded, then \eqref{cas_1} holds.
    \item[$(iii)$] If $\alpha < \alpha_b$, $r(\alpha - \alpha_b)^2 \to +\infty$, and $\frac{e^{K_+^2 r (\alpha_b - \alpha)^2}}{r(\alpha_b - \alpha)^{3/2}} \to 0$,
    then \eqref{cas_1} holds.
    \item[$(iv)$] If $\alpha < \alpha_b$, $r(\alpha - \alpha_b)^2 \to +\infty$, and
    $\frac{e^{K_+^2 r (\alpha_b - \alpha)^2}}{r(\alpha_b - \alpha)^{3/2}} \to +\infty,$
    then
    \begin{equation}\label{cas_3.2}
    g^{z_0}(r\cos(\alpha), r\sin(\alpha))
    \underset{r\to\infty \atop \alpha\to\alpha_b}{\sim}  C_{br} f_0(z_0) \frac{e^{-r(\cos(\alpha)x(\alpha_b) + \sin(\alpha)y(\alpha_b))}}{r^{3/2} (\alpha_b - \alpha)^{3/2}},
    \end{equation}
    where $f_0(z_0) > 0$ is given by \eqref{f(z_0)branch} and $C_{br} = |\sin(\alpha_b)|^{3/2} C_0$ (see \eqref{eq:C_0}).
    \item[$(v)$] If $\alpha < \alpha_b$ (resp. $\alpha > \alpha_b$), $r(\alpha - \alpha_b)^2 \to +\infty$, and
    \[
    \frac{e^{K_+^2 r (\alpha_b - \alpha)^2}}{r(\alpha_b - \alpha)^{3/2}} \to c > 0,
    \]
    then
    \begin{equation}\label{cas_3.3}
    g^{z_0}(r\cos(\alpha), r\sin(\alpha))
    \underset{r\to\infty \atop \alpha\to\alpha_b}{\sim} \left( C(\alpha_b) h_{\alpha_b}(z_0) + c\, C_{br} f_0(z_0) \right) \frac{e^{-r(\cos(\alpha)x(\alpha) + \sin(\alpha)y(\alpha))}}{\sqrt{r}},
    \end{equation}
    where $h_{\alpha_b}(z_0)$ is given by \eqref{Harm1}.
\end{itemize}

Now assume that $x_{max}^+ < x_{max}^-$ (cases B and C of Figure~\ref{anglesalphab}). Then, the analogous cases $(i)$ to $(v)$ hold. More precisely, let
\[
K_-^2 = \frac{\Sigma_{22}^- \big(z(\alpha_b) - Z^+(x(\alpha_b))\big)}{4|\sin(\alpha_b)| \left( \Sigma^-_{11} \sin^2(\alpha_b) - 2 \Sigma^-_{12} \sin(\alpha_b) \cos(\alpha_b) + \Sigma^-_{22} \cos^2(\alpha_b) \right)}.
\]
\begin{itemize}
    \item[$(i)$-$(iii)$] In cases $(i)$ to $(iii)$, replace $K_+^2$ by $K_-^2$ and the conditions $\alpha < \alpha_b$ (resp. $\alpha > \alpha_b$) by $\alpha > \alpha_b$ (resp. $\alpha < \alpha_b$). Then, \eqref{cas_1'} holds in these cases.
    \item[$(iv)$-$(v)$] In cases $(iv)$ to $(v)$, replace $K_+^2$ by $K_-^2$ and the condition $\alpha < \alpha_b$ by $\alpha > \alpha_b$. Then, the corresponding asymptotics hold replacing $y(\alpha)$ by $z(\alpha)$. Namely, in case $(iv)$,
    \begin{equation}\label{cas_3.2'}
    g^{z_0}(r\cos(\alpha), r\sin(\alpha))
    \underset{r\to\infty \atop \alpha\to\alpha_b}{\sim} C_{br} f_0(z_0) \frac{e^{-r(\cos(\alpha) x(\alpha_b) + \sin(\alpha) z(\alpha_b))}}{r^{3/2} |\alpha_b - \alpha|^{3/2}},
    \end{equation}
    where $f_0(z_0) > 0$ is given by \eqref{f(z_0)col}. In case $(v)$,
    \begin{equation}
    g^{z_0}(r\cos(\alpha), r\sin(\alpha))
    \underset{r\to\infty \atop \alpha\to\alpha_b}{\sim} \left( C(\alpha_b) h_{\alpha_b}(z_0) + c\, C_{br} f_0(z_0) \right) \frac{e^{-r(\cos(\alpha) x(\alpha) + \sin(\alpha) z(\alpha))}}{\sqrt{r}},
    \end{equation}
    where $h_{\alpha_b}(z_0)$ is given by \eqref{Harm1'}.
\end{itemize}

The symmetric result holds for $\alpha_0 = \tilde{\alpha}_b$.
\end{theorem}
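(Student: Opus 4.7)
The proof follows the contour-integration and saddle-point machinery developed in Section~\ref{sub:alphab}, the main novelty being the careful treatment of the coalescence between the moving saddle point $x(\alpha)$ and the fixed branch point $\M$ of $\phi^{z_0}$. We treat the case $\alpha_b\in(0,\pi)$ (where $x_{max}^+>x_{max}^-$, so $\M=x_{max}^-$); the case $\alpha_b\in(\pi,2\pi)$ is symmetric after exchanging the roles of $Y^\pm$ and $Z^\pm$. Since $b=r\sin\alpha>0$ for $\alpha$ near $\alpha_b$, we work with $\phi^{z_0}_+$.

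Starting from the Laplace-inversion formula and the explicit expression \eqref{laplace explicite} from Theorem~\ref{prop:laplace explicitee}, we invert in $y$ by moving the $y$-contour and picking up the residue at $y=Y^+(x)$; this produces a one-dimensional contour integral of the form
\[
g^{z_0}(r\cos\alpha,r\sin\alpha)=\frac{1}{2\pi i}\int_{\mathcal{C}_x}\Phi^{z_0}(x)\,e^{-r(\cos\alpha\,x+\sin\alpha\,Y^+(x))}\,dx,
\]
where $\Phi^{z_0}(x)$ is an explicit function of $x$, $Y^+(x)$, $Z^+(x)$ and $\phi^{z_0}(x)$, inheriting from $\phi^{z_0}$ the branch points $\m$ and $\M$. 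The phase has a unique real saddle at $x=x(\alpha)$ and the steepest-descent contour $\mathcal{D}_\alpha$ crosses the real axis transversally at that point. When $\alpha>\alpha_b$ we have $x(\alpha)<\M$, so $\mathcal{C}_x$ can be deformed onto $\mathcal{D}_\alpha$ without crossing the cut $[\M,+\infty)$; the standard Laplace expansion then yields \eqref{cas_1}, proving case $(i)$.

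For $\alpha<\alpha_b$ the saddle lies to the right of $\M$, so the deformation must skirt the cut, and we write $\mathcal{C}_x=\mathcal{D}_\alpha+\mathcal{B}_\M$, where $\mathcal{B}_\M$ is a Hankel-type contour wrapping $[\M,+\infty)$. The $\mathcal{D}_\alpha$-integral still yields the saddle term $C^+(\alpha)h_\alpha(z_0)\,r^{-1/2}e^{-rH(\alpha)}$ with $H(\alpha):=\cos\alpha\,x(\alpha)+\sin\alpha\,y(\alpha)$, whereas $\mathcal{B}_\M$ is evaluated by Watson's lemma using the local expansion $Z^+(x)-Z^-(x)\sim c\sqrt{\M-x}$ and the smoothness of $Y^+$ at $\M$, producing a branch contribution of size $C_{br}f_0(z_0)\,r^{-3/2}(\alpha_b-\alpha)^{-3/2}e^{-rH_b(\alpha)}$ with $H_b(\alpha):=\cos\alpha\,\M+\sin\alpha\,y(\alpha_b)$. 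A quadratic Taylor expansion of $H$ around $\alpha_b$, using $H'(\alpha_b)=H_b'(\alpha_b)$ from the saddle condition $\cos\alpha_b+\sin\alpha_b\,Y^{+\prime}(\M)=0$ together with an explicit computation of $Y^{+\prime\prime}(\M)$ from \eqref{Ypm}, gives $H_b(\alpha)-H(\alpha)=K_+^2(\alpha-\alpha_b)^2+o((\alpha-\alpha_b)^2)$ with $K_+^2$ as in \eqref{eq:K+2}. The ratio of the branch contribution to the saddle contribution is therefore proportional to $r^{-1}(\alpha_b-\alpha)^{-3/2}e^{K_+^2 r(\alpha_b-\alpha)^2}$, i.e. precisely the quantity appearing in the hypotheses of cases $(iii)$--$(v)$: the saddle dominates in $(ii)$ and $(iii)$, the branch term dominates in $(iv)$, and both are of the same order and add linearly in $(v)$.

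The main difficulty is the uniformity of these expansions when the saddle and the branch point are close. In case $(ii)$, where $|\alpha-\alpha_b|=O(r^{-1/2})$, the nominally large prefactor $(\alpha_b-\alpha)^{-3/2}$ in the branch term is compensated by the Gaussian damping $e^{-K_+^2 r(\alpha_b-\alpha)^2}$; separating the two pieces rigorously, and absorbing the branch contribution into the saddle remainder, requires a uniform asymptotic expansion in the spirit of the Chester--Friedman--Ursell method, adapted to a saddle coalescing with a branch point rather than with another saddle. The statements for $\tilde\alpha_b$ and for $\alpha_b\in(\pi,2\pi)$ follow by the same argument after swapping $Y^\pm\leftrightarrow Z^\pm$ and replacing the branch point $\M$ by $\m$.
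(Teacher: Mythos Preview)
Your global picture---competition between a saddle contribution at $x(\alpha)$ and a branch contribution at $\M$, with the ratio governed by $r^{-1}(\alpha_b-\alpha)^{-3/2}e^{K_+^2 r(\alpha_b-\alpha)^2}$---is exactly right and matches the paper. The main imprecision is the contour decomposition. When $\alpha<\alpha_b$ the saddle $x(\alpha)$ sits \emph{on} the cut $[\M,+\infty)$, so a Hankel loop $\mathcal B_\M$ ``wrapping $[\M,+\infty)$'' and a separate steepest-descent path $\mathcal D_\alpha$ through $x(\alpha)$ cannot coexist as disjoint pieces: $\mathcal B_\M$ would itself carry a saddle contribution, and you would double-count. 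The paper's contour is rather $S^-_{x,\alpha}\cup\Gamma^-_{x,\alpha}\cup\Omega_{-,\alpha}\cup\Omega_{+,\alpha}\cup\Gamma^+_{x,\alpha}\cup S^+_{x,\alpha}$ (Lemma~\ref{découpage}), where the ``branch pieces'' $\Omega_{\pm,\alpha}$ run only from $\M$ to $x(\alpha)$ along the two edges of the cut, and the steepest-descent arcs $\Gamma^\pm_{x,\alpha}$ leave from $x(\alpha)$ on those same edges. This cleanly separates the two contributions. Also, the sign in your Taylor expansion is reversed: one has $H(\alpha)-H_b(\alpha)=K_+^2(\alpha-\alpha_b)^2+o((\alpha-\alpha_b)^2)$ (the saddle value is larger), although your ratio formula comes out with the correct sign.

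On the analytic side, the $\Omega_\pm$ integral is not evaluated by Watson's lemma in the paper. After the decomposition $\Phi=\Phi_1+\sqrt{\M-x}\,\Phi_2$ with $\Phi_1,\Phi_2$ holomorphic near $\M$ (the $\Phi_1$ parts cancel between $\Omega_+$ and $\Omega_-$), one passes to the Morse coordinate $\omega$ from \eqref{eq:F=t2} so that the integral becomes $\int_{\tau(\alpha)}^0\sqrt{t-\tau(\alpha)}\,e^{rt^2}\,dt$; this is then computed by the elementary Lemma~\ref{lemme_technique} when $r\tau(\alpha)^2\to\infty$, yielding \eqref{eq:contribomega} and cases $(iii)$--$(v)$. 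For case $(ii)$, no Chester--Friedman--Ursell or Bleistein machinery is needed: since $r\tau(\alpha)^2$ stays bounded, the same integral is bounded by $C|\tau(\alpha)|^{3/2}=O(r^{-3/4})$, which is $o(r^{-1/2})$ and hence negligible next to the saddle term (Lemma~\ref{lem:contribomega}). Your Watson-based computation would in fact recover the same leading branch asymptotic in regimes $(iii)$--$(v)$ once the contour is truncated at $x(\alpha)$, but the paper's Morse-coordinate route makes the uniformity in $\alpha$ near $\alpha_b$ transparent and avoids any appeal to uniform coalescing asymptotics.
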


The direction $\alpha_b$ gives rise to two harmonic functions. The first of these is $h_{\alpha_b}(z_0)$, which is defined in \eqref{Harm1} when $\alpha=\alpha_b$. 
This function can also be viewed as the limit of the functions $h_{\alpha}(z_0)$ obtained in Theorem~\ref{thm:1} as $\alpha \to\alpha_b$. The second function is $f_0(z_0)$, given by \eqref{f(z_0)col}, and corresponds to the direction $\alpha_0=0$.
When $\alpha \to \alpha_b$ and $r\to \infty$ at the rate described in case (v), linear combinations of these two functions determine the asymptotic constants as in \eqref{cas_3.3}.

The last asymptotic theorem deals with the remaining cases where $\alpha_0 \in [0,2\pi\textnormal{]}\backslash \mathcal M$ or $\alpha \to 0,\pi$ as $\alpha \notin \mathcal M$. The asymptotics are then determined by the branching points. The proof is given at the end of Section~\ref{sub:apresalphab}.

\begin{theorem}[Asymptotics for $\alpha \to \alpha_0,$ $\alpha_0 \in [0,2\pi\textnormal{]}\backslash \mathcal M$, or $\alpha \to 0, \pi$ with $\alpha \in [0,2\pi\textnormal{]}\backslash \mathcal M$]\label{thm:4} 
Assume $x^+_{max} > x^-_{max}$ (cases A and D of Figure~\ref{anglesalphab}). Let $z_0 \in \R^2$ and let $0 \leq \alpha_0 < \alpha_b$. Then:
\begin{equation}\label{cas4}
g^{z_0}(r\cos(\alpha), r\sin(\alpha))
\underset{r\to\infty \atop\alpha\to\alpha_0, \alpha \geq 0}{\sim} C'_{br}(\alpha_0) f_0(z_0)\frac{e^{-r(\cos(\alpha)x(\alpha_b) + \sin(\alpha)y(\alpha_b))}}{r^{3/2}
}
\end{equation}
where $C'_{br}(\alpha_0) = \left(\frac{\sin(\alpha_b)}{\sin(\alpha_b - \alpha_0)}\right)^{3/2}C_0$ (see \eqref{eq:C_0}) and where $f_0(z_0)$ is given by \eqref{f(z_0)branch}. 

Assume now $x^+_{max} < x^-_{max}$ (cases B and C of Figure~\ref{anglesalphab}) and let $\alpha_b < \alpha_0 \leq 2\pi$. Then,
\begin{equation}\label{cas4'}
 g^{z_0}(r\cos(\alpha), r\sin(\alpha))
\underset{r\to\infty \atop\alpha\to\alpha_0, \alpha \leq 2\pi}{\sim}  C'_{br}(\alpha_0)f_0(z_0)\frac{e^{-r(\cos(\alpha)x(\alpha_b) + \sin(\alpha)z(\alpha_b))}}{r^{3/2}}
\end{equation}
where $C'_{br}(\alpha_0)$ has the same expression and where $f_0(z_0)$ is given by \eqref{f(z_0)col}.

The symmetric results hold for $\tilde\alpha_b< \alpha_0 \leq\pi$ in the cases A and B and for $ \pi\leq \alpha_0 < \tilde\alpha_b$ in the cases C and D.
\end{theorem}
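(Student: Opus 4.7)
The strategy is to adapt the branching-point analysis from Theorem~\ref{alpha=0} (the special directions $\alpha_0 = 0, \pi$) to an arbitrary direction $\alpha_0 \in [0, 2\pi]\setminus\mathcal{M}$. I will detail the case $x^+_{max} > x^-_{max}$ (so $\M = x^-_{max}$, $\alpha_b \in (0,\pi)$) with $\alpha_0 \in [0, \alpha_b)$; the remaining cases are fully symmetric.

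The starting point is the double inverse Laplace representation
\[
g^{z_0}(r\cos\alpha, r\sin\alpha) = \frac{1}{(2\pi i)^2} \int_{c_1 - i\infty}^{c_1 + i\infty}\!\int_{c_2 - i\infty}^{c_2 + i\infty} e^{-rx\cos\alpha - ry\sin\alpha}\, \phi^{z_0}_+(x, y)\, dy\, dx,
\]
valid since $\sin\alpha > 0$. The explicit formula \eqref{laplace explicite} shows that $\phi^{z_0}_+(x, y)$ has a simple pole in $y$ at $y = Y^+(x)$ coming from $1/\gamma_+(x, y)$. Closing the inner contour to the left and collecting this residue reduces the problem to a one-variable integral of the form $\int e^{-r \Phi_\alpha(x)}\, \Psi(x; z_0)\, dx$ with phase $\Phi_\alpha(x) = x\cos\alpha + Y^+(x)\sin\alpha$ and integrand $\Psi(x; z_0)$ that inherits from $\phi^{z_0}(x)$ an algebraic $\sqrt{\M - x}$ singularity at the branch point $x = \M$, arising from $Z^+$ in the denominator $\gamma(x, Y^-(x), Z^+(x))$ of \eqref{laplace explicitee}.

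Now the crucial observation: by the definition \eqref{def:alphabb} of $\alpha_b$, the real stationary point $x(\alpha)$ of $\Phi_\alpha$ lies in $(\M, x^+_{max})$ for every $\alpha \in [0, \alpha_b)$, so the saddle is \emph{strictly to the right of the nearest branch point}. One therefore cannot push the contour through $x(\alpha)$; the best one can do is deform it into a Hankel contour wrapping the cut $[\M, x^+_{max}]$, exactly as in the proof of Theorem~\ref{alpha=0}. The tail of the Hankel contour is exponentially negligible, and a local analysis near $x = \M$ via the change of variable $u = r \Phi_\alpha'(\M) (\M - x)$ followed by Watson's lemma applied to the square-root singularity produces the leading term
\[
g^{z_0}(r\cos\alpha, r\sin\alpha) \sim \bigl[\Phi_\alpha'(\M)\bigr]^{-3/2} C_0\, f_0(z_0)\, \frac{e^{-r\Phi_\alpha(\M)}}{r^{3/2}}.
\]
Here $C_0$ and $f_0(z_0)$ are those of Theorem~\ref{alpha=0}, since the local data of the $\sqrt{\M - x}$ expansion of $\Psi$ at $\M$ are independent of $\alpha$. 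The tangency condition $(Y^+)'(\M) = -\cos\alpha_b / \sin\alpha_b$, which follows from \eqref{def:alphabb}, then gives $\Phi_\alpha'(\M) = \sin(\alpha_b - \alpha)/\sin\alpha_b$, and one reads off $C'_{br}(\alpha_0) = (\sin\alpha_b / \sin(\alpha_b - \alpha_0))^{3/2} C_0$ together with $\Phi_\alpha(\M) = \cos\alpha\, x(\alpha_b) + \sin\alpha\, y(\alpha_b)$, matching \eqref{cas4}.

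The remaining situations of the theorem are obtained by obvious substitutions: when $\alpha_0 \in (\pi, 2\pi)$ one inverts $\phi^{z_0}_-$ using the pole at $z = Z^-(x)$, and for $\tilde\alpha_b$ the branch point $\m$ plays the role of $\M$. The principal difficulty will be justifying the uniformity of the estimates as $\alpha \to \alpha_0$: the Hankel contour and the local expansion must both depend continuously on $\alpha$, and the error terms in Watson's lemma must be controlled uniformly on a compact set bounded away from $\alpha_b$. This uniformity fails exactly when $\alpha_0 \to \alpha_b$, where $\Phi_\alpha'(\M) \to 0$ and the saddle coalesces with the branch point; this coalescence is precisely why that boundary case has to be treated by the finer competition analysis of Theorem~\ref{thm3}.
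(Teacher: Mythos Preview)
Your approach is essentially the same as the paper's: reduce the double inverse Laplace integral to a one-variable integral via the residue at $y=Y^+(x)$, deform to a Hankel contour around the branch point $\M$, decompose the integrand as a holomorphic part plus $\sqrt{\M-x}$ times a holomorphic part, and apply a Watson-type expansion; the computation $\Phi'_\alpha(\M)=\sin(\alpha_b-\alpha)/\sin\alpha_b$ is exactly the one the paper carries out. Two small points: the inner $y$-contour must be closed to the \emph{right} (so that $e^{-ry\sin\alpha}$ decays), not to the left; and the paper treats the term $I_2^{z_0}$ differently depending on the sign of $b_0$ --- when $b_0>0$ it is not folded into the Hankel analysis but shown separately (via the ordinary saddle-point method) to be exponentially smaller than the branch-point contribution --- a bookkeeping detail your sketch glosses over but which is needed for a complete argument.
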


Note that the asymptotics \eqref{cas4} formally match case $(iv)$ of Theorem~\ref{thm3} in the direction $\alpha_b$, since $\sin(\alpha_b - \alpha) \sim \alpha_b - \alpha$ as $\alpha \to \alpha_b.$ Since $C'_{br}(\alpha)|_{\alpha = 0} = C_0$, the asymptotics given by \eqref{cas4} also formally match \eqref{as:alpha=0} for $\alpha = 0.$ Furthermore, we observe that a single harmonic function, $f_0(z_0)$ (arising from the asymptotics at $\alpha = 0$ obtained in Theorem~\ref{alpha=0}) corresponds to all directions described in Theorem~\ref{thm:4}.

\subsubsection{Martin Boundary and positive harmonic functions}\label{subsec:results_Martin}
Before stating the results on the Martin boundary and harmonic functions, let us recall the definition of harmonic function. 

\begin{defi}
A measurable non-negative function $h$ is called harmonic with respect to the Markov process $Z$ if for all open set $U$ relatively compact in $\R^2$ and $z_0 \in \R^2$, $$h(z_0) = \E_{z_0}\left[h\left(Z_{T_U}\right)\fc_{T_U < +\infty}\right] $$
where $T_U = \inf\{t\geq 0 \mid X_t \in U\}$. 
\end{defi}

The general theory of Martin boundary \cite[Theorem 3]{kunitaWatanabe1965} justifies that the limits of the quotient of Green's functions $\frac{g(z_0,z)}{g(0,z)}$ as $|z| \to +\infty$ -- called Martin functions -- %(i.e. functions $f_0, f_\pi$ and $h_\alpha, \alpha \in \mathcal M\backslash\{0,\pi\}$)
are generally \textit{excessive} in $z_0$ (see \cite[Section 3]{kunitaWatanabe1965}), which is weaker than harmonic. 
In Theorems~\ref{alpha=0} to \ref{thm:4}, we obtained all Martin functions. In particular, we obtained: 
\begin{itemize}
    \item For each direction $\alpha_0\in\inter{\mathcal M}$, one function $h_{\alpha_0}(z_0)$ (see Theorem~\ref{thm:1});
    \item For the directions $0$ and $\pi$, two functions $f_0(z_0)$ and $f_\pi(z_0)$ (see Theorems~\ref{alpha=0} and~\ref{thm:2});
    \item For $\alpha_0 = \alpha_b$, all the convex combinations of $h_{\alpha_b}(z_0)$ and $f_0(z_0)$, see \eqref{cas_3.3} (Theorem~\ref{thm3});
    \item Similarly, for $\alpha_0 = \tilde\alpha_b$, with $h_{\tilde\alpha_b}(z_0)$ and $f_\pi(z_0)$ (see Theorem~\ref{thm3});
    \item For each direction $\alpha_0 \in [0,2\pi]\backslash \mathcal M$, the functions $f_0(z_0)$ and $f_\pi(z_0)$ (see Theorem~\ref{thm:4}).
\end{itemize}
We prove in Section~\ref{sec:5} that those functions are harmonic.

%For simplicity, let us consider the case D of Figure~\ref{anglesalphab} (i.e. $x^+_{max} > x^-_{max}$ and $x^+_{min} > x^-_{min}$), which matches the configuration shown in Figure~\ref{fig:soleil}.

\begin{rem}[Partial differential equation]
The partial differential equation associated with the harmonicity of a function $h$ with respect to our process is given by %\fr{Est ce qu'on rajoute un ``formally''? Car on vient de dire qu'on prouvera en section 5 que les fonctions sont harmoniques, et dans cette remarque on dit que les fonctions $f_0, f_\pi, h_\alpha$ satisfont trivialement l'EDP.}
\begin{equation}
    \begin{cases}
(\frac{1}{2}\nabla \cdot \Sigma^+ \nabla + \mu^+ \cdot \nabla)\, h(z) = 0 & \text{for } z \in \mathbb{R} \times (0, +\infty), \\
(\frac{1}{2}\nabla \cdot \Sigma^- \nabla + \mu^- \cdot \nabla)\, h(z) = 0 & \text{for } z \in \mathbb{R} \times (-\infty, 0), \\
(q_1,1+q_2) \nabla h(x,0^+)= (-q_1,1-q_2) \nabla h(x,0^-) & \text{for } x \in \mathbb{R},
\\
h(x,0^+)= h(x,^-) \text{ and } (1,0)\cdot\nabla h(x,0^+)=(1,0)\cdot\nabla h(x,0^-) & \text{for } x \in \mathbb{R}
.
    \end{cases}
    \label{eq:pdeharm}
\end{equation}

It is straightforward to verify that the harmonic functions $f_0, f_\pi,$ and $h_\alpha, \alpha \in \mathcal M\backslash\{0,\pi\}$ obtained in our main results satisfy this equation.
\end{rem}
Then, the structures of the Martin boundary and the minimal Martin boundary follow : the proof is detailed in Section~\ref{sec:5}.
Let us just notice the fact that the non-minimality of the full Martin boundary comes from the Green's functions asymptotics 
\eqref{cas_3.3} in Theorem~\ref{thm3}.

\begin{theorem}[Martin Boundary, minimal Martin Boundary and positive harmonic functions]\label{thm5}
%Assume that $x^+_{max} > x^-_{max}$ and $x^+_{min} > x^-_{min}$ (i.e. that the case D holds). Then 
All of the functions $h_\alpha, \alpha \in \mathcal M\backslash\{0,\pi\}$, $f_0$ and $f_\pi$ are minimal harmonic functions. Furthermore, the Martin boundary $\Gamma$ of the process is given by 
\begin{equation}\label{front_Martin}
\Gamma = \left[\{e^{i\alpha}\}_{\alpha \in \mathcal M} \cup \{ue^{i\alpha}\}_{u \in [1,2],\, \alpha = \alpha_b, \tilde\alpha_b}\right]/\mathcal{R} \;\sim\; \mathbb{S}^1
\end{equation}
where $\mathcal{R}$ is the equivalence relation defined by $e^{i.0} \,\mathcal{R}\, 2e^{i\alpha_b}$, $e^{i\pi} \,\mathcal{R}\, 2e^{i\tilde\alpha_b}$
%, and $[x \mathcal{R} y \Longleftrightarrow x = y]$ for all other points \fs{il me semble que toute relation d'équivalence est reflexive, il faut supprimer ça, ça va de soit que si on ne dit rien d'autre il n'y a rien d'autre} 
(see Figure~\ref{tableauGamma}).

The minimal Martin boundary, denoted by $\Gamma_{min}$, is given by
\begin{equation}
\Gamma_{min} = \{e^{i\alpha}\}_{\alpha \in \mathcal M}
\end{equation}
(see Figure~\ref{anglesalphab}). Therefore, according to Martin boundary theory, for each positive harmonic function $h$ (with respect to the process), there exists a unique Radon measure $\nu$ on $\mathcal M$ such that : 
\begin{equation}\label{eq:representation}
    \forall z \in \R^2, \quad h(z) = \int_{\mathcal M} \frac{h_\alpha(z)}{h_\alpha(0)} \, d\nu(\alpha).%\textcolor{gray}{+ c_0 f_0(z) + c_\pi f_\pi(z) (gris à supprimer)} 
\end{equation}
where we have used the notation $\frac{h_0(z_0)}{h_0(0)} = \frac{f_0(z_0)}{f_0(0)}$ and $\frac{h_\pi(z_0)}{h_\pi(0)} = \frac{f_\pi(z_0)}{f_\pi(0)}$ since $$\frac{h_\alpha(z_0)}{h_\alpha(0)} \underset{\alpha \to 0\atop \alpha \in \mathcal M}{\longrightarrow} \frac{f_0(z_0)}{f_0(0)}\quad \textnormal{and}\quad \frac{h_\alpha(z_0)}{h_\alpha(0)} \underset{\alpha \to \pi \atop \alpha \in \mathcal M}{\longrightarrow} \frac{f_\pi(z_0)}{f_\pi(0)}.$$
%The other cases are analogous; see Figure~\ref{tableauGamma} for all possible Martin boundaries. The corresponding minimal Martin boundaries $\Gamma_{min}$ are represented in Figure~\ref{anglesalphab}.
\end{theorem}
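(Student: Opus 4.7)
The plan is to deduce this theorem almost entirely from the asymptotic analysis carried out in Theorems~\ref{alpha=0}--\ref{thm:4}, combined with the classical Martin boundary theory of Kunita--Watanabe~\cite{kunitaWatanabe1965}. The starting observation is that forming the Martin kernel $K(z_0, z) := g^{z_0}(z)/g^0(z)$ cancels every factor in the asymptotics that depends only on $z$: the exponentials $\exp(-r(\cos\alpha\, x(\alpha) + \sin\alpha\, y(\alpha)))$, the powers $r^{-1/2}$ or $r^{-3/2}$, and the scalar constants $C(\alpha), C_0, C_{br}$ all cancel, leaving only the ratio of the $z_0$-dependent prefactors. I would first enumerate all possible limits of $K(z_0, z_n)$ along sequences $z_n = r_n(\cos\alpha_n, \sin\alpha_n) \to \infty$: Theorems~\ref{alpha=0} and~\ref{thm:1} yield the functions $f_0$, $f_\pi$ and $h_\alpha$ for $\alpha \in \inter{\mathcal M}$; Theorem~\ref{thm:2} confirms that tangential approaches to $\alpha = 0, \pi$ from inside $\mathcal M$ produce no new limit (the $\kappa\alpha + r^{-1}$ factor cancels in the ratio); Theorem~\ref{thm:4} shows that $\alpha_0 \in [0, 2\pi]\setminus \mathcal M$ yields only $f_0$ or $f_\pi$; and Theorem~\ref{thm3}(v) produces a genuine one-parameter family of normalized limits indexed by $c \in (0, +\infty)$ at $\alpha_b$, and symmetrically at $\tilde\alpha_b$. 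Reparametrizing $c$ by $u \in (1,2)$ and letting $u=1$ (resp.\ $u=2$) correspond to $c=0$ (resp.\ $c=+\infty$) identifies the endpoints of each hair with $h_{\alpha_b}$ and $f_0$ (resp.\ $h_{\tilde\alpha_b}$ and $f_\pi$); this is exactly the equivalence relation $\mathcal R$ of \eqref{front_Martin}, and case-by-case inspection of Figure~\ref{anglesalphab} shows that the two hairs fill the two gaps of $\mathcal M$ in $[0, 2\pi]$, so that $\Gamma \cong \mathbb{S}^1$.

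Step two is to upgrade these \emph{a priori} excessive limits to genuine harmonic functions. Excessivity is automatic from Martin theory; harmonicity is obtained by verifying directly that each candidate satisfies the PDE system \eqref{eq:pdeharm}. This is a short computation: the building blocks $e^{a_0 x + b_0 Y^\pm(x)}$ and $e^{a_0 x + b_0 Z^\pm(x)}$ are by construction killed by $\frac{1}{2}\nabla\cdot(\Sigma^\pm\nabla) + \mu^\pm\cdot\nabla$ in the respective half-planes, since $\gamma_\pm$ vanishes on the corresponding branch; and the specific affine combinations appearing in \eqref{f(z_0)col}--\eqref{Harm1'} are precisely the ones enforcing the trace and oblique-flux transmission conditions at $\{b_0=0\}$ under the choice $q=q_0$. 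This is the role played by the three-variable polynomial $\gamma(x,y,z)$ built into the kernel method.

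The third step is the minimality analysis and the integral representation. The non-minimality of every interior point of the two hairs is manifest from \eqref{cas_3.3}: these Martin functions are explicit strict convex combinations of the two distinct harmonic functions $h_{\alpha_b}$ and $f_0$ (respectively $h_{\tilde\alpha_b}$ and $f_\pi$). For the converse---that every $h_\alpha$, $\alpha \in \mathcal M$, is minimal---I would use the Choquet-type characterization: any candidate decomposition $h_\alpha = h_1 + h_2$ is represented over $\Gamma_{\min}$ via Martin's theorem, and the pairwise distinct exponential growth directions $(x(\alpha), y(\alpha))$ or $(x(\alpha), z(\alpha))$ along each open arc of $\mathcal M$ force the representing measure to concentrate at $\alpha$. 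The integral representation \eqref{eq:representation} is then the Martin representation theorem over $\Gamma_{\min}$, and the continuity statements $h_\alpha(z_0)/h_\alpha(0) \to f_0(z_0)/f_0(0)$ as $\alpha \to 0$ in $\mathcal M$ (and similarly at $\pi$) follow by direct inspection of \eqref{Harm1}--\eqref{Harm1'} against \eqref{f(z_0)col}--\eqref{f(z_0)branch}, once one notes that the saddle point $(x(\alpha), y(\alpha))$ collapses to $(\M, Y^-(\M))$ (or $(\M, Z^+(\M))$) as $\alpha \to 0$.

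\textbf{Main obstacle.} The most delicate point is establishing minimality at the four special directions $0, \pi, \alpha_b, \tilde\alpha_b$: at each of these, two natural harmonic functions share the \emph{same} dominant exponential, so the separation-of-growth-directions argument used on the open arcs is not enough. The cleanest route I see is the standard $h$-transform criterion: $h$ is minimal if and only if the tail $\sigma$-algebra under $\mathbb{P}^h$ is trivial, which amounts to showing that the conditioned process $Z_t/|Z_t|$ converges almost surely to the correct boundary point. Such a law-of-large-numbers statement for the conditioned diffusion in each half-plane---in the spirit of the arguments used for discrete analogues in \cite{IgKurRa, KurMa}---will require a careful analysis of the drift after the Doob transform and is the technical heart of the proof.
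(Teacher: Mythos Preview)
Your overall strategy matches the paper, but two steps diverge from it and one of them has a gap.

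\textbf{Harmonicity.} Checking the PDE~\eqref{eq:pdeharm} is not enough: harmonicity here is the probabilistic mean-value property $h(z_0)=\E_{z_0}[h(Z_{\tau_{U^c}})]$, and bridging from the PDE to this across the discontinuous interface would require an It\^o argument of the same flavour as Section~\ref{sub:detailedproof}. The paper avoids this entirely: since $K(\cdot,z)=g^{\cdot}(z)/g^{0}(z)$ is already probabilistically harmonic for each fixed $z$ by~\cite{kunitaWatanabe1965}, one writes $K(z_0,(r\cos\alpha,r\sin\alpha))=(h_{\alpha_0}(z_0)+\varepsilon)/(h_{\alpha_0}(0)+\varepsilon)$ from the asymptotic theorems and passes the mean-value identity to the limit by dominated convergence. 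The actual work is showing that the remainder $\varepsilon(z_0,r,\alpha)\to 0$ \emph{uniformly} on compact sets in $z_0$; the paper handles the generic directions by reference to~\cite{petit2024} and treats the branching directions $\alpha_b,\tilde\alpha_b$ explicitly by bounding the $z_0$-dependence of $\Phi_1^{z_0},\Phi_2^{z_0}$ in~\eqref{Phi1Phi2}.

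\textbf{Minimality at the special angles.} Your ``main obstacle'' at $0,\pi,\alpha_b,\tilde\alpha_b$ does not arise, and the $h$-transform or tail-triviality route is unnecessary. The paper's Lemma~\ref{lem:ineqmini} shows that for \emph{every} pair $\alpha\neq\beta$ in $\mathcal M$ there is a strict exponential gap between $h_\alpha$ and $h_\gamma$ ($\gamma$ near $\beta$) when both are evaluated along the direction $\beta$: the argmax defining $(x(\beta),y(\beta))$ forces this even at the special points. For instance along $\beta=\alpha_b$ one has $Y^-(\M)<Y^+(\M)$, so $f_0$ has rate $\M\cos\alpha_b+Y^-(\M)\sin\alpha_b$, strictly below the rate $\M\cos\alpha_b+Y^+(\M)\sin\alpha_b$ of $h_{\alpha_b}$. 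This localises the representing measure of $h_\alpha$ to $\{\alpha\}$ on $\mathcal M\setminus\{0,\pi\}$, leaving a finite relation $h_\alpha=c\,h_\alpha+c_0 f_0+c_\pi f_\pi$, which is disposed of by the evident linear independence of these three explicit functions.
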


\begin{figure}[hbtp]
\begin{tabular}{|c|c|}
\hline
Case A & Case B\\
\hline
$x_{max}^+ > x_{max}^-$& $x_{max}^+ < x_{max}^-$\\
$x_{min}^+ <\ x_{min}^-$  & $x_{min}^+ <\ x_{min}^-$  \\
\hline
\includegraphics[scale = 0.5]{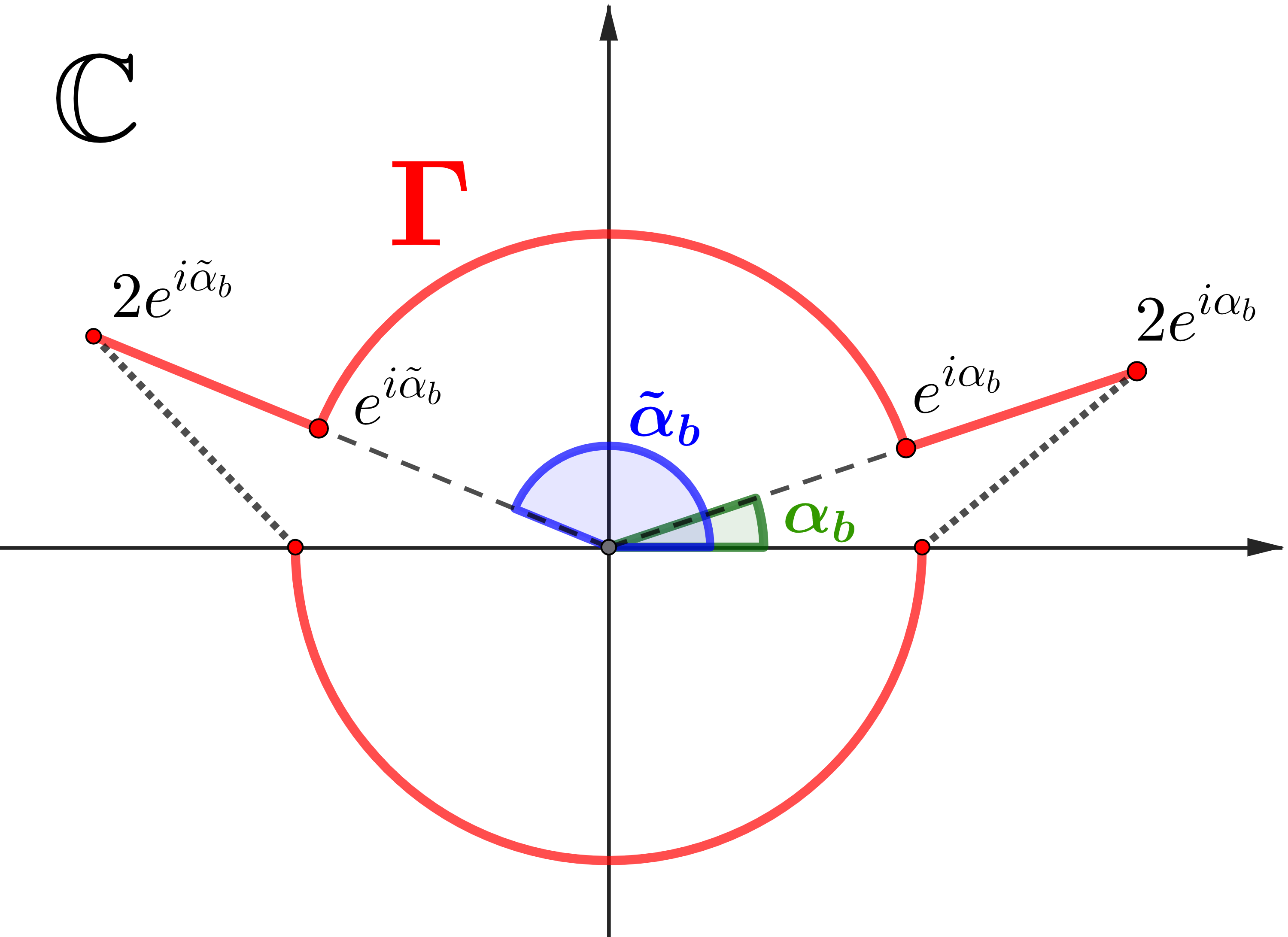}&\includegraphics[scale = 0.5]{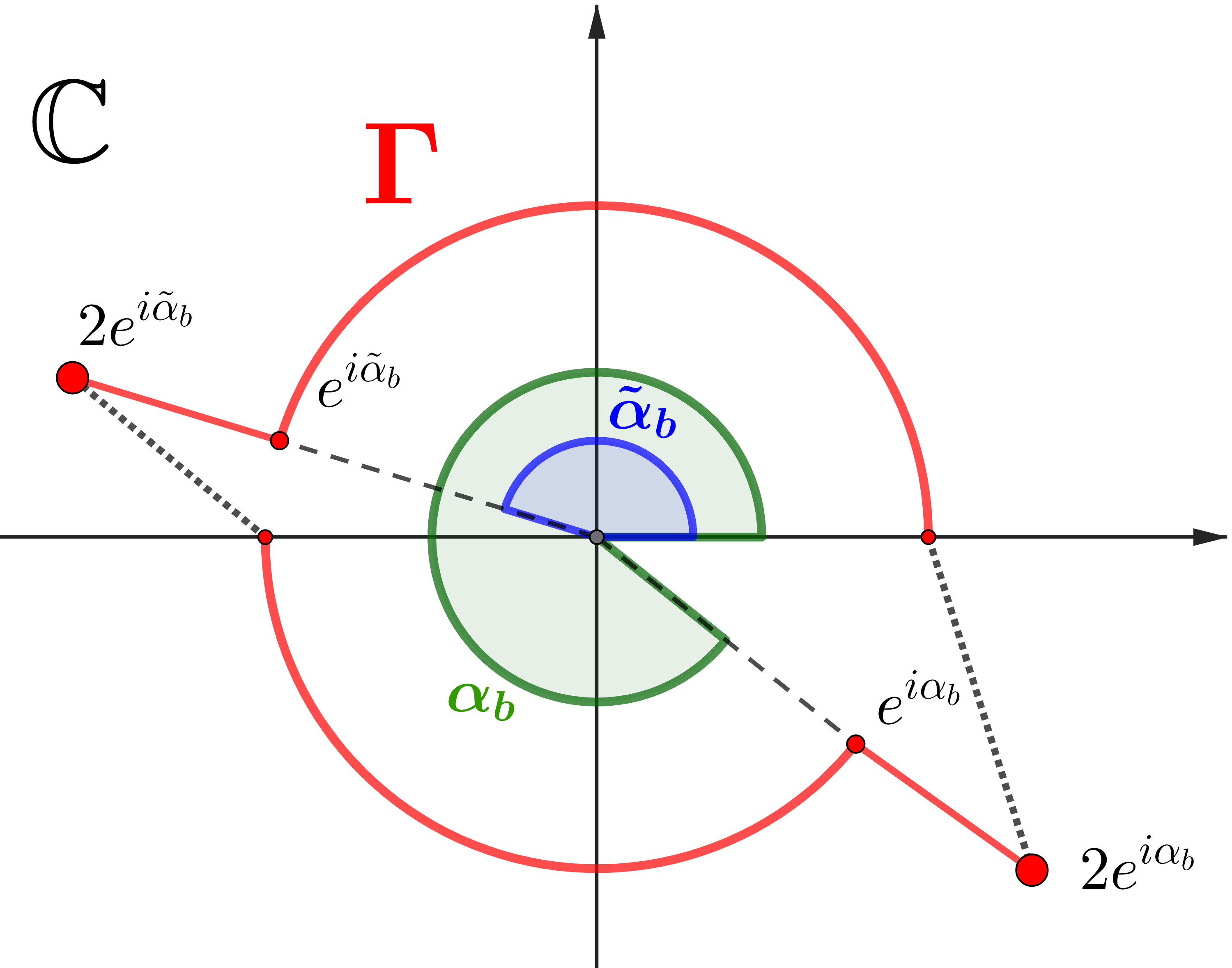}  \\
\hline
Case C & Case D\\
\hline
$x_{max}^+ < x_{max}^-$ & $x_{max}^+ > x_{max}^-$ \\
$x_{min}^+ >\ x_{min}^-$& $x_{min}^+ >\ x_{min}^-$\\
\hline
\includegraphics[scale = 0.5]{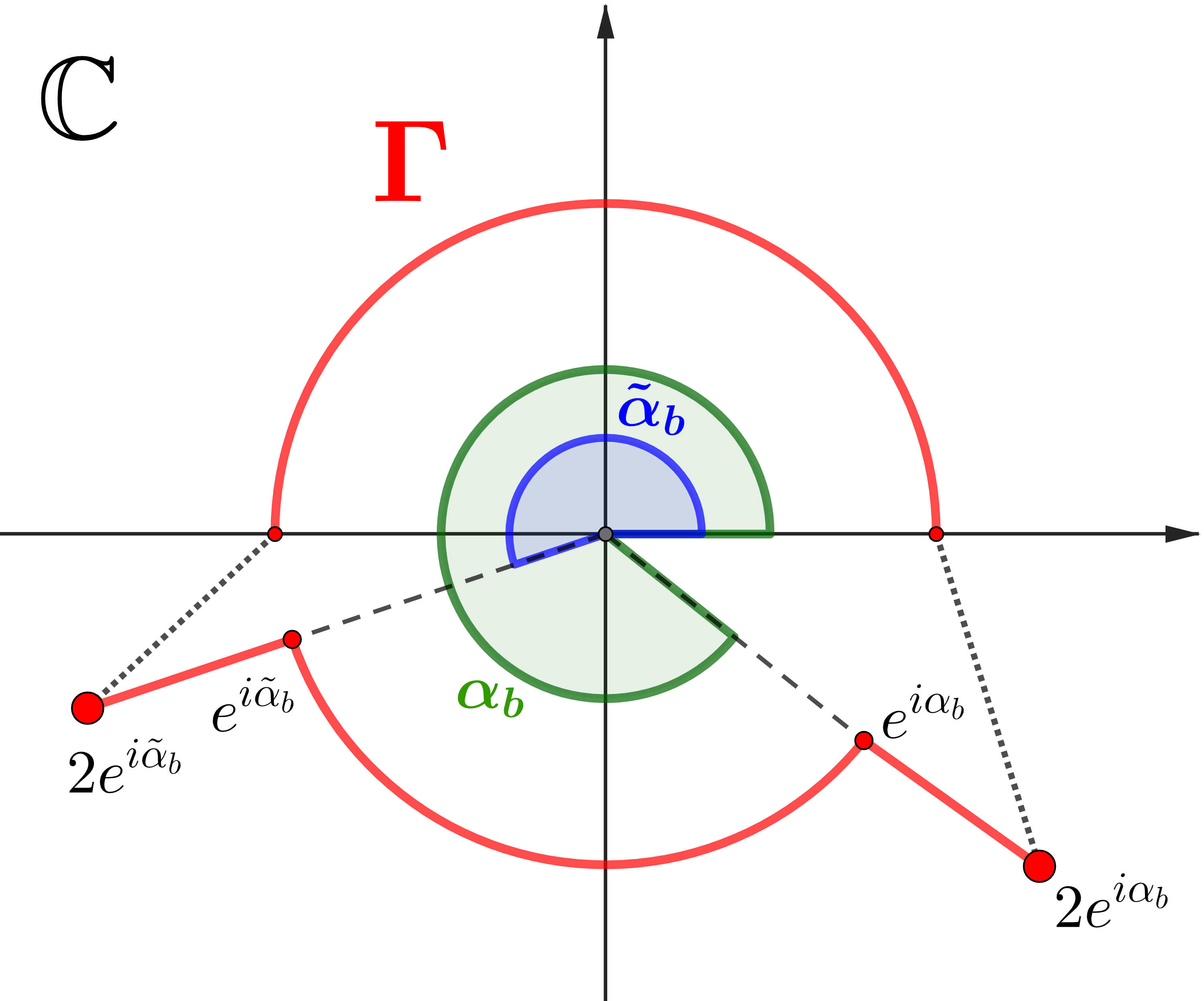}&\includegraphics[scale = 0.5]{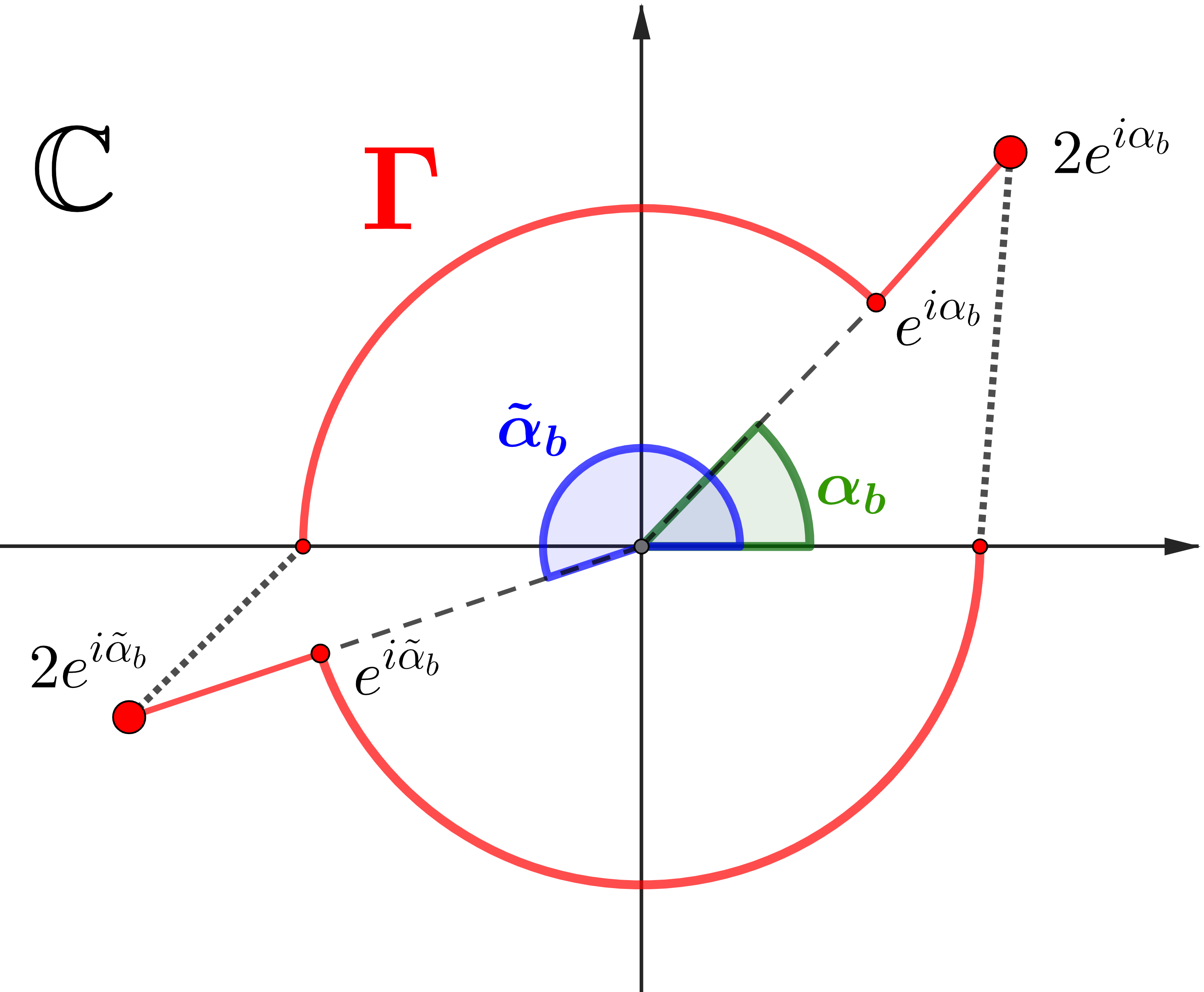}\\
\hline
\end{tabular}
\caption{Representation of the Martin boundary $\Gamma$ depending on the parameters.}\label{tableauGamma}
\end{figure}

%\begin{figure}[hbtp]
%\begin{tabular}{|c|c|c|}
%\hline
%& $x_{max}^+ > x_{max}^-$& $x_{max}^+ < x_{max}^-$\\
%\hline
%$x_{min}^+ <\ x_{min}^-$  &\includegraphics[scale=0.45]{Martin_min1.png} & ...\\
%\hline
%& & \\
%$x_{min}^+ > x_{min}^-$  &... & ...  \\
%\hline
%\end{tabular}
%\caption{Representation of the minimal Martin boundary according to the parameters.}\label{tableauGammamin}
%\end{figure}

Using the preceding representation theorem, we can derive the well-known one-dimensional result concerning the probabilities of escaping into the upper or lower half-plane.
By assumption~\eqref{hyp:drift} on the drift (see Figure~\ref{parametres}), the second component $B$ of the process naturally satisfies $B_t \underset{t\to+\infty}{\longrightarrow} \pm\infty$. By standard arguments, the mappings 
\[
z_0 \longmapsto \P_{z_0}\bigl(B_t \underset{t\to+\infty}{\longrightarrow} +\infty\bigr) \quad \text{and} \quad z_0 \longmapsto \P_{z_0}\bigl(B_t \underset{t\to+\infty}{\longrightarrow} -\infty\bigr)
\]
are harmonic functions. 
Looking at the bounded non negative harmonic functions (which reduces to convex combinations of $h_{\alpha_{\mu^+}}$ and $h_{\alpha_{\mu^-}}$), we deduce in Section~\ref{sec:5} the following classical one-dimensional result.

\begin{cor}[Upper and lower escape probabilities]\label{cor:escape+-infty}
Let $(a_0, b_0) \in \R^2$. Then: $$\P_{(a_0,b_0)}\bigl(B_t \underset{t\to+\infty}{\longrightarrow} +\infty\bigr) = h_{\alpha_{\mu^+}}(b_0),\quad \textnormal{and}\quad \P_{(a_0,b_0)}\bigl(B_t \underset{t\to+\infty}{\longrightarrow} -\infty\bigr) = h_{\alpha_{\mu^-}}(b_0).$$
In particular,
\begin{align}
\P_{(a_0,b_0)}\bigl(B_t \underset{t\to+\infty}{\longrightarrow} +\infty\bigr) &= 1 - \P_{(a_0,b_0)}\bigl(B_t \underset{t\to+\infty}{\longrightarrow} -\infty\bigr) \\
& = \begin{cases}
1 + \frac{\mu_2^-}{\mu_2^+ - \mu_2^-} e^{-2 b_0 \mu^+_2 / \Sigma_{22}^+} & \text{if } b_0 \geq 0, \\
\frac{\mu_2^+}{\mu_2^+ - \mu_2^-} e^{-2 b_0 \mu_2^- / \Sigma_{22}^-} & \text{if } b_0 < 0.
\end{cases}
\end{align}
\end{cor}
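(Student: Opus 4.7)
The approach is to combine the harmonicity of the escape probabilities with the Martin representation of Theorem~\ref{thm5}, restricted to the face of \emph{bounded} positive harmonic functions, and finally to identify the constants through the behaviour at $b_0=\pm\infty$.

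\emph{Harmonicity and boundedness.} By the strong Markov property applied at the hitting time of any relatively compact open set, each of the maps $z_0\mapsto\P_{z_0}(B_t\to\pm\infty)$ is harmonic. Since the second component $B$ is autonomous---its coefficients in~\eqref{EDS} depend on $B$ alone and the skew term is constant in direction---these probabilities depend only on $b_0$; being probabilities, they take values in $[0,1]$.

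\emph{Reduction to two minimal functions.} The bounded non-negative harmonic functions form a face of the cone of positive harmonic functions: if $0\leq h_1+h_2\leq M$ with $h_i\geq 0$ harmonic, then $0\leq h_i\leq M$. Combined with the uniqueness part of~\eqref{eq:representation} and the minimality of the $h_\alpha$'s (Theorem~\ref{thm5}), this forces the representing measure $\nu$ of any bounded positive harmonic function to be supported on those $\alpha\in\mathcal M$ for which $h_\alpha$ is itself bounded. Now~\eqref{Harm1}--\eqref{Harm1'} show that $h_\alpha(z_0)$ carries a dominant factor $e^{a_0 x(\alpha)}$, so boundedness on $\R^2$ demands $x(\alpha)=0$. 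Since $\alpha\mapsto x(\alpha)$ is a $C^\infty$ diffeomorphism on each of $(0,\pi)$ and $(\pi,2\pi)$, the only such values are $\alpha_{\mu^+}$ and $\alpha_{\mu^-}$, and using $Y^-(0)=-2\mu_2^+/\Sigma_{22}^+<0$ together with $Z^+(0)=-2\mu_2^-/\Sigma_{22}^->0$ one verifies directly that both $h_{\alpha_{\mu^+}}$ and $h_{\alpha_{\mu^-}}$ are bounded on $\R^2$. Hence every bounded non-negative harmonic function is of the form $c_+ h_{\alpha_{\mu^+}}+c_- h_{\alpha_{\mu^-}}$ with $c_\pm\geq 0$.

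\emph{Explicit evaluation and identification of constants.} Specialising~\eqref{Harm1} at $\alpha=\alpha_{\mu^+}$ (where $x(\alpha_{\mu^+})=y(\alpha_{\mu^+})=0$) and expanding $\gamma(0,0,Z^+(0))$ and $\gamma(0,Y^-(0),Z^+(0))$ via~\eqref{eq:q}, a short algebraic simplification produces exactly the two-piece expression for $h_{\alpha_{\mu^+}}(b_0)$ claimed in the corollary; in particular $h_{\alpha_{\mu^+}}(b_0)\to 1$ as $b_0\to+\infty$ and $\to 0$ as $b_0\to-\infty$, and symmetric statements hold for $h_{\alpha_{\mu^-}}$. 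Hypothesis~\eqref{hyp:drift} guarantees that $B_t\to\pm\infty$ almost surely and (by comparison with a drifted Brownian motion) that $\P_{(a_0,b_0)}(B_t\to+\infty)\to 1$, resp.~$0$, as $b_0\to\pm\infty$. Matching these limits in $c_+ h_{\alpha_{\mu^+}}+c_- h_{\alpha_{\mu^-}}$ forces $c_+=1,c_-=0$, which gives the claimed formula for $\P(B_t\to+\infty)$; the formula for $\P(B_t\to-\infty)$ then follows from $\P(B_t\to+\infty)+\P(B_t\to-\infty)=1$. The main delicate step is the face argument reducing the Martin representation to these two atoms; the rest is pure algebra.
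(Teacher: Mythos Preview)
Your proof is correct and follows essentially the same approach as the paper's own proof: harmonicity of the escape probability, reduction via the Martin representation of Theorem~\ref{thm5} to the two bounded minimal harmonic functions $h_{\alpha_{\mu^\pm}}$, and identification of the coefficients $c_\pm$ by letting $b_0\to\pm\infty$. You give slightly more detail than the paper on why only $\alpha_{\mu^+}$ and $\alpha_{\mu^-}$ yield bounded minimal functions (via the dominant factor $e^{a_0 x(\alpha)}$ and the face property), but the structure is identical.
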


\section{Process and  functional equation}\label{sec:process_functionalequation}

In Section~\ref{sub:existence}, we prove Proposition~\ref{prop:existence_deuxnoyaux}. Then, we present some preliminary lemmas in Section~\ref{sub:preliminary}, which are necessary to establish the functional equation~\eqref{eq fonctionnelle}. A short intuitive proof of the functional equation is given in Section~\ref{sub:formalproof}, followed by a detailed one in Section~\ref{sub:detailedproof}. As a first reading, the reader may focus on the short proof.

\subsection{Construction of the process}\label{sub:existence}

In this section, we prove Proposition~\ref{prop:existence_deuxnoyaux}. The proof is adapted from~\cite[Proposition~1]{Lejay2011}, where the covariance matrix is assumed to be diagonal. We first establish the existence and pathwise uniqueness of the solution to equation~\eqref{EDS}, and then identify the corresponding generator.

\subsubsection*{Existence and uniqueness of \eqref{EDS}}

First, note that \eqref{EDS} can be written as the following SDE:
\begin{equation}\label{EDSS}
\begin{cases} 
 dA_t &= \sqrt{\Sigma_{11}(B_t) - \frac{\Sigma_{12}^2(B_t)}{\Sigma_{22}(B_t)}}\,dW^1_t + \frac{\Sigma_{12}(B_t)}{\sqrt{\Sigma_{22}(B_t)}}\,dW^2_t + \mu_1(B_t)\,dt + \frac{\Sigma_{12}^+ - \Sigma_{12}^-}{\Sigma_{22}^+ + \Sigma_{22}^-} \,dL^0_t(B), \quad (1)\\[1ex]
 dB_t &= \sqrt{\Sigma_{22}(B_t)}\,dW^2_t + \mu_2(B_t)\,dt + \frac{\Sigma_{22}^+ - \Sigma_{22}^-}{\Sigma_{22}^+ + \Sigma_{22}^-} \,dL^0_t(B), \quad\quad\quad\quad\quad\quad\quad\quad\quad\quad\quad\quad (2)
\end{cases}
\end{equation}

Here, $\big((W^1_t, W^2_t)\big)_{t \geq 0}$ is a standard two-dimensional Brownian motion, and $L^0(B)$ denotes the symmetric local time of $B$ at $0$.
Indeed, the matrix
\[
\sigma(B_t) = \begin{pmatrix}
\sqrt{\Sigma_{11}(B_t) - \frac{\Sigma_{12}^2(B_t)}{\Sigma_{22}(B_t)}} & \frac{\Sigma_{12}(B_t)}{\sqrt{\Sigma_{22}(B_t)}}\\
0 & \sqrt{\Sigma_{22}(B_t)}
\end{pmatrix}
\]
satisfies $\sigma(B_t)\, \sigma(B_t)^T = \Sigma(B_t)$.

Let $(W^1, W^2)$ be a standard two-dimensional Brownian motion. By~\cite{Legall1984}, there exists a pathwise unique solution $(B_t)_{t \geq 0}$ of equation~$(2)$ starting from $b_0 \in \R$. Then, equation~$(1)$ admits a pathwise unique solution starting from $a_0 \in \R$ given by:
\[
A_t = a_0 + \int_0^t\sqrt{\Sigma_{11}(B_s) - \frac{\Sigma_{12}^2(B_s)}{\Sigma_{22}(B_s)}}\,dW^1_s + \int_0^t\frac{\Sigma_{12}(B_s)}{\sqrt{\Sigma_{22}(B_s)}}\,dW^2_s + \int_0^t\mu_1(B_s)\,ds + \frac{\Sigma_{12}^+ - \Sigma_{12}^-}{\Sigma_{22}^+ + \Sigma_{22}^-} L^0_t(B).
\]

Using a generalisation of the Yamada Watanabe Engelbert theorem (see~\cite[Proposition~2.10]{Kurtz2007}), uniqueness in law also holds for the process starting from $(a_0, b_0)$. By standard arguments, we deduce that the family of probability laws indexed by $(a_0, b_0) \in \R^2$ defines a Markov process.

\subsubsection*{Shape of the generator}

Let us prove that the generator can be written in divergence form. To this end, consider the continuous piecewise affine regularisations of $\Sigma$ and $\mu$ defined by: $\Sigma^n(b) = \Sigma^+$ if $b \geq \frac{1}{n}$, $\Sigma^n(b) = \Sigma^-$ if $b \leq \frac{-1}{n}$, and $\Sigma^n$ affine on $\left[-\frac{1}{n}, \frac{1}{n}\right]$ (similarly for $\mu^n$).

Let $(W^1, W^2)$ be a two-dimensional Brownian motion. By standard arguments, the SDE
\[
\begin{cases} 
 dA^n_t &= \sqrt{\Sigma^n_{11}(B^n_t) - \frac{{\Sigma_{12}^n}^2(B^n_t)}{\Sigma^n_{22}(B^n_t)}}\,dW^1_t + \frac{\Sigma^n_{12}(B^n_t)}{\sqrt{\Sigma^n_{22}(B^n_t)}}\,dW^2_t + \left(\mu^n_1(B^n_t) + \frac{1}{2}\frac{d}{dy}\Sigma^n_{12}(B^n_t)\right)dt, \\[1ex]
 dB^n_t &= \sqrt{\Sigma^n_{22}(B^n_t)}\,dW^2_t + \left(\mu^n_2(B^n_t) + \frac{1}{2} \frac{d}{dy}\Sigma^n_{22}(B^n_t)\right)dt
\end{cases}
\]
defines a Markov process with generator $\mathcal{L}_n = \frac{1}{2} \nabla \cdot (\Sigma^n(y) \nabla) + \mu^n(y) \cdot \nabla$.
By~\cite{Stroock1988}, this family of processes converges in law to the strong Markov process with generator given by~\eqref{divergence-form}. Moreover, using arguments similar to those in~\cite{Lejay2011}, one can show that $(A^n_t, B^n_t)$ converges in the $L^1$ norm to $(A_t, B_t)$ for all $t \geq 0$.
By identifying the limits, we conclude that~\eqref{divergence-form} is indeed the generator of $(X, Y)$.

\subsection{Preliminary lemmas}\label{sub:preliminary}

To derive the desired functional equation, we first establish a few preliminary lemmas. We begin by stating Aronson-type estimates for the transition density, along with some elementary consequences. These inequalities hold since the process admits a generator in divergence form.

\begin{lemma}[Some estimates due to Aronson]
There exists a constant $M > 0$ such that, for all $t > 0$ and $z_0 \in \R^2$,
\begin{equation}\label{Aronson}
\frac{1}{Mt} \, e^{-M|z - z_0|^2 / t - Mt} \leq p_t^{z_0}(z) \leq \frac{M}{t} \, e^{-|z - z_0|^2 / Mt + Mt}.
\end{equation}
Let $g_t^{z_0}(z)$ be defined by
\[
g_t^{z_0}(z) := \int_0^t p_s^{z_0}(u, 0) \, ds.
\]
Then,
\begin{equation}\label{utile!}
g_t^{z_0}(z) \leq \frac{M^2 \, e^{ -\frac{|z - z_0|^2}{Mt} + Mt }}{|z - z_0|^2} \, t.
\end{equation}
Finally, for all $z_0 = (a_0, b_0) \in \R^2$ and $x \in \R$,
\begin{equation}\label{utile!2}
\E_{(a_0, b_0)}\left[e^{x A_t}\right] \leq e^{x^2 M t + a_0 x}.
\end{equation}
\end{lemma}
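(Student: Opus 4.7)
The three estimates are increasingly elementary once the first is granted, so my plan is to treat them in order, with the bulk of the difficulty in the first and third. I emphasize that the divergence-form structure established in Proposition~\ref{prop:existence_deuxnoyaux} is precisely what makes all three bounds accessible.

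For the two-sided Gaussian bound \eqref{Aronson} I would invoke the classical Aronson estimates for parabolic operators in divergence form with bounded, measurable, uniformly elliptic coefficients, incorporating a bounded first-order (drift) term. The matrix $\Sigma(y)$ is uniformly elliptic because $\Sigma^+$ and $\Sigma^-$ are positive-definite, and $\Sigma,\mu$ are manifestly bounded and measurable; the existence of the density $p_t^{z_0}$ has already been recalled from Stroock \cite{Stroock1988}, and the same reference (or Aronson's original work) supplies the required two-sided bounds. The factors $e^{\pm Mt}$ on the two sides absorb the contribution of the bounded drift $\mu$, either directly from the inhomogeneous Aronson estimate or via a Girsanov-type reduction to the pure divergence case. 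No new work is needed here.

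For \eqref{utile!} my plan is to insert the Aronson upper bound into the definition of $g_t^{z_0}(z)$ and then use the elementary inequality $e^{-1/u}\leq u$ valid for all $u>0$ (which follows from $\ln u + 1/u\geq 1$). Writing
\[
e^{-|z-z_0|^2/(Ms)} = e^{-|z-z_0|^2/(2Ms)}\cdot e^{-|z-z_0|^2/(2Ms)},
\]
the first factor is bounded by $2Ms/|z-z_0|^2$, which exactly cancels the singular $1/s$ in the Aronson bound and yields the desired $1/|z-z_0|^2$ prefactor. For the second factor, the monotonicity $e^{-c/s}\leq e^{-c/t}$ for $0<s\leq t$ replaces $s$ by $t$ in the exponent, and a trivial integration over $s\in[0,t]$ then produces the factor of $t$ (after enlarging $M$ to absorb the numerical constants).

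For the moment bound \eqref{utile!2} my plan is to condition on $\mathcal{F}^B_t=\sigma(B_s: s\leq t)$ and decompose $A_t$ according to \eqref{EDSS}. Given $\mathcal{F}^B_t$, the integral $\int_0^t \sqrt{\Sigma_{11}(B_s)-\Sigma_{12}^2(B_s)/\Sigma_{22}(B_s)}\,dW_s^1$ is Gaussian with variance $\leq Ct$ because $W^1$ is independent of $B$, giving a conditional Laplace transform bounded by $e^{x^2Ct/2}$. For the $dW^2$-integral, whose integrand is a bounded process, I would apply the supermartingale property of the stochastic exponential together with the Cauchy-Schwarz identity
\[
e^{xM_t} = e^{xM_t-(x^2/2)\langle M\rangle_t}\cdot e^{(x^2/2)\langle M\rangle_t},
\]
taking expectation and using $\langle M\rangle_t\leq C't$ to get again a bound of the form $e^{x^2C't}$. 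The drift term $\int_0^t\mu_1(B_s)\,ds$ is bounded deterministically by a constant times $t$, and the resulting linear factor in $x$ can be absorbed into the quadratic one via $|x|\leq (1+x^2)/2$, at the price of enlarging $M$. The main obstacle is the local-time contribution $q_1 L^0_t(B)$, which requires an exponential moment bound on $L^0_t(B)$; here I would apply the Tanaka formula $L^0_t(B)=|B_t|-|b_0|-\int_0^t\mathrm{sgn}(B_s)\,dB_s$ to rewrite the local time as a bounded-variation plus martingale part, and then repeat the exponential-martingale argument to obtain a bound of the same $e^{x^2Mt}$ form. Collecting all contributions and enlarging $M$ once more yields \eqref{utile!2}.
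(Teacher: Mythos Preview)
Your treatment of \eqref{Aronson} and \eqref{utile!} matches the paper's very brief proof, which only cites \cite[Theorem II.3.8]{Stroock1988} for \eqref{Aronson} and leaves the two consequences implicit; your explicit derivation of \eqref{utile!} via the elementary inequality $e^{-1/u}\le u$ is correct.

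For \eqref{utile!2} your route through the SDE decomposition, conditioning on $\mathcal F^B_t$, exponential martingales, and Tanaka's formula is far more elaborate than the natural approach the paper implicitly has in mind: simply integrate the Aronson upper bound for $p_t^{z_0}$ against $e^{xa}$ over $\R^2$, a Gaussian computation giving $\E_{z_0}[e^{xA_t}]\le \pi M^2\,e^{Mt+a_0x+Mtx^2/4}$. Your argument also contains two genuine gaps. First, the absorption step ``$|x|\le(1+x^2)/2$'' applied to the drift contribution $|x|\cdot Ct$ leaves a residual $Ct/2$ in the exponent that is \emph{not} of the form $x^2Mt+a_0x$ and cannot be removed by enlarging $M$. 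Second, the Tanaka formula $L^0_t(B)=|B_t|-|b_0|-\int_0^t\mathrm{sgn}(B_s)\,dB_s$ reintroduces the unbounded quantity $|B_t|$; to close the loop you would still need to combine it with the SDE for $B$ and use $|q_2|<1$ to solve for $L^0_t(B)$ in terms of martingale increments plus a linear-in-$t$ term, which again reproduces the same residual. In fact the direct Gaussian route exhibits the very same extra factor $e^{Ct}$, and the bound \eqref{utile!2} cannot hold exactly as written: since both sides equal $1$ at $x=0$, the inequality for all $x\in\R$ would force $\E_{z_0}[A_t]=a_0$, which fails under the positive-drift assumption \eqref{hyp:drift} (take $b_0$ large so that the local-time contribution is negligible). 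The intended, and entirely sufficient, estimate is $\E_{z_0}[e^{xA_t}]\le C\,e^{x^2Mt+a_0x+Mt}$; the extra factor is immaterial for every downstream application in the paper, which only uses \eqref{utile!2} to guarantee finiteness of exponential moments and to justify dominated convergence.
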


\begin{proof}
Inequalities \eqref{Aronson} are proved in~\cite[Theorem II.3.8]{Stroock1988}.%\fr{Question pour nous : +Mt indispensable?} Estimates \eqref{utile!} and \eqref{utile!2} follow directly from \eqref{Aronson}.
\end{proof}

We now relate the occupation measure on the axis $\{y = 0\}$, defined in~\eqref{eq:H(z0,A)}, to the values of the Green's function $g^{z_0}$ restricted to $\{y = 0\}$. This connection is made precise in the following lemma.

\begin{lemma}[Link between $H(z_0,\cdot)$ and $g^{z_0}(u,0)$]\label{Tlocal_densite}
Let $z_0 = (a_0, b_0) \in \R^2$ with $b_0 \neq 0$. Let $f: \R \to [0, +\infty)$ be a continuous nonnegative function. Then:
\begin{equation}\label{aaaaa}
\E_{z_0}\left[\int_0^{+\infty}  f(A_s) \, dL^0_s(B)\right] = \left(\frac{\Sigma_{22}^- + \Sigma_{22}^+}{2}\right) \int_\R g^{z_0}(u, 0) f(u) \, du.
\end{equation}
In particular, combining this with the asymptotic behavior as $t \to +\infty$ and the definition~\eqref{def:phi}, we obtain the identity
\begin{equation}
\phi^{z_0}(x) = \left(\frac{\Sigma_{22}^- + \Sigma_{22}^+}{2}\right) \int_\R g^{z_0}(u, 0) \, e^{-xu} \, du.
\end{equation}
\end{lemma}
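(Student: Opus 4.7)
The plan is to approximate $L^0_s(B)$ by its defining occupation density $\frac{1}{2\varepsilon}\int_0^s \fc_{[-\varepsilon,\varepsilon]}(B_u)\,du$, interchange expectation and the $\varepsilon$-limit via Fubini together with a dominated-convergence argument, and identify the resulting pointwise limit through the continuity of the transition density $p_s^{z_0}$ at $b=0$. The Laplace-transform identity for $\phi^{z_0}$ is then obtained by specialising $f$ to an exponential and letting the time horizon tend to infinity.

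Concretely, for a fixed horizon $T>0$ and a continuous bounded $f\geq 0$, the defining identity for $L^0_t(B)$ gives, almost surely,
\[
\int_0^T f(A_s)\,dL^0_s(B)=\lim_{\varepsilon\to 0}\frac{1}{2\varepsilon}\int_0^T f(A_s)\fc_{[-\varepsilon,\varepsilon]}(B_s)\,ds.
\]
Taking the expectation under $\P_{z_0}$ and applying Fubini, the right-hand side becomes
\[
\int_0^T\!\!\int_\R f(a)\,\Bigl(\frac{1}{2\varepsilon}\int_{-\varepsilon}^\varepsilon p_s^{z_0}(a,b)\,db\Bigr)\,da\,ds.
\]
By the continuity of $(s,z_0,z)\mapsto p_s^{z_0}(z)$ on $(0,+\infty)\times\R^2\times\R^2$ (Strook \cite{Stroock1988}), the inner average converges to $p_s^{z_0}(a,0)$ for every $(s,a)$ with $s>0$. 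Since $b_0\neq 0$, the Aronson upper bound \eqref{Aronson} provides a Gaussian-type domination uniform in $b\in[-\varepsilon_0,\varepsilon_0]$ for some small $\varepsilon_0>0$, so dominated convergence yields
\[
\lim_{\varepsilon\to 0}\E_{z_0}\!\Bigl[\tfrac{1}{2\varepsilon}\int_0^T f(A_s)\fc_{[-\varepsilon,\varepsilon]}(B_s)\,ds\Bigr]=\int_0^T\!\!\int_\R f(a)\,p_s^{z_0}(a,0)\,da\,ds.
\]
Letting $T\to+\infty$ by monotone convergence and recalling $g^{z_0}(a,0)=\int_0^\infty p_s^{z_0}(a,0)\,ds$ yields \eqref{aaaaa} up to the prefactor $(\Sigma_{22}^-+\Sigma_{22}^+)/2$, which reflects the divergence-form normalisation of $p^{z_0}$ along the interface $\{b=0\}$ imposed by the choice $q=q_0$ of singular drift.

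The main obstacle is the justification of the interchange $\lim_\varepsilon \E_{z_0}[\,\cdot\,]=\E_{z_0}[\lim_\varepsilon\,\cdot\,]$, in particular when $f$ is merely continuous and non-negative (as needed below for exponential weights). The estimates \eqref{utile!} and \eqref{utile!2} are the key tools here: \eqref{utile!} furnishes a uniform-in-$\varepsilon$ $L^1$ control of the approximants $\tfrac{1}{2\varepsilon}\int_0^T f(A_s)\fc_{[-\varepsilon,\varepsilon]}(B_s)\,ds$, whereas \eqref{utile!2} supplies the exponential moments of $A_t$ that allow one to handle unbounded $f(u)=e^{xu}$ via a truncation $f_n(u)=(e^{xu})\wedge n$ followed by monotone convergence. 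Combining this with the definition \eqref{def:phi} of $\phi^{z_0}(x)$ and substituting $f(u)=e^{xu}$, for $x$ in the strip of convergence given by Proposition~\ref{prop:eqfonc}, then produces the announced Laplace-transform identity.
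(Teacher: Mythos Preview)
Your outline has the right skeleton but contains a genuine gap at the decisive point: the appearance of the prefactor $(\Sigma_{22}^-+\Sigma_{22}^+)/2$. If you approximate $L^0_t(B)$ by $\frac{1}{2\varepsilon}\int_0^t\fc_{[-\varepsilon,\varepsilon]}(B_s)\,ds$ and pass to the density, you obtain exactly
\[
\int_0^T\!\!\int_\R f(a)\,p_s^{z_0}(a,0)\,da\,ds=\int_\R g_T^{z_0}(u,0)f(u)\,du,
\]
with \emph{no} multiplicative constant. Your sentence ``up to the prefactor $(\Sigma_{22}^-+\Sigma_{22}^+)/2$, which reflects the divergence-form normalisation of $p^{z_0}$'' is not a derivation: nothing in the divergence-form structure rescales $p_s^{z_0}(\cdot,0)$ by that specific number, and if it did your argument would be circular.

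The paper obtains the constant by working instead with the \emph{quadratic-variation} approximation of the semimartingale symmetric local time,
\[
L^0_t(B)=\lim_{\varepsilon\to 0}\frac{1}{2\varepsilon}\int_0^t\fc_{[-\varepsilon,\varepsilon]}(B_s)\,d\langle B\rangle_s,
\]
so that, since $d\langle B\rangle_s=\Sigma_{22}(B_s)\,ds=\bigl(\Sigma_{22}^-\fc_{B_s<0}+\Sigma_{22}^+\fc_{B_s\ge 0}\bigr)\,ds$, the expectation splits as
\[
\frac{1}{2\varepsilon}\Bigl(\Sigma_{22}^-\!\int_{-\varepsilon}^0 p_s^{z_0}(a,b)\,db+\Sigma_{22}^+\!\int_0^\varepsilon p_s^{z_0}(a,b)\,db\Bigr)\;\longrightarrow\;\frac{\Sigma_{22}^-+\Sigma_{22}^+}{2}\,p_s^{z_0}(a,0).
\]
This is precisely where the factor comes from. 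Note that for a diffusion with non-unit coefficient the Lebesgue-$ds$ limit and the $d\langle B\rangle_s$ limit do \emph{not} coincide; the one consistent with the Tanaka formula (and hence with the local-time term in the SDE \eqref{EDS}) is the latter.

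A secondary looseness: knowing $L^0_T(B)=\lim_\varepsilon \frac{1}{2\varepsilon}\int_0^T\fc_{[-\varepsilon,\varepsilon]}(B_s)\,ds$ for each fixed $T$ does not by itself yield
\[
\int_0^T f(A_s)\,dL^0_s(B)=\lim_{\varepsilon\to 0}\frac{1}{2\varepsilon}\int_0^T f(A_s)\fc_{[-\varepsilon,\varepsilon]}(B_s)\,ds
\]
for continuous $f$. The paper first establishes this convergence for measurable step functions $s\mapsto H_s$ (in $L^1$ and a.s.) and then passes to continuous integrands via uniform approximation of $s\mapsto f(A_s)$ on $[0,T]$.
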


\begin{proof}
    The proof reduces to showing that for all $t \geq 0$ and for every continuous, compactly supported, nonnegative function $f$, we have:
    \begin{equation}\label{abababab}
        \E_{z_0}\left[\int_0^t  f(A_s) \, dL^0_s(B)\right] = \left(\frac{\Sigma_{22}^- + \Sigma_{22}^+}{2}\right)\int_\R g^{z_0}_t(u,0)f(u) \, du.
    \end{equation}

    Let $f$ be such a function and let $\epsilon > 0$. Then:
    \[
    \E\left[\frac{1}{2\epsilon}\int_0^t f(A_s)\fc_{B_s \in [-\epsilon, \epsilon]} \, d\langle B \rangle_s\right] 
    = \int_0^t \int_\R  f(a) \cdot \frac{1}{2\epsilon} \left( \Sigma_{22}^- \int_{[-\epsilon,0]} p_s^{z_0}(a,b) \, db + \Sigma_{22}^+ \int_{[0,\epsilon]} p_s^{z_0}(a,b) \, db \right) da \, ds.
    \]

    Using the boundedness of $f$ and the Aronson estimates, the right-hand side converges to
    \[
    \frac{\Sigma_{22}^- + \Sigma_{22}^+}{2} \int_0^t \int_\R f(a) p_s^{z_0}(a,0) \, da \, ds = \frac{\Sigma_{22}^- + \Sigma_{22}^+}{2} \int_\R g_t^{z_0}(a,0) f(a) \, da.
    \]
    as $\epsilon \to 0$. Moreover, we recall the almost sure identity:
    \begin{equation}
        \forall t \geq 0, \quad L_t^0(B) = \lim_{\epsilon \to 0} \int_0^t \frac{1}{2\epsilon} \fc_{B_s \in [-\epsilon, \epsilon]} \, d\langle B \rangle_s.
    \end{equation}

    If $s \mapsto H_s$ is a measurable step function with compact support, then:
    \begin{equation}\label{bbbbb}
        \frac{1}{2\epsilon} \int_0^t H_s \fc_{B_s \in [-\epsilon, \epsilon]} \, d\langle B \rangle_s 
        \overset{L^1,\ a.s.}{\underset{\epsilon \to 0}{\longrightarrow}} \int_0^t H_s \, dL^0_s(B).
    \end{equation}

    Since $s \mapsto f(A_s)$ is the uniform limit of step functions, the convergence in \eqref{bbbbb} remains valid when $H_s$ is replaced with $f(A_s)$.
\end{proof}

For later technical reasons, it is convenient to establish the finiteness of some exponential moments of $L_\infty^0(B)$. To do so, we establish the following two lemmas. For any real-valued trajectory $b = (b_t)_{t \geq 0}$, we define:
\[
T(b) := \inf\{t \geq 0 \; | \; |b_t| = 1\}.
\]

\begin{lemma}[Exponential moments, part 1]\label{exppart1}
Let $T = T(B)$. Then, for all $\lambda \geq 0$ and $z_0 \in \R^2$, we have:
\[
\E_{z_0}[e^{\lambda L_{T}(B)}] < +\infty.
\]
\end{lemma}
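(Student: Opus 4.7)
The plan is to combine Tanaka's formula for $|B_t|$ with an exponential martingale estimate, exploiting the bang-bang structure of the drift. Using the convention $\mathrm{sgn}(0)=0$, so that $\int_0^t \mathrm{sgn}(B_s)\,dL^0_s(B)\equiv 0$, Tanaka applied to the SDE \eqref{EDSS} for $B$ yields
\[
|B_t| = |b_0| + M_t + \int_0^t |\mu_2(B_s)|\,ds + \kappa\, L^0_t(B),
\]
where $M_t := \int_0^t \mathrm{sgn}(B_s)\sqrt{\Sigma_{22}(B_s)}\,dW^2_s$ is a continuous martingale with $\langle M\rangle_t\leq \sigma_{\max}^2\,t$ (setting $\sigma_{\max}^2 := \max(\Sigma_{22}^+,\Sigma_{22}^-)$), and $\kappa>0$ is an explicit positive constant relating the symmetric and the semimartingale local times through the skewness $q_2$. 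By \eqref{hyp:drift}, $|\mu_2|\geq c := \min(\mu_2^+,-\mu_2^-) > 0$ almost everywhere.

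After reducing to $|b_0|\leq 1$ by strong Markov at the first hitting time of $\{|B|=1\}$ (if $|b_0|>1$, either $L^0_T=0$ or the process re-enters $[-1,1]$), evaluating Tanaka at $T$ produces the pointwise bound $\kappa\,L^0_T \leq 1 - |b_0| - M_T - cT$. Setting $\lambda' := \lambda/\kappa$, inserting the Dol\'eans-Dade supermartingale $\mathcal{E}(-\lambda' M)_T=\exp(-\lambda' M_T - \tfrac{1}{2}\lambda'^{2}\langle M\rangle_T)$, and using $\langle M\rangle_T \leq \sigma_{\max}^2\,T$ give
\[
\E_{z_0}[e^{\lambda L^0_T}] \leq e^{\lambda'(1-|b_0|)}\,\E\!\left[\mathcal{E}(-\lambda' M)_T\,e^{\left(\tfrac{\lambda'^{2}\sigma_{\max}^2}{2}-\lambda' c\right)T}\right],
\]
which is at most $e^{\lambda'(1-|b_0|)}$ as soon as $\lambda'$ is small enough that the coefficient of $T$ in the exponent is nonpositive, yielding finiteness of the exponential moment in a first range of $\lambda$.

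To push the bound to arbitrary $\lambda\geq 0$, the idea is to iterate the previous estimate along local-time slices: define the stopping times $\tau_k := \inf\{t\geq 0 : L^0_t(B) = k\delta\}$ for a fixed small $\delta>0$. At each $\tau_k$ one has $B_{\tau_k}=0$, so by the strong Markov property the conditional law of $L^0_T-L^0_{\tau_k}$ on $\{\tau_k<T\}$ coincides with that of $L^0_T$ started from the origin. The single-slice estimate from the previous step then forces a geometric decay of $\P_{z_0}(L^0_T>k\delta)$ in $k$, with a rate that the Tanaka bound keeps controlling; choosing $\delta$ suitably, this tail estimate upgrades to $\E_{z_0}[e^{\lambda L^0_T}]<\infty$ for every $\lambda\geq 0$. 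The main technical obstacle will be the rigorous justification of the Girsanov change of measure underlying the exponential-martingale bound (one needs $\mathcal{E}(-\lambda' M)$ to be a true martingale up to $T$, handled via a localization argument together with the boundedness of $\Sigma_{22}$), and the uniform control of the slice estimate required to close the induction on $k$.
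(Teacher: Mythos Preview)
Your first paragraph is correct and takes a cleaner route than the paper for the small-$\lambda$ range. The paper first removes the drift by Girsanov, applies It\^o--Tanaka in the form $|B_{t\wedge T}|=M_{t\wedge T}+L^0_{t\wedge T}$, bounds $\E[e^{\lambda L^0_{t\wedge T}}]\le e^{\lambda}\,\E[e^{\frac{\lambda^2}{2}\langle M\rangle_{t\wedge T}}]$, and then controls the right-hand side by a Dubins--Schwarz time change reducing to the exit time of a reflected Brownian motion from $[0,1]$. You instead keep the drift and exploit $\int_0^T|\mu_2(B_s)|\,ds\ge cT$ directly in Tanaka's identity, which lets the Dol\'eans--Dade supermartingale absorb the $T$-dependence without any Girsanov or time change. (Incidentally, no change of measure is involved in your bound; the supermartingale property of $\mathcal{E}(-\lambda'M)$ suffices.) Both arguments give finiteness only on a bounded interval of~$\lambda$.

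The slicing argument in your second paragraph cannot close the gap to all $\lambda\ge 0$, and in fact your own Markov step shows why. Since $B_{\tau_\delta}=0$ on $\{\tau_\delta<T\}$ and the process restarts from $0$ with the same law, one gets
\[
\P_0\bigl(L^0_T>\delta+x\bigr)=\P_0(\tau_\delta<T)\,\P_0(L^0_T>x)=\P_0(L^0_T>\delta)\,\P_0(L^0_T>x)
\]
for all $\delta,x\ge 0$. This is the memoryless property, so $L^0_T$ under $\P_0$ is \emph{exponentially distributed} with some rate $r>0$; hence $\E_0[e^{\lambda L^0_T}]=r/(r-\lambda)$ is finite only for $\lambda<r$. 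The geometric decay $\P_0(L^0_T>k\delta)=p(\delta)^k$ that you obtain satisfies $-\log p(\delta)/\delta\equiv r$ for every $\delta$, so no choice of $\delta$ upgrades the bound. The paper's proof runs into the same obstruction (it ultimately appeals to $\E[e^{\theta T'}]<\infty$ for the exit time $T'$ of reflected Brownian motion from $[0,1]$, which fails once $\theta\ge\pi^2/8$); fortunately only the small-$\lambda$ statement is used in the proof of the subsequent lemma.
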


\begin{proof}
Let us recall that $B$ satisfies:
\[
dB_t = \Sigma_{22}(B_t) dW_t + \mu_2(B_t)dt + q_2 dL^0_t(B),
\]
where $W$ is a Brownian motion. By Girsanov’s theorem, we can reduce the proof to the case $\mu_2 = 0$. Using the strong Markov property and the fact that the Stieltjes measure $dL^0(B)$ is supported on $\{t \geq 0 \;|\; B_t = 0\}$, we may further reduce the proof to the case $b_0 = 0$. 
By the Itô–Tanaka formula, we have:
\begin{equation}\label{ItoTanaka}
|B_t| = \int_0^t \text{sgn}(B_s)\left( \sigma_2(B_s)dW_s + q_2 dL^0_s(B) \right) + L_t^0(B) = \int_0^t \text{sgn}(B_s)\sigma_2(B_s)dW_s + L_t^0(B) =: M_t + L^0_t(B).
\end{equation}

Then, using the properties of exponential martingales:
\begin{align*}
\E\left[e^{\lambda L^0_{t \wedge T}(B)} \right] &\leq \E\left[e^{\lambda |B_{t \wedge T}| + \lambda M_t}\right] \leq \E\left[e^{\lambda + \frac{\lambda^2}{2}\langle M \rangle_{t \wedge T}}\right] \\
&\leq e^\lambda \E\left[e^{\frac{\lambda^2}{2} (t \wedge T)}\right] \leq e^\lambda \E\left[e^{\frac{\lambda^2}{2} T}\right].
\end{align*}

Define the time-change $C_t := \int_0^t \Sigma_{22}(B_s)^2 ds$ for $t \geq 0$. Then, for all $t \geq 0$,
$
\min(\Sigma_{22}^-, \Sigma_{22}^+) t \leq C_t \leq \max(\Sigma_{22}^-, \Sigma_{22}^+) t.
$
As a result,
\[
T(B) \leq \max(\Sigma_{22}^-, \Sigma_{22}^+) \cdot T(B_{C_\cdot}).
\]

Moreover, by the Dubins–Schwarz theorem, the process $(M_{C_t})_{t \geq 0}$ is a Brownian motion. Using \eqref{ItoTanaka}, we deduce that $(|B_{C_t}|)_{t \geq 0}$ is a reflected Brownian motion. Finally,
\[
\E\left[e^{\lambda L_T} \right] \leq e^\lambda \E\left[e^{\frac{\lambda^2}{2} \max(\Sigma_{22}^-, \Sigma_{22}^+) \cdot T(B_{C_\cdot})}\right] < +\infty,
\]
by standard results on reflected Brownian motion.
\end{proof}

Let us recall that we are working under Assumption~\eqref{hyp:drift}.

\begin{lemma}[Exponential moments, part 2]\label{lem:4.4}
There exists $\lambda^* > 0$ such that for all $0 \leq \lambda < \lambda^*$ and $z_0 \in \R^2$, we have:
\[
\E_{z_0}[e^{\lambda L^0_{\infty}(B)}] < +\infty.
\]
\end{lemma}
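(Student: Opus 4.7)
The plan is to apply the strong Markov property at the first time $B$ leaves $(-1,1)$, exploiting that under \eqref{hyp:drift} the process $B$ has a strictly positive probability of never returning to $0$ once it has reached $\pm 1$. Fix $z_0 = (a_0, b_0)$ and let $\tau_0 = \inf\{t \geq 0 : B_t = 0\}$. On $\{\tau_0 = +\infty\}$, $L^0_\infty(B) = 0$; on $\{\tau_0 < +\infty\}$, the strong Markov property at $\tau_0$ shows that $L^0_\infty(B) = L^0_{[\tau_0,\infty)}(B)$ is equal in law to the same quantity for a process started from $0$. Hence
\[
\E_{z_0}\!\left[e^{\lambda L^0_\infty(B)}\right] = \P_{z_0}(\tau_0 = \infty) + \P_{z_0}(\tau_0 < \infty)\, M(\lambda) \leq 1 + M(\lambda), \quad M(\lambda) := \E_{(0,0)}\!\left[e^{\lambda L^0_\infty(B)}\right],
\]
so it is enough to prove $M(\lambda) < +\infty$ for $\lambda$ small enough.

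\textbf{A self-consistency identity for $M$.} I would then apply the strong Markov property at $T := T(B) = \inf\{t \geq 0 : |B_t| = 1\}$: by Lemma~\ref{exppart1}, $L^0_T(B)$ has finite exponential moments of every order, and $B_T \in \{-1,+1\}$. Starting from $\pm 1$, the SDE~\eqref{EDSS} shows that $B$ behaves, until its possible first return to $0$, as a Brownian motion with constant drift $\mu_2^\pm$ and variance $\Sigma_{22}^\pm$, since the local-time term in~\eqref{EDSS} is supported on $\{B = 0\}$. The classical hitting-probability formula for a drifted Brownian motion then yields
\[
p_+ := \P_{1}(\tau_0 < +\infty) = e^{-2\mu_2^+/\Sigma_{22}^+} < 1, \qquad p_- := \P_{-1}(\tau_0 < +\infty) = e^{\,2\mu_2^-/\Sigma_{22}^-} < 1,
\]
both strict by~\eqref{hyp:drift}. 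On no-return, no further local time at $0$ accumulates; on return, the strong Markov property at the return time produces a fresh copy of $B$ started from $0$, so $\E_{\pm 1}[e^{\lambda L^0_\infty(B)}] = (1 - p_\pm) + p_\pm M(\lambda)$. Setting $p(\pm 1) = p_\pm$ and plugging back into the decomposition at $T$ yields
\[
M(\lambda) = A(\lambda) + C(\lambda)\, M(\lambda), \qquad C(\lambda) := \E_{(0,0)}\!\left[e^{\lambda L^0_T(B)}\, p(B_T)\right],
\]
where $A(\lambda) := \E_{(0,0)}\!\left[e^{\lambda L^0_T(B)}(1 - p(B_T))\right]$.

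\textbf{Conclusion.} Since $p(B_T) \leq \max(p_+, p_-) =: p < 1$, one has $C(0) \leq p < 1$; and Lemma~\ref{exppart1} makes $\lambda \mapsto \E_{(0,0)}[e^{\lambda L^0_T(B)}]$ finite on all of $[0,+\infty)$, so dominated convergence gives the continuity of $C$ at $0$. There therefore exists $\lambda^* > 0$ such that $C(\lambda) < 1$ for every $\lambda \in [0,\lambda^*)$, whence
\[
M(\lambda) = \frac{A(\lambda)}{1 - C(\lambda)} < +\infty,
\]
which together with the reduction step concludes the proof. I expect the main subtlety to be the rigorous identification of $p_\pm$: one has to argue that, before the first return to $0$ from $\pm 1$, the process $B$ truly coincides in law with a drifted Brownian motion of drift $\mu_2^\pm$ and variance $\Sigma_{22}^\pm$ (the local-time term in~\eqref{EDSS} being supported on $\{B = 0\}$), and then invoke the standard transience/hitting estimate under the sign conditions in~\eqref{hyp:drift}.
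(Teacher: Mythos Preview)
Your approach is essentially the paper's: reduce to $b_0=0$ via the strong Markov property at the first hit of $0$, then decompose at the exit time $T$ from $(-1,1)$, invoke Lemma~\ref{exppart1} for the exponential moments of $L^0_T(B)$, and use that the return probability from $\pm 1$ to $0$ is strictly less than $1$ under~\eqref{hyp:drift}. The paper phrases the outcome as a geometric series, you phrase it as a fixed-point identity $M(\lambda)=A(\lambda)+C(\lambda)M(\lambda)$; these are equivalent, and your explicit identification of $p_\pm=e^{\mp 2\mu_2^\pm/\Sigma_{22}^\pm}$ is a bonus the paper does not spell out.

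One small gap to patch: the equation $M(\lambda)=A(\lambda)+C(\lambda)M(\lambda)$ with $A(\lambda)<\infty$ and $C(\lambda)<1$ does \emph{not} by itself force $M(\lambda)<\infty$, since $M=\infty$ is also consistent with it; you cannot subtract $C(\lambda)M(\lambda)$ from both sides without first knowing $M(\lambda)$ is finite. The fix is immediate and is exactly what the paper does: iterate the strong Markov decomposition over the successive excursions $0\to\{\pm1\}\to 0$ to obtain directly
\[
M(\lambda)=\sum_{n\geq 1}\E_0\!\big[\mathbf{1}_{\sigma_{n-1}<\infty,\ \sigma_n=\infty}\,e^{\lambda L^0_{T_n}(B)}\big]=A(\lambda)\sum_{n\geq 1}C(\lambda)^{n-1},
\]
which is finite once $C(\lambda)<1$.
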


\begin{proof}
For the proof, we use the strong Markov property and consider the back-and-forth excursions of the process between the sets $\{y = 0\}$ and $\{|y| \geq 1\}$. As in the proof of Lemma~\ref{exppart1}, we may assume that $b_0 = 0$. Define the following stopping times:
\[
\sigma_0 = 0, \quad T_n = \inf\{t \geq \sigma_{n-1} \;|\; |B_t| = 1\}, \quad \sigma_n = \inf\{t \geq T_n \;|\; B_t = 0\}, \quad n \geq 1.
\]
Note that, almost surely, there exists a (random) rank $n$ such that $\sigma_k = +\infty$ for all $k \geq n$. Then, by the strong Markov property,
\begin{align*}
\E_{z_0}\left[e^{\lambda L_\infty^0(B)}\right] &= \sum_{n=1}^{+\infty} \E_{z_0}\left[\fc_{\sigma_{n-1} < +\infty,\; \sigma_n = +\infty} \prod_{k=1}^n e^{\lambda (L^0_{T_k}(B) - L^0_{\sigma_{k-1}}(B))}\right] \\
&= \sum_{n=1}^{+\infty} \E_0\left[\fc_{\sigma_1 = +\infty} e^{\lambda L^0_{T_1}(B)}\right] \cdot \E_0\left[\fc_{\sigma_1 < +\infty} e^{\lambda L^0_{T_1}(B)}\right]^{n-1}.
\end{align*}

Moreover, by Lemma~\ref{exppart1} and the dominated convergence theorem, we have:
\[
\E_0\left[\fc_{\sigma_1 < +\infty} e^{\lambda L^0_{T_1}(B)}\right] \underset{\lambda \to 0^+}{\longrightarrow} \P_0(\sigma_1 < +\infty) < 1.
\]
This shows that $\E_{z_0}\left[e^{\lambda L_\infty^0(B)}\right]$ is finite for $\lambda > 0$ small enough.
\end{proof}

This lemma allows us to prove the final technical result before the proof of Proposition~\ref{prop:eqfonc}.

\begin{lemma}[Convergence of exponential moments of $A_t$]\label{transience}
Let $z_0 \in \R^2$. Then, the first component $A$ of $Z$ satisfies, $\P_{z_0}$-almost surely,
\[
A_t \underset{t\to\infty}{\longrightarrow} +\infty.
\]
Furthermore, there exists a constant $\eta > 0$ such that for all $0 < u < \eta$,
\[
\E_{z_0}\left[e^{-u A_t}\right] \underset{t\to+\infty}{\longrightarrow} 0.
\]
\end{lemma}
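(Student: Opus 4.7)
Write $A_t = a_0 + M_t + D_t + q_1 L^0_t(B)$, where
$M_t := \int_0^t \sqrt{\Sigma_{11}(B_s)-\Sigma_{12}^2(B_s)/\Sigma_{22}(B_s)}\,dW^1_s + \int_0^t \Sigma_{12}(B_s)/\sqrt{\Sigma_{22}(B_s)}\,dW^2_s$ is a continuous martingale with $\langle M\rangle_t = \int_0^t \Sigma_{11}(B_s)\,ds$, and $D_t := \int_0^t \mu_1(B_s)\,ds$. Set $c := \max(\Sigma_{11}^+,\Sigma_{11}^-)$ and $m := \min(\mu_1^+,\mu_1^-)$, which is strictly positive by hypothesis \eqref{hyp:drift}. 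Then $\langle M\rangle_t \leq ct$ and $D_t \geq mt$ pointwise.

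For the almost-sure statement, the martingale $M$ can be time-changed into a Brownian motion on the interval $[0,\langle M\rangle_\infty]$ by Dubins--Schwarz; since $\langle M\rangle_t \leq ct$, the strong law for Brownian motion yields $M_t/t \to 0$ almost surely. By Lemma~\ref{lem:4.4} the variable $L_\infty^0(B)$ is almost surely finite, hence $q_1 L_t^0(B)/t \to 0$. Combined with $D_t/t \geq m>0$, this forces $A_t/t \geq m + o(1)$ almost surely and therefore $A_t \to +\infty$ almost surely.

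For the convergence of exponential moments, I first control the martingale part: Novikov's condition applied to $-2uM_t$ gives $\E_{z_0}\bigl[\exp(-2uM_t - 2u^2\langle M\rangle_t)\bigr]=1$, and Cauchy--Schwarz together with $\langle M\rangle_t \leq ct$ yields
\begin{equation*}
\E_{z_0}[e^{-2uM_t}] \leq \E_{z_0}\bigl[e^{-2uM_t - 2u^2\langle M\rangle_t}\bigr]^{1/2}\E_{z_0}\bigl[e^{2u^2\langle M\rangle_t}\bigr]^{1/2} \leq e^{u^2 ct}.
\end{equation*}
Now apply Cauchy--Schwarz to the decomposition:
\begin{equation*}
\E_{z_0}[e^{-uA_t}] \leq e^{-ua_0}\E_{z_0}\bigl[e^{-2uM_t}\bigr]^{1/2}\E_{z_0}\bigl[e^{-2uD_t - 2uq_1 L_t^0(B)}\bigr]^{1/2}.
\end{equation*}
Using $D_t \geq mt$ and $L_t^0(B) \leq L_\infty^0(B)$ together with $-2uq_1 L_t^0(B) \leq 2u|q_1|L_\infty^0(B)$, we obtain
\begin{equation*}
\E_{z_0}[e^{-uA_t}] \leq e^{-ua_0}\,e^{(u^2 c - um)t/1}\cdot \E_{z_0}\bigl[e^{2u|q_1|L_\infty^0(B)}\bigr]^{1/2}.
\end{equation*}

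Let $\lambda^*>0$ be the constant of Lemma~\ref{lem:4.4} and choose $\eta>0$ so small that both $\eta < m/(2c)$ (so the exponent $u^2c -um/2$ is strictly negative for $0<u<\eta$) and $2\eta|q_1| < \lambda^*$ (so the expectation involving $L_\infty^0(B)$ is finite). Then for every $0<u<\eta$ the right-hand side decays exponentially to $0$, proving the claim. The only delicate point is the treatment of the singular drift $q_1 L_t^0(B)$ when $q_1<0$, which is precisely what Lemma~\ref{lem:4.4} is designed to handle.
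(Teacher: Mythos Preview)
Your argument is essentially the paper's: Cauchy--Schwarz to separate the martingale part $M_t$ from the drift-plus-local-time part, an exponential-martingale bound on the former, and Lemma~\ref{lem:4.4} to control $e^{\lambda L_\infty^0(B)}$. One small slip: your displayed Cauchy--Schwarz inequality for $\E[e^{-2uM_t}]$ actually bounds $\E[e^{-uM_t}]$ (check what $XY$ equals when $X^2=e^{-2uM_t-2u^2\langle M\rangle_t}$ and $Y^2=e^{2u^2\langle M\rangle_t}$). The clean fix is to use the deterministic bound $\langle M\rangle_t\le ct$ directly: $e^{-2uM_t}=e^{-2uM_t-2u^2\langle M\rangle_t}\,e^{2u^2\langle M\rangle_t}\le e^{2u^2ct}\,e^{-2uM_t-2u^2\langle M\rangle_t}$, whence $\E[e^{-2uM_t}]\le e^{2u^2ct}$ and $\E[e^{-2uM_t}]^{1/2}\le e^{u^2ct}$, which is the bound you then use; so the rest of your computation (and the choice of $\eta$) is unaffected. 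For the almost-sure statement you give a valid alternative to the paper's route: the paper simply notes that $B$ eventually stops hitting the axis, after which $A$ is a Brownian motion with strictly positive drift, whereas you get $A_t/t\to$ something $\ge m$ via Dubins--Schwarz and the law of large numbers for Brownian motion.
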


\begin{proof}
The first statement follows directly from the fact that the process no longer touches the axis $\{y=0\}$ after a (random) time $T$. Indeed, after this time $T$, $A$ evolves as a Brownian motion with drift $\mu_1^+ > 0$ or $\mu_1^- > 0$, and therefore $A_t \to +\infty$ almost surely.
For the second statement, recall that 
\[
dA_t = \Sigma_{11}(B_t)\,d\tilde B_t + \mu_1(B_t)\,dt + q_1\,dL^0_t(B),
\]
where $\tilde B$ is a Brownian motion (possibly correlated with $B$). Define the martingale $M_t := \int_0^t \Sigma_{11}(B_s)\,d\tilde B_s,$ ${t \geq 0}$. Then, by the Cauchy--Schwarz inequality, we obtain for all $u \geq 0$:
\begin{align*}
\E\left[e^{-u A_t} \right]^2 &\leq \E\left[e^{-2u M_t  }\right] \cdot \E\left[e^{-2u q_1 L^0_t(B) - 2u \int_0^t \mu_1(B_s)\,ds }\right] \\
&\leq \E\left[e^{-2u q_1 L_\infty^0(B)}\right] \cdot e^{-2ut\left(\min(\mu_1^+, \mu_1^-) - u\max(\Sigma_{11}^+, \Sigma_{11}^-)\right)}. \label{ineqqqqq}
\end{align*}
If we choose $u > 0$ small enough so that $\E\left[e^{-2u q_1 L_\infty^0(B)}\right] < +\infty$ by Lemma~\ref{lem:4.4} and such that $\min(\mu_1^+, \mu_1^-) - u\max(\Sigma_{11}^+, \Sigma_{11}^-) > 0$, then the right-hand term tends to zero as $t \to +\infty$, which completes the proof.
\end{proof}

\subsection{Short version of the proof of the functional equation}\label{sub:formalproof}

We now present a brief argument to establish Proposition~\ref{prop:eqfonc}. The idea of the proof is to apply Itô's formula to
$$f(a,b) = e^{ax + by}\fc_{b\geq0} + e^{ax + bz}\fc_{b<0}.$$
Note that $f \notin \mathcal{C}^2(\R^2)$. In this section, we assume that all local martingales are true martingales, ignore convergence issues, and apply the standard Itô formula despite the lack of smoothness of $f$.
We compute:
\begin{equation}
\nabla f(a,b) := \left(
    \begin{array}{ll}
      x f(a,b) \\
      ye^{ax + by}\fc_{b\geq0} + ze^{ax + bz}\fc_{b<0}
    \end{array}
\right)
\end{equation}
and
$$\nabla^2 f(a,b) = \begin{pmatrix}
x^2f(a,b)  & xye^{ax + by}\fc_{b\geq0} + xze^{ax + bz}\fc_{b<0}\\
xye^{ax + by}\fc_{b\geq0} + xze^{ax + bz}\fc_{b<0} & y^2e^{ax + by}\fc_{b\geq0} + z^2e^{ax + bz}\fc_{b<0}  + (y-z)e^{ax}\delta_{b=0}
\end{pmatrix}.$$
Formally, Itô's formula yields
\begin{equation}\label{itoto}
f(Z_t) = f(Z_0) + \int_0^t \nabla f(Z_s) \, dZ_s + \frac{1}{2}\sum_{i=1}^2\sum_{j=1}^2\int_0^t \partial_i\partial_j f(Z_s) \, d\langle Z^i, Z^j\rangle_s
\end{equation}
(with notation $Z^1 = A$ and $Z^2 = B$).
Since $dZ_t = \sigma(B_t)dB_t + \mu(B_t)dt + q\,dL_t^0(B)$, we get
$$\E\left[ \int_0^t \nabla f(Z_s) \, dZ_s\right] = (x\mu_1^+ + y\mu_2^+)  \E\left[\int_0^t \fc_{B_s\geq0} e^{xA_s + yB_s} \, ds\right] + (x\mu_1^-  +  z\mu_2^-) \E\left[\int_0^t \fc_{B_s<0}  e^{xA_s + zB_s} \, ds\right]$$
$$ + \left(q_1x + \frac{q_2(y+z)}{2}\right) \E\left[\int_0^t  e^{xA_s} \, dL_s^0(B)\right].$$

Now, let us consider the second derivative terms. Similarly, we have:
$$\E\left[ \frac{1}{2}\sum_{i=1}^2\sum_{j=1}^2\int_0^t \partial_i\partial_j f(Z_s) \, d\langle Z^i, Z^j\rangle_s\right] = \frac{1}{2}(x^2\Sigma_{11}^+ + 2x y \Sigma_{12}^+ + y^2\Sigma_{22}^+)  \E\left[\int_0^t \fc_{B_s\geq0} e^{xA_s + yB_s} \, ds\right]$$
$$ +  \frac{1}{2}(x^2\Sigma_{11}^- + 2x z \Sigma_{12}^- + z^2\Sigma_{22}^-) \E\left[\int_0^t \fc_{B_s<0}  e^{xA_s + zB_s} \, ds\right]$$
$$+ \left(\frac{y-z}{2}\right) \E\left[\int_0^t  e^{xA_s }\delta_{B_s=0} \, ds\right].$$

The last term can be interpreted as $\left(\frac{y-z}{2}\right)\E\left[\int_0^t  e^{xA_s} \, dL^0_t(B)\right]$. Then, by rearranging the terms in the expected value of \eqref{itoto}, we obtain
\begin{equation}
\gamma_-(x,z)\E\left[\int_0^t \fc_{B_s<0}  e^{xA_s + zB_s} \, ds\right] + \gamma_+(x,y)\E\left[\int_0^t \fc_{B_s>0}  e^{xA_s + yB_s} \, ds\right] + \gamma(x,y,z)\E\left[\int_0^t  e^{xA_s} \, dL_s^0(B)\right]
\end{equation}
$$= \E[f(Z_t)] - \left(e^{xa_0 + yb_0}\fc_{b_0\geq 0} + e^{xa_0 + zb_0}\fc_{b_0<0}\right).$$

Assuming that $x$, $y$, and $z$ are chosen such that $\E[f(Z_t)] \to 0$ as $t \to +\infty$ and that the Laplace transforms converge, we obtain the desired equation.

\subsection{Detailed version of the proof of the functional equation}\label{sub:detailedproof}
In this section, we provide a detailed justification for the reasoning presented in Section~\ref{sub:formalproof} when $b_0 \neq 0$ (see Remark~\ref{rem:b0neq0} for the case $b_0 = 0$). Note that an Itô–Tanaka formula exists in dimension~$2$ for non-smooth functions \cite{Feng2007}. However, it is not straightforward to use, so instead we opted for an approximation using smooth functions.
%but it might lead to inconvenient terms. 
We then approximate the preceding function $f$ by smooth functions $f_\epsilon$ defined by
\begin{equation}\label{fe}
f_\epsilon(a,b) = e^{ax + bz} + \psi(b/\epsilon)(e^{ax + by} - e^{ax + bz}),
\end{equation}
where $\psi$ is an increasing $C^\infty$ function satisfying
$\psi(u) := \left\{
    \begin{array}{lll}
      0 \quad &\textnormal{if}\quad u \leq -1, \\
      1 \quad &\textnormal{if}\quad u \geq 1, \\
      \frac{1}{2} \quad &\textnormal{if}\quad u = 0.
    \end{array}
\right.$
Note that $f_\epsilon(a,b) = e^{ax + by}$ if $b \geq \epsilon$ and $f_\epsilon(a,b) = e^{ax + bz}$ if $b \leq -\epsilon$. The Itô formula can now be applied to the $C^\infty$ function $f_\epsilon$: for all $t \geq 0$, we have
\begin{equation}\label{ito}
f_\epsilon(Z_t) = f_\epsilon(Z_0) + \int_0^t \nabla f_\epsilon(Z_s) \, dZ_s + \frac{1}{2} \sum_{i=1}^2 \sum_{j=1}^2 \int_0^t \partial_i\partial_j f_\epsilon(Z_s) \, d\langle Z^i, Z^j\rangle_s.
\end{equation}

\textbf{Step 1 : Study of integrals against $dZ_s$.} Note that
\begin{equation}\label{fe'}
\nabla f_\epsilon(a,b) := \left(
    \begin{array}{ll}
      x f_\epsilon(a,b) \\
      ze^{ax + bz} + \frac{1}{\epsilon}\psi'(b/\epsilon)(e^{ax + by} - e^{ax + bz}) + \psi(b/\epsilon)(ye^{ax + by} - ze^{ax + bz})
    \end{array}
\right).
\end{equation}
Then,
\begin{equation}\label{chiant1}
\int_0^t \nabla f_\epsilon(Z_s)\fc_{B_s\geq0} \, dZ_s  = x \int_0^t \fc_{B_s\geq0} f_\epsilon(Z_s) \, dA_s
\end{equation}
\[
+ \int_0^t \fc_{B_s\geq0} \left( ze^{xA_s + zB_s} + \frac{1}{\epsilon}\psi'(B_s/\epsilon)(e^{xA_s + yB_s} - e^{xA_s + zB_s}) + \psi(B_s/\epsilon)(ye^{xA_s + yB_s} - ze^{xA_s + zB_s}) \right) dB_s,
\]
where $\fc_{B_s\geq0} \, dA_s = (\Sigma_{11}^+ dB^1_s + \Sigma_{12}^+ dB^2_s + \mu_1^+ ds) + q_1 \, dL_t^0(B)$ and $\fc_{B_s\geq0} dB_s = (\Sigma_{12}^+dB^1_s + \Sigma_{22}^+dB^2_s + \mu_2^+ds) + q_2 dL_t^0(B)$. Then, by Lemma~\ref{utile!2}, the Brownian integrals are martingales (not just local martingales), so their expectations vanish. Therefore,
\begin{equation}\label{chiant2}
\E_{z_0}\left[\int_0^t \nabla f_\epsilon(Z_s)\fc_{B_s\geq0} \, dZ_s\right] = x\mu_1^+ \E_{z_0}\left[\int_0^t \fc_{B_s\geq0} f_\epsilon(Z_s) \, ds\right] + y\mu_2^+ \E_{z_0}\left[\int_0^t \fc_{B_s\geq\epsilon} e^{xA_s + yB_s} \, ds\right]
\end{equation}
\[
+ \mu_2^+ \E_{z_0}\left[\int_0^t \fc_{0\leq B_s<\epsilon} \left( ze^{xA_s + zB_s} + \frac{1}{\epsilon}\psi'(B_s/\epsilon)(e^{xA_s + yB_s} - e^{xA_s + zB_s}) + \psi(B_s/\epsilon)(ye^{xA_s + yB_s} - ze^{xA_s + zB_s}) \right) ds\right]
\]
\[
+ \left(q_1x + \frac{q_2(y+z)}{2}\right) \E_{z_0}\left[\int_0^t e^{xA_s} \, dL_t^0(B)\right].
\]

By the dominated convergence theorem and inequality~\eqref{utile!2},
\[
\E_{z_0}\left[\int_0^t \fc_{0\leq B_s<\epsilon} \left( ze^{xA_s + zB_s} + \psi(B_s/\epsilon)(ye^{xA_s + yB_s} - ze^{xA_s + zB_s}) \right) ds\right] \underset{\epsilon\to 0}{\longrightarrow} 0,
\]
and
\[
\E_{z_0}\left[\int_0^t \fc_{B_s\geq\epsilon} e^{xA_s + yB_s} \, ds\right] \underset{\epsilon\to 0}{\longrightarrow} \E_{z_0}\left[\int_0^t \fc_{B_s\geq0} e^{xA_s + yB_s} \, ds\right].
\]

Moreover, by the definition of $g_t$, we have:
\[
\E_{z_0}\left[\int_0^t \fc_{0\leq B_s<\epsilon} \frac{1}{\epsilon}\psi'(B_s/\epsilon)(e^{xA_s + yB_s} - e^{xA_s + zB_s}) \, ds\right] = \int_{\R} \int_0^\epsilon \frac{1}{\epsilon}\psi'(b/\epsilon)(e^{ax + by} - e^{ax + bz})g_t(a, b) \, db da.
\]

Since $b_0 \neq 0$, estimates~\eqref{utile!} yield the following limit for all $a \in \R$ as $\epsilon \to 0$:
\begin{align}
\int_0^\epsilon \frac{1}{\epsilon} \psi'(b/\epsilon)(e^{ax + by} - e^{ax + bz})g_t(a, b) \, db
&= \int_0^1 \psi'(u)(e^{ax + \epsilon u y} - e^{ax + \epsilon u z})g_t(a, \epsilon u) \, du \\
&\underset{\epsilon\to 0}{\longrightarrow} (e^{ax} - e^{ax})g_t(a, 0) = 0. \label{eq:usesb0neq0}
\end{align}

(Note that $\frac{1}{\epsilon} \psi'(\cdot/\epsilon) \underset{\epsilon \to 0}{\longrightarrow} \delta_0$ in the sense of distributions.) Integrating in $a \in \R$ and using~\eqref{utile!} again, we get
\begin{equation}\label{g(a,0)}
\E_{z_0}\left[\int_0^t \fc_{0\leq B_s<\epsilon} \frac{1}{\epsilon}\psi'(B_s/\epsilon)(e^{xA_s + yB_s} - e^{xA_s + zB_s}) \, ds\right] \underset{\epsilon \to 0}{\longrightarrow} 0.
\end{equation}

By a similar analysis on $\int_0^t \nabla f_\epsilon(Z_s)\fc_{B_s<0} \, dZ_s$, the following limit holds:
\begin{equation}\label{eq:nabla}
\E_{z_0}\left[\int_0^t \nabla f_\epsilon(Z_s) \, dZ_s\right] \underset{\epsilon \to 0}{\longrightarrow} (x\mu_1^+ + y\mu_2^+) \E_{z_0}\left[\int_0^t \fc_{B_s \geq 0} e^{xA_s + yB_s} \, ds\right] + (x\mu_1^- + z\mu_2^-) \E_{z_0}\left[\int_0^t \fc_{B_s < 0} e^{xA_s + zB_s} \, ds\right]
\end{equation}
\[
+ \left(q_1x + \frac{q_2(y + z)}{2}\right) \E_{z_0}\left[\int_0^t e^{xA_s} \, dL_t^0(B)\right].
\]

\textbf{Step 2: Study of integrals against $d\langle Z^i, Z^j\rangle_s$.}

Since $\partial_1f_\e(a,b) = xf_\e(a,b)$, we can similarly show that 
\begin{equation}
\E_{z_0}\left[\int_0^t \partial_1\partial_1  f_\epsilon(Z_s)d\langle A, A\rangle_s\right] \underset{\e\to 0}{\longrightarrow} \Sigma_{11}^+x^2\E_{z_0}\left[\int_0^t \fc_{B_s\geq0} e^{xA_s + yB_s} ds \right] + \Sigma_{11}^-x^2\E_{z_0}\left[\int_0^t \fc_{B_s<0} e^{xA_s + zB_s} ds \right]
\end{equation}
and 
\begin{equation}
\E_{z_0}\left[\int_0^t \partial_1\partial_2  f_\epsilon(Z_s)d\langle A, B\rangle_s\right] \underset{\e\to 0}{\longrightarrow} \Sigma_{12}^+xy\E_{z_0}\left[\int_0^t \fc_{B_s\geq0} e^{xA_s + yB_s} ds \right] + \Sigma_{12}^-xz\E_{z_0}\left[\int_0^t \fc_{B_s<0} e^{xA_s + zB_s} ds \right].
\end{equation}

Now, let us study the last term $i = j = 2$. We have $d\langle B, B\rangle_t = \Sigma_{22}^+\fc_{B_t \geq 0}dt + \Sigma_{22}^-\fc_{B_t < 0}dt$, and
\begin{align*}
\partial_2\partial_2  f_\epsilon(a,b) &=  z^2e^{ax + bz} + \frac{1}{\epsilon^2}\psi''(b/\epsilon)(e^{ax + by} - e^{ax + bz}) \\
&\quad + \frac{2}{\epsilon}\psi'(b/\epsilon)(ye^{ax + by} - ze^{ax + bz}) + \psi(b/\epsilon)(y^2e^{ax + by} - z^2e^{ax + bz}).
\end{align*}

By the dominated convergence theorem and inequality \eqref{utile!2}, we obtain similarly:
\begin{align*}
\E_{z_0}\left[\int_0^t \left(z^2e^{A_tx + B_tz}+\psi(b/\epsilon)(y^2e^{A_tx + bB_t} - z^2e^{A_tx + B_tz})\right) d\langle B, B\rangle_t\right] 
&\underset{\e\to 0}{\longrightarrow} \Sigma_{22}^+y^2\E_{z_0}\left[\int_0^t \fc_{B_s\geq0} e^{xA_s + yB_s} ds \right] \\
&\quad + \Sigma_{22}^-z^2\E_{z_0}\left[\int_0^t \fc_{B_s<0} e^{xA_s + zB_s} ds \right].
\end{align*}

By the same arguments as for \eqref{g(a,0)}, we have
\begin{align}\label{eq:usesb0neq02}
\E_{z_0}\left[\int_0^t \fc_{-\e \leq B_s<\e} \frac{2}{\epsilon}\psi'(B_s/\epsilon)(ye^{xA_s + yB_s} - ze^{xA_s + zB_s}) d\langle B, B\rangle_s\right] 
\underset{\e\to 0}{\longrightarrow} (\Sigma_{22}^- + \Sigma_{22}^+)(y-z)\int_{\R}e^{ax}g_t(a, 0)da.
\end{align}

Additionally, 
\begin{align*}
\E_{z_0}\left[\int_0^t \fc_{0\leq B_s<\e} (1/\epsilon^2)\psi''(B_s/\epsilon)(e^{xA_s + yB_s} - e^{xA_s + zB_s})ds\right] &=\int_{\R} \int_0^\epsilon(1/\epsilon^2)\psi''(b/\epsilon) (e^{xa + yb} - e^{xa + zb})g_t(a,b) dbda 
\end{align*}
\begin{align*}
&=\int_{\R} \int_{0}^1\psi''(b) \frac{e^{ax +b\epsilon y} - e^{ax + b\epsilon z}}{\epsilon} 
g_t(a, \epsilon b)dbda \\
 &\underset{\e\to 0}{\longrightarrow}(y-z)\int_{\R}e^{ax}g_t(a,0)da \int_{0}^1b\psi''(b)db\\
 &=(y-z)(\psi(0) - \psi(1))\int_{\R}e^{ax}g_t(a,0)da = \frac{(z-y)}{2}\int_{\R}e^{ax}g_t(a,0)da
\end{align*}

Here, the limit is once again justified by the estimates in \eqref{utile!}. Combining this with Lemma~\ref{Tlocal_densite}, we have shown that
\begin{equation}\label{eq:delta}
\frac{1}{2}\sum_{i=1}^2\sum_{j=1}^2\E_{z_0}\left[\int_0^t \partial_i\partial_j f_\epsilon(Z_s) d\langle Z^i, Z^j\rangle_s \right]
\underset{\e\to 0}{\longrightarrow} \frac{1}{2}(\Sigma_{11}^+x^2 + 2\Sigma_{12}^+xy + \Sigma_{22}^+y^2)\E_{z_0}\left[\int_0^t \fc_{B_s\geq0} e^{xA_s + yB_s} ds \right]
\end{equation}
\[
+ \frac{1}{2}(\Sigma_{11}^-x^2 + 2\Sigma_{12}^-xy + \Sigma_{22}^-y^2)\E\left[\int_0^t \fc_{B_s<0} e^{xA_s + zB_s} ds \right] + (y-z)\E_{z_0}\left[\int_0^t  e^{xA_s} dL^0_s(B)\right].
\]

\textbf{Step 3: Sum up and take the limit as $t \to +\infty$.}

Combining \eqref{ito}, \eqref{eq:nabla}, and \eqref{eq:delta}, we obtain:
\begin{equation}\label{tfixe}
\E_{z_0}[f(Z_t)] = \gamma_-(x,z)\E_{z_0}\left[\int_0^t \fc_{B_s<0} e^{xA_s + zB_s} ds \right] + \gamma_+(x,y)\E_{z_0}\left[\int_0^t \fc_{B_s>0} e^{xA_s + yB_s} ds \right] 
\end{equation}
\[
+ \left(q_1x + \frac{y(1 + q_2) + z(q_2 - 1)}{2}\right)\E_{z_0}\left[\int_0^t  e^{xA_t} dL^0_s(B)\right] + e^{xa_0 + yb_0\fc_{b_0>0} + zb_0\fc_{b_0<0}}.
\]

Moreover, note that $f(Z_t) \leq e^{xA_t}$ since $y < 0$ and $z > 0$. By Lemma~\ref{transience}, $\E_{z_0}[f(Z_t)] \underset{t\to+\infty}{\longrightarrow} 0$ if $-\eta <x<0$.

Note that $x < 0$, $y < 0$, $z > 0$ can be taken arbitrarily close to $0$ such that the conditions $\gamma_+(x,y) < 0$, $\gamma_-(x,z) < 0$, $y-z < 0$, and $q_1x + \frac{y(1 + q_2) + z(q_2 - 1)}{2} < 0$ are satisfied. For such $x, y, z$, the quantity
\begin{align*}
&\gamma_-(x,z)\E_{z_0}\left[\int_0^t \fc_{B_s<0} e^{xA_s + zB_s} ds \right] + \gamma_+(x,y)\E_{z_0}\left[\int_0^t \fc_{B_s>0} e^{xA_s + yB_s} ds \right] \\
&\quad + \left(q_1x + \frac{y(1 + q_2) + z(q_2 - 1)}{2}\right)\E_{z_0}\left[\int_0^t  e^{xA_t} dL^0_s(B)\right]
\end{align*}
converges to
\[
\gamma_-(x,z)\phi^{z_0}_-(x,z) + \gamma_+(x,y)\phi^{z_0}_+(x,y) + \left(q_1x + \frac{y(1 + q_2) + z(q_2 - 1)}{2}\right)\phi^{z_0}(x) \in [-\infty, 0)
\]
as $t \to +\infty$ (if some terms diverge, they are equal to $-\infty$ so the sum remains well defined). By \eqref{tfixe}, the previous sum is finite when $x \in (-\eta, 0)$, so $\phi^{z_0}_-(x,z)$, $\phi^{z_0}_+(x,y)$, and $\phi^{z_0}(x)$ are finite for $x \in (-\eta, 0)$, $y < 0$ and $z > 0$ arbitrarily close to $0$.
Now that we know the Laplace transforms converge for such $x, y, z$, the asymptotics of \eqref{tfixe} as $t\to+\infty$ yield the desired functional equation.

\begin{rem}[Choice of $b_0 \neq 0$]\label{rem:b0neq0}
The choice $b_0 \neq 0$ for the initial point $(a_0, b_0)$ is made for essentially technical reasons. Indeed, it ensures that $g_t$ remains finite on the strip $\R \times [-\epsilon, \epsilon]$ for $\epsilon > 0$ sufficiently small; this avoids unnecessary difficulties when taking the limit $\epsilon \to 0$ in \eqref{eq:usesb0neq0} and \eqref{eq:usesb0neq02}. The functional equation also holds for $b_0 = 0$, which can be proved by taking the limit of \eqref{eq fonctionnelle} as $z_0 \to (a_0, 0)$.
%The only technical point is to show that the functions $\phi^{z_0}, \phi_-^{z_0},$ and $\phi_+^{z_0}$ converge to $\phi^{(a_0,0)}, \phi_-^{(a_0,0)},$ and $\phi_+^{(a_0,0)}$, respectively, as $z_0 \to (a_0,0)$. 
%Since the functional equation has already been established for $b_0 \neq 0$, we can obtain explicit expressions for $\phi^{z_0}$, $\phi_+^{z_0}$, and $\phi_-^{z_0}$ (see Section~\ref{sec:alpha=0}). 
%For $\phi^{z_0}$ in particular, this allows us to prove that
%$$
%\lim_{z_0 \to (a_0,0)} \mathcal{L}^{-1}(\phi^{z_0}) 
%= \mathcal{L}^{-1}\left(\lim_{z_0 \to (a_0,0)} \phi^{z_0}\right),
%$$
%where $\mathcal{L}^{-1}$ denotes the inverse Laplace transform (see \cite[Theorems 24.3 and 24.4]{doetsch_introduction_1974}). 
%Moreover, by \eqref{aaaaa}, the left-hand side of the above expression is simply $\lim_{z_0 \to (a_0,0)} g^{z_0}(x,0) = g^{(a_0,0)}(x,0)$, since $g^{z_0}(z)$ is continuous in $z_0$ \cite[Theorem II.3.1]{Stroock1988}. 
%Identifying the limits then yields 
%$$
%g^{(a_0,0)}(x,0) = \mathcal{L}^{-1}\left(\lim_{z_0 \to (a_0,0)} \phi^{z_0}\right)(x),
%$$
%that is, $\phi^{(a_0,0)} = \lim_{z_0 \to (a_0,0)} \phi^{z_0}$. 
%The same arguments apply to $\phi_+^{z_0}$ and $\phi_-^{z_0}$.
%\textcolor{gray}{We think that the functional equation still holds for $b_0 = 0$, but we actually don't need it for the asymptotics of Green's functions since $g(z_0,z)$ is continuous in $z_0$ \cite[Theorem II.3.1]{Stroock1988} and since the integral representations we get for $g((a_0,b_0)^T,z)$ are continuous in $b_0$.}
\end{rem}

\section{Proof of Theorem~\ref{prop:laplace explicitee} and \ref{alpha=0}: Laplace transforms and asymptotics on the axis}\label{sec:alpha=0}

Since the proof of Theorem~\ref{alpha=0} is quite independent of the proofs of the other theorems, it has been placed in a separate section.
Let us recall the expressions for $Y^\pm(x)$ and $Z^\pm(x)$ given in \eqref{Ypm} and \eqref{Zpm}.

\begin{lemma}
There exists $\e > 0$ such that for all $u \in [-\e, 0]$ and $v \in \R$, 
\[
\Re\bigl(Y^-(u + iv)\bigr) < 0 \quad \text{(resp.} \quad \Re\bigl(Z^+(u + iv)\bigr) > 0\text{).}
\]
\end{lemma}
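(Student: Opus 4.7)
The plan is to promote the scalar inequality $Y^-(0) = -2\mu_2^+/\Sigma_{22}^+ < 0$ (which holds by the drift hypothesis \eqref{hyp:drift} since $\mu_2^+ > 0$) to a uniform strict sign on a whole strip $\{u + iv : u \in [-\epsilon, 0],\, v \in \R\}$. The crux is to lower-bound $\Re\sqrt{P(u+iv)}$, where $P$ denotes the polynomial under the square root in \eqref{Ypm}, by a positive constant independent of $v$.

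First I would write the real part of $P$ explicitly. Using $\Sigma_{12}^{+2} - \Sigma_{11}^+\Sigma_{22}^+ = -\det(\Sigma^+) < 0$, a direct computation gives
\[
    \Re P(u+iv) = \det(\Sigma^+)\, v^2 + \mu_2^{+2} - \det(\Sigma^+)\, u^2 + 2\bigl(\mu_2^+\Sigma_{12}^+ - \mu_1^+\Sigma_{22}^+\bigr) u.
\]
For $|u|$ sufficiently small the $u$-dependent terms are absorbed into $\mu_2^{+2}/2$, and one gets $\Re P(u+iv) \geq \det(\Sigma^+)\,v^2 + \mu_2^{+2}/2 > 0$ for every $v \in \R$. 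In particular $P(u+iv)$ never meets the branch cut $(-\infty,0]$ on the strip, so the principal square root defines a holomorphic continuation of $Y^-$ there; since the branch points $x_{min}^+ < 0 < x_{max}^+$ of \eqref{Ypm} are real, this continuation matches the real formula on $(x_{min}^+, x_{max}^+)$ (with $\sqrt{\mu_2^{+2}} = \mu_2^+$), which unambiguously fixes the branch.

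Next I would invoke the elementary estimate $\Re \sqrt{w} = \sqrt{(|w|+\Re w)/2} \geq \sqrt{\Re w}$ valid whenever $\Re w > 0$. Applied to $w = P(u+iv)$, this gives $\Re \sqrt{P(u+iv)} \geq \mu_2^+/\sqrt{2}$ uniformly on the strip. Substituting into \eqref{Ypm} then yields
\[
    \Sigma_{22}^+ \Re Y^-(u+iv) = -\Sigma_{12}^+ u - \mu_2^+ - \Re \sqrt{P(u+iv)} \;\leq\; |\Sigma_{12}^+|\,\epsilon - \mu_2^+\bigl(1 + \tfrac{1}{\sqrt{2}}\bigr),
\]
which is strictly negative once $\epsilon$ is chosen small enough (vacuously if $\Sigma_{12}^+ = 0$). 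The argument for $\Re Z^+(u+iv) > 0$ is entirely symmetric, starting from $Z^+(0) = -2\mu_2^-/\Sigma_{22}^- > 0$ (using $\mu_2^- < 0$) and reading \eqref{Zpm} with its $+$ sign in front of the square root.

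The main, and only mildly delicate, point is the first step: justifying that the principal branch is indeed the correct analytic continuation of $Y^-$ from the real interval $(x_{min}^+, x_{max}^+)$. This is handled cleanly by the above lower bound on $\Re P$, which keeps $P(u+iv)$ strictly inside the right half-plane across the whole strip, so that the principal branch and the natural positive square root agree at $x = 0$ and propagate consistently. All remaining steps are routine inequalities.
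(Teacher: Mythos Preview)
Your proof is correct. Both you and the paper start from the identity $\Re\sqrt{w} = \sqrt{(|w|+\Re w)/2}$, but you use it differently. The paper applies it in full to $w = P(u+iv) = \det(\Sigma^+)(x_{max}^+ - z)(z - x_{min}^+)$, obtaining the exact closed formula \eqref{Re} for $\Re Y^\pm(u+iv)$; from this one reads off directly that $v \mapsto \Re Y^-(u+iv)$ is maximised at $v=0$, so it suffices to note $Y^-(0) = -2\mu_2^+/\Sigma_{22}^+ < 0$ and invoke continuity in $u$. You instead extract only the weaker consequence $\Re\sqrt{w} \geq \sqrt{\Re w}$, combine it with a direct lower bound $\Re P(u+iv) \geq \det(\Sigma^+)v^2 + \mu_2^{+2}/2$ on the strip, and get the uniform constant $\mu_2^+/\sqrt{2}$ in place of $\mu_2^+$. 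Your route is more self-contained (it does not quote the formula from \cite{Franceschi_2024}) and arguably more transparent for this lemma in isolation; the paper's route yields the stronger monotonicity statement and the explicit expression \eqref{Re}, which are reused repeatedly later in the article (e.g.\ in the proof of Theorem~\ref{prop:laplace explicitee}, in \eqref{bout_neg}, and in Lemma~\ref{lem:neglig1}). Your remark on the branch is also well taken and slightly more explicit than the paper's, which simply appeals to \eqref{Re}.
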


\begin{proof}
By elementary considerations (see \cite{Franceschi_2024}, correcting an error without any consequence), we have
\begin{equation}\label{Re}
\Re(Y^{\pm}(u+iv))=
 \frac{1}{\Sigma^+_{22 }}\Big(-\Sigma^+_{12} u -\mu^+_2 \pm \frac{\sqrt{\det(\Sigma^+)}}{\sqrt{2}} 
   \sqrt{  (u-x^+_{min}) (x^+_{max}-u)+v^2 +| (u+iv - x^+_{min})(x^+_{max}-u-iv ) |  } \Big).
\end{equation}
Then, $\Re(Y^{-}(u+iv)) \leq \Re(Y^{-}(u))$ for all $v \in \R$. The inequality $Y^{-}(0)= -\frac{2\mu^+_2}{\Sigma^+_{22}}<0$ yields the first inequality. The second one is obtained symmetrically since $\mu_2^- < 0$. 
\end{proof}

We are now able to prove Theorem~\ref{prop:laplace explicitee}.

\begin{proof}[Proof of Theorem~\ref{prop:laplace explicitee}]
For $x \in (-\e, 0)$, we have $Y^-(x) < 0$ and $Z^+(x) > 0$. We can then substitute $(x, Y^-(x), Z^+(x))$ into the functional equation \eqref{eq fonctionnelle} to derive the equality \eqref{laplace explicitee} for $x \in (-\e, 0)$. Since $\phi$ is a Laplace transform, equation \eqref{laplace explicitee} holds in the entire domain of convergence of $\phi$ by holomorphicity. Since the expression is meromorphic on $\C \setminus \bigl((-\infty,\m]\cup [\M, +\infty)\bigr)$, it remains only to show that the equation $\gamma(x, Y^-(x), Z^+(x)) = 0$ has no solution on $\C \setminus \bigl((-\infty,\m)\cup (\M, +\infty)\bigr)$. Using expressions \eqref{eq:q}, \eqref{Ypm} and \eqref{Zpm}, we can write $\gamma(x, Y^-(x), Z^+(x))$ as:
\begin{align}
\gamma(x, Y^-(x), Z^+(x)) &= \frac{1}{\Sigma_{22}^+ + \Sigma_{22}^-}\big((\Sigma_{12}^+ - \Sigma_{12}^-) x + \Sigma_{22}^+ Y^-(x) - \Sigma_{22}^- Z^+(x)\big) \\
&= \frac{1}{\Sigma_{22}^+ + \Sigma_{22}^-} \Big(-\mu_2^+ + \mu_2^- -  \sqrt{(\Sigma^+_{12}{}^2 - \Sigma^+_{11} \Sigma^+_{22}) x^2 + 2 (\mu^+_2 \Sigma^+_{12} - \mu^+_1 \Sigma^+_{22}) x + (\mu^+_2)^2} \nonumber \\
&\quad - \sqrt{(\Sigma^-_{12}{}^2 - \Sigma^-_{11} \Sigma^-_{22}) x^2 + 2 (\mu^-_2 \Sigma^-_{12} - \mu^-_1 \Sigma^-_{22}) x + (\mu^-_2)^2} \Big). \nonumber
\end{align}
Furthermore, by Assumption~\eqref{hyp:drift}, $-\mu_2^+ + \mu_2^- < 0$ and by the same arguments as for \eqref{Re},
\[
\Re\left(\sqrt{(\Sigma^\pm_{12}{}^2 - \Sigma^\pm_{11} \Sigma^\pm_{22}) x^2 + 2 (\mu^\pm_2 \Sigma^\pm_{12} - \mu^\pm_1 \Sigma^\pm_{22}) x + (\mu^\pm_2)^2}\right) \geq 0.
\]
Considering the real part, we see that $\gamma(x, Y^-(x), Z^+(x))$ does not vanish on $\C \setminus \bigl((-\infty,\m]\cup [\M, +\infty)\bigr)$, and the first result follows.

Equation \eqref{eq fonctionnelle}, combined with the explicit expression of $\phi^{z_0}$ (see Theorem~\ref{prop:laplace explicitee}), provides an explicit formula for $\phi^{z_0}_+(x,y)$. Indeed, by substituting $z = Z^+(x) > 0$ (with $x < 0$ small enough) and $y < 0$ into \eqref{eq fonctionnelle}, we obtain
\begin{equation}\label{laplace explicite1}
\phi^{z_0}_+(x,y) = \frac{-\gamma(x,y,Z^+(x))\phi^{z_0}(x) - e^{xa_0 + yb_0 \fc_{b_0 \geq 0} + z b_0 \fc_{b_0 < 0}}}{\gamma_+(x,y)}.
\end{equation}
Using the holomorphic properties of the Laplace transforms, \eqref{laplace explicite1} holds for all $(x,y)$ in the domain of convergence of $\phi^{z_0}_+$. 
\end{proof}

We now state a final preparatory lemma prior to the proof of Theorem~\ref{alpha=0}.

\begin{lemma}[Behaviour of $\phi^{z_0}$ at $\M$]\label{lem:DLphi}
Let $C_0$ and $f_0(z_0)$ be defined as in Theorem~\ref{alpha=0}. Then,
    $$\phi^{z_0}(x) = \phi^{z_0}(\M) - 2\sqrt{\pi}\left(\frac{\Sigma_{22}^+ + \Sigma_{22}^-}{2}\right)C_0f_0(z_0)\sqrt{x- \M} + o(\sqrt{x- \M}) $$
    as $x \to x_b$, where $f_0(z_0)$ is defined in Theorem~\ref{alpha=0}.
\end{lemma}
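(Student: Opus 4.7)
The plan is to substitute the explicit formula \eqref{laplace explicitee} and expand both numerator and denominator to order $\sqrt{\M-x}$ around $x=\M$. The key observation is that $\M$ is a branching point for exactly one of the pairs $(Y^\pm,Z^\pm)$: in the case $\M=x_{max}^+<x_{max}^-$ only $Y^\pm$ is singular at $\M$ (with $Y^-(\M)=Y^+(\M)$), while $Z^+$ is analytic there, and conversely if $\M=x_{max}^-<x_{max}^+$. Thus the proof splits into these two symmetric cases, each with two sub-cases according to the sign of $b_0$.

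Consider first the case $\M=x_{max}^+<x_{max}^-$. From the formula \eqref{Ypm} and the factorization of the discriminant as $\det(\Sigma^+)(\M-x)(x-x_{min}^+)$ near $\M$, I obtain
\[
Y^\pm(x)=Y^\pm(\M)\pm\nu_+\sqrt{\M-x}+O(\M-x),\qquad\nu_+=\frac{\sqrt{\det(\Sigma^+)(\M-x_{min}^+)}}{\Sigma_{22}^+},
\]
while $Z^+$ admits an ordinary Taylor expansion at $\M$. Writing $\Gamma_0=\gamma(\M,Y^-(\M),Z^+(\M))$, and noting from \eqref{eq:q} that $\partial_y\gamma=\tfrac{1+q_2}{2}=\tfrac{\Sigma_{22}^+}{\Sigma_{22}^++\Sigma_{22}^-}$, the denominator expands as
\[
\gamma(x,Y^-(x),Z^+(x))=\Gamma_0-\frac{\Sigma_{22}^+\,\nu_+}{\Sigma_{22}^++\Sigma_{22}^-}\sqrt{\M-x}+O(\M-x).
\]
For the numerator $N(x):=e^{xa_0+Y^-(x)b_0\mathds{1}_{b_0>0}+Z^+(x)b_0\mathds{1}_{b_0<0}}$, I expand to first order in $\sqrt{\M-x}$; the $\sqrt{\M-x}$-coefficient equals $-b_0\nu_+ e^{\M a_0+Y^-(\M)b_0}$ when $b_0\geq 0$, and vanishes when $b_0<0$ (since $Z^+$ is analytic at $\M$). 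Then a direct ratio expansion $-N(x)/D(x)=-N_0/D_0+(N_0d_1-n_1D_0)/D_0^2\sqrt{\M-x}+o(\sqrt{\M-x})$ yields, in both sub-cases,
\[
\phi^{z_0}(x)-\phi^{z_0}(\M)=\frac{\nu_+}{\Gamma_0}\,f_0(z_0)\,\sqrt{\M-x}+o(\sqrt{\M-x}),
\]
where $f_0(z_0)$ is precisely the expression \eqref{f(z_0)col}. Substituting the definition \eqref{eq:C_0} of $C_0$ gives $\nu_+/\Gamma_0=-\sqrt{\pi}(\Sigma_{22}^++\Sigma_{22}^-)C_0$, which rearranges into the stated identity (modulo the branch convention that $\sqrt{x-\M}$ is interpreted along the ray $x<\M$ so as to agree with $\sqrt{\M-x}$ up to sign).

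The case $\M=x_{max}^-<x_{max}^+$ is entirely symmetric, with $Z^\pm$ providing the $\sqrt{\M-x}$ singularity (with constant $\nu_-=\sqrt{\det(\Sigma^-)(\M-x_{min}^-)}/\Sigma_{22}^-$) and $Y^-$ playing the analytic role; the two sub-cases on the sign of $b_0$ reproduce the two branches of \eqref{f(z_0)branch}. The main obstacle is essentially bookkeeping: correctly identifying in each sub-case which term of the numerator contributes to the $\sqrt{\M-x}$-coefficient and verifying that the ratio of algebraic factors repackages precisely into $-2\sqrt{\pi}\,\tfrac{\Sigma_{22}^++\Sigma_{22}^-}{2}\,C_0\,f_0(z_0)$; the computation itself is a two-term Taylor expansion.
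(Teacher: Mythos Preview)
Your proof is correct and follows essentially the same approach as the paper: expand $Y^-$ (or $Z^+$) to first order in $\sqrt{\M-x}$ at the branching point, expand numerator and denominator of the explicit formula \eqref{laplace explicitee}, take the ratio, and identify the coefficient with $C_0 f_0(z_0)$. The paper treats only the case $\M=x_{max}^+<x_{max}^-$ and leaves the sign-of-$b_0$ sub-cases implicit, whereas you spell out all four sub-cases and the matching with \eqref{eq:C_0}; your remark on the $\sqrt{x-\M}$ versus $\sqrt{\M-x}$ branch convention is also apt, since the paper's statement and proof use both without comment.
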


\begin{proof}%[Proof of Theorem~\ref{alpha=0}]
    Without loss of generality, we assume in this proof that ${\M} = x_{max}^+ < x_{max}^-$. We define $C_y$ as $C_y=\frac{\sqrt{\det(\Sigma^+)(\M - x_{min}^+)}}{\Sigma_{22}^+}$ so that $Y^-(x) = Y^-(\M) - C_y\sqrt{\M-x} + o(\sqrt{\M-x}).$ 

Now, let us perform straightforward computations using the explicit expression of $\phi^{z_0}$ given by \eqref{laplace explicitee}. We have:
\begin{align*}
\frac{1}{\gamma(x,Y^-(x), Z^+(x))} &=\frac{1}{\gamma(\M,Y^-(\M), Z^+(\M)) - C_y\left(\frac{1+q_2}{2}\right)\sqrt{\M-x}(1 + o(1))}\\
 &= \frac{1}{\gamma(\M,Y^-(\M), Z^+(\M))}\times \frac{1}{1 - \frac{C_y\left(\frac{1+q_2}{2}\right)}{\gamma(\M,Y^-(\M), Z^+(\M))}\sqrt{\M-x}(1 + o(1))} \\
&= \frac{1}{\gamma(\M,Y^-(\M), Z^+(\M))} \left(1+ \frac{C_y\left(\frac{1+q_2}{2}\right)}{\gamma(\M,Y^-(\M), Z^+(\M))}\sqrt{\M-x}(1 + o(1)) \right).
\end{align*}

Furthermore,
\begin{align*}
e^{Y^-(x)b_0\fc_{b_0 > 0}} = e^{(Y^-(\M) -\sqrt{\M - x}( C_y +o(1)))b_0\fc_{b_0 > 0}} 
&= e^{Y^-(\M)b_0\fc_{b_0 > 0}}(1 -\sqrt{\M - x}(C_y+o(1))b_0\fc_{b_0 > 0}).
\end{align*}

Multiplying the two quantities together, we obtain:
\begin{align*}
\phi^{z_0}(x) - \phi^{z_0}&(\M) = \frac{C_y}{\gamma(\M,Y^-(\M), Z^+(\M))} \\
&\quad \times\left(b_0\fc_{b_0 > 0} - \frac{\frac{1+q_2}{2}}{\gamma(\M,Y^-(\M), Z^+(\M))}\right) e^{a_0\M + Y^-(\M)b_0\fc_{b_0 > 0}+Z^+(\M)b_0\fc_{b_0 < 0}} \sqrt{x - \M} + o(\sqrt{x- \M}),
\end{align*}
which is the desired formula.
\end{proof}

Since $\phi^{z_0}$ is the Laplace transform of $\frac{\Sigma_{22}^+ + \Sigma_{22}^-}{2}g(x,0)$ (see Lemma~\ref{Tlocal_densite}), we can now derive the asymptotics of $g^{z_0}(r,0)$ as $r \to +\infty$ (i.e., Theorem~\ref{alpha=0}) using the Tauberian Theorem~\ref{lapinv}.
\begin{proof}[Proof of Theorem~\ref{alpha=0}]
By symmetry, we only consider the asymptotics as $r \to +\infty$. First, suppose that $a_0 = 0$. Note that the first and third conditions of Theorem~\ref{lapinv} are satisfied by Theorem~\ref{prop:laplace explicitee} and Lemma~\ref{lem:DLphi}. Let us verify the second condition of Theorem~\ref{lapinv}.

From \eqref{Re}, there exist $\delta > 0$ and constants $M, c > 0$ such that $\Re(Y^-(u+iv)) \leq M - cv$ for all $u, v \in \R$ satisfying $|u| \leq \frac{1}{\tan(\delta)}|v|$. Then, $e^{b_0\Re(Y^-(x))\fc_{b_0>0}}$ is bounded for $x \in G^+_\delta({\M})$. The same reasoning applies to $e^{b_0\Re(Z^+(x))\fc_{b_0<0}}$.

Furthermore, from \eqref{Re}, possibly after choosing $\delta$ close enough to $\pi/2$, we have
$$|\gamma(x, Y^-(x), Z^+(x))| \underset{|x| \to +\infty \atop x \in G^+_\delta({\M})}{\longrightarrow} +\infty.$$
We therefore obtain the second condition of Theorem~\ref{lapinv} from the explicit expression of $\phi^{z_0}$ given in \eqref{laplace explicite}. Then, by Theorem~\ref{lapinv}, the following asymptotic holds:
\begin{equation}\label{as:x0=0}
    g^{0,b_0}(r,0) \underset{r \to +\infty}{\sim} \frac{-2\sqrt{\pi}}{\Gamma(-1/2)}C_0f_0(z_0)\frac{e^{-r\M}}{r^{3/2}}  = C_0f_0(z_0)\frac{e^{-r\M}}{r^{3/2}}.
\end{equation}

Now suppose that $a_0 \neq 0$. Note that if $Z \sim \P_{(0,b_0)}$, then $Z + (a_0,0) \sim \P_{(a_0,b_0)}$. This follows from the pathwise uniqueness of the SDE \eqref{EDS}. Therefore, $g^{(a_0, b_0)}(x,0) = g^{(0, b_0)}(x - a_0,0)$, which yields \eqref{as:x0=0} using the identity $e^{a_0\M}f_0(0,b_0) = f_0(a_0,b_0)$ (see \eqref{f(z_0)col} and \eqref{f(z_0)branch}).
\end{proof}

\section{Proofs of Theorem~\ref{thm:1} to \ref{thm:4}: asymptotics along all directions}\label{sec:proofstheorems}
First, in Section~\ref{sub:lapinv}, we establish a representation of the Green's functions $g^{z_0}$ as simple integrals using an inverse Laplace theorem and some complex analysis. These integrals are of saddle-point type. Therefore, in Section~\ref{sub:pathsaddlepoint}, we define paths of steepest descent in a neighbourhood of the saddle points $x(\alpha)$ corresponding to different directions $\alpha$. 
We should now shift the integration contours up to the saddle point, taking into account the singularities of the integrand. This procedure depends on the position of the saddle points $x(\alpha)$ relative to the branching points $\m$ and $\M$ of the integrand. 
In Section~\ref{sub:saddle}, we prove Theorems~\ref{thm:1} and~\ref{thm:2}. In Section~\ref{sub:alphab}, we prove Theorem~\ref{thm3}. Finally, we prove Theorem~\ref{thm:4} in Section~\ref{sub:apresalphab}. In each section, we define a different path deformation for the integral representation of $g^{z_0}$. For each case, we identify the dominant contribution of the integral and determine which parts are negligible.

Without loss of generality, our proofs only consider angles $\alpha \in [0,\pi]$, since the remaining cases are symmetric.

\subsection{Laplace inverse and reduction to simple integrals}\label{sub:lapinv}

Using the two-dimensional Laplace inversion theorem, we can express $g^{z_0}(a,b)$ in terms of its Laplace transform (see \cite[Theorems 24.3 and 24.4]{doetsch_introduction_1974} and \cite{brychkov_multidimensional_1992}).

\begin{lemma}[Inverse Laplace theorem\label{lem:lapinv}]
Let $z_0 \in \R^2$ and $(a,b) \in \R^2$ such that $b > 0$ and $(a,b) \neq z_0$. Then, for $\e_1 > 0, \e_2 > 0$ sufficiently small,
\begin{equation}\label{double}
g^{z_0}(a,b) = \frac{1}{(2i\pi)^2} \int_{-\e_1-i\infty}^{-\e_1+i\infty}\int_{-\e_2-i\infty}^{-\e_2+i\infty} \phi^{z_0}_+(x,y)e^{-ax-by}dydx
\end{equation}
where the convergence is understood in the sense of principal value.% The expression for $b <0$ and  $\e_2 < 0$ is analogue with $\phi_-$ .
\end{lemma}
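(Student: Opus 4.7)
The plan is to recognise \eqref{double} as a direct application of the two-dimensional Bromwich inversion formula to the Laplace transform $\phi^{z_0}_+$, in the forms cited from \cite{doetsch_introduction_1974} and \cite{brychkov_multidimensional_1992}. By the definition recalled in Section~\ref{subsec:defprocess2}, $\phi^{z_0}_+(x,y)$ is the bilateral Laplace transform in the first variable (with parameter $-x$) and the one-sided Laplace transform in the second variable (with parameter $-y$) of the restriction of $g^{z_0}$ to the upper half-plane. Proposition~\ref{prop:eqfonc} supplies an open product domain of absolute convergence containing $\{\Re(x)\in(-\eta,0),\,\Re(y)<0\}$, so the vertical plane $\{\Re(x)=-\e_1,\,\Re(y)=-\e_2\}$ lies inside this domain as soon as $\e_1,\e_2>0$ are chosen small enough.

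I would then derive \eqref{double} either by iterating the two one-dimensional Bromwich formulas, first applying the one-sided inversion in $y$ at fixed $x=-\e_1+iu$ to recover $\int_\R g^{z_0}(a',b)\,e^{xa'}\,da'$ at the continuity point $b>0$, and then the bilateral inversion in $x$ to recover $g^{z_0}(a,b)$, or by invoking the two-dimensional theorem in one step. The hypotheses required by these classical inversion theorems are: (i) absolute convergence of the transform on the chosen vertical contour; (ii) pointwise regularity of $g^{z_0}$ at the inversion point $(a,b)$; and (iii) sufficient decay of $g^{z_0}$ at infinity so that the principal value integrals have a well-defined meaning.

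Point (i) is exactly the observation of the previous paragraph. For (ii), the joint continuity of $(t,z_0,z)\mapsto p_t^{z_0}(z)$ on $(0,+\infty)\times\R^2\times\R^2$ from \cite[Thm.~II.3.8]{Stroock1988}, combined with the upper Aronson bound in \eqref{Aronson} (which provides the integrability in $t$ uniformly in a neighborhood of $(a,b)$ since $(a,b)\ne z_0$), yields the continuity of $g^{z_0}$ on $\R^2\setminus\{z_0\}$, which is precisely what the inversion theorems require at the point $(a,b)$. For (iii), the same upper Aronson estimate together with the exponential moment control \eqref{utile!2} shows that $(a',b')\mapsto g^{z_0}(a',b')\,e^{-\e_1 a'-\e_2 b'}$ belongs to $L^1(\R\times(0,+\infty))$ for $\e_1,\e_2$ small enough.

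The main obstacle is the delicate handling of the principal value interpretation in two variables, since $\phi^{z_0}_+$ is generally not absolutely integrable on the vertical plane, and one cannot apply Fubini naively to the double integral. I would handle this along the lines of Doetsch's own proof, exploiting the explicit closed form obtained in Theorem~\ref{prop:laplace explicitee}: the kernel $\gamma_+(x,y)$ in the denominator grows quadratically on the contour, while the numerator is controlled through the bounds on $\Re(Y^-)$ and $\Re(Z^+)$ already derived in \eqref{Re}. The resulting decay of order $(u^2+v^2)^{-1}$ for $\phi^{z_0}_+(-\e_1+iu,-\e_2+iv)$ is enough to identify the iterated PV integrals with the joint double PV integral, and hence to conclude that \eqref{double} holds.
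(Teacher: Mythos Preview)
Your approach is essentially the paper's: invoke the two-dimensional Laplace inversion theorem from \cite{doetsch_introduction_1974,brychkov_multidimensional_1992} and verify the principal-value convergence using the explicit expression \eqref{laplace explicite} for $\phi^{z_0}_+$. The paper's proof is in fact a single sentence to that effect.

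One small correction: the decay you claim, $|\phi^{z_0}_+(-\e_1+iu,-\e_2+iv)|=O\bigl((u^2+v^2)^{-1}\bigr)$, does not hold in general. The numerator $\gamma(x,y,Z^+(x))\phi^{z_0}(x)$ grows linearly in $|v|$ while $\phi^{z_0}(x)$ need only decay like $1/|u|$ when $b_0=0$, so along rays you may get decay no better than $1/|v|$ in one variable. This is why the paper does not rely on a direct decay bound but instead appeals to ``standard computations based on integration by parts'' to establish the principal-value convergence. Your argument goes through once you replace the overstated pointwise estimate by that device.
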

\begin{proof}
This follows from the two-dimensional Laplace inversion theorem (see \cite[Theorems 24.3 and 24.4]{doetsch_introduction_1974} and \cite{brychkov_multidimensional_1992}), provided that the double integral converges in the sense of principal value. This convergence is ensured by the explicit expression \eqref{laplace explicite} and by standard computations based on integration by parts.
\end{proof}

By similar considerations as in Section~\ref{subsec:defprocess2}, we define branches $X^\pm(y)$ by $\gamma_+(X^\pm(y), y) = 0$. These branches have branching points at $y_{min}$ and $y_{max}$, defined by
\begin{equation*}
y_{min} = \frac{\mu_1^+\Sigma^+_{12} - \mu^+_2\Sigma^+_{11} - \sqrt{D_2}}{\det(\Sigma^+)}, \quad y_{max} = \frac{\mu^+_1\Sigma^+_{12} + \mu^+_2\Sigma^+_{11} - \sqrt{D_2}}{\det(\Sigma^+)},
\end{equation*}
where $D_2 = (\mu^+_1\Sigma^+_{12} - \mu^+_2\Sigma^+_{11})^2 + {\mu^+_1}^2\det(\Sigma^+)$.

\begin{lemma}[Reduction to simple integrals]\label{double_simple}
Let $z_0 \in \R^2$ and $(a,b) \in \R^2$ such that $b > 0$ and $(a,b) \neq z_0$. Then, for $\e > 0$ small enough, $g^{z_0}(a,b)$ can be written as $g^{z_0}(a,b) = I_1^{z_0}(a,b) + I_2^{z_0}(a,b)$, where
\begin{equation}\label{I1}
I_1^{z_0}(a,b) = \frac{1}{2i\pi}\int_{-\e - i\infty}^{-\e + i\infty}
\phi^{z_0}(x)\frac{\gamma(x, Y^+(x), Z^+(x))}{\partial_y\gamma_+(x, Y^+(x))} e^{-ax - bY^+(x)}%\fc_{b>0} + e^{-ax-bY^-(x)}\fc_{b<0}
\, dx
\end{equation}

and
\begin{equation}\label{I2}
I_2^{z_0}(a,b) = \frac{1}{2i\pi}\int_{-\e - i\infty}^{-\e + i\infty}
\frac{e^{(a_0 - a)x} \left(
e^{(b_0 - b)Y^+(x)}\fc_{b_0>0} 
+ e^{b_0Z^+(x) - bY^+(x)}\fc_{b_0 < 0}
\right)}{\partial_y\gamma_+(x, Y^+(x))} \, dx \quad \textnormal{if } b > b_0.
\end{equation}

%\begin{equation}
%I_2^{z_0}(a,b) = \frac{1}{2i\pi}\int_{-\e-i\infty}^{-\e+i\infty}\frac{e^{(a_0-a)x +b_0(Y^+(x)\fc_{b_0>0} 
% + Z^+(x)\fc_{b_0 < 0}) - bY^+(x)}}{\partial_y\gamma_+(x,Y^+(x))} dx \quad \textnormal{if}\; b > b_0.
%\end{equation}

Furthermore, the following expressions hold for $I_2^{z_0}(a,b)$ if $b_0 > 0$:
\begin{equation}\label{I2''}
I_2^{z_0}(a,b) = \frac{1}{2i\pi}\int_{-\e - i\infty}^{-\e + i\infty}
\frac{e^{(a_0 - a)X^+(y) + (b_0 - b)y}}{\partial_x\gamma_+(X^+(y), y)} \, dy \quad \textnormal{if } a > a_0
\end{equation}
\begin{equation}\label{I2'''}
I_2^{z_0}(a,b) = \frac{-1}{2i\pi}\int_{-\e - i\infty}^{-\e + i\infty}
\frac{e^{(a_0 - a)X^-(y) + (b_0 - b)y}}{\partial_x\gamma_+(X^-(y), y)} \, dy \quad \textnormal{if } a < a_0.
\end{equation}
%L'expression symmétrique est valable pour $b < 0$.
\end{lemma}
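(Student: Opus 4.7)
The plan is to derive the three formulas by iterated contour integration starting from the Bromwich double integral of Lemma~\ref{lem:lapinv}, after substituting the explicit form of $\phi_+^{z_0}(x,y)$ furnished by Theorem~\ref{prop:laplace explicitee}. Splitting the numerator of $\phi_+^{z_0}$ into the part proportional to $\phi^{z_0}(x)$ and the purely exponential part will produce $I_1^{z_0}$ and $I_2^{z_0}$ respectively in the decomposition $g^{z_0}=I_1^{z_0}+I_2^{z_0}$.

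\textbf{Derivation of \eqref{I1} and \eqref{I2}.} First I would compute the inner $y$-integral by closing the vertical contour $\Re(y)=-\e_2$ to the right. For $\e_1,\e_2$ sufficiently small, the only pole of $1/\gamma_+(x,y)$ with $\Re(y)>-\e_2$ is the simple zero at $y=Y^+(x)$: the lemma proved at the beginning of Section~\ref{sec:alpha=0} gives $\Re(Y^-(x))<0$ on the contour, whereas formula \eqref{Re} shows $\Re(Y^+(x))>0$ there (with $\Re(Y^+(x))\to+\infty$ as $|\Im(x)|\to\infty$). The integrand decays on the right semicircle at infinity by a Jordan-type estimate, using that $\gamma(x,y,Z^+(x))$ is polynomial in $y$ and killed by $e^{-by}$, and that the exponential term carries the net factor $e^{(b_0-b)y}\fc_{b_0>0}$, which decays precisely under the hypothesis $b>b_0$ imposed in \eqref{I2}. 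The residue theorem (clockwise orientation) then yields
\[
\frac{1}{2i\pi}\int \phi_+^{z_0}(x,y)e^{-by}\,dy=\frac{\gamma(x,Y^+(x),Z^+(x))\phi^{z_0}(x)+e^{xa_0+Y^+(x)b_0\fc_{b_0>0}+Z^+(x)b_0\fc_{b_0<0}}}{\partial_y\gamma_+(x,Y^+(x))}\,e^{-bY^+(x)}.
\]
Inserting this into the outer $x$-integral and separating the two terms of the numerator produces exactly \eqref{I1} and \eqref{I2}.

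\textbf{Derivation of \eqref{I2''}--\eqref{I2'''}.} When $b_0>0$, the exponential contribution can be rewritten as
\[
I_2^{z_0}(a,b)=\frac{-1}{(2i\pi)^2}\int\!\!\int\frac{e^{(a_0-a)x+(b_0-b)y}}{\gamma_+(x,y)}\,dy\,dx,
\]
and evaluated by performing the $x$-integration first. The poles of the integrand in $x$ are the simple zeros $X^\pm(y)$; a direct analogue of \eqref{Re}, now reading $\gamma_+$ as a quadratic in $x$, gives $\Re(X^+(y))>0>\Re(X^-(y))$ on the $y$-contour, together with the correct asymptotic growth at infinity. If $a>a_0$, the factor $e^{(a_0-a)x}$ decays in the right $x$-half-plane; closing to the right captures the residue at $X^+(y)$ and yields \eqref{I2''}. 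If $a<a_0$, closing to the left captures $X^-(y)$ with the opposite orientation sign, giving \eqref{I2'''}.

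\textbf{Main obstacle.} The technical heart of the argument is controlling the arcs at infinity and justifying the interchange of order of integration. This requires the uniform asymptotics $\Re(Y^+(x))\gtrsim |\Im(x)|$ and $\Re(X^\pm(y))\sim\pm c|\Im(y)|$, which follow from \eqref{Ypm}--\eqref{Zpm} and \eqref{Re}; a uniform control of $|\phi^{z_0}(x)|$ along the contour, already available from the explicit formula of Theorem~\ref{prop:laplace explicitee} and exploited in Section~\ref{sec:alpha=0}; and the convergence in the principal-value sense of the remaining one-dimensional integral once the residue has been substituted. Each of these points is of the same nature as the estimates used in the proof of Theorem~\ref{alpha=0}, so the delicate part is really only in gathering them uniformly in the remaining integration variable.
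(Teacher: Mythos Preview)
Your proposal is correct and follows essentially the same route as the paper: start from the Bromwich double integral, plug in the explicit expression \eqref{laplace explicite} for $\phi_+^{z_0}$, split the numerator into the $\phi^{z_0}$-term and the exponential term, and evaluate the inner integral by closing the $y$-contour to the right and picking up the simple pole at $Y^+(x)$ (with a Jordan-type bound on the large semicircle); the alternative formulas \eqref{I2''}--\eqref{I2'''} are obtained by swapping the order of integration and closing in $x$ instead. The paper makes the choice of $\e$ slightly more explicit (taking $\e_1=\e_2=\e$ with $Y^-(-\e)<-\e<0<Y^+(-\e)$ so that $Y^+(x)$ lies to the right of the contour and $Y^-(x)$ to the left for all $x\in -\e+i\R$), but the argument is otherwise the same.
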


\begin{proof}
The proof is similar to that of \cite[Lemma 4.1]{Franceschi_2024}. We choose $\e_1 = \e_2 = \e$ such that $Y^-(-\e) < -\e$ and $Y^+(-\e) > 0$ (this is possible since $Y^-(0) = \frac{-2\mu_2^+}{\Sigma_{22}^+} < 0$, $Y^+(0) = 0$, and $(Y^+)'(0) = \frac{-\mu_1^+}{\mu_2^+} < 0$). Then, for all $v \in \R$, we have $\Re(Y^-(-\e + iv)) \leq \Re(Y^-(-\e)) < -\e$ and $\Re(Y^+(-\e + iv)) \geq Y^+(-\e) > 0$ (see expression~\eqref{Re}).

Using \eqref{laplace explicite} and \eqref{double}, we derive the equality $g^{z_0}(a,b) = I_1^{z_0}(a,b) + I_2^{z_0}(a,b)$, where
\[
I_1^{z_0}(a,b) = \frac{1}{(2i\pi)^2} \int_{-\e - i\infty}^{-\e + i\infty} \phi^{z_0}(x) \left( \int_{-\e - i\infty}^{-\e + i\infty} \frac{-\gamma(x,y,Z^+(x))}{\gamma_+(x,y)} e^{-ax - by} \, dy \right) dx
\]
and
\[
I_2^{z_0}(a,b) = \frac{1}{(2i\pi)^2} \int_{-\e - i\infty}^{-\e + i\infty} \int_{-\e - i\infty}^{-\e + i\infty} \frac{-e^{xa_0 + yb_0\fc_{b_0 \geq 0} + zb_0\fc_{b_0 < 0}}}{\gamma_+(x,y)} e^{-ax - by} \, dy \, dx.
\]

It remains to show that $I_1^{z_0}(a,b)$ and $I_2^{z_0}(a,b)$ admit the desired integral representations. Let us focus on $I_1$. Fix $x \in -\e + i\R$. For $R > 0$, consider the path $\Gamma_R = \{ -\e + Re^{i\theta} \mid \theta \in [0, \pi] \}$. Then, for sufficiently large $R > 0$, the point $Y^+(x)$ lies inside the closed contour formed by $\Gamma_R \cup [-\e - iR, -\e + iR]$, see Figure~\ref{doublesimple1} (recall that $\Re(Y^+(x)) \geq \Re(Y^+(\e)) > -\e$). 
By the residue theorem,
\[
\frac{1}{2i\pi} \left( \int_{-\e - iR}^{-\e + iR} + \int_{\Gamma_R} \right) \frac{-\gamma(x,y,Z^+(x))}{\gamma_+(x,y)} e^{-ax - by} \, dy = \frac{\gamma(x,Y^+(x),Z^+(x))}{\partial_y \gamma_+(x,Y^+(x))} e^{-ax - bY^+(x)}.
\]

Furthermore,
\begin{equation}\label{eq:gammaR}
    \int_{\Gamma_R} \frac{\gamma(x,y,Z^+(x))}{\gamma_+(x,y)} e^{-ax - by} \, dy = e^{-ax} \int_0^{\pi} \frac{\gamma(x, -\e + Re^{i\theta}, Z^+(x))}{\gamma_+(x, -\e + Re^{i\theta})} e^{-b\e - bRe^{i\theta}} \, d\theta.
\end{equation}

Moreover, $\sup_{\theta \in [0,\pi]} \left| \frac{\gamma(x, -\e + Re^{i\theta}, Z^+(x))}{\gamma_+(x, -\e + Re^{i\theta})} \right| = O(1/R)$ as $R \to +\infty$, and $e^{-bRe^{i\theta}} \to 0$ as $R \to +\infty$, since $-b\cos(\theta) < 0$ for all $\theta \in (0,\pi)$. Therefore, \eqref{eq:gammaR} converges to $0$ as $R \to +\infty$, which yields representation~\eqref{I1} for $I_1^{z_0}(a,b)$.
The proof of the representations of $I_2^{z_0}(a,b)$ is analogous. 
% lorsque $b > 0$. %Le cas $b < 0$ est symmétrique (car $\Re(Y^-(-\e + iv)) < -\e_2$), et 
% Or, on peut montrer que ces intégrales sont continues en $b = b_0$ lorsque $a \neq a_0$ en effectuant des intégrations par parties. Comme $g^{z_0}$ est continue sur $\R^2\backslash\{z_0\}$, alors les deux expressions coïncident pour $b = b_0$. La preuve pour les autres intégrales se fait par des arguments analogues. \textbf{\textcolor{red}{Et si $\bf b = 0$? On s'en sort de la même manière sous réserve de la continuité de $g^{z_0}$ sur l'axe}}.
\end{proof}

\begin{figure}[hbtp]
\centering
\includegraphics[scale=1.5]{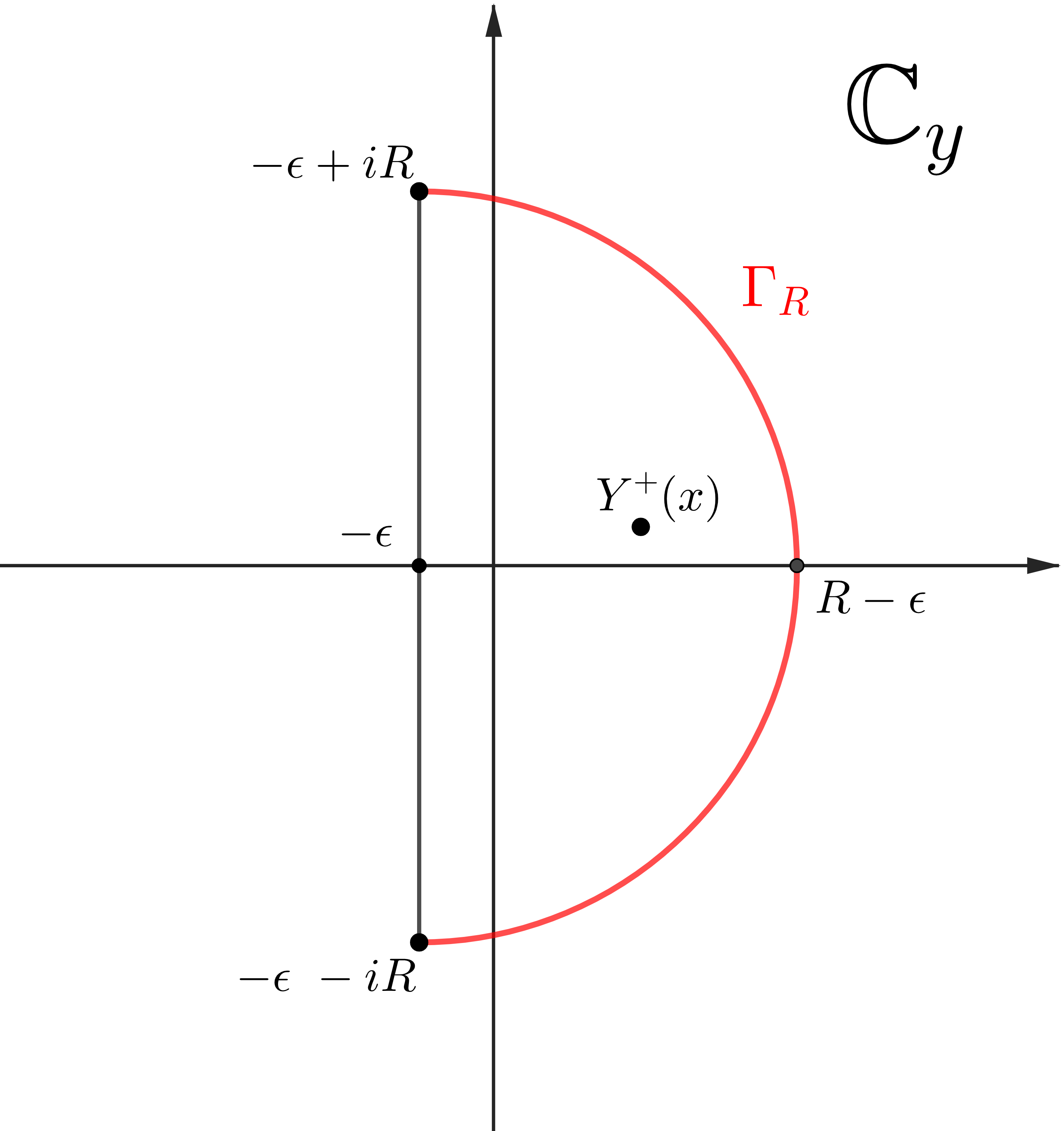}
\caption{Changing path for representations \eqref{I1} and \eqref{I2} of $I_1^{z_0}(a,b)$ and $I_2^{z_0}(a,b)$.}
\label{doublesimple1}
\end{figure}

\subsection{Paths of the saddle point method}\label{sub:pathsaddlepoint}
%\subsubsection*{\textbf{Definition of paths}}
As the previous integrals are of the saddle-point type, we now introduce the associated paths. We recall the definition of $(x(\alpha), y(\alpha))$ for $\alpha \in [0, \pi]$ given by~\eqref{xalphayalpha}.
Let us define
\begin{equation}\label{fff}
F(x, \alpha) = -\cos(\alpha)\, x - \sin(\alpha)\, Y^+(x) + \cos(\alpha)\, x(\alpha) + \sin(\alpha)\, y(\alpha).
\end{equation}

By construction, for all $\alpha \in (0, \pi)$, we have $F(x(\alpha), \alpha) = 0$ and $F'_x(x(\alpha), \alpha) = 0$. It follows that $(Y^+)'(x(\alpha)) = -\frac{\cos(\alpha)}{\sin(\alpha)}$, and
\[
x'(\alpha)\, F''_x(x(\alpha), \alpha) + F''_{x,\alpha}(x(\alpha), \alpha) = 0.
\]

Then,
\begin{equation}\label{eq_link_cste}
x'(\alpha)\, F''_x(x(\alpha), \alpha) = -\left( \sin(\alpha) - (Y^+)'(x(\alpha)) \cos(\alpha) \right) = \frac{-1}{\sin(\alpha)}.
\end{equation}

From computations carried out in~\cite[Section~5]{Franceschi_2024}, we also have
\begin{equation}
\label{fzfz}
F''_x(x(\alpha), \alpha) = \frac{ \Sigma^+_{11}\sin^2(\alpha) - 2\Sigma^+_{12} \sin(\alpha)\cos(\alpha) + \Sigma^+_{22} \cos^2(\alpha) }{ \partial_y \gamma_+(x(\alpha), y(\alpha)) \sin(\alpha) } > 0, \quad \alpha \in (0, \pi).
\end{equation}

\subsubsection*{Case $\alpha_0 \in (0, \pi)$.}
Let us fix $\alpha_0 \in (0, \pi)$. By the parameterised Morse Lemma~\cite[Lemma~A.1]{Franceschi_2024}, there exists a neighbourhood of $(0, \alpha_0) \in \C \times \R$, given by
\begin{equation}\label{OMEGA}
\Omega(0, \alpha_0) = \left\{ (\omega, \alpha) \in \C \times [0, \pi] \; \middle| \; |\omega| \leq L, \; |\alpha - \alpha_0| \leq \eta \right\}
\end{equation}
for some $L, \eta > 0$, and a function
$x : \Omega(0, \alpha_0) \longrightarrow \C$
of class $\mathcal{C}^\infty$ (and holomorphic in the first variable) such that
\[
x(0, \alpha) = x(\alpha) \quad \text{for all } \alpha \text{ with } |\alpha - \alpha_0| \leq \eta,
\]
and satisfying
\begin{equation}\label{eq:F=t2}
F(x(\omega, \alpha), \alpha) = \omega^2, \quad \forall (\omega, \alpha) \in \Omega(0, \alpha_0).
\end{equation}

Furthermore,
\begin{equation}\label{zop}
x'_\omega(0, \alpha) = \sqrt{ \frac{2}{F''_x(x(\alpha), \alpha)} }.
\end{equation}

Let $0 < \epsilon < K$. We define the contour of steepest descent as
\[
\Gamma_{x,\alpha} = \left\{ x(it, \alpha) \;\middle|\; t \in [-\epsilon, \epsilon] \right\}.
\]
Note that $F(x(it, \alpha), \alpha) = -t^2$. We denote by $x^+_\alpha = x(i\epsilon, \alpha)$ and $x^-_\alpha = x(-i\epsilon, \alpha)$ the endpoints of $\Gamma_{x,\alpha}$.
It can be shown (see~\cite{Franceschi_2024}) that for any $\alpha_0 \in (0, \pi)$, there exist $\eta > 0$ and $\nu > 0$ such that
\begin{equation}\label{nu}
\operatorname{Im}(x^+_\alpha) > \nu, \quad \operatorname{Im}(x^-_\alpha) < -\nu \quad \text{for all } \alpha \text{ with } |\alpha - \alpha_0| < \eta.
\end{equation}

\subsubsection*{Case $\alpha_0 \in \{0, \pi\}$.}

We define
\begin{equation}
    G(y, \alpha) = -\cos(\alpha) X^+(y) - \sin(\alpha) y 
    + \cos(\alpha) x(\alpha) + \sin(\alpha) y(\alpha).
\end{equation}
As before, we have $G(y(\alpha), \alpha) = 0$, $G'_y(y(\alpha), \alpha) = 0$, and $G''_y(y(\alpha), \alpha) > 0$ for $\alpha \in [0, \pi/2) \cup (\pi/2, \pi]$.
Then, if $\alpha_0 \in [0, \pi/2) \cup (\pi/2, \pi]$ (and in particular if $\alpha_0 = 0$ or $\alpha_0 = \pi$), we define symmetrically the function $y(\omega, \alpha)$ corresponding to $G(y, \alpha)$ via the parameterised Morse Lemma.
We then set
\[
\Gamma_{y,\alpha} = \left\{ y(it, \alpha) \; \middle| \; t \in [-\epsilon, \epsilon] \right\},
\]
whose endpoints are $y^+_\alpha = y(i\epsilon, \alpha)$ and $y^-_\alpha = y(-i\epsilon, \alpha)$.
The paths $\Gamma_{x,\alpha}$ and $\Gamma_{y,\alpha}$ are linked by the relations
\begin{equation}
\Gamma_{x,\alpha} = \underrightarrow{\overleftarrow{X^+(\Gamma_{y,\alpha})}}, \quad
\Gamma_{y,\alpha} = \underrightarrow{\overleftarrow{Y^+(\Gamma_{x,\alpha})}}, \quad \alpha \in (0, \pi/2) \cup (\pi/2, \pi).
\label{eq:gammaXarrow}
\end{equation}
where the arrows indicate that the paths are traversed in the reversed direction.

%Paths $\Gamma_{y, 0}, \Gamma_{x, \pi/2}, \Gamma_{y, \pi}$ are well defined by the preceding construction but not
%  $\Gamma_{x, 0}, \Gamma_{y, \pi/2}, \Gamma_{x, \pi}$ (note that, for instance, $F''_x(x(0),0)=\infty$). 
%  We define then
%  $$\Gamma_{x,0}=\underrightarrow{\overleftarrow{X^{+}(\Gamma_{y,0})}}, \quad \Gamma_{y,\pi/2}=\underrightarrow{\overleftarrow{Y^{+}(\Gamma_{x,\pi/2})}},\quad \Gamma_{x,\pi}=\underrightarrow{\overleftarrow{X^{+}(\Gamma_{y,\pi})}}.$$  

\subsection{Lemmas for red-arc parts}\label{sub:saddle}

Now, let us consider the asymptotics of Green's functions $g^{z_0}$ in the directions corresponding to the red arc parts, excluding $\alpha_b$ and $\tilde\alpha_b$ (see Figure~\ref{anglesalphab}).
\begin{figure}
    \centering
    \includegraphics[scale=0.5]{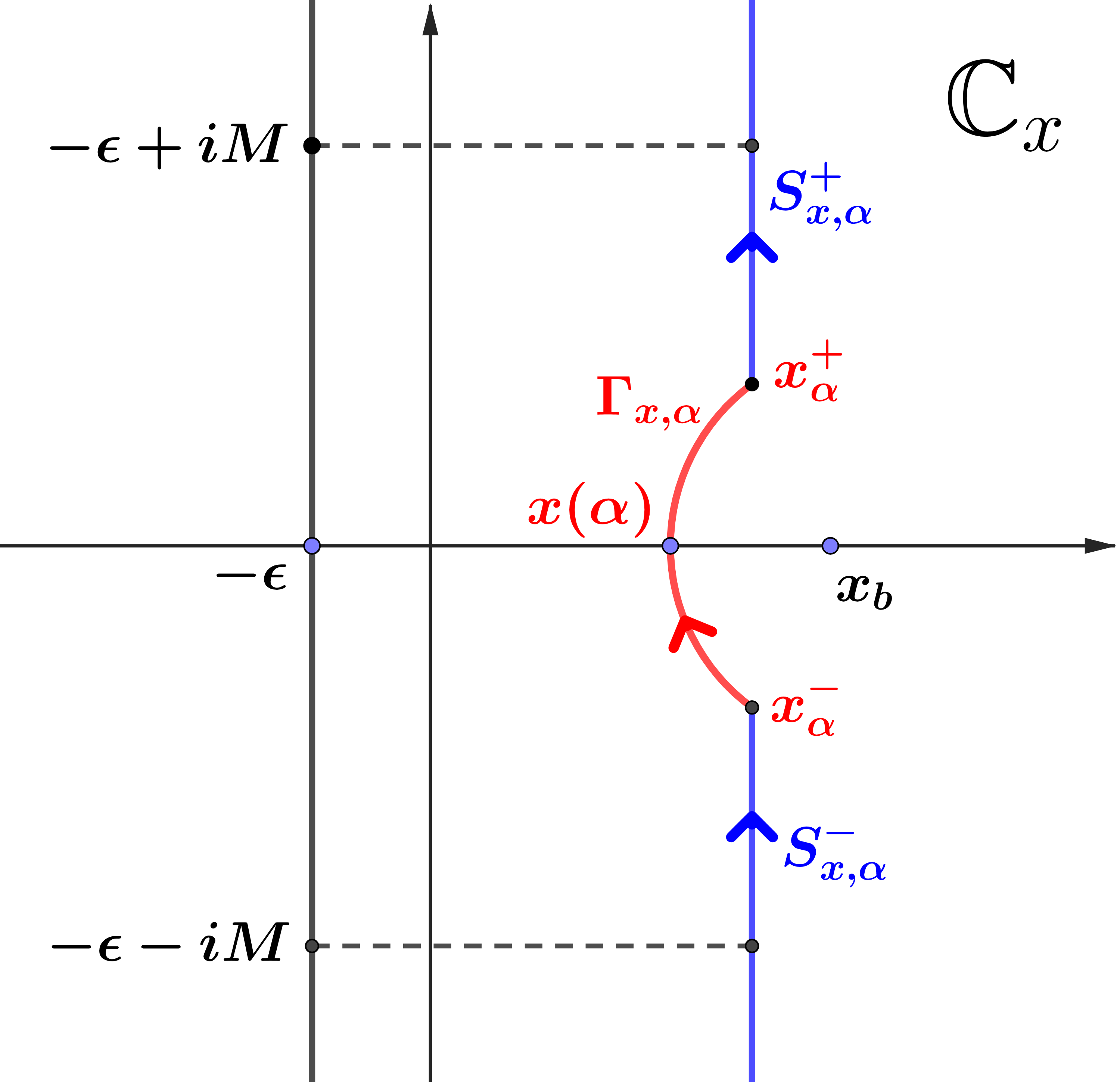}
    \caption{Changing path of Lemma~\ref{lem:chemin}}
    \label{fig:chgtchemincol}
\end{figure}
Recall the notation introduced in \eqref{def:xb}. We define
\[
S^+_{x, \alpha} = \{ x^+_{\alpha} + it \mid t \geq 0 \}, \quad
S^-_{x, \alpha} = \{ x^-_{\alpha} - it \mid t \geq 0 \},
\]
\[
S^+_{y, \alpha} = \{ y^+_{\alpha} + it \mid t \geq 0 \}, \quad
S^-_{y, \alpha} = \{ y^-_{\alpha} - it \mid t \geq 0 \}.
\]

The following lemma describes how to modify the contours of integration in \eqref{I1} and \eqref{I2} so that they pass through the saddle point along the steepest descent path.

\begin{lemma}[Changing path through the saddle point]\label{lem:chemin}
Let $z_0 \in \R^2$, $a \in \R$, $b > 0$ such that $(a,b) \neq z_0$, and let
$\alpha \in (0,\pi)$ be such that ${\m} < x(\alpha) < {\M}$. Then:
\begin{equation}\label{I11}
I_1^{z_0}(a,b) = \frac{1}{2i\pi} \int_{S^-_{x, \alpha} \cup \Gamma_{x,\alpha} \cup S^+_{x, \alpha}} \phi^{z_0}(x) \frac{\gamma(x,Y^+(x),Z^+(x))}{\partial_y\gamma_+(x,Y^+(x))} e^{-a x - b Y^+(x)} \, dx,
\end{equation}
and
\begin{equation}\label{I22}
I_2^{z_0}(a,b) = \frac{1}{2i\pi} \int_{S^-_{x, \alpha} \cup \Gamma_{x,\alpha} \cup S^+_{x, \alpha}} \frac{e^{(a_0 - a)x} \left(
e^{(b_0 - b) Y^+(x)} \fc_{b_0 > 0} + e^{b_0 Z^+(x) - b Y^+(x)} \fc_{b_0 < 0}
\right)}{\partial_y\gamma_+(x,Y^+(x))} \, dx, \quad \text{if } b > b_0.
\end{equation}

Furthermore, if $b_0 > 0$, then
\begin{equation}\label{I22''}
I_2^{z_0}(a,b) = \frac{1}{2i\pi} \int_{S^-_{y, \alpha} \cup \Gamma_{y,\alpha} \cup S^+_{y, \alpha}} \frac{e^{(a_0 - a) X^+(y) + (b_0 - b)y}}{\partial_x\gamma_+(X^+(y),y)} \, dy, \quad \text{if } a > a_0,
\end{equation}
and
\begin{equation}\label{I22'''}
I_2^{z_0}(a,b) = -\frac{1}{2i\pi} \int_{S^-_{y, \alpha} \cup \Gamma_{y,\alpha} \cup S^+_{y, \alpha}} \frac{e^{(a_0 - a) X^-(y) + (b_0 - b)y}}{\partial_x\gamma_+(X^-(y),y)} \, dy, \quad \text{if } a < a_0.
\end{equation}
\end{lemma}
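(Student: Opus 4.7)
The plan is to apply Cauchy's theorem to deform the vertical line $-\epsilon + i\mathbb{R}$ onto the steepest-descent contour $S^-_{x,\alpha} \cup \Gamma_{x,\alpha} \cup S^+_{x,\alpha}$, for each of the four representations. The deformation is justified by producing, for every $T > 0$, a closed rectangular contour whose sides are the truncation $[-\epsilon - iT, -\epsilon + iT]$, the truncated new contour, and two horizontal segments at heights $\pm T$ connecting them; Cauchy's theorem applied on this rectangle (which is homotopic to a point in the holomorphy domain of the integrand) then yields the identity after sending $T \to \infty$.

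First I would check that each integrand in \eqref{I1}--\eqref{I22'''} is holomorphic on $\mathbb{C}\setminus\bigl((-\infty,\m]\cup[\M,+\infty)\bigr)$. By Theorem~\ref{prop:laplace explicitee}, $\phi^{z_0}$ extends holomorphically to this domain, and the branches $Y^+(x), Z^+(x)$ are holomorphic outside $(-\infty, x^+_{min}]\cup[x^+_{max},+\infty)$ and $(-\infty, x^-_{min}]\cup[x^-_{max},+\infty)$ respectively, both contained in $(-\infty,\m]\cup[\M,+\infty)$ by definition of $\m$ and $\M$. The denominator $\partial_y\gamma_+(x,Y^+(x))$ vanishes exactly at $x^+_{min}, x^+_{max}$ (where $Y^+ = Y^-$), which are inside the branch cuts. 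Next I would verify that both contours lie in this holomorphy domain: the original line has $\Re = -\epsilon \in (\m, \M)$ for $\epsilon$ small, while $\Re(x^\pm_\alpha) = x(\alpha) + O(\epsilon^2) \in (\m, \M)$ by hypothesis, and $S^\pm_{x,\alpha}$ are vertical half-lines with $|\Im| \geq \nu > 0$ by \eqref{nu}. Since both contours avoid the real branch cuts and have bounded real part, they are homotopic through rectangles strictly between $\m$ and $\M$.

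The principal technical step is the vanishing of the horizontal segments at heights $\pm T$ as $T \to \infty$. Here I would use the explicit real-part formula \eqref{Re}, which yields $\Re(Y^+(u+iv)) \sim c|v|$ and $\Re(Y^-(u+iv)) \to -\infty$, $\Re(Z^+(u+iv)) \to +\infty$ linearly in $|v|$, uniformly for $u$ in the bounded real interval between $-\epsilon$ and $\Re(x^\pm_\alpha)$. For $I_1$, the factor $e^{-bY^+(x)}$ with $b > 0$ then decays exponentially in $|v|$, while $\phi^{z_0}(x)$ is controlled via \eqref{laplace explicitee}: the numerator $\exp\bigl(Y^-(x)b_0\fc_{b_0>0} + Z^+(x)b_0\fc_{b_0<0}\bigr)$ decays and the denominator $\gamma(x, Y^-(x), Z^+(x))$ grows. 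For $I_2$, the condition $b > b_0$ preserves the sign required for decay. For the $y$-plane representations \eqref{I22''} and \eqref{I22'''}, the same argument applies with the roles swapped: the real part of $X^+(y+iv)$ (resp.\ $X^-(y+iv)$) diverges linearly in $|v|$, so $e^{(a_0-a)X^\pm(y)}$ decays exponentially under the sign hypothesis on $a - a_0$; the paths $\Gamma_{y,\alpha}, S^\pm_{y,\alpha}$ lie between the branch cuts of $X^\pm$ via the relation \eqref{eq:gammaXarrow}.

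The main obstacle is the uniform decay estimate on $\phi^{z_0}$ along the horizontal cross-segments, because $\phi^{z_0}$ is itself built from the branches $Y^-, Z^+$ and one must simultaneously track the exponential factors and the denominator $\gamma(x, Y^-(x), Z^+(x))$. A clean way to handle this is to note, as in the proof of Theorem~\ref{alpha=0} via Lemma~\ref{lapinv}, that for $\delta \in (0, \pi/2)$ close enough to $\pi/2$ one has $|\gamma(x, Y^-(x), Z^+(x))| \to +\infty$ in the sector $G^+_\delta(\M)$, while the exponential prefactors stay bounded in any strip $-\epsilon \leq \Re(x) \leq x(\alpha) + \eta$ with $b_0$-dependent sign control coming from \eqref{Re}. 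Once these bounds are in hand, the horizontal-segment integrals are $O(1/T)$ or better and the deformation identity \eqref{I11}--\eqref{I22'''} follows.
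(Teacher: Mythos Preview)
Your proposal is correct and follows essentially the same route as the paper: deform the vertical line from Lemma~\ref{double_simple} onto the steepest-descent contour via Cauchy's theorem, the key step being the vanishing of the horizontal connecting segments, which you justify exactly as the paper does through the explicit expression \eqref{laplace explicitee} and the real-part estimate \eqref{Re}. Your write-up is simply more detailed than the paper's, which compresses the whole argument into the single decay statement \eqref{bout_neg} and a reference to \cite[Lemma~6.2]{Franceschi_2024}.
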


\begin{proof}
We prove \eqref{I11}; the other identities are analogous. The argument is similar to the one given in \cite[Lemma 6.2]{Franceschi_2024}. Using the Residue Theorem (see Figure~\ref{fig:chgtchemincol}) and Lemma~\ref{double_simple}, it suffices to verify that
\begin{equation}\label{bout_neg}
\sup_{u \in [X^+(y_{\max}) - \eta, x^{\max} + \eta]} \left|
\frac{ \phi^{z_0}(u + iv) \, \gamma(u + iv, Y^+(u + iv), Z^+(u + iv)) }
     { \partial_y\gamma_+(u + iv, Y^+(u + iv)) }
\, e^{-a(u + iv) - b Y^+(u + iv)}
\right| \longrightarrow 0
\end{equation}
as $|v| \to \infty$, for some $\eta > 0$. This decay follows directly from \eqref{laplace explicitee} and the estimate on the real part in \eqref{Re}.
\end{proof}

The following two lemmas provide the asymptotic contribution of integrals along the steepest descent contours.

\begin{lemma}[Asymptotics along $\Gamma_{x,\alpha}$ in the red arc region with $\alpha_0 \neq 0, \pi$]
\label{lem:contribsaddle}
Let $\alpha_0 \in (0,\pi)$ such that ${\m} < x(\alpha_0) < {\M}$. Let $r = \sqrt{a^2 + b^2}$ and define $\alpha = \alpha(a,b) \in (0,\pi)$ such that $\cos(\alpha) = \frac{a}{\sqrt{a^2 + b^2}}, \qquad 
\sin(\alpha) = \frac{b}{\sqrt{a^2 + b^2}}.$
Then, the following asymptotic holds as $r \to +\infty$ and $\alpha \to \alpha_0$:
\begin{equation}\label{as:coll}
\frac{1}{2i\pi} \int_{\Gamma_{x,\alpha}} \left( 
\frac{
\phi^{z_0}(x) \gamma(x, Y^+(x), Z^+(x)) e^{-a x - b Y^+(x)} 
+ e^{(a_0 - a)x} \left(
e^{(b_0 - b) Y^+(x)} \fc_{b_0 > 0}
+ e^{b_0 Z^+(x) - b Y^+(x)} \fc_{b_0 < 0}
\right)
}{
\partial_y \gamma_+(x, Y^+(x))
}
\right) dx 
\end{equation}
\[
\underset{r \to +\infty \atop \alpha \to \alpha_0}{\sim}
C^+(\alpha_0) h_{\alpha_0}(z_0)
\frac{e^{-r(\cos(\alpha)x(\alpha) + \sin(\alpha)y(\alpha))}}{\sqrt{r}},
\]
where $C^+(\alpha_0)$ and $h_{\alpha_0}(z_0)$ are defined in \eqref{C(alpha)} and \eqref{Harm1}.
\end{lemma}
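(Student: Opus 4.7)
The plan is to execute the saddle-point method along the steepest descent contour $\Gamma_{x,\alpha}$ constructed via the parameterised Morse Lemma, and to verify that after rescaling the quadratic variable by $\sqrt{r}$ the leading constant assembles into $C^+(\alpha_0)h_{\alpha_0}(z_0)$.

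\textbf{Step 1: factorisation of the exponent.} Using the definition \eqref{fff} of $F(x,\alpha)$ together with $r=\sqrt{a^2+b^2}$, $a=r\cos\alpha$, $b=r\sin\alpha$, I would rewrite
\[
-ax-bY^+(x)\;=\;rF(x,\alpha)-r\bigl(\cos(\alpha)x(\alpha)+\sin(\alpha)y(\alpha)\bigr).
\]
Factoring the second (saddle-independent) exponential out of the integral reduces the analysis to evaluating
\[
\frac{e^{-r(\cos(\alpha)x(\alpha)+\sin(\alpha)y(\alpha))}}{2i\pi}\int_{\Gamma_{x,\alpha}}\Psi(x,\alpha)\,e^{rF(x,\alpha)}\,dx,
\]
where $\Psi(x,\alpha)$ denotes the remaining (nonexponential) factor in the integrand of \eqref{as:coll}. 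Plugging the explicit formula \eqref{laplace explicitee} for $\phi^{z_0}$ into $\Psi$, the two summands in the bracket collapse into
\[
\Psi(x,\alpha)=\frac{1}{\partial_y\gamma_+(x,Y^+(x))}\Bigl[-\tfrac{\gamma(x,Y^+(x),Z^+(x))}{\gamma(x,Y^-(x),Z^+(x))}e^{a_0x+Y^-(x)b_0\fc_{b_0>0}+Z^+(x)b_0\fc_{b_0<0}}+e^{a_0x+b_0Y^+(x)}\fc_{b_0>0}+e^{a_0x+b_0Z^+(x)}\fc_{b_0<0}\Bigr].
\]
Evaluating at $x=x(\alpha)$ (where $Y^+(x(\alpha))=y(\alpha)$) and comparing with \eqref{Harm1}, one checks in each sign case of $b_0$ that the bracket equals exactly $h_\alpha(z_0)$, so
\[
\Psi(x(\alpha),\alpha)=\frac{h_\alpha(z_0)}{\partial_y\gamma_+(x(\alpha),y(\alpha))}.
\]

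\textbf{Step 2: steepest descent parameterisation and Gaussian rescaling.} Parameterise $\Gamma_{x,\alpha}$ by $x=x(it,\alpha)$, $t\in[-\epsilon,\epsilon]$, so that by \eqref{eq:F=t2} we have $F(x(it,\alpha),\alpha)=-t^2$ and $dx=i\,\partial_\omega x(it,\alpha)\,dt$. The integral becomes
\[
\frac{e^{-r(\cos(\alpha)x(\alpha)+\sin(\alpha)y(\alpha))}}{2\pi}\int_{-\epsilon}^{\epsilon}\Psi(x(it,\alpha),\alpha)\,\partial_\omega x(it,\alpha)\,e^{-rt^2}\,dt.
\]
Setting $t=s/\sqrt{r}$ produces a $1/\sqrt{r}$ prefactor and the Gaussian $e^{-s^2}\,ds$. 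The $C^\infty$ regularity of $x(\omega,\alpha)$ on the compact neighbourhood $\Omega(0,\alpha_0)$ gives a uniform bound on $(\Psi\circ x)\cdot\partial_\omega x$ for $(\omega,\alpha)$ close to $(0,\alpha_0)$, and continuity at $\omega=0$, uniformly in $\alpha$ near $\alpha_0$. Dominated convergence (jointly in $r\to\infty$ and $\alpha\to\alpha_0$) then yields
\[
\frac{1}{2\pi\sqrt{r}}\Psi(x(\alpha),\alpha)\,\partial_\omega x(0,\alpha)\int_{\mathbb{R}}e^{-s^2}\,ds=\frac{\sqrt{\pi}}{2\pi\sqrt{r}}\,\frac{h_\alpha(z_0)}{\partial_y\gamma_+(x(\alpha),y(\alpha))}\,\partial_\omega x(0,\alpha)(1+o(1)).
\]

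\textbf{Step 3: identification of the constant.} Using \eqref{zop} to write $\partial_\omega x(0,\alpha)=\sqrt{2/F''_x(x(\alpha),\alpha)}$ and \eqref{fzfz} for $F''_x(x(\alpha),\alpha)$, the coefficient simplifies as
\[
\frac{1}{2\sqrt{\pi}}\cdot\frac{1}{\partial_y\gamma_+(x(\alpha),y(\alpha))}\sqrt{\frac{2\,\partial_y\gamma_+(x(\alpha),y(\alpha))\sin(\alpha)}{\Sigma_{11}^+\sin^2(\alpha)-2\Sigma_{12}^+\sin(\alpha)\cos(\alpha)+\Sigma_{22}^+\cos^2(\alpha)}},
\]
which is precisely $C^+(\alpha)$ from \eqref{C(alpha)}. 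Continuity of $C^+$ and $h_\alpha(z_0)$ at $\alpha=\alpha_0$ (guaranteed by the smoothness of $\alpha\mapsto(x(\alpha),y(\alpha))$ and the absence of singularities, since $\underline{x}_b<x(\alpha_0)<\overline{x}_b$) then produces the claimed equivalent.

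\textbf{Main obstacle.} The substantive point is the \emph{joint} uniformity as $r\to\infty$ and $\alpha\to\alpha_0$: one must control the remainder in Taylor-expanding $\Psi(x(it,\alpha),\alpha)\,\partial_\omega x(it,\alpha)$ around $(t,\alpha)=(0,\alpha_0)$ on scale $t\sim 1/\sqrt{r}$, simultaneously with the parameter drift. This is where the compactness of $\Omega(0,\alpha_0)$ in \eqref{OMEGA} and the holomorphicity of $x(\cdot,\alpha)$ are essential: they provide a uniform Lipschitz (actually $C^1$) bound allowing the standard dominated-convergence argument to be applied inside a single compact set, rather than refitting a new path for each $\alpha$. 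Once this uniformity is in place, the rest is bookkeeping of constants.
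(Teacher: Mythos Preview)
Your proof is correct and follows essentially the same route as the paper's, which simply invokes a parameterised saddle-point lemma from \cite{Franceschi_2024} and then identifies the leading coefficient $c_0(\alpha)=C^+(\alpha)h_\alpha(z_0)$ --- precisely the computation you carry out explicitly in Steps~1--3. The only point you leave implicit is that $h_{\alpha_0}(z_0)>0$ (needed for the asymptotic $\sim$ to be meaningful rather than a statement that both sides are $o(r^{-1/2})$), which the paper notes follows from the explicit expression \eqref{Harm1}.
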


\begin{proof}
We have $F''_{x}(x(\alpha_0), \alpha_0) > 0$ (see \eqref{fzfz}). Then, by the parameterised saddle point method (see \cite[Lemma 8.1]{Franceschi_2024}), for all $n \geq 0$, the integral \eqref{as:coll} admits the following asymptotic expansion as $\alpha = \alpha(a,b) \to \alpha_0$ and $r = \sqrt{a^2 + b^2} \to +\infty$:
\begin{equation}
\label{contrib_col} 
e^{-r(\cos(\alpha) x(\alpha) + \sin(\alpha) y(\alpha))} \sum_{k=0}^n \frac{c_k(\alpha)}{r^k \sqrt{r}} + o\left(\frac{1}{r^n \sqrt{r}}\right),
\end{equation}
where the coefficients $c_0(\alpha), \ldots, c_n(\alpha)$ are continuous functions of $\alpha$ and satisfy
\[
c_0(\alpha) = \frac{h_{\alpha}(z_0) \, x'_\omega(0, \alpha)}{2 \sqrt{\pi} \, \partial_y \gamma_+(x(\alpha), y(\alpha))} = C(\alpha) h_{\alpha}(z_0)
\]
(see \eqref{zop}). In particular, $c_0(\alpha_0) > 0$ by expression \eqref{Harm1}. Therefore, the dominant asymptotic term is given by the $k=0$ term in \eqref{contrib_col}, which concludes the proof.
\end{proof}

%\subsubsection*{Asymptotics for Theorem~\ref{thm:1}}
\begin{lemma}[Asymptotics along $\Gamma_{x,\alpha}$ and $\Gamma_{y,\alpha}$ in the red arc region at $\alpha_0 = 0, \pi$] \label{lem:colalpha=0}
Suppose $x^+_{max} < x^-_{max}$ and let $\alpha_0 = 0$. If $r = \sqrt{a^2 + b^2}$ and $\alpha = \alpha(a,b) \in (0,\pi)$ is the angle such that $\cos(\alpha) = \frac{a}{\sqrt{a^2 + b^2}}$ and $\sin(\alpha) = \frac{b}{\sqrt{a^2 + b^2}}$, then the following asymptotics hold:
\begin{equation}\label{as:collalpha=0}
\frac{1}{2i\pi}\int_{\Gamma_{x,\alpha}} \phi^{z_0}(x) \frac{\gamma(x, Y^+(x), Z^+(x))}{\partial_y \gamma_+(x, Y^+(x))} e^{-a x - b Y^+(x)} \, dx 
+ \frac{1}{2i\pi} \int_{\Gamma_{y,\alpha}} \frac{e^{(a_0 - a) X^+(y) + (b_0 - b) y}}{\partial_x \gamma_+(X^+(y), y)} \, dy
\end{equation}
\[
\underset{r \to +\infty, \ \alpha \to 0, \ \alpha \geq 0}{\sim} 
C_0 f_0(z_0) \left( \kappa \alpha + \frac{1}{r} \right) \frac{e^{-r(\cos(\alpha) x(\alpha) + \sin(\alpha) y(\alpha))}}{\sqrt{r}},
\]
where $\kappa > 0$ is an explicit constant (see \eqref{eq:kappa}) and $f_0(z_0)$ is given by \eqref{f(z_0)col}. Furthermore, 
\[
\partial_\alpha \bigl(C(\alpha) h_\alpha(z_0)\bigr) \big|_{\alpha = 0} = \kappa C_0 f_0(z_0).
\]

If $x^+_{min} > x^-_{min}$, the symmetric result holds for $\alpha \to \pi$ with $\alpha \leq \pi$.
\end{lemma}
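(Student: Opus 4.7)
The plan is to push the parameterised saddle-point analysis of Lemma~\ref{lem:contribsaddle} one order further: the crucial observation is that at $\alpha_0=0$ the $r^{-1/2}$ coefficient vanishes, so that the sub-leading $r^{-3/2}$ contribution becomes visible and competes with the leading term. Without loss of generality we treat the case $\alpha_0 = 0$ and $\M = x_{\max}^+ < x_{\max}^-$; the case $\alpha_0 = \pi$ is symmetric.

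First I would analyze the leading coefficient. From the proof of Lemma~\ref{lem:contribsaddle}, the sum of the $r^{-1/2}$ coefficients coming from the two integrals in \eqref{as:collalpha=0} equals $C^+(\alpha) h_\alpha(z_0)$. At $\alpha = 0$ we have $x(0) = \M$ and $y(0) = Y^+(\M) = Y^-(\M)$ (the two branches coalesce at the branching point), so formula \eqref{Harm1} collapses to
\begin{equation*}
h_0(z_0) = e^{a_0 \M + b_0 Y^\pm(\M)} - \frac{\gamma(\M, Y^\pm(\M), Z^+(\M))}{\gamma(\M, Y^\pm(\M), Z^+(\M))} \, e^{a_0 \M + b_0 Y^\pm(\M)} = 0,
\end{equation*}
and similarly $h_0(z_0) = 0$ for $b_0 < 0$. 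Hence $C^+(\alpha) h_\alpha(z_0) = \kappa\, C_0 f_0(z_0)\, \alpha + o(\alpha)$ as $\alpha \to 0^+$, where $\kappa := \partial_\alpha\bigl(C^+(\alpha) h_\alpha(z_0)\bigr)/\bigl(C_0 f_0(z_0)\bigr)\big|_{\alpha = 0^+}$. A direct first-order expansion of \eqref{Harm1}, using $Y^+(x(\alpha)) - Y^-(x(\alpha)) = \tfrac{2}{\Sigma^+_{22}}\sqrt{\det(\Sigma^+)(x(\alpha)-x^+_{\min})(\M - x(\alpha))}$ together with $\sqrt{\M - x(\alpha)} \sim \alpha\, \partial_x\gamma_+(\M, Y^\pm(\M))/\sqrt{\det(\Sigma^+)(\M - x^+_{\min})}$, confirms that the derivative is indeed proportional to $f_0(z_0)$ with an explicit constant $\kappa > 0$ independent of $z_0$.

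Next I would extract the $r^{-3/2}$ correction. The obstruction to a direct application of the parameterised saddle-point method is that the amplitude $\phi^{z_0}(x)$ of the first integral has a square-root branch singularity at $\M$ by Lemma~\ref{lem:DLphi}, and the saddle $x(\alpha)$ coalesces with $\M$ as $\alpha \to 0^+$; the amplitude $1/\partial_x\gamma_+(X^+(y),y)$ of the second integral has an analogous square-root singularity at $y = Y^+(\M)$. To handle this I would split
\begin{equation*}
\phi^{z_0}(x) = \phi_{\mathrm{reg}}(x) - 2\sqrt{\pi}\left(\tfrac{\Sigma^+_{22} + \Sigma^-_{22}}{2}\right) C_0 f_0(z_0) \sqrt{x - \M},
\end{equation*}
with $\phi_{\mathrm{reg}}$ extending analytically across $\M$ (up to a higher-order remainder), and make a parallel decomposition for the second integrand. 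The regular parts are amenable to the standard parameterised saddle-point expansion to second order and, by the same identity used in Lemma~\ref{lem:contribsaddle}, combine to give only the $r^{-1/2}$ term already treated. The singular parts are treated by deforming the contour around the branch cut $[\M, +\infty)$ exactly as in the Tauberian proof of Theorem~\ref{alpha=0}; this yields precisely the contribution $C_0 f_0(z_0)\, r^{-3/2} e^{-r \M}$, which agrees at $\alpha = 0$ with the value obtained from Theorem~\ref{alpha=0}. Uniformity in $\alpha$ near $0$ of both the saddle-point and branch-cut estimates must be verified; this follows because the relevant paths $\Gamma_{x,\alpha}, \Gamma_{y,\alpha}$ and their tails $S^{\pm}_{x,\alpha}, S^{\pm}_{y,\alpha}$ vary smoothly with $\alpha$ and remain at uniform distance from the other singularities.

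Summing the two contributions and factoring out $r^{-1/2} e^{-r(\cos(\alpha) x(\alpha) + \sin(\alpha) y(\alpha))}$ gives the claimed asymptotic $C_0 f_0(z_0)\bigl(\kappa \alpha + r^{-1}\bigr) r^{-1/2} e^{-r(\cos(\alpha) x(\alpha) + \sin(\alpha) y(\alpha))}$. The hard part will be controlling the interplay of the two small parameters $\alpha$ and $1/r$ simultaneously: one needs the two-term expansion to be uniform on $\{(\alpha, r) : 0 \leq \alpha \leq \eta, \ r \geq r_0\}$ so that the error is negligible in the regime $\alpha \sim 1/r$ where the two dominant terms are comparable. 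The comparison with Theorem~\ref{alpha=0} at $\alpha = 0$ both pins down the constant in the $r^{-3/2}$ contribution and serves as a consistency check for the whole construction.
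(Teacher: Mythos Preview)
Your decomposition of $\phi^{z_0}$ into a regular part plus $\sqrt{x-\M}$ does not isolate the singular behaviour of the first integrand. The factor $\partial_y\gamma_+(x,Y^+(x))=\tfrac{\Sigma^+_{22}}{2}(Y^+(x)-Y^-(x))$ itself vanishes like $\sqrt{\M-x}$ at $x=\M$, and $Y^+(x)$ in the exponent also has a square-root branch there. So after your splitting, the ``regular'' piece $\phi_{\mathrm{reg}}(x)\,\gamma(x,Y^+(x),Z^+(x))/\partial_y\gamma_+(x,Y^+(x))$ still carries a $1/\sqrt{\M-x}$ singularity, and the standard parameterised saddle-point expansion to second order does not apply to it uniformly as the saddle $x(\alpha)$ collides with the branch point $\M$. (Your remark that the second integrand has an analogous square-root singularity in $y$ is also off: $X^+$ is analytic near $Y^\pm(\M)$ and $\partial_x\gamma_+(X^+(y),y)$ does not vanish there, so that integral is already regular.)

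The paper sidesteps this entirely by the change of variables $x=X^+(y)$ in the first integral, pushing it onto $\Gamma_{y,\alpha}$. The point is that $X^+$ is analytic near $Y^\pm(\M)$, and via the Vi\`ete relation $Y^-(X^+(y))=\bigl(\Sigma^+_{11}X^+(y)^2+2\mu^+_1 X^+(y)\bigr)/(\Sigma^+_{22}\,y)$ the composition $Y^-(X^+(y))$ is also analytic there; since $\M=x^+_{\max}<x^-_{\max}$, so is $Z^+(X^+(y))$. Hence the combined integrand over $\Gamma_{y,\alpha}$ is holomorphic at the saddle $y(\alpha)$ uniformly down to $\alpha=0$, and one two-term parameterised saddle-point expansion gives coefficients $\tilde c_0(\alpha),\tilde c_1(\alpha)$ continuous at $\alpha=0$. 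Matching $\tilde c_0$ with the known $C^+(\alpha)h_\alpha(z_0)$ and evaluating the expansion at $\alpha=0$ against Theorem~\ref{alpha=0} pins down $\tilde c_1(0)=C_0 f_0(z_0)$. Your endpoint analysis of $h_\alpha$ and the derivative computation are correct and match the paper's; what you are missing is this coordinate change that makes the two-term expansion legitimate in the first place.
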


\begin{proof}
To prove this, we consider the integral representation along the contour $\Gamma_{y,\alpha}$. By performing the change of variables $x = X^+(y)$, we obtain:
\begin{equation}\label{chgtvar}
\int\limits_{\Gamma_{x, \alpha}}\phi^{z_0}(x)\frac{\gamma(x,Y^+(x),Z^+(x))}{\partial_y\gamma_+(x,Y^+(x))}e^{-ax-bY^+(x)}\,dx = \int\limits_{\Gamma_{y, \alpha}}\phi^{z_0}(X^+(y))\frac{\gamma(X^+(y),y,Z^+(X^+(y)))}{\partial_x\gamma_+(X^+(y),y)}e^{-aX^+(y)-by}\,dy.
\end{equation}
Recall that $(x(\alpha),y(\alpha)) \underset{\alpha \to 0}{\longrightarrow} (x^+_{{max}}, Y^\pm(x^+_{{max}}))$. Using \eqref{laplace explicitee}, we have:
\[
\phi^{z_0}(X^+(y)) = -\frac{e^{X^+(y)a_0 + Y^-(X^+(y))b_0\fc_{b_0\geq 0} +  Z^+(X^+(y))b_0\fc_{b_0<0}}}{\gamma(X^+(y),Y^-(X^+(y)), Z^+(X^+(y)))}.
\]
By definition, $X^+(y)$ is holomorphic in a neighborhood of $Y^\pm(x^+_{{max}})$. Moreover, by the Viète identities, the expression
\[
Y^{-}(X^+(y)) = \frac{\Sigma^+_{11} (X^+(y))^2 + 2 \mu^+_1 X^+(y)}{ \Sigma^+_{22} y }
\]
is holomorphic in a neighborhood of $Y^\pm(x^+_{{max}})$. Since we are in the case $x^+_{{max}} < x^-_{{max}}$, the function $Z^+(X^+(y))$ is also holomorphic in a neighborhood of $Y^\pm(x^+_{{max}})$. Thus, the integrand on the right-hand side of \eqref{chgtvar} is holomorphic near the saddle point.

Applying the parameterized saddle-point method to the contour $\Gamma_{y,\alpha}$, we deduce that for any $\alpha_0 \in [0, \pi/2)$, the expression in \eqref{as:collalpha=0} admits the following asymptotic expansion as $\alpha = \alpha(a,b) \to \alpha_0$ and $r = \sqrt{a^2 + b^2} \to \infty$:
\begin{equation}\label{DLpetit}
\left(\tilde c_0(\alpha)+ \frac{\tilde c_1(\alpha)}{r} + o\left(\frac{1}{r}\right)\right)\frac{e^{-r(\cos(\alpha)x(\alpha) + \sin(\alpha)y(\alpha))}}{\sqrt r},
\end{equation}
where the coefficients $\tilde c_k(\alpha)$ are continuous in $\alpha$. By uniqueness of the asymptotic expansion, we necessarily have $c_k(\alpha) = \tilde c_k(\alpha)$ for all $\alpha \in (0, \pi/2]$ and for $k=0,1$. 
Recall that $X^+(y(\alpha)) = x(\alpha)$ and $Y^+(x(\alpha)) = y(\alpha)$ for all $\alpha \in (0,\pi/2)$. In particular, we have:
\begin{equation}\label{tttt}
c_0(\alpha) = C^+(\alpha) \cdot \frac{e^{x(\alpha)a_0 + b_0 y(\alpha)\fc_{b_0>0} + Z^+(x(\alpha))b_0\fc_{b_0<0}}}{\gamma(x(\alpha), Y^-(x(\alpha)), Z^+(x(\alpha)))} 
\end{equation}
\[
\times \left( -\gamma(x(\alpha), y(\alpha), Z^+(x(\alpha))) e^{(Y^-(x(\alpha)) - y(\alpha))b_0\fc_{b_0 \geq 0}} + \gamma(x(\alpha), Y^-(x(\alpha)), Z^+(x(\alpha))) \right).
\]

Furthermore, we observe that
\begin{align*}
y(\alpha) - Y^{-}(X^+(y(\alpha))) &= Y^+(X^+(y(\alpha))) - Y^-(X^+(y(\alpha))) \\
&= \frac{2}{\Sigma^+_{22}} \sqrt{ \det(\Sigma_+) \cdot (x^+_{{max}} - X^+(y(\alpha))) \cdot (X^+(y(\alpha)) - x^+_{{min}}) }.
\end{align*}
We also recall that $X^+(Y^\pm(x^+_{{max}})) = x^+_{{max}}$, and that
\[
(X^+)'(y)\Big|_{y = Y^\pm(x^+_{{max}})} = 0, \quad (X^+ )''(y)\Big|_{y = Y^\pm(x^+_{{max}})} = -\frac{\Sigma^+_{22}}{\partial_x \gamma_+(x^+_{{max}}, Y^\pm(x^+_{{max}}))}.
\]
It follows that
\begin{equation}\label{ttt}
y(\alpha) - Y^-(X^+(y(\alpha))) \sim \sqrt{ \frac{2 \det(\Sigma^+) (x^+_{{max}} - x^+_{{min}})}{ \Sigma^+_{22} \partial_x \gamma_+(x^+_{{max}}, Y^\pm(x^+_{{max}})) } } \cdot \alpha \sim \Pi \alpha,
\end{equation}
where we define
\[
\Pi = \sqrt{ \frac{2 \det(\Sigma^+) (x^+_{{max}} - x^+_{{min}})}{ \Sigma^+_{22} \partial_x \gamma_+(x^+_{{max}}, Y^\pm(x^+_{{max}})) } } = 2 \sqrt{ \frac{ \det(\Sigma^+) }{ \Sigma^+_{11} \Sigma^+_{22} } },
\]
since $\partial_x \gamma_+(x^+_{{max}}, Y^\pm(x^+_{{max}})) = \frac{\Sigma^+_{11}(x^+_{{max}} - x^+_{{min}})}{2}$.

Injecting \eqref{ttt} into \eqref{tttt}, and using the asymptotics
\[
C^+(\alpha) \underset{\alpha \to 0^+}{\longrightarrow} \frac{1}{\sqrt{2\pi \Sigma^+_{22} \partial_x \gamma_+(x^+_{{max}}, Y^\pm(x^+_{{max}}))}} = \frac{1}{\sqrt{\pi \Sigma^+_{11} \Sigma^+_{22}(x^+_{{max}} - x^+_{{min}})}},
\]
we obtain, after simplification,
\begin{equation}\label{eq:memefonctionharm}
C(\alpha) h_\alpha(z_0) = c_0(\alpha) \underset{\alpha \to 0^+}{\sim} \frac{2}{\Sigma^+_{11} \Sigma^+_{22}} \sqrt{ \frac{ \det(\Sigma^+) }{ \pi(x^+_{{max}} - x^+_{{min}}) } } f_0(z_0) \alpha = \kappa C_0 f_0(z_0) \alpha,
\end{equation}
with
\begin{equation}\label{eq:kappa}
\kappa = \frac{ -(\Sigma^+_{22} + \Sigma^-_{22}) \gamma(x^+_{{max}}, Y^\pm(x^+_{{max}}), Z^+(x^+_{{max}})) }{ \Sigma^+_{11} (x^+_{{max}} - x^+_{{min}}) }.
\end{equation}

Moreover, evaluating \eqref{DLpetit} at $\alpha = 0$ gives
\[
g(r, 0) \underset{r \to \infty}{\sim} c_1(0) \cdot \frac{e^{-r\M}}{r^{3/2}}.
\]
Since $c_1(\alpha) = \tilde c_1(\alpha)$ is continuous at $\alpha = 0^+$, % and from Theorem~\ref{alpha=0} 
we have $c_1(\alpha) \underset{\alpha \to 0}{\longrightarrow} C_0 f(z_0) > 0$, we conclude that the full expansion \eqref{as:collalpha=0} holds as claimed.
\end{proof}

The next lemma shows that in representations \eqref{I11}, \eqref{I22}, \eqref{I22''} and \eqref{I22'''}, the integrals along $S^\pm_{x,\alpha}$ and $S^\pm_{y,\alpha}$ are negligible compared to those along $\Gamma_{x,\alpha}$ and $\Gamma_{y,\alpha}$.

\begin{lemma}[Negligibility of the integrals along $S_{x,\alpha}^\pm$ and $S_{y,\alpha}^\pm$] 
\label{lem:neglig1}
Let $z_0 \in \R^2$. For $(a,b) \in \R \times (0,+\infty)$, let $\alpha(a,b) \in (0,\pi)$ be the angle such that $\cos (\alpha) = \frac{ a }{ \sqrt{a^2+b^2}}$ and $\sin(\alpha) = \frac{ b }{\sqrt{a^2+ b^2}}$.
\begin{itemize}
\item Suppose $\alpha_0 \in (0, \pi)$. Then, for $\eta > 0$ small enough, there exist $r_0 > 0$ and $D > 0$ such that for all $(a,b)$ satisfying $\sqrt{a^2+b^2} > r_0$ and $|\alpha(a,b) - \alpha_0| < \eta$, we have:
\begin{equation}
    \label{truc}
\left| \int\limits_{S^{+}_{x, \alpha}} \phi^{z_0}(x)\frac{\gamma(x,Y^+(x),Z^+(x))}{\partial_y\gamma_+(x,Y^+(x))} e^{-ax - bY^+(x)} \, dx \right| \leq \frac{D}{b} e^{-a x(\alpha) - b y(\alpha) - \epsilon^2 \sqrt{a^2+b^2}}.
\end{equation}
If $b > b_0$, then the following estimate also holds:
\begin{equation}
\label{truct}
\left| \int\limits_{S^{+}_{x, \alpha}}  
\frac{e^{(a_0-a)x} \left(
e^{(b_0-b)Y^+(x)}\fc_{b_0>0} 
+ e^{b_0Z^+(x) - bY^+(x)}\fc_{b_0 < 0}
\right)}{\partial_y\gamma_+(x,Y^+(x))} \, dx \right| \leq  
  \frac{D}{b - b_0} e^{-a x(\alpha) - b y(\alpha) - \epsilon^2 \sqrt{a^2 + (b - b_0)^2}}.
\end{equation}  

\item Suppose ${\M} = x^{+}_{\max}$ and let $\alpha_0 = 0$. Then, for $\eta > 0$ small enough, there exist $r_0 > 0$ and $D > 0$ such that for all $(a,b)$ satisfying $\sqrt{a^2 + b^2} > r_0$ and $0 < \alpha(a,b) \leq \eta$, we have:
\begin{equation}
    \label{truc''}
\left| \int\limits_{S^{+}_{x, \alpha}} \phi^{z_0}(x) \frac{\gamma(x,Y^+(x),Z^+(x))}{\partial_y\gamma_+(x,Y^+(x))} e^{-ax - bY^+(x)} \, dx \right| \leq D e^{-a x(\alpha) - b y(\alpha) - \epsilon^2 \sqrt{a^2 + b^2}},
\end{equation}

\begin{equation}
    \label{truc1}
\left| \int\limits_{S^{+}_{y, \alpha}}
\frac{e^{(a_0 - a)X^+(y) + (b_0 - b)y}}{\partial_x\gamma_+(X^+(y),y)} \, dy \right| \leq \frac{D}{a} e^{-a x(\alpha) - b y(\alpha) - \epsilon^2 \sqrt{a^2 + b^2}}.
\end{equation}

\item Symmetric estimates hold for $\alpha_0 = \pi$ when ${\m} = x^{+}_{\min}$.
\end{itemize}
The same estimates hold for $S^{-}_{x, \alpha}$ and $S^{-}_{y, \alpha}$.
\end{lemma}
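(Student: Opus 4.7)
I would parameterise $S^{+}_{x,\alpha}$ as $\{x^+_\alpha + is : s \geq 0\}$ and write $(a,b) = r(\cos\alpha,\sin\alpha)$. Using the function $F$ from~\eqref{fff},
\begin{equation*}
|e^{-ax - bY^+(x)}| = e^{r\,\Re F(x,\alpha)}\, e^{-r(\cos\alpha\, x(\alpha) + \sin\alpha\, y(\alpha))},
\end{equation*}
so the task reduces to establishing $\Re F(x,\alpha) \leq -\epsilon^2$ on $S^+_{x,\alpha}$ together with adequate control of the rational/exponential prefactor $\phi^{z_0}(x)\,\gamma(x,Y^+,Z^+)/\partial_y\gamma_+(x,Y^+)$. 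The endpoint identity $F(x^+_\alpha,\alpha) = -\epsilon^2$ is built into the Morse chart construction.

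\textbf{Monotonicity along vertical lines.} The decisive analytic input is the formula~\eqref{Re}: the quantity inside the square root depends on $v$ only through $v^2$ and the modulus $\sqrt{((u-x^+_{min})^2+v^2)((x^+_{max}-u)^2+v^2)}$, both strictly increasing in $|v|$. Hence $v \mapsto \Re Y^+(u+iv)$ is even and strictly increasing on $[0,+\infty)$ for each fixed $u$. Since $\sin\alpha > 0$ in the ranges considered, $s \mapsto \Re F(\Re(x^+_\alpha) + is, \alpha) = -\sin\alpha\,\Re Y^+(\Re(x^+_\alpha)+is) + \text{const}(u,\alpha)$ is strictly decreasing on $[\Im(x^+_\alpha),+\infty)$, yielding the endpoint bound. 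The same formula produces the far-field asymptotic $\Re Y^+(u+is) \sim c_+|s|$ with $c_+ = \sqrt{\det\Sigma^+}/\Sigma^+_{22} > 0$, so $\Re F$ decays linearly in $s$ at rate $c_+\sin\alpha$.

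\textbf{Integration and prefactor control.} The rational factor $\gamma(x,Y^+,Z^+)/\partial_y\gamma_+(x,Y^+)$ stays bounded on $S^+_{x,\alpha}$ since numerator and denominator both grow linearly in $|s|$. From~\eqref{laplace explicitee} and the analogous asymptotics $\Re Y^-(u+is) \sim -c_+|s|$ and $\Re Z^+(u+is) \sim c_-|s|$ with $c_- = \sqrt{\det\Sigma^-}/\Sigma^-_{22} > 0$, the Laplace factor $\phi^{z_0}(x)$ remains bounded (and decays exponentially in $s$ as soon as $b_0 \neq 0$). Combining,
\begin{equation*}
\left|\int_{S^+_{x,\alpha}} \cdots\, dx\right| \leq C\, e^{-ax(\alpha)-by(\alpha)-\epsilon^2 r}\int_{\Im(x^+_\alpha)}^{\infty} e^{-c_+ b s}\, ds \leq \frac{D}{b}\, e^{-ax(\alpha)-by(\alpha)-\epsilon^2 r},
\end{equation*}
which is \eqref{truc}. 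Estimate \eqref{truct} follows verbatim after replacing $b$ by $b-b_0$ in the linear decay rate. For \eqref{truc1} along $S^+_{y,\alpha}$, the symmetric argument with $X^+$ in place of $Y^+$ (noting $\Re X^+(v+it) \sim c'_+|t|$) gives the $1/a$ factor, since the horizontal coordinate now multiplies the growing term.

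\textbf{Main obstacle.} The most delicate case is \eqref{truc''} at $\alpha_0 \in \{0,\pi\}$, where $b = r\sin\alpha$ may be arbitrarily small and the generic $1/b$ factor becomes inadmissible. The strategy is to absorb it into the extra decay already present in $\phi^{z_0}(x)$: when $b_0 > 0$ (resp.\ $b_0 < 0$), the factor $e^{Y^-(x)b_0}$ (resp.\ $e^{Z^+(x)b_0}$) contributes a $b$-independent decay rate $|b_0|c_+$ (resp.\ $|b_0|c_-$), yielding a uniform bound $D$; when $b_0 = 0$, the prefactor itself decays as $1/s$ at infinity (since $\gamma(x,Y^-,Z^+) \sim |s|$), which together with the endpoint factor $e^{-\epsilon^2 r}$ is sufficient to absorb the at-most-logarithmic correction in $b$. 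A secondary subtlety is to ensure uniformity of all constants as $x(\alpha)$ approaches the branch point $\M$ (resp.\ $\m$); this is guaranteed by continuity of the Morse chart on a closed neighborhood and by the lower bound $\Im(x^+_\alpha) \geq \nu > 0$ from \eqref{nu} (or its analogue for $\alpha_0 \in \{0,\pi\}$, which can be established by direct computation at the branching point).
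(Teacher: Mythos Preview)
Your approach is essentially the paper's: exploit formula~\eqref{Re} to get monotonicity and linear growth of $\Re Y^+$ along vertical lines, bound the rational prefactor uniformly, and integrate the remaining exponential in $v$. For $\alpha_0\in(0,\pi)$ and for \eqref{truct}, \eqref{truc1} this matches the paper verbatim.

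There is, however, a genuine gap in your treatment of the subcase $b_0=0$ of \eqref{truc''}. You argue that the $1/s$ decay of $\phi^{z_0}$ leaves ``at most a logarithmic correction in $b$'' which the factor $e^{-\epsilon^2 r}$ can absorb. This fails: the bound must hold for \emph{every} pair $r>r_0$ and $0<\alpha\le\eta$, and these vary independently. For fixed $r$, taking $\alpha\to0^+$ makes $|\ln b|=|\ln(r\sin\alpha)|$ arbitrarily large, so no constant $D$ works. The factor $e^{-\epsilon^2 r}$ already sits on the right-hand side of \eqref{truc''} and cannot be borrowed a second time; nor can $\epsilon$ be shrunk, since it is fixed by the Morse chart. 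The paper handles this case by integration by parts in $v$: writing the oscillatory factor as $\partial_v\bigl[e^{-a(x^+_\alpha+iv)-bY^+(x^+_\alpha+iv)}\bigr]$ divided by $-ia-ib(Y^+)'$, one transfers a derivative onto the prefactor, gaining an extra $1/v$ so that the integrand is now $O(1/v^2)$ and integrable uniformly in $b$; the boundary term at $v=\Im(x^+_\alpha)$ is already of the required size. See \cite[Lemma~7.3]{Franceschi_2024}.
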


\begin{proof}
First, let us consider the case $0 < \alpha_0 < \pi$. Using the notation \eqref{fff}, inequality \eqref{truc} can be written as
\begin{equation}
    \label{eq:lefths}
\left| \int\limits_{v>0}   
\frac{\phi^{z_0}(x_\alpha^+ +iv) \gamma(x_\alpha^+ + iv, Y^+(x_\alpha^+ + iv), Z^-(x_\alpha^+ + iv))}{\partial_y\gamma(x_\alpha^+ + iv, Y^+(x_\alpha^+ + iv))} \exp\big( \sqrt{a^2 + b^2} (F(x_\alpha^+ + iv,\alpha) - F(x_\alpha^+, \alpha)) \big) \, dv \right| \leq \frac{D}{b}.
\end{equation}
Furthermore,
\begin{equation}
    \label{bb}
\left| \exp\big( \sqrt{a^2 + b^2} (F(x_\alpha^+ + iv, \alpha) - F(x_\alpha^+, \alpha)) \big) \right| = \exp \big( -b \big( \Re Y^+(x_\alpha^+ + iv) - \Re Y^+(x_\alpha^+) \big) \big).
\end{equation}
By \eqref{Ypm}, the quantity $\Re Y^+(x_\alpha^+ + iv) - \Re Y^+(x_\alpha^+)$ vanishes at $v = 0$ and grows linearly as $v \to +\infty$. Therefore, there exist constants $c > 0$ and $\eta > 0$ such that for all $\alpha$ satisfying $|\alpha - \alpha_0| \leq \eta$ and all $v \geq 0$,
\begin{equation}
    \label{cv}
\Re Y^+(x_\alpha^+ + iv) - \Re Y^+(x_\alpha^+) \geq c v.
\end{equation}

By \eqref{Ypm} and \eqref{Zpm}, the quantity $\gamma(u + iv, Y^\pm(u + iv), Z^+(u + iv))$ grows linearly as $v \to +\infty$, uniformly in $u \in [-\m - \e, \M + \e]$ for a given $\e > 0$. Similarly, $\partial_y \gamma_+(u + iv, Y^+(u + iv))$ tends to $0$ as $v \to +\infty$, also uniformly in $u \in [-\m - \e, \M + \e]$. Then, by \eqref{laplace explicitee}, we have
\begin{equation}
    \label{zlm2}
\sup_{v \geq 0, |\alpha - \alpha_0| \leq \eta} \left| \frac{ \phi^{z_0}(x_\alpha^+ + iv) \gamma(x_\alpha^+ + iv, Y^+(x_\alpha^+ + iv), Z^+(x_\alpha^+ + iv))}{\partial_y \gamma_+(x_\alpha^+ + iv, Y^+(x_\alpha^+ + iv))} \right| < +\infty.
\end{equation}
Then, for all $(a, b)$ such that $|\alpha(a,b) - \alpha_0| \leq \eta$, the left-hand side of \eqref{eq:lefths} is bounded by
$$
C \int_0^{\infty} e^{-bcv} \, dv = \frac{C}{cb},
$$
where $C > 0$ is a constant independent of $(a, b)$. This proves \eqref{truc}. Inequalities \eqref{truct} and \eqref{truc1} follow by analogous symmetric arguments.

Now, let us consider the case $\alpha_0 = 0$ and prove \eqref{truc''}. This inequality can be written as \eqref{eq:lefths}, but with the right-hand side $\frac{D}{b}$ replaced by $D$. The exponential term
\[
\exp\left( - \sqrt{a^2 + b^2} \left( F(x_\alpha^+ + iv, \alpha) - F(x_\alpha^+, \alpha) \right) \right)
\]
can be bounded by $1$. By similar arguments, there exists $\eta > 0$ such that
\begin{equation}
C' := \sup_{v \geq 0, \, 0 < \alpha \leq \eta} \left| \frac{\gamma(x_\alpha^+ + iv, Y^+(x_\alpha^+ + iv), Z^+(x_\alpha^+ + iv))}{\partial_y \gamma_+(x_\alpha^+ + iv, Y^+(x_\alpha^+ + iv))} \right| < \infty.
\end{equation}
If $b_0 \neq 0$, then by \eqref{Re} and \eqref{laplace explicitee}, we have
\begin{equation}
C'' := \sup_{0 < \alpha \leq \eta} \int_0^{+\infty} \left| \phi^{z_0}(x_\alpha^+ + iv) \right| \, dv < +\infty,
\end{equation}
which gives the desired result with $D = C' C''$. If $b_0 = 0$, an integration by parts argument justifies the same estimate, using the explicit expression \eqref{laplace explicitee} (see \cite[Lemma 7.3]{Franceschi_2024} for similar estimates).
\end{proof}

We now have the tools to prove Theorems~\ref{thm:1} and~\ref{thm:2}.

\begin{proof}[Proof of Theorem~\ref{thm:1}]
Let $\alpha_0 \in (0, \pi)$ be such that $\m < x(\alpha_0) < \M$, and let $z_0 \in \R^2$. By Lemmas~\ref{double_simple} and~\ref{lem:chemin}, the function $g^{z_0}(a,b)$ can be written as $g^{z_0}(a,b) = I_1^{z_0}(a,b) + I_2^{z_0}(a,b)$, where $I_1^{z_0}(a,b)$ and $I_2^{z_0}(a,b)$ are given by \eqref{I11} and \eqref{I22}. 
Using the asymptotic expansion given in \eqref{as:coll} from Lemma~\ref{lem:contribsaddle}, together with the bounds \eqref{truc} and \eqref{truct} from Lemma~\ref{lem:neglig1}, we see that the contributions of the integrals along the segments $S^{\pm}_{x,\alpha}$ are negligible compared to those along $\Gamma_{x,\alpha}$. Therefore, the asymptotics of $g^{z_0}(a,b)$ are given by \eqref{as:coll}.
\end{proof}

\begin{proof}[Proof of Theorem~\ref{thm:2}]
Without loss of generality, we consider the case $\alpha_0 = 0$. We use the representations of $I_1^{z_0}(a,b)$ and $I_2^{z_0}(a,b)$ given by \eqref{I11} and \eqref{I22''}. 
Then, applying the asymptotic expansion \eqref{as:collalpha=0} from Lemma~\ref{lem:colalpha=0} and the estimates \eqref{truc''} and \eqref{truc1} from Lemma~\ref{lem:neglig1}, we conclude the proof in the same way as for Theorem~\ref{thm:1}.
\end{proof}

\subsection{Lemmas for directions $\alpha_b$ and $\tilde\alpha_b$}\label{sub:alphab}

Without loss of generality, we consider the asymptotic behaviour of $g^{z_0}$ in the direction $\alpha_b$. Since the integrands in \eqref{I1} and \eqref{I2} are no longer holomorphic at points $x > \M$, we must introduce a different deformation of the integration path; see Figure~\ref{colchiant}. Note that the integrands of \eqref{I1} and \eqref{I2} admit limits at the top and bottom edges of the branch cut $[\M, +\infty)$. To define the new contour, we introduce the following notation.

\begin{notation}
Let $M > \M$, and let $f$ be a continuous function defined in a neighbourhood of $[\M, M]$ within $\{x \in \C : \Im(x) \geq 0\}$. We define
\[
\int_{\Omega_{+,M}} f(x)\, dx := \lim_{\eta \to 0} \int_{\Omega_{+,M}^\eta} f(x)\, dx,
\]
where $\Omega_{+,M}^\eta$ denotes the straight segment connecting $M + i\eta$ to $\M + i\eta$, oriented as in Figure~\ref{defint}. Integration along $\Omega_{-,M}$ is defined analogously, following the orientation shown in Figure~\ref{defint}.
\end{notation}

If $\alpha < \alpha_b$ (i.e., $x(\alpha) > \M$), we set $\Omega_{\pm, \alpha} = \Omega_{\pm, x(\alpha)}$. If $\alpha > \alpha_b$ (i.e., $x(\alpha) < \M$), we adopt the convention $\Omega_{+,\alpha} = \Omega_{-,\alpha} = \{x(\alpha)\}$, i.e., the constant path reduced to the point $x(\alpha)$. We also define
\[
\Gamma^-_{x, \alpha} = \{x(it, \alpha) \mid t \in [-\e, 0)\}, \qquad \Gamma^+_{x, \alpha} = \{x(it, \alpha) \mid t \in (0, \e]\}.
\]
% These are the two portions of $\Gamma_{x,\alpha}$ on either side of the real axis.

We can thus consider integrals of the relevant quantities along $\Omega_{\pm,\alpha}$. Let us now define the function $\Phi$ as
\begin{align}
\label{Phi}
\Phi(x) &= \phi^{z_0}(x)\frac{\gamma(x, Y^+(x), Z^+(x))}{\partial_y\gamma_+(x, Y^+(x))} + \frac{e^{a_0x + b_0 Y^+(x)\fc_{b_0>0} + b_0 Z^+(x)\fc_{b_0<0}}}{\partial_y\gamma_+(x, Y^+(x))} \\
\label{DLL}
&=\frac{e^{a_0 x + Z^+(x) b_0 \fc_{b_0 < 0}}}{\partial_y\gamma_+(x, Y^+(x))} \left(-\frac{\gamma(x, Y^+(x), Z^+(x))}{\gamma(x, Y^-(x), Z^+(x))} e^{b_0 Y^-(x) \fc_{b_0 > 0}} + e^{b_0 Y^+(x) \fc_{b_0 > 0}} \right).
\end{align}
Observe that $\Phi$ is discontinuous along the cut $[\M, +\infty)$. The following lemma provides the appropriate contour deformation when $\alpha_0 = \alpha_b$.

\begin{lemma}[Change of contour for direction $\alpha_b$] \label{découpage}
Assume that $x_{max}^+ < x_{max}^-$, i.e., $\alpha_b \in (0,\pi)$. Let $z_0 \in \R^2$, $a \in \R$, and $b > b_0$ such that $(a, b) \neq z_0$.
Then, for $0 < \alpha < \alpha_b$ (i.e., $x_{max}^+ < x(\alpha)$), we have
\begin{equation}\label{I111}
I_1^{z_0}(a,b) + I_2^{z_0}(a,b) = \frac{1}{2i\pi} \int_{S^-_{x, \alpha} \cup \Gamma^-_{x,\alpha} \cup \Omega_{-,\alpha} \cup \Omega_{+,\alpha} \cup \Gamma^+_{x,\alpha} \cup S^+_{x, \alpha}} \Phi(x) e^{-a x - b Y^+(x)}\, dx.
\end{equation}
\end{lemma}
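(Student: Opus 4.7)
My strategy is first to combine the two representations \eqref{I1} and \eqref{I2} of Lemma~\ref{double_simple} into the single integral
\[
I_1^{z_0}(a,b) + I_2^{z_0}(a,b) = \frac{1}{2i\pi}\int_{-\varepsilon - i\infty}^{-\varepsilon + i\infty} \Phi(x)\, e^{-a x - b Y^+(x)}\, dx,
\]
with $\Phi$ as in \eqref{Phi}, and then to shift the vertical contour to the right, aiming at a deformed path through the saddle point $x(\alpha)$. Since $x(\alpha) > \M$ lies past the branch point of the integrand on the real axis, the new contour cannot remain a simple curve from $-i\infty$ to $+i\infty$: it must execute a keyhole detour around the cut $[\M, x(\alpha)]$, precisely the path $\Omega_{-,\alpha} \cup \Omega_{+,\alpha}$ inserted between the halves $\Gamma^{\pm}_{x,\alpha}$ of the steepest-descent contour.

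To justify the deformation, I work on the open region $\mathcal{D} = \{x \in \mathbb{C} : \Re(x) > -\varepsilon\} \setminus [\M, +\infty)$, with $\varepsilon > 0$ shrunk so that $-\varepsilon > \max(x^+_{\min}, x^-_{\min})$; this pushes the left-hand branch cuts of $Y^{\pm}$ and $Z^+$ out of $\mathcal D$. On $\mathcal D$, the branches $Y^{\pm}(x)$ and $Z^+(x)$ are holomorphic by \eqref{Ypm}--\eqref{Zpm}, $\phi^{z_0}$ is holomorphic by Theorem~\ref{prop:laplace explicitee}, the denominator factor $\gamma(x,Y^-(x),Z^+(x))$ in \eqref{laplace explicitee} does not vanish (same theorem), and $\partial_y \gamma_+(x,Y^+(x)) = \tfrac{1}{2}\Sigma_{22}^+(Y^+(x) - Y^-(x))$ vanishes only at the branch points $x^+_{\min}, x^+_{\max}$, which lie outside $\mathcal D$. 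Hence the integrand $\Phi(x)e^{-ax-bY^+(x)}$ is holomorphic on $\mathcal D$, and Cauchy's theorem applies on the closed region bounded by the truncation of $-\varepsilon + i\mathbb R$ at heights $\pm i R$, the truncation of the keyhole contour, two horizontal segments at imaginary parts $\pm R$, and a small semicircle of radius $\eta$ around $\M$.

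The two resulting error contributions vanish in the appropriate limits. On the horizontal segments at $\pm i R$, formula \eqref{Re} yields $\Re(Y^+(u+iv)) \geq c\,|v|$ for some $c > 0$, uniformly in $u$ over compact intervals; since $b > 0$, the factor $|e^{-bY^+(x)}|$ decays exponentially in $R$ and dominates the polynomial growth of $|\Phi|$ inferred from \eqref{DLL} and \eqref{laplace explicitee}. For the $\eta$-semicircle around $\M$, inspection of \eqref{DLL} together with the local behaviours of $Y^{\pm}$, $Z^+$ and $\phi^{z_0}$ near $\M$ shows that $\Phi$ has at worst a $(x-\M)^{-1/2}$ singularity there, so the semicircle contributes $O(\sqrt{\eta}) \to 0$. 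Passing to the double limit $R \to \infty$, $\eta \to 0$ yields \eqref{I111}. The principal technical subtlety is the keyhole itself: one must verify that the one-sided limits of the (multi-valued) integrand from above and below $[\M, x(\alpha)]$ exist and are finite, so that the integrals $\int_{\Omega_{\pm,\alpha}}$ defined just before the lemma make sense, and that these two pieces, traversed in opposite directions, do not cancel but combine into a genuine contribution around the cut.
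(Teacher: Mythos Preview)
Your proposal is correct and follows essentially the same route as the paper's own proof: combine \eqref{I1}--\eqref{I2} into a single integral with integrand $\Phi(x)e^{-ax-bY^+(x)}$, then deform the vertical line $-\varepsilon + i\R$ to the keyhole contour of Figure~\ref{colchiant} via Cauchy's theorem, using the decay \eqref{bout_neg} to kill the horizontal segments at height $\pm R$ and letting $\eta \to 0$ on the pieces along the cut. Your write-up is in fact more detailed than the paper's; the only cosmetic difference is that the paper closes the keyhole with $\eta$-shifted horizontal segments (and invokes dominated convergence) rather than your semicircle around $\M$, and your singularity bound $O((x-\M)^{-1/2})$ is a bit pessimistic since $\Phi$ is actually bounded at $\M$ under the relevant hypothesis --- but this changes nothing in the argument.
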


\begin{proof}
We apply the Residue theorem to the representations \eqref{I1} and \eqref{I2} of $I_1^{z_0}(a,b)$ and $I_2^{z_0}(a,b)$, using the contour described in Figure~\ref{colchiant}. The asymptotic as $M \to +\infty$ is justified by \eqref{bout_neg}, and the limit as $\eta \to 0$ follows from the dominated convergence theorem. This yields the desired identity \eqref{I111}.
\end{proof}

\begin{figure}[hbtp]
\centering
\begin{subfigure}[b]{0.45\textwidth} 
    \includegraphics[scale=0.5]{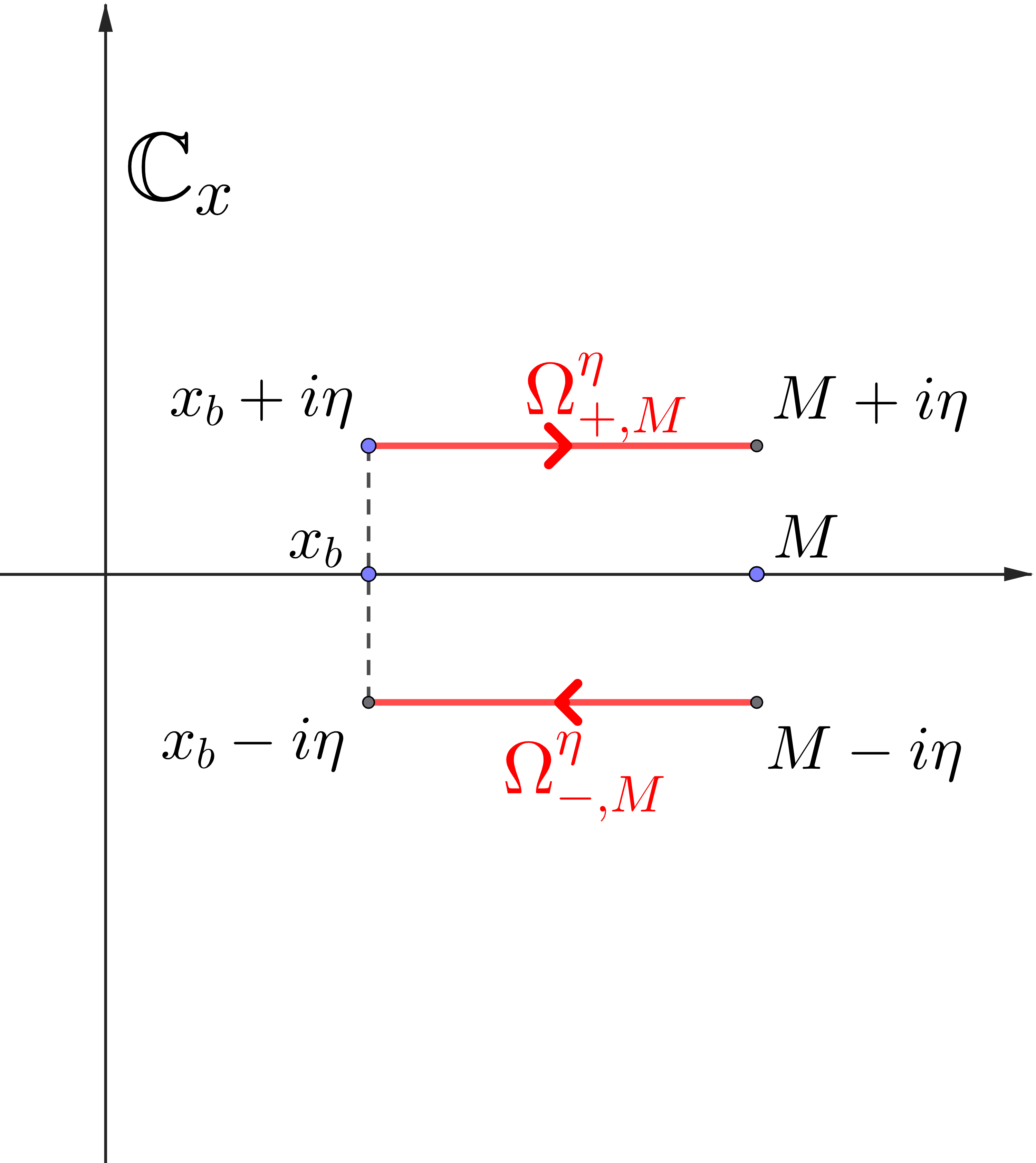}
    \caption{Paths $\Omega^\eta_{+,M}$ and $\Omega^\eta_{-,M}$.}
    \label{defint}
\end{subfigure}
\hfill
\begin{subfigure}[b]{0.45\textwidth}
    \includegraphics[scale=0.7]{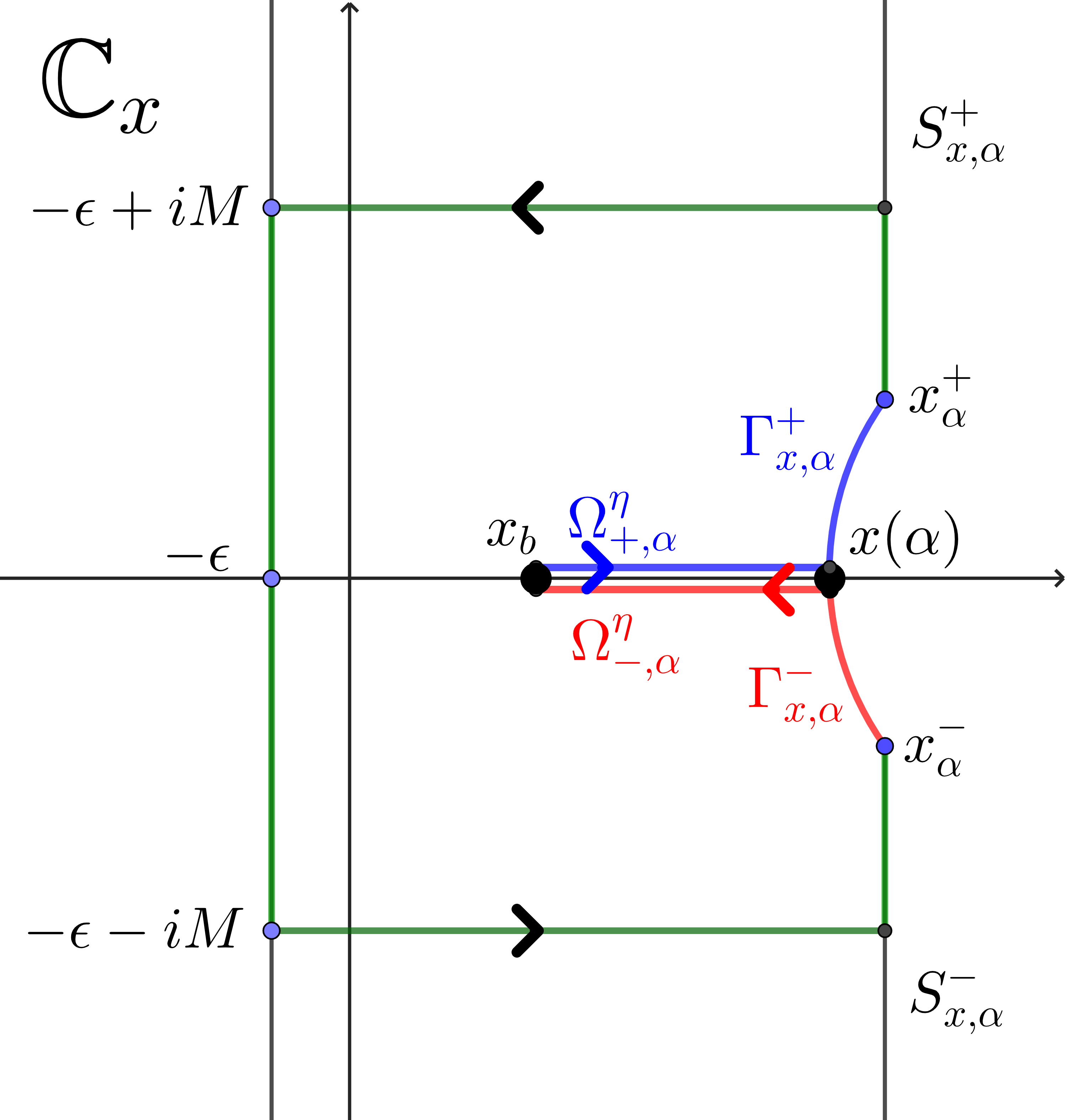}
    \caption{Contour for the Residue Theorem. Paths $\Omega_{+,\alpha}$ and $\Omega_{-,\alpha}$ coincide in $\C_x$.}
    \label{colchiant}
\end{subfigure}
\caption{Illustration of the integration paths $\Omega^\eta_{+,\alpha}$ and the contour deformation for direction $\alpha_b$.}
\end{figure}

For the asymptotic behaviour of Green's functions as $\alpha \to \alpha_b$, both contours $\Omega_{\pm,\alpha}$ and $\Gamma^{\pm}_{x,\alpha}$ contribute to the leading term. Since the function $\Phi$ is not holomorphic at ${\M}$, the standard saddle-point expansion does not apply. However, we present an adapted version of the method.

Recall the notation \eqref{OMEGA}. From the expressions \eqref{Ypm} and \eqref{laplace explicitee}, there exists a neighbourhood $V$ of ${\M}$, containing the disc $\mathbb{D}(0,L)$ for some $L > 0$, and a holomorphic function $\psi$ on $V$ such that
\[
\Phi(x) = \psi\left(\sqrt{{\M} - x}\right), \quad \text{for all } x \in V \setminus [{\M}, +\infty).
\]
By separating the even and odd parts in the power series expansion of $\psi$ around ${\M}$, we may write  
\begin{equation} \label{Phi1Phi2}  
\Phi(x) = \Phi_1(x) + \sqrt{{\M} - x} \, \Phi_2(x)
\end{equation}  
for $x \in V \setminus [{\M}, +\infty)$, where $\Phi_1$ and $\Phi_2$ are holomorphic in a neighbourhood of ${\M}$.
This decomposition allows us to establish the following lemma.

\begin{lemma}[Contribution of $\Gamma^{\pm}_{x,\alpha}$ in direction $\alpha_b$]\label{lem:colalphab}
Let $z_0 \in \R^2$. Suppose $x_{\max}^+ < x_{\max}^-$. Then, the following asymptotic expansion holds:
\begin{equation}\label{jujuju}
\frac{1}{2i\pi}\int_{\Gamma^-_{x,\alpha}\cup \Gamma^+_{x,\alpha}}\Phi(x)\,e^{-r(\cos(\alpha)x+\sin(\alpha)Y^+(x))}\, dx \underset{r\to+\infty\atop \alpha\to\alpha_b}{\sim} C^+(\alpha_b)\,h_{\alpha_b}(z_0)\,\frac{e^{-r(\cos(\alpha)x(\alpha) + \sin(\alpha)y(\alpha))}}{\sqrt r}
\end{equation}
where $C^+(\alpha_b)$ and $h_{\alpha_b}(z_0)$ are given by \eqref{C(alpha)} and \eqref{Harm1}.
\end{lemma}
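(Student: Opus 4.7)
My plan is to use the local decomposition \eqref{Phi1Phi2}, $\Phi(x) = \Phi_1(x) + \sqrt{\M - x}\,\Phi_2(x)$, where $\Phi_1,\Phi_2$ are holomorphic in a neighbourhood $V$ of $\M$, and to split the integral accordingly as $I = I^{(1)} + I^{(2)}$, where $I^{(j)}$ denotes the contribution along $\Gamma^-_{x,\alpha}\cup\Gamma^+_{x,\alpha}$ obtained by replacing $\Phi$ with $\Phi_1$ (for $j=1$) or with $\sqrt{\M - x}\,\Phi_2$ (for $j=2$). The claim then reduces to showing that $I^{(1)}$ admits the announced saddle-point equivalent and that $I^{(2)}$ is negligible with respect to $e^{-r(\cos(\alpha)x(\alpha)+\sin(\alpha)y(\alpha))}/\sqrt{r}$.

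Since $\Phi_1$ is holomorphic in $V$, the parameterised Morse lemma of \cite{Franceschi_2024} (already used in the proof of Lemma~\ref{lem:contribsaddle}) provides a change of variables $x = x(it,\alpha)$ under which $F(x(it,\alpha),\alpha) = -t^2$, valid uniformly for $\alpha$ in a neighbourhood of $\alpha_b$ and for $t \in [-\epsilon,\epsilon]$. After this substitution,
\[
I^{(1)} = \frac{e^{-r(\cos(\alpha)x(\alpha)+\sin(\alpha)y(\alpha))}}{2\pi}\int_{-\epsilon}^{\epsilon} \Phi_1\bigl(x(it,\alpha)\bigr)\,x'_\omega(it,\alpha)\,e^{-rt^2}\,dt,
\]
and Laplace's method yields
\[
I^{(1)} \underset{r\to\infty \atop \alpha\to\alpha_b}{\sim} \frac{\Phi_1(x(\alpha))\,x'_\omega(0,\alpha)}{2\sqrt{\pi r}}\,e^{-r(\cos(\alpha)x(\alpha)+\sin(\alpha)y(\alpha))}.
\]
To identify the limiting coefficient with $C^+(\alpha_b)\,h_{\alpha_b}(z_0)$, I would use the identity $\Phi(x(\alpha))\,\partial_y\gamma_+(x(\alpha),y(\alpha)) = h_\alpha(z_0)$ for $\alpha \in \mathcal{M}\cap(0,\pi)$, which is immediate from \eqref{DLL} and \eqref{Harm1}. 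Letting $\alpha \to \alpha_b$ from within $\mathcal{M}$ and using $\sqrt{\M - x(\alpha)}\to 0$ in \eqref{Phi1Phi2} gives $\Phi_1(\M) = h_{\alpha_b}(z_0)/\partial_y\gamma_+(\M, Y^+(\M))$; combined with \eqref{zop} and \eqref{fzfz}, this produces exactly $\Phi_1(\M)\,x'_\omega(0,\alpha_b)/(2\sqrt{\pi}) = C^+(\alpha_b)\,h_{\alpha_b}(z_0)$. Continuity of $\Phi_1$ and $x'_\omega(0,\cdot)$ on a neighbourhood of $\alpha_b$ then delivers the claimed equivalent.

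For $I^{(2)}$, the same change of variables gives
\[
I^{(2)} = \frac{e^{-r(\cos(\alpha)x(\alpha)+\sin(\alpha)y(\alpha))}}{2\pi}\int_{-\epsilon}^{\epsilon} \sqrt{\M - x(it,\alpha)}\,\Phi_2(x(it,\alpha))\,x'_\omega(it,\alpha)\,e^{-rt^2}\,dt.
\]
Using the expansion $x(it,\alpha) = x(\alpha) + it\,x'_\omega(0,\alpha) + O(t^2)$ and the elementary inequality $|\sqrt{a+b}| \leq \sqrt{|a|} + \sqrt{|b|}$, one obtains a uniform bound $|\sqrt{\M - x(it,\alpha)}| \leq \sqrt{|\M - x(\alpha)|} + C\sqrt{|t|}$ for $\alpha$ close to $\alpha_b$ and $t \in [-\epsilon,\epsilon]$. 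Since $\Phi_2$ and $x'_\omega(\cdot,\alpha)$ are bounded on the relevant compact set, integrating against the Gaussian $e^{-rt^2}$ produces $|I^{(2)}| = O\bigl(\sqrt{|\M - x(\alpha)|}/\sqrt{r}\bigr) + O(r^{-3/4})$ times the exponential factor, both of which are $o(1/\sqrt{r})$ uniformly as $r\to\infty$ and $\alpha\to\alpha_b$.

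The main technical point is to make sure that the Morse parameterisation $x(\omega,\alpha)$ remains valid and smooth through $\alpha = \alpha_b$, even when the saddle point $x(\alpha)$ crosses $\M$ and the paths $\Gamma^\pm_{x,\alpha}$ start from the two edges of the branch cut $[\M,+\infty)$. This is guaranteed because $\M$ is a regular point of $Y^+$ under the relevant configuration (so that $F''_x(\M,\alpha_b) > 0$ by \eqref{fzfz}, and $\partial_y\gamma_+(\M, Y^+(\M)) \neq 0$), while the decomposition \eqref{Phi1Phi2} isolates the entire non-holomorphic behaviour into the factor $\sqrt{\M-x}$ that appears only in $I^{(2)}$. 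In particular, no additional jump term arises from the cut for $I^{(1)}$, and the uniform bound on $I^{(2)}$ absorbs the sign ambiguity of $\sqrt{\M-x}$ on either side of the cut.
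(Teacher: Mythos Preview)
Your proposal is correct and follows essentially the same route as the paper: decompose $\Phi=\Phi_1+\sqrt{\M-x}\,\Phi_2$, apply the parameterised saddle-point method to the holomorphic part $\Phi_1$ (noting $\Phi_1(\M)=\Phi(\M)$ gives the constant $C^+(\alpha_b)h_{\alpha_b}(z_0)$), and bound the $\Phi_2$ contribution via $|\sqrt{\M-x(it,\alpha)}|\le C(\sqrt{|\alpha_b-\alpha|}+\sqrt{|t|})$ integrated against the Gaussian $e^{-rt^2}$. Your estimate $\sqrt{|\M-x(\alpha)|}$ is equivalent to the paper's $\sqrt{|\alpha_b-\alpha|}$ since $x(\alpha_b)=\M$ and $x(\cdot)$ is smooth, and your explicit identification of the leading coefficient via $\Phi(x(\alpha))\,\partial_y\gamma_+(x(\alpha),y(\alpha))=h_\alpha(z_0)$ is a welcome clarification of what the paper leaves implicit.
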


\begin{proof}
Applying the saddle-point method to the holomorphic function $\Phi_1$, as developed in \cite[Lemma 8.1]{Franceschi_2024}, we obtain:
\begin{equation}\label{jujujuju}
\frac{1}{2i\pi}\int_{\Gamma_{x,\alpha}}\Phi_1(x)\,e^{-r(\cos(\alpha)x+\sin(\alpha)Y^+(x))}\, dx \underset{r\to+\infty\atop \alpha\to\alpha_b}{\sim} C^+(\alpha_b)\,h_{\alpha_b}(z_0)\,\frac{e^{-r(\cos(\alpha)x(\alpha) + \sin(\alpha)y(\alpha))}}{\sqrt r},
\end{equation}
since $\Phi(\M) = \Phi_1(\M)$.
Moreover, for all $(t,\alpha) \in \Omega(0,\alpha_b)$, we have
\[
\left|\sqrt{\M - x(it, \alpha)}\right| \leq \sqrt{|x(0,\alpha_b) - x(0, \alpha)| + |x(0,\alpha) - x(it, \alpha)|} \leq C\left(\sqrt{|\alpha_b - \alpha|} + \sqrt{|t|}\right),
\]
for some constant $C > 0$ independent of $t$ and $\alpha$. Since $\Phi_2$ is holomorphic, it is bounded on every compact sets. Therefore, there exist constants $M, M' > 0$ such that, for all $\alpha$ with $|\alpha - \alpha_b| < \eta$, we have:
\begin{align}
\left|\int_{\Gamma^+_{x,\alpha}}\sqrt{\M-x}\,\Phi_2(x)\,e^{-r(\cos(\alpha)x+\sin(\alpha)Y^+(x))} dx\right|
&= e^{-r(\cos(\alpha)x(\alpha) + \sin(\alpha)y(\alpha))}\notag \\
&\quad \times \left|\int_{0}^{\e} \sqrt{\M - x(it, \alpha)}\,\Phi_2(x(it, \alpha))\,e^{-rt^2} i x'_\omega(it,\alpha)\,dt\right| \notag\\
&\leq M\,e^{-r(\cos(\alpha)x(\alpha) + \sin(\alpha)y(\alpha))} \int_{0}^{\e} \left(\sqrt{|\alpha_b - \alpha|} + \sqrt{t}\right)e^{-rt^2}dt \label{ineq_M_indepz0} \\
&\underset{r\to+\infty\atop \alpha\to\alpha_b}{\sim} M'\,e^{-r(\cos(\alpha)x(\alpha) + \sin(\alpha)y(\alpha))} \left(\frac{\sqrt{|\alpha_b - \alpha|}}{\sqrt{r}} + \frac{1}{r}\right),\notag
\end{align}
which is negligible compared to \eqref{jujujuju} as $r \to +\infty$ and $\alpha \to \alpha_b$. The same estimate holds for the integral over $\Gamma^-_{x,\alpha}$. This completes the proof of \eqref{jujuju}.
\end{proof}

We split the proof of the contribution of the integrals along $\Omega_{\pm,\alpha}$ into two lemmas.

\begin{lemma}[Value of $\Phi_2$ at $\M$] \label{tres_chiant}
Recall the notation from \eqref{Phi1Phi2}. Suppose $x_{max}^- < x_{max}^+.$ Then:
$$\Phi_2(x_b) =  - 2\frac{\sqrt{\det(\Sigma^-)(\M - x_{min}^-)}}{\Sigma_{22}^-(\Sigma_{22}^+ + \Sigma_{22}^-)}f_0(z_0)$$
where $f_0(z_0)$ is given by \eqref{f(z_0)branch}.
\end{lemma}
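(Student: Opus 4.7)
The plan is to extract $\Phi_2(\M)$ as the coefficient of $u := \sqrt{\M-x}$ in the Taylor expansion of $\Phi$ around $\M$ in the variable $u$, starting from the explicit representation \eqref{DLL}. Since by hypothesis $\M = x_{max}^-$, both branches $Y^\pm(x)$, and hence the Jacobian $\partial_y\gamma_+(x,Y^+(x))$, are holomorphic in a neighbourhood of $\M$, so the only source of non-analyticity is $Z^+(x)$. A direct computation from \eqref{Zpm} yields
\begin{equation*}
Z^+(x) = Z^\pm(\M) + C_z\, u + O(u^2), \qquad C_z := \frac{\sqrt{\det(\Sigma^-)(\M - x_{min}^-)}}{\Sigma_{22}^-}.
\end{equation*}

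Using the linearity of $\gamma$ in its third variable, together with $\partial_z\gamma = -\Sigma_{22}^-/(\Sigma_{22}^+ + \Sigma_{22}^-)$, I would then expand
\begin{equation*}
\gamma(x,Y^\pm(x),Z^+(x)) = \gamma(\M,Y^\pm(\M),Z^\pm(\M)) - \frac{\Sigma_{22}^- C_z}{\Sigma_{22}^+ + \Sigma_{22}^-}\, u + O(u^2),
\end{equation*}
and, when $b_0 < 0$, also $e^{b_0 Z^+(x)} = e^{b_0 Z^\pm(\M)}\bigl(1 + b_0 C_z\, u + O(u^2)\bigr)$. Substituting these expansions into the bracket of \eqref{DLL}, dividing by the holomorphic factor $\partial_y\gamma_+(x,Y^+(x))$ whose value at $\M$ is $\tfrac{1}{2}\Sigma_{22}^+(Y^+(\M) - Y^-(\M)) = \sqrt{\det(\Sigma^+)(x_{max}^+ - \M)(\M - x_{min}^+)}$, and collecting the $u$-term gives an explicit expression for $\Phi_2(\M)$.

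The last step is algebraic: rewriting this $u$-coefficient as a multiple of $f_0(z_0)$ given by \eqref{f(z_0)branch}. A clean way to proceed is to set $A := \gamma(\M,Y^+(\M),Z^\pm(\M))$, $B := \gamma(\M,Y^-(\M),Z^\pm(\M))$, $D := -\Sigma_{22}^- C_z/(\Sigma_{22}^+ + \Sigma_{22}^-)$, and to observe that the $u$-coefficient in the ratio $\gamma(x,Y^+(x),Z^+(x))/\gamma(x,Y^-(x),Z^+(x))$ at $\M$ equals $D(B - A)/B^2$. Since $B - A = \Sigma_{22}^+(Y^-(\M) - Y^+(\M))/(\Sigma_{22}^+ + \Sigma_{22}^-)$, the factor $Y^+(\M) - Y^-(\M)$ cancels against the matching factor in $\partial_y\gamma_+(\M,Y^+(\M))$, after which the remaining expression is readily identified with the formula for $f_0$ in \eqref{f(z_0)branch}.

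The main bookkeeping difficulty is the case $b_0 < 0$, where both the ratio above and the exponential $e^{b_0 Z^+(x)}$ contribute to the first-order term in $u$; one must verify that the two contributions recombine exactly into the second line of \eqref{f(z_0)branch} with the same proportionality constant as for $b_0 \geq 0$. A convenient shortcut bypassing most of this algebra is provided by Lemma~\ref{lem:DLphi}: since $\Phi(x)$ differs from $\phi^{z_0}(x)\,\gamma(x,Y^+(x),Z^+(x))/\partial_y\gamma_+(x,Y^+(x))$ by a term that is holomorphic when $b_0 \geq 0$, the coefficient of $\sqrt{\M - x}$ in $\Phi$ at $\M$ can be expressed directly in terms of the one in $\phi^{z_0}$ already computed in Lemma~\ref{lem:DLphi}, providing an independent verification of the final formula.
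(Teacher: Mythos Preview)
Your proposal is correct and follows essentially the same route as the paper: expand $Z^+(x)=Z^\pm(\M)+C_z\sqrt{\M-x}+o(\sqrt{\M-x})$, feed this into the ratio $\gamma(x,Y^+,Z^+)/\gamma(x,Y^-,Z^+)$ and into $e^{b_0 Z^+(x)}$ when $b_0<0$, and then use $\partial_y\gamma_+(\M,Y^+(\M))=\tfrac{\Sigma_{22}^+}{2}(Y^+(\M)-Y^-(\M))$ to cancel $Y^+(\M)-Y^-(\M)$ and recognise $f_0(z_0)$ from \eqref{f(z_0)branch}. The additional cross-check via Lemma~\ref{lem:DLphi} (valid for $b_0\geq 0$ since the second term of \eqref{Phi} is then holomorphic at $\M$) is a nice complement the paper does not make explicit.
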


\begin{proof}
The proof reduces to showing that
\begin{equation}\label{eq:DLbienrelou}
\Phi(x) =\Phi(\M) - 2\frac{\sqrt{\det(\Sigma^-)(\M - x_{min}^-)}}{\Sigma_{22}^-(\Sigma_{22}^+ + \Sigma_{22}^-)}f_0(z_0)\sqrt{\M - x}+ o(\sqrt{\M - x})
\end{equation}
as $x \to \M$. To prove the above formula, we use the explicit expressions of $\Phi$ and $\phi^{z_0}$ given by \eqref{Phi} and \eqref{laplace explicitee}. Let us denote $C_z=\frac{\sqrt{\det(\Sigma^-)(\M - x_{min}^-)}}{\Sigma_{22}^-}$ so that
$Z^+(x) = Z^+(\M) + C_z\sqrt{\M - x} + o(\sqrt{\M - x}).$
We have:
\begin{align*}
\frac{\gamma(x,Y^+(x), Z^+(x))}{\gamma(x,Y^-(x), Z^+(x))} &= 1 +\frac{\left(\frac{1 + q_2}{2}\right)(Y^+(x)-Y^-(x))}{\gamma(x,Y^-(x), Z^+(x))} \\
&= 1 +\left(\frac{1 + q_2}{2}\right)\frac{Y^+(x)-Y^-(x)}{\gamma(\M,Y^-(\M), Z^+(\M)) + \frac{q_2 - 1}{2}C_z\sqrt{\M - x} + o(\sqrt{\M - x})} \\
&= 1 +\left(\frac{1 + q_2}{2}\right)\frac{Y^+(\M) - Y^-(\M)}{\gamma(\M,Y^-(\M), Z^+(\M))}\left(1 - \frac{\frac{q_2 - 1}{2}C_z\sqrt{\M - x}(1 + o(1))}{\gamma(\M,Y^-(\M), Z^+(\M))} \right).
\end{align*}

Furthermore,
\begin{align*}
e^{Z^+(x)b_0\fc_{b_0 < 0}} &= e^{(Z^+(\M) +\sqrt{\M - x}( C_z + o(1)))b_0\fc_{b_0 < 0}} \\
&= e^{Z^+(\M)b_0\fc_{b_0 < 0}}(1 +\sqrt{\M - x}(C_z + o(1))b_0\fc_{b_0 < 0}).
\end{align*}

Then, if $b_0 < 0$, \eqref{DLL} can be written as
\begin{align*}
\partial_y\gamma_+(x,Y^+(x))\Phi(x) &= e^{\M a_0 + Z^+(\M)b_0}\left(1 + C_z\sqrt{\M - x}(b_0 + o(1))\right) \\
&\quad \times -\left(\frac{1 + q_2}{2}\right)\frac{Y^+(\M) - Y^-(\M)}{\gamma(\M,Y^-(\M), Z^+(\M))}\left(1 - \frac{ \frac{q_2 - 1}{2}C_z\sqrt{\M - x}(1 + o(1))}{\gamma(\M,Y^-(\M), Z^+(\M))} \right)
\end{align*}

which yields \eqref{eq:DLbienrelou}, using the identity
$\partial_y\gamma_+(\M,Y^+(\M)) = \frac{\Sigma_{22}^+}{2}(Y^+(\M) - Y^-(\M)).$
If $b_0 > 0$, the computations are analogous.
\end{proof}

Before setting the asymptotics along $\Omega_{\pm,\alpha}$, we need to introduce some notation.
We recall that $x(0, \alpha_b) = \M$ and that $x'_\omega(0,\alpha_b) = \sqrt{\frac{2}{F''_x(x(\alpha_b), \alpha)}} \neq 0$ by \eqref{fzfz}. Then, by the implicit function theorem, there exists a $C^\infty$ function, denoted by $\tau$, and a neighbourhood $W$ of $(0, \alpha_b)$ in $\R_\omega \times \R_\alpha$ (contained in $\Omega(0,\alpha_b)$, see~\eqref{OMEGA}) such that
$$\forall (\omega, \alpha) \in W, \quad [x(\omega, \alpha) = \M] \Longleftrightarrow [\omega = \tau(\alpha)].$$
In particular, $x(\tau(\alpha), \alpha) = \M$, and $\tau'(\alpha_b) = \frac{-x'(\alpha_b)}{x'_\omega(0,\alpha_b)} > 0.$
Then, for $\alpha < \alpha_b$, we have $\tau(\alpha) < 0$ and
\[
\tau(\alpha) = \frac{-x'(\alpha_b)}{x'_\omega(0,\alpha_b)}(\alpha - \alpha_b)(1 + o_{\alpha\to\alpha_b}(1)).
\]

Note that, by \eqref{eq_link_cste} and \eqref{zop}, we have $|K_+| = \frac{-x'(\alpha_b)}{x'_\omega(0,\alpha_b)}$
with the notation \eqref{eq:K+2}. Since, by \eqref{eq:F=t2}, $F(\M, \alpha) = F(x(\tau(\alpha), \alpha), \alpha) = \tau^2(\alpha)$, we deduce from \eqref{fff} the following identity:
\begin{equation}\label{eq:xalpha_tau}
    \cos(\alpha)x(\alpha_b) + \sin(\alpha)y(\alpha_b) = \cos(\alpha)x(\alpha) + \sin(\alpha)y(\alpha) + \tau^2(\alpha).
\end{equation}

\begin{lemma}[Contribution of $\Omega_{\pm,\alpha}$ in direction $\alpha_b$] \label{lem:contribomega} Suppose that $x_{max}^+ < x_{max}^-$.
Then, \begin{itemize}
    \item If $r(\alpha - \alpha_b)^2$ remains bounded, then the asymptotic contribution of the integrals along $\Omega_{\pm,\alpha}$ in \eqref{I111} is negligible compared to \eqref{jujuju} as $r\to+\infty$ and $\alpha \to \alpha_b$.
    \item If $r(\alpha - \alpha_b)^2 \to +\infty$ and $\alpha \leq \alpha_b$, the following asymptotics hold:
\begin{equation}\label{eq:contribomega}
\frac{1}{2i\pi}\int_{\Omega_{\pm,\alpha}}\Phi(x)e^{-r(\cos(\alpha)x+\sin(\alpha)Y^+(x))}%\fc_{b>0} + e^{-ax-bY^-(x)}\fc_{b<0}
 dx
\end{equation}
$$  \underset{r\to+\infty\atop \alpha\to\alpha_b, \alpha \leq \alpha_b}{\sim}\frac{\sin^{3/2}(\alpha_b)\sqrt{\det(\Sigma^-)(\M - x_{min}^-)}}{\sqrt \pi \Sigma_{22}^-(\Sigma_{22}^+ + \Sigma_{22}^-)} f_0(z_0)e^{-r(\cos(\alpha)x(\alpha) + \sin(\alpha)y(\alpha))} \frac{e^{r\tau(\alpha)^2}}{(r(\alpha_b - \alpha))^{3/2}}.$$
\end{itemize}
\end{lemma}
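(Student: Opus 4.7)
The plan is to exploit the decomposition $\Phi(x)=\Phi_1(x)+\sqrt{\M-x}\,\Phi_2(x)$ of~\eqref{Phi1Phi2} so as to reduce the sum of the two contour integrals along $\Omega_{\pm,\alpha}$ to a single real integral along $[\M,x(\alpha)]$. On the upper (resp.\ lower) lip of the cut one has $\sqrt{\M-x}=\pm i\sqrt{x-\M}$, and since $\Omega_{+,\alpha}$ and $\Omega_{-,\alpha}$ are traversed in opposite real directions in the contour of Lemma~\ref{découpage}, the holomorphic part $\Phi_1$ contributes nothing while the discontinuous square-root part doubles, yielding an identity of the shape
\begin{equation*}
\frac{1}{2i\pi}\int_{\Omega_{+,\alpha}\cup\Omega_{-,\alpha}}\Phi(x)\,e^{-r(\cos\alpha\,x+\sin\alpha\,Y^+(x))}\,dx
=\frac{\varepsilon_0}{\pi}\int_{\M}^{x(\alpha)}\sqrt{x-\M}\,\Phi_2(x)\,e^{-r(\cos\alpha\,x+\sin\alpha\,Y^+(x))}\,dx
\end{equation*}
for an explicit sign $\varepsilon_0\in\{\pm1\}$ dictated by the orientation.

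Next, I would use the parametrised Morse coordinate $x=x(\omega,\alpha)$ satisfying $F(x(\omega,\alpha),\alpha)=\omega^2$ (see~\eqref{eq:F=t2}). As $\omega$ runs from $\tau(\alpha)<0$ to $0$, the corresponding $x$ sweeps $[\M,x(\alpha)]$, and by~\eqref{eq:xalpha_tau} the phase factorises as $e^{-r(\cos\alpha\,x(\alpha)+\sin\alpha\,y(\alpha))}e^{r\omega^2}$. The real integral thus becomes
\begin{equation*}
e^{-r(\cos\alpha\,x(\alpha)+\sin\alpha\,y(\alpha))}\int_{\tau(\alpha)}^{0}\sqrt{x(\omega,\alpha)-\M}\,\Phi_2(x(\omega,\alpha))\,x'_\omega(\omega,\alpha)\,e^{r\omega^2}\,d\omega,
\end{equation*}
which I treat by a Watson-type analysis concentrated near $\omega=\tau(\alpha)$, since $F$ attains its maximum on $[\M,x(\alpha)]$ precisely at the endpoint $x=\M$.

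For the first bullet, when $r(\alpha-\alpha_b)^2$ stays bounded, one has $|\tau(\alpha)|=O(1/\sqrt r)$ and $r\tau(\alpha)^2=O(1)$, so $e^{r\omega^2}$ is bounded on the integration range and a crude estimate gives a contribution of size $(x(\alpha)-\M)^{3/2}=O(|\alpha_b-\alpha|^{3/2})=O(r^{-3/4})$, negligible compared to the $\Theta(r^{-1/2})$ magnitude of~\eqref{jujuju}. For the second bullet, when $r\tau(\alpha)^2\to+\infty$ and $\alpha\leq\alpha_b$, I would rescale $\omega=\tau(\alpha)+u/(2r|\tau(\alpha)|)$: the exponential factorises as $e^{r\tau(\alpha)^2}e^{-u+O(u^2/(r\tau(\alpha)^2))}$, the root term satisfies $\sqrt{x(\omega,\alpha)-\M}=\sqrt{x'_\omega(\tau(\alpha),\alpha)\,u/(2r|\tau(\alpha)|)}\,(1+o(1))$, and dominated convergence together with $\int_0^\infty\sqrt u\,e^{-u}\,du=\sqrt\pi/2$ produces the leading term
\begin{equation*}
\Phi_2(\M)\,x'_\omega(0,\alpha_b)^{3/2}\,\frac{\sqrt\pi}{2\,(2r|\tau(\alpha)|)^{3/2}}\,e^{r\tau(\alpha)^2}.
\end{equation*}

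It remains to identify the constants. From $|\tau(\alpha)|\sim|K_+|(\alpha_b-\alpha)$ with $|K_+|=|x'(\alpha_b)|/x'_\omega(0,\alpha_b)$, and the identities~\eqref{eq_link_cste} and~\eqref{fzfz}, a direct manipulation yields $x'_\omega(0,\alpha_b)^{3/2}/(2|K_+|)^{3/2}=\sin^{3/2}\alpha_b$; substituting the explicit value of $\Phi_2(\M)$ from Lemma~\ref{tres_chiant} then produces exactly~\eqref{eq:contribomega}. The main obstacle is the uniform control of the two limits $r\to+\infty$ and $\alpha\to\alpha_b$ simultaneously: the dominated convergence in the rescaled integrand, the exponential decay of the tail $u\gtrsim r\tau(\alpha)^2$, and the smooth dependence of $\Phi_2$ and $x'_\omega$ in $(\omega,\alpha)$ must all be verified with error terms that are uniform in $\alpha$, so that the two regimes interpolate continuously across the transition $r\tau(\alpha)^2=O(1)$.
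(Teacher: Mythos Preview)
Your proposal is correct and follows essentially the same route as the paper: cancel the holomorphic part $\Phi_1$, reduce to a real integral of $\sqrt{x-\M}\,\Phi_2(x)$ over $[\M,x(\alpha)]$, pass to the Morse coordinate $\omega\in[\tau(\alpha),0]$, and extract the asymptotics from the endpoint $\omega=\tau(\alpha)$ corresponding to $x=\M$. The paper differs only in packaging: instead of your direct Watson rescaling $\omega=\tau(\alpha)+u/(2r|\tau(\alpha)|)$, it substitutes $t=\tau(\alpha)s$ to reduce everything to the model integral $\int_0^1\sqrt{1-s}\,e^{qs^2}\,ds$ with $q=r\tau(\alpha)^2$, whose asymptotic $\sim\frac{\sqrt\pi}{4\sqrt2}\,e^q q^{-3/2}$ is relegated to a separate appendix lemma. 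For the uniform-in-$\alpha$ control you flag as the main obstacle, the paper uses a clean device worth noting: it bounds the difference between the full integrand $\sqrt{x(t,\alpha)-\M}\,\Phi_2(x(t,\alpha))\,x'_\omega(t,\alpha)$ and its frozen leading term $\Phi_2(x(0,\alpha))\,x'_\omega(0,\alpha)^{3/2}\sqrt{t-\tau(\alpha)}$ pointwise by $\varepsilon(\alpha)\sqrt{t-\tau(\alpha)}$ with $\varepsilon(\alpha)\to0$, so the relative error is simply $\varepsilon(\alpha)$ times the \emph{same} model integral, avoiding any delicate dominated-convergence argument in the double limit.
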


\begin{proof}
First, since $\Phi_1$ is holomorphic, we have $$\frac{1}{2i\pi}\int_{\Omega_{x,\alpha}^-\cup\Omega^+_{x,\alpha}}\Phi_1(x)e^{-r(\cos(\alpha)x+\sin(\alpha)Y^+(x))}%\fc_{b>0} + e^{-ax-bY^-(x)}\fc_{b<0}
 dx = 0.$$
For $\Phi_2$, we note that $$\int_{\Omega^-_{x,\alpha}} \sqrt{{\M}-x}\Phi_2(x) e^{-ax -bY^+(x)}dx = \int_{\Omega^+_{x,\alpha}} \sqrt{{\M}-x}\Phi_2(x) e^{-ax -bY^+(x)}dx$$ by the definition of integration over $\Omega_{\pm,\alpha}$ and the convention for the complex square root.
We parameterise the path $\Omega_{+,\alpha}$ as $\Omega_{+,\alpha}=\{x(t,\alpha) \mid t \in [\tau(\alpha), 0]\}$. Then,
\begin{equation}\label{intomega}
\int_{\Omega^+_{x,\alpha}} \sqrt{{\M}-x}\Phi_2(x) e^{-r(\cos(\alpha)x +\sin(\alpha)Y^+(x))}dx
\end{equation} $$= -ie^{-r(\cos(\alpha)x(\alpha) + \sin(\alpha)y(\alpha))} \int_{\tau(\alpha)}^0 \sqrt{x(t,\alpha) - {\M}}\Phi_2(x(t,\alpha)) e^{rt^2}x'_\omega(t,\alpha)dt.$$
Moreover, for any $t\in(\tau(\alpha),0)$, there exists $\xi_{t,\alpha} \in (\tau(\alpha),t)$ such that $$x(t,\alpha) - {\M} = x(t,\alpha) - x(\tau(\alpha),\alpha) = x'_\omega(\xi_{t,\alpha}, \alpha)(t-\tau(\alpha)).$$
Therefore,
\begin{equation}\label{inégalité_qui_simplifie}
\left|\int_{\tau(\alpha)}^0 \sqrt{x(t,\alpha) - {\M}}\Phi_2(x(t,\alpha)) e^{rt^2}x'_\omega(t,\alpha)dt - \Phi_2(x(0,\alpha))x'_\omega(0,\alpha)^{3/2}\int_{\tau(\alpha)}^0 \sqrt{t - \tau(\alpha)} e^{rt^2}dt\right|\end{equation}
$$\leq \e(\alpha) \int_{\tau(\alpha)}^0 \sqrt{t-\tau(\alpha)} e^{rt^2}dt $$
where
%$$\e(\alpha) = \sup_{t\in[0,\tau(\alpha)]} \left|\sqrt{x'_\omega(\xi_{t,\alpha})}\Phi_2(x(t,\alpha))x'_\omega(t,\alpha) - \sqrt{x'_\omega(0,\alpha)}\Phi_2(x(0,\alpha))x'_\omega(0,\alpha)\right| $$
%
%Or, par l'inégalité des accroissements finis $$\sup_{t \in [0,\tau(\alpha)]}\left|\Phi_2(x(t,\alpha)) x'_\omega(t,\alpha) - \Phi_2(x(0,\alpha) x'_\omega(0,\alpha) \right| \leq \sup_{|t| \leq K , |\alpha-\alpha_0|\leq\eta}  \left|\frac{d}{dt}(\Phi_2(x(t,\alpha)) x'_\omega(t,\alpha))\right|\tau(\alpha) =: C\tau(\alpha).$$
%Ainsi,
\begin{align*}
\e(\alpha) &:= \sup_{t\in[\tau(\alpha),0]} \left|\sqrt{x'_\omega(\xi_{t,\alpha},\alpha)}\Phi_2(x(t,\alpha))x'_\omega(t,\alpha) - \sqrt{x'_\omega(0,\alpha)}\Phi_2(x(0,\alpha))x'_\omega(0,\alpha)  \right|\\
&\leq \sup_{t\in[\tau(\alpha),0]\atop\xi\in[\tau(\alpha),0]}\left|\sqrt{x'_\omega(\xi,\alpha)}\Phi_2(x(t,\alpha))x'_\omega(t,\alpha) - \sqrt{x'_\omega(0,\alpha)}\Phi_2(x(0,\alpha))x'_\omega(0,\alpha)  \right|  \underset{\alpha \to \alpha_b}{\longrightarrow} 0.
\end{align*}

Using the change of variables $t = \tau(\alpha)s$, we have \begin{equation}\label{eq:inttau(alpha)}
\int_{\tau(\alpha)}^0 \sqrt{t-\tau(\alpha)}\, e^{rt^2}dt =|\tau(\alpha)|^{3/2}\int_{0}^1 \sqrt{1-s}\, e^{r\tau(\alpha)^2s^2}ds.
\end{equation}

Assume first that $r\tau(\alpha)^2$ remains bounded.Then, \eqref{eq:inttau(alpha)} is also bounded. By \eqref{inégalité_qui_simplifie}, \eqref{intomega} is then bounded by $|\tau(\alpha)|^{3/2} e^{-r(\cos(\alpha)x(\alpha) + \sin(\alpha)y(\alpha))} \leq M\sqrt{|\tau(\alpha)|}\frac{e^{-r(\cos(\alpha)x(\alpha) + \sin(\alpha)y(\alpha))}}{\sqrt r} $ for some constant $M > 0$, which is negligible compared to \eqref{jujuju}. This proves $(ii)$. 

Now, suppose that $r\tau(\alpha)^2 \to +\infty$. By Lemma~\ref{lemme_technique}, the following asymptotics hold:
$$|\tau(\alpha)|^{3/2}\int_{0}^1 \sqrt{1-s} e^{r\tau(\alpha)^2s^2}ds \sim |\tau(\alpha)|^{3/2}\frac{\sqrt \pi}{4\sqrt 2}\frac{e^{r\tau(\alpha)^2}}{(r\tau(\alpha)^2)^{3/2}} = \frac{\sqrt \pi}{4\sqrt 2}\frac{e^{r\tau(\alpha)^2}}{(r|\tau(\alpha)|)^{3/2}} $$
Hence,
\begin{align*}
\frac{1}{2i\pi}\int_{\Omega_{x,\alpha}^-\cup\Omega^+_{x,\alpha}}\Phi(x)e^{-r(\cos(\alpha)x+\sin(\alpha)Y^+(x))}
 dx &\sim  \frac{-i}{i\pi}\frac{\sqrt \pi}{4\sqrt 2} \Phi_2(\M)x'_\omega(0,\alpha_b)^{3/2}e^{-r(\cos(\alpha)x(\alpha) + \sin(\alpha)y(\alpha))} \frac{e^{r\tau(\alpha)^2}}{(r\tau(\alpha))^{3/2}}\\
&\sim  \frac{-\sin^{3/2}(\alpha_b)}{2\sqrt \pi} \Phi_2(\M)e^{-r(\cos(\alpha)x(\alpha) + \sin(\alpha)y(\alpha))} \frac{e^{r\tau(\alpha)^2}}{(r\sin(\alpha_b - \alpha))^{3/2}}
\end{align*}
since 
\begin{align*}
\frac{x'_\omega(0,\alpha_b)^3}{2\sqrt{2}|x'(\alpha_b)|^{3/2}}&= \frac{1}{|F''(x(\alpha_b),\alpha_b)x'(\alpha_b)|^{3/2}} = |\sin(\alpha_b)|^{3/2} 
\end{align*} (see \eqref{eq_link_cste} and \eqref{zop}). The conclusion follows from Lemma~\ref{tres_chiant}.
\end{proof}

\begin{proof}[Proof of Theorem~\ref{thm3}]
By Lemmas~\ref{double_simple} and~\ref{découpage}, the function $g^{z_0}$ can be written as $g^{z_0} = I_1^{z_0}(a,b) + I_2^{z_0}(a,b)$, with representation given by \eqref{I111}. According to Lemmas~\ref{lem:neglig1}, \ref{lem:colalphab}, and \ref{lem:contribomega}, the contributions of the integrals along the paths $S^\pm_{x,\alpha}$ are negligible compared to those along $\Gamma^\pm_{x,\alpha}$ and $\Omega^\pm_{x,\alpha}$. Thus, we are left with a competition between the contributions from $\Gamma^\pm_{x,\alpha}$ and $\Omega^\pm_{x,\alpha}$. 
\begin{itemize}
\item[$(i)$] If $\alpha > \alpha_b$, then the integrals along $\Omega_{\pm,\alpha}$ vanish (since $\Omega_{\pm,\alpha} = \{x(\alpha)\}$), and the result \eqref{jujuju} follows from Lemma~\ref{lem:colalphab}.
\item[$(ii)$] The statement in point $(ii)$ directly follows from Lemma~\ref{lem:contribomega}.
\item[$(iii) - (iv) - (v)$] Now suppose that $\alpha < \alpha_b$ and $r(\alpha - \alpha_b)^2 \to +\infty$. Then we have:
We have
\begin{align}\label{quotient}\frac{e^{r\tau^2(\alpha)}}{r|\tau(\alpha)|^{3/2}} &= \frac{e^{r\tau^2(\alpha)}}{r|\tau(\alpha)|^2}\sqrt{|\tau(\alpha)|} \\
&= \exp\left(r\tau^2(\alpha) - \ln(r\tau^2(\alpha)) + \frac{1}{2}\ln(|\tau(\alpha)| ) \right)\notag\\
&= \exp\left(K_+^2r(\alpha_b - \alpha)^2(1 + o(1)) + \frac{1}{2}\ln(\alpha_b - \alpha) +  \frac{1}{2}\ln(|K_+|) \right)\notag\\
&= \sqrt{|K_+|}\exp\left(K_+^2r(\alpha_b - \alpha)^2\left(\frac{\ln(\alpha_b - \alpha)}{2K_+^2r(\alpha_b - \alpha)^2}+ 1 + o(1) \right)\right).
\end{align}

If $\frac{\ln(\alpha_b - \alpha)}{2K_+^2r(\alpha_b - \alpha)^2} \to l \in [-\infty, -1) %\;(resp.\; l \in (-1, 0])
$, then \eqref{quotient} tends to $0$, and the asymptotics of $g(r\cos(\alpha),r\sin(\alpha))$ are governed by \eqref{jujuju}.  
Note that in this case,  $\frac{e^{K_+^2r(\alpha_b - \alpha)^2}}{r(\alpha_b - \alpha)^{3/2}} \to 0.$
%\; %(resp. \to+\infty)$.% par l'asymptotique \eqref{verif}. 
If $\frac{\ln(\alpha_b - \alpha)}{2K_+^2r(\alpha_b - \alpha)^2} \to l \in (-1, 0]$, then \eqref{quotient} diverges to $+\infty$, and the asymptotics of $g(r\cos(\alpha),r\sin(\alpha))$ are given by \eqref{eq:contribomega}, which coincides with \eqref{cas_3.2} thanks to \eqref{eq:xalpha_tau}.  
In this case,  $\frac{e^{K_+^2r(\alpha_b - \alpha)^2}}{r(\alpha_b - \alpha)^{3/2}} \to +\infty.$
It remains to analyze the critical case where $\frac{\ln(\alpha_b - \alpha)}{2K_+^2r(\alpha_b - \alpha)^2} \to -1$. In this situation, we have $r\sim\frac{-\ln(\alpha_b - \alpha)}{2K_+^2(\alpha_b - \alpha)^2}$ so that
$$
r\tau^2(\alpha) = rK_+^2(\alpha_b - \alpha)^2 + O\left(r(\alpha_b - \alpha)^3\right)  = rK_+^2(\alpha_b - \alpha)^2 + o(1)
$$
and consequently,
\begin{equation}\label{eq:rtau2}
\frac{e^{r\tau^2(\alpha)}}{r|\tau(\alpha)|^{3/2}} \sim \frac{e^{K_+^2r(\alpha_b - \alpha)^2}}{r|K_+(\alpha - \alpha_b)|^{3/2}}.
\end{equation}
Hence, both contributions \eqref{jujuju} and \eqref{eq:contribomega} may be significant, and expression \eqref{eq:rtau2} provides the conclusion for points $(iii)$ to $(v)$, depending on the possible limiting behavior of $\frac{e^{K_+^2r(\alpha_b - \alpha)^2}}{r|K_+(\alpha - \alpha_b)|^{3/2}}$.
\end{itemize}
\end{proof}

\subsection{Lemmas for the non-red-arc zones}\label{sub:apresalphab}
For angles outside of the red zones, we use a different changing path again. For $\delta > 0$, we set
\begin{equation}
R^+_{\delta} =\{\M + \delta + iv \;|\; v > 0 \},\quad R^-_{\delta} =\{\M + \delta - iv \;|\; v > 0 \}.
\end{equation}
The following lemma performs the appropriate path deformation; see Figure~\ref{fig:chgtcheminbranch}.
\begin{lemma}[Changing path for directions $0 \leq \alpha < \alpha_b$]\label{découpage2}
Suppose that $x_{max}^+ < x_{max}^-$ (i.e., $\alpha_b \in (0,\pi)$). Let $\delta > 0$ and let $z_0 \in \R^2$, $a \in \R$, and $b > 0$ such that $(a,b) \neq z_0$. Then, for $0 \leq \alpha < \alpha_b$,
\begin{equation}\label{I1111}
I_1^{z_0}(a,b) = \frac{1}{2i\pi}\int_{R^-_{\delta}\cup \Omega_{-,\M + \delta} \cup \Omega_{+,\M + \delta}\cup R^-_{\delta}}\phi^{z_0}(x)\frac{\gamma(x,Y^+(x),Z^+(x))}{\partial_y\gamma_+(x,Y^+(x))}e^{-ax-bY^+(x)}%\fc_{b>0} + e^{-ax-bY^-(x)}\fc_{b<0}
 dx.
\end{equation}
Furthermore, if $b_0 > 0$ and $a > a_0$, then \eqref{I22''} holds. Finally, if $b_0 \leq 0$, then
\begin{equation}\label{I3333}
I_2^{z_0}(a,b) = \frac{1}{2i\pi}\int_{R^-_{\delta}\cup \Omega_{-,\M + \delta} \cup \Omega_{+,\M + \delta}\cup R^+_{\delta}}\frac{e^{(a_0-a)x+b_0Z^+(x) - bY^+(x)}}{\partial_y\gamma_+(x,Y^+(x))} dx.
\end{equation}
\end{lemma}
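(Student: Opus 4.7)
The plan is to deform the vertical contour $-\e + i\R$ in the representations of Lemma~\ref{double_simple} rightward, past the branching point $\M$, until it becomes the keyhole contour $R^-_\delta \cup \Omega_{-,\M+\delta} \cup \Omega_{+,\M+\delta} \cup R^+_\delta$ wrapping around the slit $[\M, \M+\delta]$. I would apply Cauchy's theorem on the truncated region
$$\mathcal{R}_V = \{x \in \C : -\e < \Re(x) < \M + \delta,\ |\Im(x)| < V\} \setminus [\M, \M+\delta]$$
for large $V>0$, and then let $V \to +\infty$ while shrinking a small circular arc around $\M$ to zero radius.

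For \eqref{I1111}, I would start from representation \eqref{I1}. Choose $\delta$ small enough that $\M + \delta < x_{max}^-$; then the branches $Y^\pm(x)$ and $Z^+(x)$ are all holomorphic on $\mathcal{R}_V$ (their remaining branch points lie outside the closure of $\mathcal{R}_V$, since we are in the case $x_{max}^+ < x_{max}^-$, and $\M = x_{max}^+$ is excised by the slit). Moreover, $\partial_y \gamma_+(x,Y^+(x)) = \tfrac{\Sigma^+_{22}}{2}(Y^+(x) - Y^-(x))$ is nonvanishing on $\mathcal{R}_V$, and by Theorem~\ref{prop:laplace explicitee}, $\phi^{z_0}(x)$ is holomorphic on $\C \setminus ((-\infty,\m] \cup [\M,+\infty))$. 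Hence the integrand of \eqref{I1} is holomorphic on $\mathcal{R}_V$, and Cauchy's theorem equates the sum of its integrals along the pieces of $\partial \mathcal{R}_V$ to zero. The two horizontal segments at $\Im(x) = \pm V$ vanish in the limit: by \eqref{Re}, $\Re(Y^+(x))$ grows linearly in $V$ uniformly in $\Re(x) \in [-\e, \M+\delta]$, so $|e^{-bY^+(x)}|$ decays exponentially in $V$, while the remaining factor stays bounded (exactly as in estimate \eqref{bout_neg}). The small arc around $\M$ contributes nothing in the limit, since the integrand has only an integrable $O((\M - x)^{-1/2})$ singularity there, inherited from $\partial_y\gamma_+(x, Y^+(x)) \sim c\sqrt{\M - x}$.

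For \eqref{I22''} (the case $b_0 > 0$, $a > a_0$), the identity is exactly the representation already proved in Lemma~\ref{double_simple}, so nothing further is needed. For \eqref{I3333} (the case $b_0 \leq 0$, for which $b > b_0$ holds automatically since $b > 0$), I would apply the same keyhole deformation to representation \eqref{I2}. The integrand $\tfrac{e^{(a_0-a)x + b_0 Z^+(x) - bY^+(x)}}{\partial_y \gamma_+(x,Y^+(x))}$ is holomorphic on $\mathcal{R}_V$ by the same holomorphy considerations, and the horizontal contributions vanish because $\Re(-b Y^+(x) + b_0 Z^+(x)) \to -\infty$ linearly in $|\Im(x)|$ (using $b > 0$, $b_0 \leq 0$, and the linear growth of $\Re(Y^+)$ and $\Re(Z^+)$ from \eqref{Re}); the arc around $\M$ is again negligible by the same square-root integrability.

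The main obstacle throughout is the uniform decay estimate along horizontal segments at height $\pm V$, which must be checked in each of the three cases. Once this estimate (analogous to \eqref{bout_neg}) is secured, the remainder of the argument is a routine application of Cauchy's theorem combined with dominated convergence to pass to the limit in $V$ and in the arc radius.
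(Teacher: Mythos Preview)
Your proposal is correct and follows essentially the same route as the paper, which simply says the proof is identical to that of Lemma~\ref{découpage} (Cauchy's theorem applied to representations \eqref{I1} and \eqref{I2}, with the horizontal pieces controlled by \eqref{bout_neg} and the limit onto the slit taken by dominated convergence). Your write-up is in fact more detailed than the paper's one-line argument; the only cosmetic difference is that the paper approaches the cut via the shifted segments $\Omega^\eta_{\pm,\M+\delta}$ and lets $\eta\to 0$, whereas you phrase this as shrinking a small arc around $\M$, but these are equivalent contour moves.
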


\begin{proof}
The proof is identical to that of Lemma~\ref{découpage}, except that the paths $\Gamma^\pm_{x,\alpha}$ are replaced by straight lines.
\end{proof}
\begin{figure}
    \centering
    \includegraphics[scale=0.5]{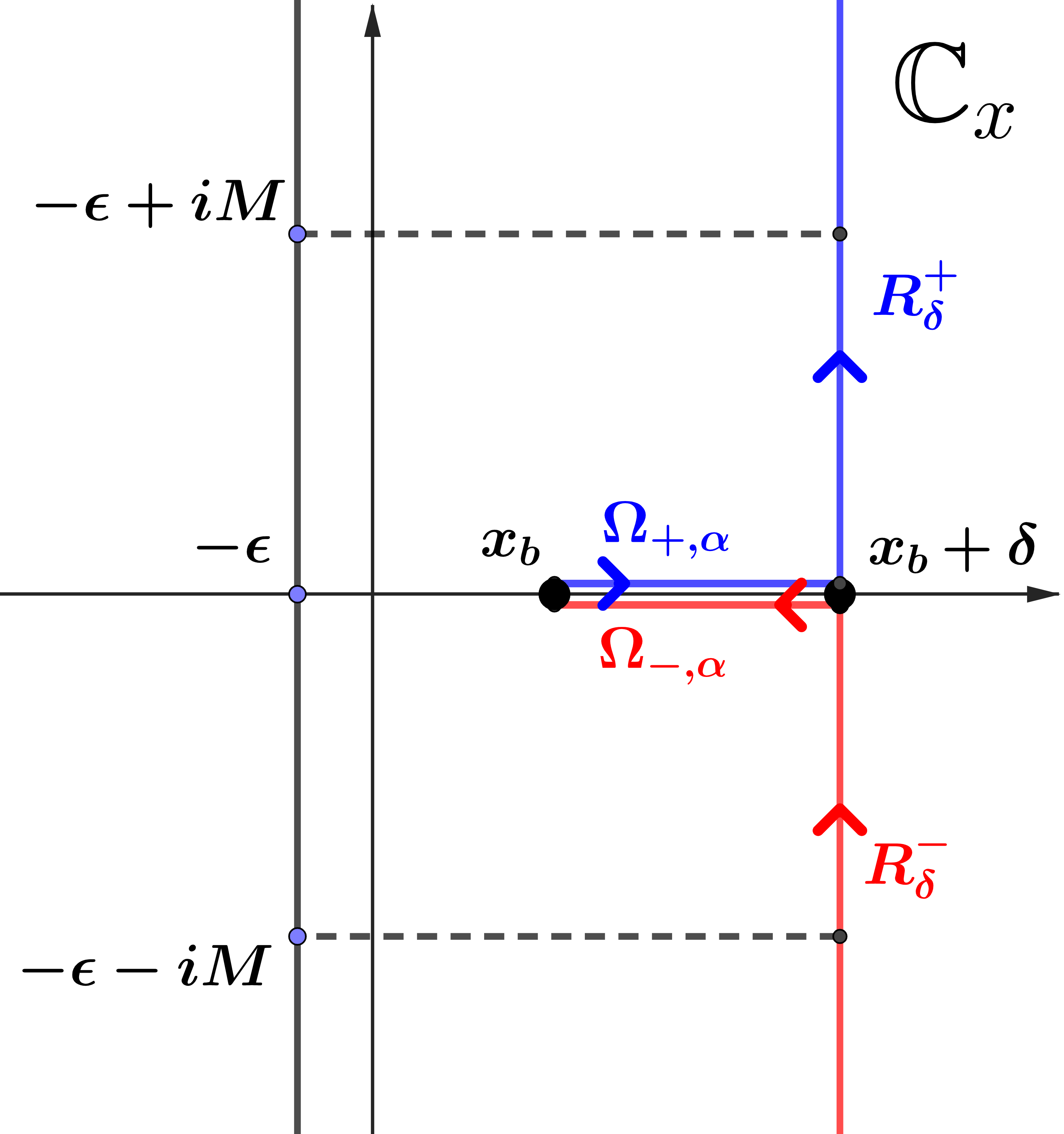}
    \caption{Path deformation in Lemma~\ref{découpage2}}
    \label{fig:chgtcheminbranch}
\end{figure}
Once again, we split the analysis of the asymptotics. We recall the notation from \eqref{Phi}. The following lemma provides the main asymptotic contribution.

\begin{lemma}[Contribution of $\Omega_{-,\M + \delta}$]\label{lem:contribOMEGAxbdelta}
Let $z_0 \in \R^2$. Suppose that $x_{max}^+ < x_{max}^-$ (i.e., $\alpha_b \in (0,\pi)$), and let $\alpha_0 \in [0, \alpha_b)$.  
If $b_0 \leq 0$, then
\begin{equation}\label{eq:contribOMEGAxbdelta}
\frac{1}{2i\pi}\int_{\Omega_{-,\M + \delta} \cup \Omega_{+,\M + \delta}}\Phi(x)e^{-r(\cos(\alpha)x+\sin(\alpha)Y^+(x))}%\fc_{b>0} + e^{-ax-bY^-(x)}\fc_{b<0}
 dx
\underset{r\to\infty \atop\alpha\to\alpha_0, \alpha \geq 0}{\sim}  C_{br}f_0(z_0)\frac{e^{-r(\cos(\alpha)x(\alpha_b) + \sin(\alpha)y(\alpha_b))}}{r^{3/2}
\sin(\alpha_b - \alpha)^{3/2}}
\end{equation}
where $C_{br} = |\sin(\alpha_b)|^{3/2} C_0$ (see \eqref{eq:C_0}), and where $f_0(z_0)$ is defined in \eqref{f(z_0)branch}. If $b_0 > 0,$ then \begin{equation}\label{eq:ascas2}
\frac{1}{2i\pi}\int_{\Omega_{-,\M + \delta} \cup \Omega_{+,\M + \delta}}\phi^{z_0}(x)\frac{\gamma(x,Y^+(x),Z^+(x))}{\partial_y\gamma_+(x,Y^+(x))}e^{-r(\cos(\alpha)x+\sin(\alpha)Y^+(x))}%\fc_{b>0} + e^{-ax-bY^-(x)}\fc_{b<0}
 dx
\end{equation}
$$\underset{r\to\infty \atop\alpha\to\alpha_0, \alpha \geq 0}{\sim}  C_{br}f_0(z_0)\frac{e^{-r(\cos(\alpha)x(\alpha_b) + \sin(\alpha)y(\alpha_b))}}{r^{3/2}
\sin(\alpha_b - \alpha)^{3/2}}.$$
\end{lemma}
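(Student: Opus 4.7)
The plan is to exploit the local decomposition $\Phi(x) = \Phi_1(x) + \sqrt{\M - x}\,\Phi_2(x)$ from \eqref{Phi1Phi2}, with $\Phi_1$ and $\Phi_2$ holomorphic in a neighbourhood of $\M$, and to compute the cut contribution via Watson's lemma. In the situation of the lemma the branch point $\M$ is inherited from the $z$-side of the kernel data, so that $Y^+(x)$, and hence the factor $e^{-rG(x,\alpha)}$ with $G(x,\alpha) := \cos\alpha\,x + \sin\alpha\,Y^+(x)$, extends holomorphically across $[\M, \M+\delta]$. Consequently, viewing $\Omega_{-,\M+\delta} \cup \Omega_{+,\M+\delta}$ as a thin loop around the cut, Cauchy's theorem kills the $\Phi_1 \cdot e^{-rG}$ contribution.

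Only the singular piece $\sqrt{\M-x}\,\Phi_2(x)\,e^{-rG(x,\alpha)}$ survives. Using that $\sqrt{\M-x}$ has opposite boundary values on the two edges of the cut (with the convention $\sqrt{\M-x}>0$ for $x<\M$), the jump reduces the loop integral to
\begin{equation*}
\frac{1}{2i\pi}\int_{\Omega_{-,\M+\delta}\cup\Omega_{+,\M+\delta}} \Phi(x)\,e^{-rG(x,\alpha)}\,dx \;=\; \pm\frac{1}{\pi}\int_0^\delta \sqrt{u}\,\Phi_2(\M+u)\,e^{-rG(\M+u,\alpha)}\,du,
\end{equation*}
up to a sign fixed by the contour orientations inherited from \eqref{I1111}. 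Taylor-expanding, $G(\M+u,\alpha) = G(\M,\alpha) + A\, u + O(u^2)$ with $A := \cos\alpha + \sin\alpha\,(Y^+)'(\M)$. Since $(x(\alpha_b), y(\alpha_b)) = (\M, Y^+(\M))$, the saddle-point identity $(Y^+)'(\M) = -\cot\alpha_b$ produces the two key identities
\begin{equation*}
A = \frac{\sin(\alpha_b - \alpha)}{\sin\alpha_b}, \qquad G(\M,\alpha) = \cos\alpha\,x(\alpha_b) + \sin\alpha\,y(\alpha_b),
\end{equation*}
with $A$ bounded below by a positive constant for $\alpha$ near $\alpha_0 \in [0,\alpha_b)$.

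Watson's lemma, applied uniformly in $\alpha$ to $\int_0^\delta \sqrt{u}\,\Phi_2(\M+u)\,e^{-rAu + rO(u^2)}\,du$, then yields the $r^{-3/2}$-asymptotic
\begin{equation*}
\sim\; \frac{\sqrt{\pi}}{2}\,\Phi_2(\M)\,A^{-3/2}\,\frac{e^{-rG(\M,\alpha)}}{r^{3/2}}.
\end{equation*}
Substituting the value of $\Phi_2(\M)$ from Lemma~\ref{tres_chiant}, together with $A^{-3/2} = \sin\alpha_b^{3/2}\,\sin(\alpha_b-\alpha)^{-3/2}$ and the definitions of $C_0$ in \eqref{eq:C_0} and $C_{br} = |\sin\alpha_b|^{3/2}\,C_0$, a direct algebraic simplification identifies the prefactor as $C_{br}\,f_0(z_0)$ and yields \eqref{eq:contribOMEGAxbdelta}. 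For the case $b_0 > 0$ of \eqref{eq:ascas2}, the omitted summand $e^{a_0 x + b_0 Y^+(x)}/\partial_y\gamma_+(x,Y^+(x))$ is holomorphic at $\M$ (both $Y^+$ and the denominator are), so it contributes neither to $\Phi_2(\M)$ nor to the loop integral by Cauchy, and the rest of the argument goes through unchanged.

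The main technical obstacle is the careful bookkeeping of branch conventions and contour orientations needed to justify the jump formula displayed above; once this is in place, the uniformity of Watson's lemma as $\alpha \to \alpha_0$ is routine, since $A$ is bounded away from zero on $[0,\alpha_b-\varepsilon]$ and $\Phi_2$ is continuous at $\M$ by the analyticity statement underlying \eqref{Phi1Phi2}.
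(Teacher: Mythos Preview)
Your proof is correct and follows essentially the same route as the paper's: eliminate the holomorphic part $\Phi_1$ by Cauchy, reduce the $\sqrt{\M-x}\,\Phi_2$ contribution to a one-sided real integral via the jump of the square root across the cut, apply a Laplace/Watson asymptotic, and identify the prefactor through Lemma~\ref{tres_chiant} together with $(Y^+)'(\M)=-\cot\alpha_b$. The only difference is presentational: the paper linearises the phase by an implicit-function-theorem change of variables $t=U(x,\alpha):=G(x,\alpha)-G(\M,\alpha)$, obtaining $\int_0^{t(\alpha)}\sqrt t\,e^{-rt}\,dt$ exactly and thereby absorbing your $rO(u^2)$ correction and the uniformity in $\alpha$ in one stroke, whereas you invoke Watson's lemma directly on the slightly nonlinear phase.
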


\begin{proof}
First, assume that $b_0 \leq 0$. Once again, by the convention on the complex square root,
\begin{align*}
\frac{1}{2i\pi}\int_{\Omega_{-,\M + \delta} \cup \Omega_{+,\M + \delta}}\Phi(x)e^{-ax-bY^+(x)}dx &= \frac{2}{2i\pi}\int_{\Omega_{+,\M + \delta}}\sqrt{{\M}-x}\Phi_2(x)e^{-r(\cos(\alpha) x + \sin(\alpha)Y^+(x))}dx\\
&=\frac{e^{-r(\cos(\alpha)x(\alpha_b) + \sin(\alpha)y(\alpha_b))}}{i\pi}\int_{\Omega_{+,\M + \delta}}\sqrt{{\M}-x}\Phi_2(x)e^{-rU(x,\alpha)}dx
\end{align*}
where \begin{equation}
U(x,\alpha) = \cos(\alpha)(x-x(\alpha_b)) + \sin(\alpha)(Y^+(x) -y(\alpha_b)).
\end{equation}
By the definition of $(x(\alpha_b), y(\alpha_b))$, we have $(Y^+)'(\M) = -\frac{\cos(\alpha_b)}{\sin(\alpha_b)}$. Therefore,
$$U'_x(\M, \alpha_0) = \cos(\alpha_0) - \sin(\alpha_0) \frac{\cos(\alpha_b)}{\sin(\alpha_b)} = \frac{\sin(\alpha_b - \alpha_0)}{\sin(\alpha_b)} > 0 .$$
Moreover, $U(\M, \alpha_0) = 0$. Then, by the implicit function theorem, there exists a neighborhood $V \subset \R \times [0,2\pi]$ of $(0, \alpha_0)$ given by
\begin{equation}\label{def:V}
    V = \{(t, \alpha), \quad |t|\leq L, |\alpha - \alpha_0| \leq \eta \}
\end{equation}
and a map $z : V \rightarrow \R$ such that, for all $(t, \alpha) \in V$,
\begin{equation}\label{equivalenceU=0}
    U(x , \alpha)  = t \Longleftrightarrow  x = z(t,\alpha)
\end{equation}
with $x_b=z(0, \alpha_0)$.
This means that $U(z(t,\alpha), \alpha) = t$. In particular, we obtain $U(z(0,\alpha), \alpha) = 0$.  Moreover, by the definition of $U(x,\alpha)$, we have $U(\M, \alpha) = 0$.  
By the uniqueness stated in \eqref{equivalenceU=0}, we then deduce that  
$z(0,\alpha) = x_b$
for every $\alpha$ satisfying $|\alpha - \alpha_0| \leq \eta$. Furthermore,
\begin{equation}\label{z'}z'_t(0,\alpha_0) = \frac{1}{U'_x(x_b, \alpha_0)} = \frac{\sin(\alpha_b)}{\sin(\alpha_b - \alpha_0)} > 0.\end{equation}
Again by the implicit function theorem, if $\delta > 0$ is sufficiently small, there exists a smooth function $t(\alpha)$ (possibly requiring a smaller $\eta$ in \eqref{def:V}), defined for $|\alpha - \alpha_0| \leq \eta$, such that
$$z(t(\alpha) ,\alpha) = \M + \delta\quad \forall |\alpha - \alpha_0| \leq \eta. $$
By choosing $L > 0$ and $\eta > 0$ sufficiently small in \eqref{def:V}, we can assume that $z'_t(t,\alpha) > 0$ for all $(t,\alpha) \in V$. Since $z(0,\alpha) = x_b$, it follows that $t(\alpha) > 0$. In particular, $t(\alpha) \to t(\alpha_0) > 0$ as $\alpha \to \alpha_0$.
We now parametrize the segment $\Omega_{+,\M + \delta}$ by $\{ z(t,\alpha) \;|\; t \in [0, t(\alpha)] \}$. Then,
$$\int_{\Omega_{+,\M + \delta}}\sqrt{{\M}-x}\Phi_2(x)e^{-rU(x,\alpha)}dx = -i\int_0^{t(\alpha)} \sqrt{z(t,\alpha)-\M}\Phi_2(z(t,\alpha))z'_t(t,\alpha)e^{-rt}dt.$$
Then, by the Taylor–Lagrange inequality,
$$\left|z(t,\alpha) - z(0,\alpha) - tz'_t(0,\alpha)\right| \leq C_1t^2,\quad \forall (t,\alpha) \in V$$
where $C_1 = \frac{1}{2}\sup_{(t,\alpha) \in V} |z''_t(0,\alpha)|$. Since $|\sqrt a - 1| = \frac{|a-1|}{\sqrt a +1} \leq |a -1|$ for $a \geq 0$, we obtain 
$$\left|\sqrt{\frac{z(t,\alpha) - z(0,\alpha)}{tz'_t(0,\alpha)}} - 1\right| \leq C_2|t|$$
for some constant $C_2 > 0$ (since $V$ can be chosen so that $1/z'_t(0,\alpha)$ is bounded on $V$). Hence,
$$\left|\sqrt{z(t,\alpha)-z(0,\alpha)} - \sqrt{tz'_t(0,\alpha)}\right| \leq C_3|t|^{3/2} \quad \forall (t,\alpha), \; 0\leq t\leq L, \; \forall|\alpha - \alpha_0| \leq \eta$$
for some constant $C_3 > 0$. Similarly, there exists a constant $C_4 > 0$ such that
$$\left|\Phi_2(z(t,\alpha))z'_t(t,\alpha) - \Phi_2(\M)z'_t(0,\alpha) \right|\leq C_4|t| \quad \forall (t,\alpha), \; 0\leq t\leq L, \; \forall|\alpha - \alpha_0| \leq \eta.$$
Thus, there exists $C > 0$ such that $$\left|\sqrt{z(t,\alpha) - z(0,\alpha)} \Phi_2(z(t,\alpha))z'_t(t,\alpha) - \Phi_2(\M)z'_t(0,\alpha)\sqrt{tz'_t(0,\alpha)} \right|\leq C|t|^{3/2} \quad \forall (t,\alpha), \; 0\leq t\leq K, \; \forall|\alpha - \alpha_0| \leq \eta.$$
It follows that $$-i\int_0^{t(\alpha)} \sqrt{z(t,\alpha)-\M}\Phi_2(z(t,\alpha))z'_t(t,\alpha)e^{-rt}dt = -i\Phi_2(\M)|z'_t(0,\alpha)|^{3/2}\int_0^{t(\alpha)} \sqrt{t}e^{-rt}dt + R(r,\alpha)$$
where $$ |R(r,\alpha)| \leq C\int_0^{t(\alpha)}t^{3/2}e^{-rt}dt = r^{-5/2}\int_0^{rt(\alpha)}t^{3/2}e^{-t}dt = O(r^{-5/2})$$ as $r\to+\infty$ and $\alpha \to \alpha_0$. 
Furthermore, $$\int_0^{t(\alpha)} \sqrt{t}e^{-rt}dt \underset{r\to\infty \atop\alpha\to\alpha_0}{\sim} r^{-3/2}\int_0^{+\infty}\sqrt{t}e^{-t}dt = \frac{\sqrt \pi}{2r^{3/2}}.$$ 
Hence,
\begin{align}\label{gltt}
\frac{1}{2i\pi}\int_{\Omega_{-,\M + \delta} \cup \Omega_{+,\M + \delta}}\Phi(x)e^{-r(\cos(\alpha)x+\sin(\alpha)Y^+(x))}%\fc_{b>0} + e^{-ax-bY^-(x)}\fc_{b<0}
 dx&
\underset{r\to\infty \atop\alpha\to\alpha_0}{\sim} \frac{-ie^{-r(\cos(\alpha)x(\alpha_b) + \sin(\alpha)y(\alpha_b))}}{i\pi}\Phi_2(\M)|z'_t(0,\alpha_0)|^{3/2}\frac{\sqrt \pi}{2r^{3/2}}\\
&\underset{r\to\infty \atop\alpha\to\alpha_0}{\sim} C_{br}f_0(z_0)\frac{e^{-r(\cos(\alpha)x(\alpha_b) + \sin(\alpha)y(\alpha_b))}}{r^{3/2}\sin(\alpha_b - \alpha_0)^{3/2}}.
\end{align}
by \eqref{z'} and Lemma~\ref{tres_chiant}.

If $b_0 > 0$, the same argument applies, replacing $\Phi(x)$ with  $\phi^{z_0}(x)\frac{\gamma(x,Y^+(x),Z^+(x))}{\partial_y\gamma_+(x,Y^+(x))}$ which yields the result in \eqref{eq:ascas2}.
%donc l'égalité $|z'_t(0,\alpha)|^{3/2} = \left(\frac{\sin(\alpha_b)}{\sin(\alpha_b - \alpha_0)}\right)^{3/2}$ donne la conclusion pour $b_0 > 0$ en identifiant les constantes devant les asymptotiques \eqref{cas4} et \eqref{gltt}.
\end{proof}

To show that the asymptotics of the previous integrals yield the leading term in the asymptotics of the Green's function, we establish the following lemma.

\begin{lemma}[Negligibility of remaining integrals] \label{neglig2} Suppose $x_{max}^+ < x_{max}^-$ (i.e., $\alpha_b \in (0, \pi)$). Let $0 \leq \alpha_0 < \alpha_b$. Then, for $\delta > 0$ small enough, there exist constants $D > 0$, $\e' > 0$, $\eta > 0$, and $r_0 > 0$ such that for all $r \geq r_0$ and all $\alpha \geq 0$ satisfying $|\alpha - \alpha_0| < \eta$, the following estimates hold:
\begin{equation}\label{Neg'1}
\Big|\int_{R^\pm_{\delta}}\phi^{z_0}(x)\frac{\gamma(x,Y^+(x),Z^+(x))}{\partial_y\gamma_+(x,Y^+(x))}e^{-r(\cos(\alpha)x+\sin(\alpha)Y^+(x))}%\fc_{b>0} + e^{-ax-bY^-(x)}\fc_{b<0}
 dx\Big| \leq De^{-r(\cos(\alpha)x(\alpha_b) + \sin(\alpha)y(\alpha_b) + \e')},
\end{equation}
and:
\begin{itemize}
    \item If $b_0 > 0,$ then \begin{equation}\label{Neg'3}I_2(r\cos(\alpha),r\sin(\alpha)) \leq De^{-r(\cos(\alpha)x(\alpha_b) + \sin(\alpha)y(\alpha_b) + \e')}
    \end{equation}
    \item If $b_0 \leq 0,$ then
    \begin{equation}\label{Neg'2}
\Big|\int_{R^\pm_{\delta}}\frac{e^{(a_0-a)x+b_0Z^+(x) - bY^+(x)}}{\partial_y\gamma_+(x,Y^+(x))}e^{-r(\cos(\alpha)x+\sin(\alpha)Y^+(x))}%\fc_{b>0} + e^{-ax-bY^-(x)}\fc_{b<0}
 dx\Big| \leq De^{-r(\cos(\alpha)x(\alpha_b) + \sin(\alpha)y(\alpha_b) + \e')}
\end{equation}
\end{itemize}
\end{lemma}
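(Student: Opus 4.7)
The plan is to adapt the proof of Lemma~\ref{lem:neglig1}, replacing the endpoints $x^\pm_\alpha$ of the saddle contour by $\M + \delta$. The argument splits into two ingredients: a uniform strict exponential gain from shifting the base point to $\M + \delta$, and an integrability bound on the non-exponential prefactor along the vertical ray.

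Set $F_\alpha(x) := \cos(\alpha) x + \sin(\alpha) Y^+(x)$, so that $F_\alpha(\M) = \cos(\alpha) x(\alpha_b) + \sin(\alpha) y(\alpha_b)$ since $(x(\alpha_b), y(\alpha_b)) = (\M, Y^+(\M))$. I will first show that, after shrinking $\delta, \eta > 0$, there exists $\e' > 0$ with
\begin{equation*}
\Re F_\alpha(\M + \delta + iv) \geq F_\alpha(\M) + \e' \qquad \text{for every } v \geq 0 \text{ and } |\alpha - \alpha_0| < \eta.
\end{equation*}
At $v = 0$ the condition $\alpha_0 < \alpha_b$ places the real saddle $x(\alpha_0)$ strictly to the right of $\M$, and the concavity of $Y^+$ on the real axis makes $u \mapsto F_{\alpha_0}(u)$ strictly increasing on $[\M, x(\alpha_0)]$, giving $F_{\alpha_0}(\M + \delta) - F_{\alpha_0}(\M) \geq c_0 \delta$ for some $c_0 > 0$ and $\delta$ small; continuity in $\alpha$ extends the bound uniformly. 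The extension from $v = 0$ to $v > 0$ relies on the monotonicity of $v \mapsto \Re Y^+(u + iv)$ on $[0, +\infty)$, which is directly readable from formula~\eqref{Re}: both $v^2$ and $|(u + iv - x_{min}^+)(x_{max}^+ - u - iv)|$ are non-decreasing in $v \geq 0$. Together with $\sin \alpha \geq 0$, this yields $\Re F_\alpha(\M + \delta + iv) \geq F_\alpha(\M + \delta) \geq F_\alpha(\M) + \e'$.

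With $|e^{-r F_\alpha(x)}| \leq e^{-r F_\alpha(\M) - r \e'}$ extracted, it remains to bound the $L^1(\mathrm{d}v)$ norm of the prefactor along $R^+_\delta$, uniformly in $\alpha$ close to $\alpha_0$ and $r \geq r_0$. Using the explicit formula~\eqref{laplace explicitee} together with the linear asymptotics $\Re Y^-(\M + \delta + iv) \sim -\frac{\sqrt{\det \Sigma^+}}{\Sigma^+_{22}} v$ and $\Re Z^+(\M + \delta + iv) \sim \frac{\sqrt{\det \Sigma^-}}{\Sigma^-_{22}} v$, obtained from~\eqref{Re} and its $Z^+$ analogue, the boundary factors $e^{b_0 Y^-(x) \fc_{b_0 > 0}}$ and $e^{b_0 Z^+(x) \fc_{b_0 < 0}}$ appearing in $\phi^{z_0}$ produce exponential decay in $v$ whenever $b_0 \neq 0$. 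The rational piece $\gamma(\cdot, Y^+, Z^+) / \bigl(\partial_y \gamma_+(\cdot, Y^+) \, \gamma(\cdot, Y^-, Z^+)\bigr)$ is uniformly bounded on the ray because numerator and denominator both grow linearly in $|v|$. This dominates the integrand in~\eqref{Neg'1} by an $L^1$ function independent of $\alpha, r$. The estimate~\eqref{Neg'2} is treated analogously, the factor $e^{b_0 Z^+(x)}$ in its integrand supplying the required exponential $v$-decay. For~\eqref{Neg'3}, one switches to the $y$-plane representation of $I_2^{z_0}$ from Lemma~\ref{double_simple} and runs the $y$-saddle argument of the proof of Theorem~\ref{thm:1}, whose integrand is analytic at $\M$ and yields the claimed exponential smallness directly.

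The only delicate case is $b_0 = 0$: the prefactor then merely decays like $1/v$ along the ray and is not absolutely integrable. I will handle it by performing a single integration by parts in $v$, exploiting the oscillation of $e^{(a_0 - r) i v}$ for $r$ large and the decay of the derivative of the prefactor, exactly as in~\cite[Lemma~7.3]{Franceschi_2024}; this produces a spurious $1/r$ factor which is harmlessly absorbed into the constant $D$. The symmetric estimates along $R^-_\delta$ follow by complex conjugation, completing the proof. The main technical obstacle is precisely this $b_0 = 0$ subcase, which is the sole source of non-absolute integrability.
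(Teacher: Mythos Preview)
Your proof is correct and follows essentially the same scheme as the paper: extract a strict exponential gain by moving the base point from $\M$ to $\M+\delta$ (using the real-axis monotonicity of $u\mapsto F_{\alpha_0}(u)$ to the left of its maximum $x(\alpha_0)>\M$), then show integrability of the remainder along the vertical ray, deferring the non-absolutely-integrable edge case to integration by parts as in \cite[Lemma~7.3]{Franceschi_2024}. The treatment of \eqref{Neg'3} via the $y$-plane saddle also coincides with the paper's.

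The one genuine difference is in which factor supplies the $L^1(\mathrm dv)$ bound. The paper uses the residual decay $e^{-r\sin(\alpha)(\Re Y^+(\M+\delta+iv)-Y^+(\M+\delta))}$ coming from the main exponential; this works whenever $\sin\alpha$ is bounded away from zero, and so forces the case split $\alpha_0>0$ versus $\alpha_0=0$. You instead exploit the intrinsic decay of $\phi^{z_0}$ in $v$ through the factors $e^{b_0Y^-}$ or $e^{b_0Z^+}$, which forces the split $b_0\neq 0$ versus $b_0=0$. Your route has the minor advantage that the resulting $L^1$ bound is manifestly independent of $(r,\alpha)$, whereas the paper obtains $C/(r\sin\alpha)$. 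One caveat: your description of the IBP phase as $e^{(a_0-r)iv}$ tacitly assumes $\cos\alpha\approx 1$; for $\alpha_0>0$ with $b_0=0$ the cleanest fix is not a more general IBP but simply to fall back on the paper's $e^{-r\sin(\alpha)\,cv}$ decay (which you already have available, since you proved $\Re Y^+$ is nondecreasing in $v$). The only case that \emph{genuinely} requires IBP is then $b_0=0$ together with $\alpha_0=0$, where your phase description is accurate.
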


\begin{proof}
We begin by proving \eqref{Neg'1}. The argument is analogous to that used in the proof of Lemma~\ref{lem:neglig1}.
Suppose first that $\alpha_0 > 0$. Then \eqref{Neg'1} can be rewritten as
\begin{equation}\label{nenene}\Big|\int_0^{+\infty}\phi^{z_0}(\M+\delta + iv)\frac{\gamma(\M+\delta + iv,Y^+(\M+\delta + iv),Z^+(\M+\delta + iv))}{\partial_y\gamma_+(\M+\delta + iv,Y^+(\M+\delta + iv))}e^{-r(F(\M, \alpha) - F(\M +\delta + iv) - \e')}%\fc_{b>0} + e^{-ax-bY^-(x)}\fc_{b<0}
 dx\Big| \leq D.
\end{equation}
We decompose the exponent as follows: $$F(\M, \alpha) - F(\M +\delta + iv) - \e' = (F(\M, \alpha) - F(\M + \delta, \alpha) - \e') +(F(\M + \delta, \alpha) - F(\M +\delta + iv)).$$
Now, observe that the function $x \mapsto F(x,\alpha_0)$ is strictly decreasing on $[-\pi, x(\alpha_0)]$ and strictly increasing on $[x(\alpha_0), x_{max}^+]$. Since $\alpha_0 < \alpha_b$, it follows that $x(\alpha_0) > \M$. Therefore, for some $\delta > 0$ and $\e' > 0$ small enough, there exists $\eta > 0$ such that $F(\M, \alpha) - F(\M + \delta, \alpha) - \e' \geq 0$ for all $\alpha \geq 0$ such that $|\alpha - \alpha_0|<\eta$. 
Moreover, using the same reasoning as in the proof of Lemma~\ref{lem:neglig1}, we find that there exists $c > 0$ such that for all $v \geq 0$,
\begin{equation}
    \label{c'v}
F(\M + \delta, \alpha) - \Re F(\M +\delta + iv) \geq c \sin(\alpha) v.
\end{equation}
Then, by applying \eqref{zlm2}, the left-hand side of \eqref{nenene} is bounded by $\frac{C}{r \sin(\alpha)} \leq D$, for some $D > 0$ independent of $r$ and $\alpha \geq \alpha_0 - \eta > 0$ (for $\eta > 0$ sufficiently small). This proves \eqref{nenene} when $\alpha_0 > 0$. 

If $\alpha_0 = 0$, \eqref{Neg'1} can be obtained similarly by integration by parts, see \cite[Lemma 7.3]{Franceschi_2024} for similar considerations. The proof of \eqref{Neg'2} is symmetrical. 

We now turn to the proof of \eqref{Neg'3}. Suppose $b_0 \geq 0$ and consider the representation of $I_2^{z_0}(a,b)$ given by \eqref{I22''}. Applying the saddle-point method to \eqref{I22''} (see Lemmas~\ref{lem:contribsaddle}, \ref{lem:colalpha=0}, and \ref{lem:neglig1}), we obtain $$I_2^{z_0}(r\cos(\alpha), r\sin(\alpha)) = O\left(\frac{e^{-r(\cos(\alpha)x(\alpha) + \sin(\alpha)y(\alpha))}}{\sqrt r}\right) = o\left(e^{-r(\cos(\alpha)x(\alpha_b) + \sin(\alpha)y(\alpha_b) + \e')}\right)$$
as $r \to +\infty$ and $\alpha \to \alpha_0$, for $\e' > 0$ small enough by the definition of $(x(\alpha), y(\alpha))$. This proves \eqref{Neg'3} for some constant $D > 0$.
\end{proof}

\begin{proof}[Proof of Theorem~\ref{thm:4}]
If $b_0 \leq 0$, we consider the representations of $I_1^{z_0}(a,b)$ and $I_2^{z_0}(a,b)$ given by \eqref{I1111} and \eqref{I3333}, respectively, in Lemma~\ref{découpage2}. The contributions of these integrals along $\Omega_{\pm,\M+\delta}$ are exactly given by \eqref{eq:contribOMEGAxbdelta} from Lemma~\ref{lem:contribOMEGAxbdelta}. By Lemma~\ref{neglig2}, and more precisely by \eqref{Neg'1} and \eqref{Neg'2}, the asymptotic contributions of the integrals along $R^\pm_\delta$ in the representations \eqref{I1111} and \eqref{I3333} are negligible compared to those along $\Omega_{\pm,\M+\delta}$. It follows that the asymptotics of $g^{z_0}(r\cos(\alpha), r\sin(\alpha))$ are given by \eqref{eq:contribOMEGAxbdelta}, as claimed.

If $b_0 > 0$, we consider the representation of $I_1^{z_0}(a,b)$ given by \eqref{I1111}, and the inequality for $I_2^{z_0}(a,b)$ provided in \eqref{Neg'3}. The contribution of the integral \eqref{I1111} along $\Omega_{\pm,\M+\delta}$ is given by \eqref{eq:ascas2}. By \eqref{Neg'1}, the integrals along $R^\pm_\delta$ from the representation \eqref{I1111} are asymptotically negligible compared to \eqref{eq:ascas2}. Moreover, by \eqref{Neg'3}, the asymptotic contribution of $I_2^{z_0}(r\cos(\alpha), r\sin(\alpha))$ is also negligible compared to \eqref{eq:ascas2}. Therefore, the asymptotics of $g^{z_0}(r\cos(\alpha), r\sin(\alpha))$ are given by \eqref{eq:ascas2}, which completes the proof.
\end{proof}

\section{Proof of Theorem~\ref{thm5}: full and minimal Martin boundary}\label{sec:5}
In this section, we prove the result concerning the Martin boundary using Theorems~\ref{alpha=0} to~\ref{thm:4}. In the general theory of Martin boundaries, Martin functions are usually \textit{excessive} (see \cite[Theorem~3]{kunitaWatanabe1965}). Nevertheless, for Markov chains with a finite number of steps from each state, one can immediately verify that Martin functions are harmonic. They are also harmonic for (drifted) Brownian motion in $\R^d$. In the next lemmas, we show that the functions $f_0$, $f_\pi$, and $h_\alpha$, $\alpha \in \mathcal M \setminus \{0,\pi\}$, in our model are harmonic as well. Due to the fundamental role of $z_0$ in this section, we write $g(z_0, z)$ for $g^{z_0}(z)$.

\begin{lemma}[Harmonicity]\label{cestbienharmonique}
The functions $f_0$, $f_\pi$, and $h_\alpha$, $\alpha \in \mathcal M\backslash\{0,\pi\}$, are harmonic.
\end{lemma}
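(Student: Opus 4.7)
The strategy is to upgrade the PDE system \eqref{eq:pdeharm} --- which, as the remark preceding the lemma observes, is satisfied by each of $f_0$, $f_\pi$, and the $h_\alpha$ --- to the Markov harmonicity property via an It\^o-type martingale argument mirroring the one in Section~\ref{sub:detailedproof}.

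First I would unify the analysis by observing that each candidate $h\in\{h_\alpha,f_0,f_\pi\}$ decomposes as
\begin{equation*}
h(a,b) = \bigl[c_1 e^{ax+by_1} + c_2 e^{ax+by_2}\bigr]\fc_{b\geq 0} + c_3 e^{ax+bz_3}\fc_{b<0},
\end{equation*}
possibly augmented, in the degenerate cases $f_0,f_\pi$ (where $\M$ is a branching point), by a summand with a $b$-prefactor of the form $b\,e^{ax+by_\ast}$ or $b\,e^{ax+bz_\ast}$; these are annihilated by $\mathcal{L}^\pm$ since the generator identity $\mathcal{L}^\pm(b\,e^{ax+by_\ast}) = \bigl(b\,\gamma_\pm(x,y_\ast)+\partial_y\gamma_\pm(x,y_\ast)\bigr)e^{ax+by_\ast}$ vanishes at a branching point, where both $\gamma_\pm$ and its $y$-derivative are zero. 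Two algebraic facts then do all the work. (a) $\gamma_+(x,y_i)=0$ for $i=1,2$, and $\gamma_-(x,z_3)=0$, so $\mathcal{L}^+h\equiv 0$ on the upper half-plane and $\mathcal{L}^-h\equiv 0$ on the lower one. (b) The coefficients are calibrated so that $c_1+c_2=c_3$ (continuity at $\{b=0\}$) and $c_1\gamma(x,y_1,z_3)+c_2\gamma(x,y_2,z_3)=0$; using $\gamma(x,y,z)=q_1x+\tfrac{1}{2}\bigl((1+q_2)y+(q_2-1)z\bigr)$ shows this last identity to be exactly the flux condition on the third line of \eqref{eq:pdeharm}. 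For $h_\alpha$ it is immediate from the ratio defining $c_2$ in \eqref{Harm1}--\eqref{Harm1'}; for $f_0,f_\pi$ an analogous check using \eqref{f(z_0)col}--\eqref{f(z_0)branch} explains the particular constant prefactors appearing there.

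Next, I would apply the approximation scheme of Section~\ref{sub:detailedproof} with $h$ in place of the test function: mollify via the same bump $\psi$ as in \eqref{fe}, apply the classical It\^o formula to the smooth approximation $h_\epsilon$, and pass to the limit $\epsilon\to 0$ exactly as in Steps~1--2 there. The drift, second-order, and local-time contributions reassemble into $\gamma_\pm$ and $\gamma$ evaluated at the relevant triples, and all vanish by (a) and (b); hence $(h(Z_t))_{t\geq 0}$ is a local martingale under $\P_{z_0}$ for every $z_0\in\R^2$.

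Finally, for any open relatively compact $U\subset\R^2$ and $z_0\in U$, the exit time $T_{U^c}:=\inf\{t\geq 0:Z_t\notin U\}$ is $\P_{z_0}$-a.s.\ finite (non-degenerate diffusion in a bounded set) and $h$ is continuous hence bounded on $\bar U$. Optional stopping applied to $h(Z_{t\wedge T_{U^c}})$, together with dominated convergence as $t\to\infty$, yields $h(z_0)=\E_{z_0}[h(Z_{T_{U^c}})]$, which is the harmonicity required by the definition. The main technical hurdle is the It\^o step at the interface $\{b=0\}$ for the non-$\mathcal{C}^2$ function $h$; this, however, is essentially identical to the computation already performed to prove the functional equation, so no new analysis is needed beyond tracking the algebraic cancellations forced by (a) and (b).
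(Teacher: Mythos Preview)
Your approach is correct and genuinely different from the paper's. The paper does \emph{not} verify the PDE directly; instead it argues abstractly: since $g(\cdot,z)$ is harmonic away from $z$, the Martin kernel $K(z_0,z)=g(z_0,z)/g(0,z)$ satisfies $K(z_0,z)=\E_{z_0}[K(Z_{\tau_{U^c}},z)]$, and then one passes to the limit $z\to\infty$ along appropriate trajectories using the asymptotic Theorems~\ref{alpha=0}--\ref{thm3}. The crux of the paper's argument is showing that the error $\varepsilon(z_0,r,\alpha)$ in the asymptotics converges to zero \emph{uniformly in $z_0$} on compacts, so that dominated convergence applies inside the expectation; this in turn requires going back into the saddle-point estimates (Lemma~\ref{lem:colalphab}) and checking that $\Phi_1^{z_0}$, $\Phi_2^{z_0}$ are uniformly bounded, with special care at the branching directions $\alpha_b,\tilde\alpha_b$.

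Your route trades this analytic uniformity argument for an algebraic It\^o computation. It is more self-contained: it does not invoke Theorems~\ref{alpha=0}--\ref{thm:4} at all, and it would work for any explicit piecewise-exponential solution of \eqref{eq:pdeharm}, not just those arising as Martin limits. The paper's route, by contrast, is what one would use if the candidate functions were not available in closed form, since it relies only on the Green's function asymptotics. One caveat on your side: for $f_0,f_\pi$ the $b$-prefactor term means the mollification in Section~\ref{sub:detailedproof} must be redone with $f(a,b)=b\,e^{ax+by_\ast}\fc_{b\geq 0}+\ldots$, and the local-time contribution picks up an extra piece from $\partial_b(b\,e^{ax+by_\ast})\big|_{b=0}$; this does not vanish alone but only after combining with the constant-prefactor part, so the claim that ``no new analysis is needed'' slightly understates the bookkeeping required.
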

\begin{proof}
Let $U$ be an open set that is relatively compact in $\R^2$. By \cite[Proposition~6.2]{kunitaWatanabe1965}, if $z \in \R^2$, then $g(\cdot, z)$ is harmonic on $\R^2 \setminus \{z\}$. Hence the Martin kernel $K(\cdot,z)$ defined by $K(z_0,z) = \frac{g(z_0,z)}{g(0,z)}$ is harmonic as well on $\R^2 \setminus \{z\}$. Therefore, if $z \notin U$ and $z_0 \in \R^2$ satisfy $z \neq z_0$, then 
\begin{equation}\label{eq:Kharmonique}
K(z_0,z) = \E_{z_0}\left[K(Z_{\tau_{U^c}}, z)\right].
\end{equation} 
Let $\alpha_0 \in \mathcal M$. We consider $z = (r\cos(\alpha), r\sin(\alpha))$ with $\alpha \in \mathcal M$, and let $r \to \infty$ and $\alpha \to \alpha_0$."
By Theorems~\ref{thm:1} to \ref{thm3}, we have 
%\begin{equation}\label{asympavecepsilon(r,alpha)}
%g(z_0,(r\cos(\alpha), r\sin(\alpha))) = (h_\alpha(z_0) + \varepsilon(z_0, r, \alpha)) \frac{e^{-r(\cos(\alpha)x(\alpha) + \sin(\alpha)y(\alpha))}}{\sqrt r}
%\end{equation}
$$K(z_0,(r\cos(\alpha), r\sin(\alpha))) = \frac{h_{\alpha_0}(z_0) + \varepsilon(z_0, r, \alpha)}{h_{\alpha_0}(0) + \varepsilon(0, r, \alpha)}$$
where $\varepsilon(z_0, r, \alpha) \underset{r\to+\infty\atop\alpha\to\alpha_0}{\longrightarrow} 0$ (and where, for consistency of notation, we have set $h_0(z_0) = f_0(z_0)$ and $h_\pi(z_0) = f_\pi(z_0)$).
If $\varepsilon(z_0, r, \alpha)$ converges to zero 
as $r\to+\infty,\alpha\to\alpha_0$ {\it uniformly in $z_0 \in U$}, the dominated convergence theorem applied to \eqref{eq:Kharmonique} yields 
$$h_{\alpha_0}(z_0) = \E_{z_0}\left[h_{\alpha_0}(Z_{\tau_{U^c}})\right]$$ and completes the proof. The uniform convergence in $z_0\in U$ can be established using arguments similar to those in \cite[Lemmas 5.3 and Section 6]{petit2024}. The only directions specific to our model to which these arguments do not apply directly are $\alpha_b$ and $\tilde\alpha_b$ corresponding to branching points, cf. \eqref{def:xb}, \eqref{def:alphabb}, and \eqref{def:alphabbtilde}.
It remains to establish uniform convergence for these directions.
Let us consider case~(i) of Theorem~\ref{thm3}. We prove that the equivalence \eqref{jujujuju} is uniform in $z_0 \in U$ and that the constant $M$ in \eqref{ineq_M_indepz0} can be chosen independently of $z_0 \in U$.  Recall the definitions \eqref{Phi1Phi2} of $\Phi_1$ and $\Phi_2$, which we denote by $\Phi_1^{z_0}$ and $\Phi_2^{z_0}$. For \eqref{jujujuju}, it suffices (see the proof of \cite[Lemma~8.1]{Franceschi_2024}) to show that  
\begin{equation}\label{casuffitpourbornesunif}
\sup_{z_0 \in U\atop x\in K}|\Phi_1^{z_0}(x)| <+\infty
\end{equation}
for some compact neighborhood $K \subset \C$ of $x_b$. For \eqref{ineq_M_indepz0}, it suffices to prove the analogue of \eqref{casuffitpourbornesunif} with $\Phi_2^{z_0}$ instead of $\Phi_1^{z_0}$. To do this, we use the explicit expression \eqref{DLL} of $\Phi^{z_0}(x)$. Note that $\Phi^{z_0}(x)$ can be written as $(\Psi_1(x,z_0) + \Psi_2(x,z_0)\sqrt{x_b-x})e^{f(x)\sqrt{\M - x}b_0\fc_{b_0<0}}$ where $\Psi_i(x,z_0), i=1,2$ are locally bounded in $(x,z_0)$ and both $\Psi_i(x,z_0), i=1,2$ and $f(x)$ are holomorphic in $x$ in a neighborhood of $x_b$. Expanding the exponential into a Taylor series gives
$$e^{f(x)\sqrt{\M - x}b_0\fc_{b_0<0}} = \underbrace{\sum_{k=0}^{+\infty} \frac{f(x)^{2k}}{(2k)!}(\M-x)^k b_0^{2k}\fc_{b_0<0}}_{=\Xi_1(x,b_0)}\;  +\; \sqrt{\M-x}\underbrace{\sum_{k=0}^{+\infty} \frac{f(x)^{2k+1}}{(2k+1)!}(\M-x)^k b_0^{2k+1}\fc_{b_0<0}}_{=\Xi_2(x,b_0)}.
 $$ 
A straightforward argument shows that $\Xi_1(x,b_0)$ and $\Xi_2(x,b_0)$ are bounded when $x$ lies in a sufficiently small neighborhood of $x_b$ and $b_0$ ranges over a compact set. Since $\Phi_1(x,z_0)=\Psi_1(x,z_0)\,\Xi_1(x,b_0) + (x_b-x)\Psi_2(x,z_0)\,\Xi_2(x,b_0)$ and $\Phi_2(x,z_0)=\Psi_1(x,z_0)\,\Xi_2(x,b_0) + \Psi_2(x,z_0)\,\Xi_1(x,b_0)$, this yields the desired uniform bounds.
\end{proof}
%
%FAUUUXXXXXX%\begin{lemma}\label{lem:bidouillebornedecomposition}
%Let $f_1(x,z), f_2(x,z)$ two functions defined on $(x, z) \in D(0,R) \times K$ where $D(0,R) \subset \C$ is the open disc centered at $0$ of radius $r >0$ and $K$ is a (general) set. Suppose that $f_1$ and $f_2$ are holomorphic in $x$. Let $f(x,z) =  f_1(x,z)+ \sqrt x f_2(x,z)$ where $\sqrt x$ is the principal branch of the sqare root, and suppose that $f$ is bounded on $(D(0,R)\backslash(-\infty,0))\times K$. Then, $f_1(x,z), f_2(x,z)$ are bounded on $D(0,r) \times K$ for all $0 < r < \sqrt R$.
%\end{lemma}
%\begin{proof}
%Consider the series expansion of $f_1$ and $f_2$ in $x$ as $f_1(x,z) = \sum_{n=0}^{+\infty}c_{2n}(z_0)x^k$ and $f_2(x,z) = \sum_{n=0}^{+\infty}c_{2n+1}(z)x^k$, so that for $|x|<\sqrt R$,
%$$f(x^2,z) = f_1(x^2,z) + xf_2(x^2,z) = \sum_{n=0}^{+\infty}c_n(z)x^k.$$
%Then, for all $k\in\N$, the Cauchy formula yields $$c_k(z) =  \frac{1}{2i\pi}\int_{\partial D(0,r)} \frac{f(x^2,z)}{x^{k+1}}dx.$$
%Therefore, we obtain  $$|c_k(z)| \leq \frac{1}{2\pi r^k}\sup_{(x,u)\in \overline D(0,R)\times K}|f(x^2,u)|.$$
%This inequality directly leads to the conclusion by considering the series expansions of $f_1$ and $f_2$.
%\end{proof}

To prove Theorem~\ref{thm5}, we state the following technical lemma.%To establish the minimality of the harmonic functions $f_0, f_\pi$ and $h_\alpha$ for $\alpha \in \mathcal M\backslash\{0,\pi\}$, we follow the approach of \cite[Section 8.2]{petit2024} and analyse the asymptotic behaviour of these functions.

\begin{lemma}[Asymptotic inequalities for $h_\alpha, f_0, f_\pi$]\label{lem:ineqmini}
Let $\alpha, \beta \in \mathcal M$ with $\alpha \neq \beta$. Then: \begin{itemize}
    \item Suppose that $\beta\in\mathcal M \cap (0, \pi)$. Then there exist $\e > 0$, $\eta > 0$, $r_0 > 0$, and $C > 0$ such that for all $\gamma \in \mathcal M \cap (\beta - \e, \beta + \e)$ and all $r \geq r_0$,
    \begin{equation}\label{ineq:hgamma1}
        h_\gamma(r\cos(\beta), r\sin(\beta)) \geq C^{-1}e^{r(x(\beta)\cos(\alpha) + y(\beta)\sin(\beta) - \eta)} 
    \end{equation}
    and such that for all $r\geq r_0$,
    \begin{equation}\label{ineq:halpha1}
        h_\alpha(r\cos(\beta), r\sin(\beta)) \leq Ce^{r(x(\beta)\cos(\alpha) + y(\beta)\sin(\beta) - 2\eta)}.
    \end{equation}
    \item Suppose that $\beta \in \mathcal M \cap (\pi, 2\pi)$. Then there exist $\e > 0$, $\eta > 0$, $r_0 > 0$, and $C > 0$ such that for all $\gamma \in \mathcal M \cap (\beta - \e, \beta + \e)$ and all $r \geq r_0$,
    \begin{equation}\label{ineq:hgamma2}
        h_\gamma(r\cos(\beta), r\sin(\beta)) \geq C^{-1}e^{r(x(\beta)\cos(\alpha) + z(\beta)\sin(\beta) - \eta)} 
    \end{equation}
    and such that for all $r\geq r_0$
    \begin{equation}\label{ineq:halpha2}
        h_\alpha(r\cos(\beta), r\sin(\beta)) \leq Ce^{r(x(\beta)\cos(\alpha) + z(\beta)\sin(\beta) - 2\eta)}.
    \end{equation}
\end{itemize}
Inequalities \eqref{ineq:halpha1} and \eqref{ineq:halpha2} also hold if $\alpha$ is replaced by $0$ (resp. $\pi$) and $h_\alpha$ by $f_0$ (resp. $f_\pi$).
\end{lemma}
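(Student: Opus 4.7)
The plan is to read off the dominant exponential from the explicit formulas \eqref{Harm1}, \eqref{Harm1'}, \eqref{f(z_0)col}, \eqref{f(z_0)branch} and to exploit one single geometric fact: when $\beta\in(0,\pi)$, the point $(x(\beta),y(\beta))$ is the \emph{unique} maximizer of the linear form $(u,v)\mapsto \cos\beta\cdot u+\sin\beta\cdot v$ on the ellipse $\{\gamma_+(x,y)=0\}$, and it lies on the upper branch $Y^+$. Every other point on the ellipse (in particular $(x(\alpha),y(\alpha))$ for $\alpha\neq\beta$ in $(0,\pi)$, $(x(\alpha),Y^-(x(\alpha)))$ for $\alpha\in(\pi,2\pi)$, and the distinguished points $(\M,Y^{\pm}(\M))$, $(\M,Y^-(\M))$ arising in $f_0$) thus strictly lowers the functional. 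Without loss of generality I treat $\beta\in(0,\pi)\cap\mathcal M$ (so $r\sin\beta>0$), the case $\beta\in(\pi,2\pi)\cap\mathcal M$ being obtained by swapping $y\leftrightarrow z$, $Y^\pm\leftrightarrow Z^\pm$ and reversing signs.

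For the lower bound \eqref{ineq:hgamma1} I would factorize
\[
h_\gamma(r\cos\beta,r\sin\beta)=e^{r(\cos\beta\,x(\gamma)+\sin\beta\,y(\gamma))}\Bigl(1-R(\gamma)\,e^{r\sin\beta\,(Y^-(x(\gamma))-y(\gamma))}\Bigr),
\]
with $R(\gamma)=\gamma(x(\gamma),y(\gamma),Z^+(x(\gamma)))/\gamma(x(\gamma),Y^-(x(\gamma)),Z^+(x(\gamma)))$, which is continuous and bounded for $\gamma$ in a compact neighborhood of $\beta$ (the denominator is nonzero by Theorem~\ref{prop:laplace explicitee}). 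Since $Y^-(x(\gamma))-y(\gamma)$ is uniformly negative and $\sin\beta>0$, the parenthesised factor tends to $1$ uniformly in $\gamma$, hence is $\geq 1/2$ for $r\geq r_0$. Continuity of $\gamma\mapsto(x(\gamma),y(\gamma))$ then gives $\cos\beta\,x(\gamma)+\sin\beta\,y(\gamma)\geq \cos\beta\,x(\beta)+\sin\beta\,y(\beta)-\eta$ for $|\gamma-\beta|<\e$ small, yielding \eqref{ineq:hgamma1}.

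For the upper bound \eqref{ineq:halpha1} I would similarly bound $h_\alpha(r\cos\beta,r\sin\beta)$ by a bounded constant times $e^{r(\cos\beta\,x(\alpha)+\sin\beta\,u(\alpha))}$, where $u(\alpha)=y(\alpha)$ when $\alpha\in(0,\pi)$ (formula \eqref{Harm1}), $u(\alpha)=Y^-(x(\alpha))$ when $\alpha\in(\pi,2\pi)$ (formula \eqref{Harm1'} for $b_0>0$ leaves only one exponential), and $u=Y^{\pm}(\M)$ or $Y^-(\M)$ when $\alpha=0$ (formulas \eqref{f(z_0)col}, \eqref{f(z_0)branch}; any polynomial prefactor $\sim r\sin\beta$ is absorbed into $C$ for $r\geq r_0$ by shrinking $2\eta$ to any amount less than the true gap). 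In each sub-case the point $(x(\alpha),u(\alpha))$ lies on $\{\gamma_+=0\}$ but is distinct from $(x(\beta),y(\beta))$, so strict optimality produces a positive gap which I choose $\eta$ to make at least $2\eta$.

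The main step to verify carefully is the strictness at the branching direction $\beta=\alpha_b$ (cases A, D), where the argmax itself is $(x(\alpha_b),y(\alpha_b))=(\M,Y^+(\M))$ with $\M<x^+_{\max}$. For $\alpha=0$ the competing exponential uses $Y^-(\M)$, and since $Y^-(\M)<Y^+(\M)$ strictly (the branching of $Y^\pm$ occurs at $x^\pm_{\max}\neq \M$) together with $\sin\alpha_b>0$, the gap remains strictly positive; the analogous check for $\beta=\tilde\alpha_b$ is identical. The only configuration where strict optimality would fail is $\beta\in\{0,\pi\}$ (the tangent to the ellipse at $\M$ becomes vertical only there), and these endpoints are excluded from the hypotheses. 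The case $\beta\in(\pi,2\pi)$ producing \eqref{ineq:hgamma2}-\eqref{ineq:halpha2} is a verbatim transcription after the aforementioned symmetry $y\leftrightarrow z$.
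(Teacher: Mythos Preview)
Your proof is correct and follows the same approach as the paper, which simply states that the lemma ``is a direct consequence of expressions \eqref{f(z_0)branch} and \eqref{Harm1}.'' You have faithfully expanded this one-line justification: factoring out the dominant exponential from the explicit formulas for $h_\gamma$, $h_\alpha$, $f_0$, $f_\pi$, and using that $(x(\beta),y(\beta))$ is the strict argmax of $u\cos\beta+v\sin\beta$ on $\{\gamma_+=0\}$ to produce the gap $2\eta$ (your careful check of the boundary case $\beta=\alpha_b$ and the polynomial prefactor in $f_0$ are exactly the details the paper leaves implicit).
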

\begin{proof}
This is a direct consequence of expressions \eqref{f(z_0)branch} and \eqref{Harm1}.
\end{proof}

\begin{proof}[Proof of Theorem~\ref{thm5}] By Theorems~\ref{alpha=0} to~\ref{thm:4} and Lemma~\ref{cestbienharmonique}, it remains to show that functions $h_\alpha$ and $f_0, f_\pi$ are minimal. The main idea of the proof is that $h_\beta(r\cos(\beta), r\sin(\beta))$ is asymptotically of greater order than  $h_\alpha(r\cos(\beta), r\sin(\beta))$ as $r\to+\infty$ for any
fixed $\beta$ such that $\beta \neq \alpha$.

Let $\alpha \in \mathcal M\backslash\{0,\pi\}$. We now rigorously prove that $h_\alpha$ is minimal. To do so, we show that if $c_0, c_\pi \geq 0$ and if $\mu$ is a Borel measure on $\mathcal M \backslash \{0,\pi\}$ such that
\begin{equation}\label{eq:represlemme}
    h_\alpha = \int_{\mathcal M\backslash\{0,\pi\}} h_\alpha \, d\mu(\alpha) + c_0 f_0 + c_\pi f_\pi,
\end{equation}
then $c_0 = c_\pi = 0$ and $\mu = \delta_\alpha$ (we excluded $0$ and $\pi$ in the integral term to avoid problems of notation). Let $\beta \in \mathcal M \cap (0, \pi)$ with $\alpha \neq \beta$, and let $\e, \eta, C, r_0 > 0$ be the constants provided by Lemma~\ref{lem:ineqmini}. Then, using inequalities \eqref{ineq:hgamma1}, \eqref{ineq:halpha1}, and the representation \eqref{eq:represlemme}, for all $r \geq r_0$ we have
\begin{align*}
    Ce^{r(x(\beta)\cos(\alpha) + y(\beta)\sin(\beta) - 2\eta)} &\geq h_\alpha(r\cos(\beta), r\sin(\beta))\\
    &\geq \int_{ (\beta-\e, \beta+\e)\cap\mathcal M}h_\beta(r\cos(\gamma), r\sin(\gamma))d\mu(\gamma)\\
    &\geq C^{-1}\mu( (\beta-\e, \beta+\e)\cap\mathcal M)e^{r(x(\beta)\cos(\alpha) + y(\beta)\sin(\beta) - \eta)}.
\end{align*}
Hence, for $r\geq r_0,$
$$C^2e^{-\eta r} \geq \mu( (\beta-\e, \beta+\e)\cap\mathcal M).$$
Taking the limit as $r \to +\infty$, we obtain $\mu((\beta - \e, \beta + \e) \cap \mathcal M) = 0$. By a standard property of Borel measures, it follows that $\mu(\mathcal M\cap (0,\pi) \setminus \{\alpha\}) = 0$. Using symmetric arguments along with inequalities \eqref{ineq:hgamma2} and \eqref{ineq:halpha2}, we similarly get $\mu(\mathcal M \cap (\pi,2\pi) \setminus \{\alpha\}) = 0$. Hence, $\mu = c\delta_\alpha$, where $\delta_\alpha$ denotes the Dirac measure at $\alpha$. By \eqref{eq:represlemme}, we get $h_\alpha = ch_\alpha + c_0 f_0 + c_\pi f_\pi$. Since $h_\alpha$, $f_0$, and $f_\pi$ are clearly linearly independent, it follows that $c_0 = c_\pi = 0$ and $c = 1$. The proofs of the minimality of $f_0$ and $f_\pi$ are analogous.
\end{proof}

Let us now prove Corollary~\ref{cor:escape+-infty}.
\begin{proof}[Proof of Corollary~\ref{cor:escape+-infty}]
By standard arguments, the mapping
\[
h : (a_0,b_0) \longmapsto \P_{(a_0,b_0)}\bigl(B_t \underset{t\to+\infty}{\longrightarrow} +\infty\bigr) 
\]
is harmonic. Moreover, note that $h_{\alpha_{\mu^+}}$ and $h_{\alpha_{\mu^-}}$ are the only bounded harmonic functions among $f_0$, $f_\pi$, and $h_\alpha$ for $\alpha \in \mathcal M \backslash\{0, \pi\}$ (see \eqref{f(z_0)col}, \eqref{f(z_0)branch}, \eqref{Harm1}, and \eqref{Harm1'}). Hence, applying the representation \eqref{eq:representation} to the bounded harmonic function $h$ yields $h = c_+ h_{\alpha_{\mu^+}} + c_- h_{\alpha_{\mu^-}}$ for some nonnegative constants $c_+$ and $c_-$. By properties of standard Brownian motion (denoted here by $W$), we have
$$h(a_0, b_0) \geq \P\left[\left\{b_0 +\sqrt{\Sigma_{22}}W_t + \mu_2^+t\underset{t\to\infty}{\longrightarrow} +\infty\right\} \cap \left\{b_0 + \sqrt{\Sigma_{22}}W_t + \mu_+t > 0\;\;\; \forall t \geq 0\right\}\right] \underset{b_0\to+\infty}\longrightarrow 1$$
by Hypothesis~\eqref{hyp:drift} on the drift.
Since $h_{\alpha_{\mu^-}}(a_0, b_0) \underset{b_0\to+\infty}\longrightarrow 0$ and $h_{\alpha_{\mu^+}}(a_0, b_0)\underset{b_0\to+\infty}\longrightarrow  1$ (see \eqref{Harm1} and \eqref{Harm1'}), we obtain $c_+ = 1$. Considering now the limit as $b_0 \to -\infty$, we deduce that $c_- = 0$, and the desired result follows.
\end{proof}
%
%\color{blue}
%Si on décide d'enlever la grosse section "généralisation", je propose cette remarque.
%\begin{rem}[Extension to a more general family of processes]
%In this article, we work under Assumption~\eqref{eq:q} since we consider divergence form operators for our processes. With similar considerations, we could generalise the results to the whole family of processes characterized by the SDE with local time~\eqref{EDS} but with general $q \in \R\times (-1,1).$ For the interested reader, this consideration would add a potential pole for $\phi$ (see Theorem~\ref{prop:laplace explicitee}). Therefore, an additional region of asymptotics would appear (see Figure~\ref{fig:soleil}). The minimal Martin Boundary, in case of the presence of a pole for $\phi$, would become homeomorphic to a segment rather than the disjoint union of two segment.
%\end{rem}
%\color{black}
%
%\fs{DU coup on enlève le paragraphe ci dessus (par contre je n'ai pas bien compris pourquoi c'était qu'un seul segment et non plus deux segment la frontière de Martin minimale)}\fr{En fait le pole vient couper la frontière de Martin avec une seule fonction harmonique. Celle ``en haut'' et celle ``en bas'' sont les mêmes, il s'agit celle provenant du pole. Et naturellement, comme dans le cas du cadrant, on a $\frac{h_\alpha(z_0)}{h_\alpha(0)} \to \frac{h_{pole}(z_0)}{h_{pole}(0)}$ où le numérateur et le dénominateur tendent vers $+\infty$ dans la limite.}

%Planar SBBBM acting in any oblique direction $q$

\section{Generalization to skew diffusions acting in any oblique direction $q$}\label{sec:skewdiff}
%Let $q_0 = \frac{1}{\Sigma_{22}^+ + \Sigma_{22}^-}\begin{pmatrix}
%\Sigma_{12}^+ - \Sigma_{12}^-\\ \Sigma_{22}^+ - \Sigma_{22}^-
%\end{pmatrix}$. 
Up to this point, we have imposed the condition $q = q_0$ (see \eqref{eq:q}) to have a divergence form generator. The goal of this section is to study the extension of our results for $q \in \R \times (-1,1)$.

In Section~\ref{sub:conjectures}, we state  
hypotheses~\ref{conj1} and \ref{aronsonlike} on $q$ under which this analysis can be fruitful. Section~\ref{sub:analogous} describes new properties satisfied by the Laplace transforms. In particular, the Laplace transform $\phi^{z_0}$ may now have a pole. In Section~\ref{sub:discussion}, we introduce a new angular region (arising from the pole of $\phi^{z_0}$) and generalise Theorems~\ref{thm:1} -- \ref{thm:4} to this broader range of parameters $q$. Sections~\ref{sub:asafterpole} and~\ref{sub:as:pole} are devoted to the study of the asymptotic behaviour of the Green's functions $g^{z_0}$ within this new region. Finally, in Section~\ref{sub:martingeneral}, we identify the Martin boundary and the minimal Martin boundary for the generalised processes.

Let $\Sigma^+, \Sigma^-$ be covariance matrices, and let $\mu^+, \mu^- \in \R^2$ satisfy \eqref{hyp:drift}. Let $q \in \R \times (-1,1)$. Using the same method as in Section~\ref{sub:existence}, one can show that for every $(a_0,b_0) \in \R^2$, there exists a unique pathwise solution $Z = (A,B)$ to equation \eqref{EDS}, driven by a Brownian motion $(W_t)_{t \geq 0}$.

\subsection{Hypotheses}\label{sub:conjectures}
 % and with parameters $(\Sigma^+, \Sigma^-, \mu^+, \mu^-)$ (and $q$).
%This uniqueness also holds in law; we denote by $\P_{(a,b)}$ the law of the process starting from $(a,b)$.
We now state two technical hypotheses on the parameter $q \in \R \times (-1,1)$ under which our extended results hold. 

\begin{hypothesis}[Markov property and density]\label{conj1}
The process $Z$ defines a strong Markov process. Moreover, for any $z_0 \in \R^2$ and any $t > 0$, the law of $Z_t$ starting from $z_0$ admits a density $p_t^{z_0}(z)$ with respect to the Lebesgue measure on $\R^2$.
Finally, the function $z\longmapsto p_t^{z_0}(z)$ has limits at the top and bottom of the axis $\{y=0\}$.
\end{hypothesis}

\begin{hypothesis}[Aronson-like inequality]\label{aronsonlike}
There exists a constant $M > 0$ such that for all $t > 0$ and all $z_0 \in \R^2$,
\begin{equation}
p_t^{z_0}(z) \leq \frac{M}{t}e^{-|z-z_0|^2/Mt + Mt}.
\end{equation}
\end{hypothesis}

Assuming Hypothesis~\ref{conj1}, we may define the Green's functions by $g^{z_0}(z) = \int_0^{+\infty} p_t^{z_0}(z) \, dt$ for $z = (x, y) \in \{y \neq 0\}$ and $z_0 \in \R^2$. Under Hypothesis~\ref{aronsonlike}, inequality~\eqref{utile!2} holds, and the functions $g^{z_0}$ satisfy inequalities~\eqref{utile!}.

\subsection{Analogous results for Laplace transforms.}\label{sub:analogous}
In this section, we investigate properties of the Laplace transforms for a general parameter $q$. The functional equation \eqref{def:phi} and the continuation formula for $\phi^{z_0}$ given in \eqref{laplace explicitee} remain valid and can be proved using the same arguments.

\begin{prop}[Functional equation and continuation of $\varphi^{z_0}$] There exists $\eta > 0$ such that for all $x \in (-\eta, 0)$, $y < 0$, and $z > 0$, the functions $\phi^{z_0}_-(x, z)$, $\phi^{z_0}_+(x, y)$, and $\phi^{z_0}(x)$ are finite and satisfy \eqref{eq fonctionnelle}. Furthermore, $\phi^{z_0}$ admits a meromorphic continuation on $\C \setminus \big((-\infty, \m] \cup [\M, +\infty)\big)$ given by \eqref{laplace explicitee}.
\end{prop}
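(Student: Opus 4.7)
The plan is to follow the same strategy as the proofs of Proposition \ref{prop:eqfonc} and Theorem \ref{prop:laplace explicitee} in Sections \ref{sec:process_functionalequation} and \ref{sec:alpha=0}, and to verify that every step still goes through under Hypotheses \ref{conj1} and \ref{aronsonlike} — without invoking the divergence form of the generator or any Aronson-type lower bound.

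First, I would re-examine the preliminary exponential-moment lemmas (Lemmas \ref{exppart1} and \ref{lem:4.4}). The process $B$ in \eqref{EDS} obeys the one-dimensional SDE
\[
dB_t = \sqrt{\Sigma_{22}(B_t)}\,dW^2_t + \mu_2(B_t)\,dt + q_2\,dL^0_t(B),
\]
whose law depends only on $\Sigma_{22}^\pm$, $\mu_2^\pm$, and the skewness $q_2 \in (-1,1)$, not on $q_1$. The arguments there (Girsanov, It\^o--Tanaka, Dubins--Schwarz, comparison to reflected Brownian motion, excursion decomposition between $\{y=0\}$ and $\{|y|\geq 1\}$) therefore apply verbatim, yielding $\E_{z_0}[e^{\lambda L^0_\infty(B)}] < +\infty$ for small $\lambda > 0$. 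Next, Lemma \ref{transience} extends directly: its proof uses only Cauchy--Schwarz, the above exponential moments, and Assumption \eqref{hyp:drift}, so the conclusion $\E_{z_0}[e^{-uA_t}] \to 0$ for small $u > 0$ still holds for general $q_1 \in \R$.

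For the functional equation, I would repeat the It\^o-plus-approximation argument of Section \ref{sub:detailedproof} with the smoothed test functions $f_\epsilon$ of \eqref{fe}. The three places where that proof used estimates on the density are (i) the upper bound \eqref{utile!2}, granted directly by Hypothesis \ref{aronsonlike}; (ii) the consequent bound \eqref{utile!} on $g_t^{z_0}$; and (iii) the analog of Lemma \ref{Tlocal_densite}, which rests on dominated convergence and the continuity of $p_t^{z_0}$ at the interface $\{y=0\}$ granted by Hypothesis \ref{conj1} (the limits of $p_t^{z_0}$ at $0^\pm$ replace the explicit continuous value used in the divergence case, weighted by $\Sigma_{22}^\pm$). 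With these ingredients, the limits \eqref{eq:nabla}, \eqref{eq:delta} and \eqref{g(a,0)} go through unchanged. The coefficient of $\phi^{z_0}(x)$ that emerges in the limit is precisely $\gamma(x,y,z) = q_1 x + \tfrac12(y(1+q_2)+z(q_2-1))$ with the general $q$; no step of the derivation depended on $q = q_0$. The final restriction $x \in (-\eta,0)$ comes, as before, from the transience lemma together with the observation that $y<0$ and $z>0$ can be taken arbitrarily close to $0$ so that all three of $\gamma_\pm$, $(y-z)/2$ and $\gamma$ have the desired negative sign, enabling the Fatou/monotone-convergence argument which forces the three Laplace transforms to be finite and to satisfy \eqref{eq fonctionnelle}.

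For the meromorphic continuation, I would substitute $y = Y^-(x)$ and $z = Z^+(x)$ — both well defined by \eqref{Ypm}--\eqref{Zpm} — into the functional equation on the small real interval $x \in (-\eta,0)$ where it has just been established and where $Y^-(x) < 0 < Z^+(x)$. This cancels $\gamma_\pm$ and yields \eqref{laplace explicitee} on that interval. Since the right-hand side is meromorphic on $\C \setminus ((-\infty,\m] \cup [\M,+\infty))$ (the only singularities being the square-root branch cuts of $Y^-$ and $Z^+$, which lie in the excluded set), analytic continuation delivers the desired extension of $\phi^{z_0}$. The main — and essentially only — difference from Section \ref{sec:alpha=0} is that for generic $q$ the denominator $\gamma(x,Y^-(x),Z^+(x))$ need no longer be zero-free inside the strip: this is precisely why the statement asserts meromorphic, rather than holomorphic, continuation, and why the zero-free argument given in the proof of Theorem \ref{prop:laplace explicitee} (which exploited the specific algebraic identity available only for $q=q_0$) is simply dropped here; no genuine obstacle remains.
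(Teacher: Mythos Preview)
Your proposal is correct and follows exactly the approach the paper indicates: the paper's own ``proof'' of this proposition consists only of the sentence ``remain valid and can be proved using the same arguments,'' and you have carefully expanded what those arguments are and why each step survives under Hypotheses~\ref{conj1} and~\ref{aronsonlike} with general $q$. In particular, your observations that the exponential-moment lemmas depend only on $q_2$ through the one-dimensional SDE for $B$, that the Aronson upper bound alone suffices for the needed density estimates, and that the vanishing of $\gamma(x,Y^-(x),Z^+(x))$ now yields poles (hence meromorphic rather than holomorphic continuation) are precisely the points one must check.
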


The main difference with the case $q = q_0$ is that $\phi^{z_0}$ may now have a pole.

\begin{prop}[Poles of $\phi^{z_0}$ on $(\m,\M)$] On $[\m,\M]$, the equation $(E) : \gamma(x,Y^-(x),Z^+(x)) = 0$ has at most one solution, denoted by $x^*$. If this equation has no solution, we adopt the convention $x^* = +\infty$. Then, on $(\m,\M)$, the extended function $\phi^{z_0}$ has a pole if and only if $(E)$ has a solution, and the pole is located at $x^*$ whenever it exists. In that case, $(x^*,\, Y^-(x^*),\, Z^+(x^*))$ solves the system
\begin{equation} \label{syst_pole}
\begin{cases}   \gamma(x,y,z) = 0 \\
\gamma_+(x,y) = 0\\
\gamma_-(x,z) = 0
\end{cases}
\end{equation}
and $$res_{x=x^*}\phi^{z_0}(x) = -\frac{e^{x^*a_0 + Y^-(x^*)b_0\fc_{b_0> 0} +  Z^+(x^*)b_0\fc_{b_0<0}}}{\frac{d}{dx}\gamma(x,Y^-(x), Z^+(x))\big|_{x=x^*}}.$$ Furthermore, if $x^* > 0$ (resp. $x^* < 0$), then $x^*$ is the unique pole of $\phi^{z_0}$ in the strip $\{\m - \e <\Re(x) <x^* + \e \}$ (resp. $\{x^* - \e <\Re(x) <\M + \e \}$) for sufficiently small $\e > 0.$
\end{prop}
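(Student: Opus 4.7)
The plan is to reduce the whole proposition to a convexity analysis of the real function $F(x) := \gamma(x, Y^-(x), Z^+(x))$ on $[\m, \M]$. From the continuation formula \eqref{laplace explicitee},
\[
\phi^{z_0}(x) = -\frac{e^{N(x)}}{F(x)}, \qquad N(x) := x a_0 + Y^-(x) b_0 \fc_{b_0>0} + Z^+(x) b_0 \fc_{b_0<0},
\]
where $N$ is holomorphic on $\C \setminus ((-\infty,\m] \cup [\M,+\infty))$, so the real poles of $\phi^{z_0}$ in $(\m,\M)$ are exactly the real zeros of $F$. Using \eqref{Ypm}, \eqref{Zpm} and $q_2 \in (-1,1)$, one decomposes
\[
F(x) = A(x) - c_+\sqrt{D_+(x)} - c_-\sqrt{D_-(x)},
\]
with $A(x) = \alpha x + \beta$ affine, $c_+ := (1+q_2)/(2\Sigma^+_{22}) > 0$, $c_- := (1-q_2)/(2\Sigma^-_{22}) > 0$, and $D_\pm(x) \geq 0$ the (downward parabolic) discriminants inside $Y^\pm$ and $Z^\pm$. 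A direct computation using hypothesis \eqref{hyp:drift} shows $\beta < 0$ and $F(0) < 0$.

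Two structural facts are then combined. First, $F$ is strictly convex on $(\m,\M)$: $Y^-$ is strictly convex and $Z^+$ is strictly concave as the lower/upper branches of the two ellipses, and the weights $(1+q_2)/2 > 0$ and $(q_2-1)/2 < 0$ combine them into a strictly convex function, up to the affine piece $q_1 x$. Second, any zero $x^*$ of $F$ forces $A(x^*) \geq 0$; since $A$ is affine with $A(0)=\beta < 0$, zeros are confined to a single half-line $H \subset \R$ depending on the sign of $\alpha$ (and do not exist if $\alpha = 0$). On $H \cap [\m,\M]$, $F$ is strictly convex and $F \leq 0$ at the endpoint $a$ where $A(a) = 0$, because there $F(a) = -c_+\sqrt{D_+(a)} - c_-\sqrt{D_-(a)} \leq 0$. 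A strictly convex function that is non-positive at one endpoint of an interval has at most one zero on that interval: two zeros $c_1 < c_2 \in (a,\M]$ would yield, at $c_1 = \lambda a + (1-\lambda)c_2$, the chain $0 = F(c_1) < \lambda F(a) + (1-\lambda) F(c_2) \leq 0$, which is impossible. The same strict convexity then forces $F'(x^*) > 0$ when $x^*$ exists; otherwise $x^*$ would be the global minimum of $F$ and $F \geq 0$ everywhere, contradicting $F(0)<0$.

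Granted $x^*$, the simplicity of the pole and the standard residue formula applied to \eqref{laplace explicitee} give the stated value of $\mathrm{res}_{x=x^*}\phi^{z_0}$. The triple $(x^*, Y^-(x^*), Z^+(x^*))$ solves system~\eqref{syst_pole} tautologically, since $Y^-(x)$ and $Z^+(x)$ are roots of $\gamma_+$ and $\gamma_-$ by construction. For the final uniqueness assertion in a narrow vertical strip, I would combine the local implicit function theorem at $x^*$ (giving a zero curve of $F$ transverse to the real axis since $F'(x^*) > 0$) with the bound $\Re(Y^-(u+iv)) \leq Y^-(u)$ and its symmetric analogue for $Z^+$ from \eqref{Re}, controlling $\Re F$ in the strip and preventing complex zeros close to the real axis. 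I expect this last point to be the principal technical obstacle: the local picture near $x^*$ is straightforward, but a global non-vanishing statement across the entire strip $\{\m - \e < \Re x < x^* + \e\}$ requires upgrading the one-sided real-part bound into an absence-of-zeros statement, which will likely demand uniform estimates on $F$ as $|\Im x| \to \infty$ in the spirit of those used in \cite{Franceschi_2024}.
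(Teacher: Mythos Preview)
Your approach is essentially the paper's: both rest on the convexity of $F(x):=\gamma(x,Y^-(x),Z^+(x))$ together with a sign observation coming from the affine part of the decomposition. The paper packages the latter as ``at least one of $F(\m)<0$ or $F(\M)<0$'', which is the same fact as your $A(0)=\beta<0$ --- the sign of $\alpha$ then forces $A$, and hence $F=A-\text{(nonneg)}$, to be negative at whichever endpoint lies on the side of $0$ where $A<0$. A convex function negative at an endpoint has at most one interior zero, so the paper's route is marginally shorter than your half-line confinement, but the content is identical.

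Two remarks. First, your argument for $F'(x^*)>0$ only excludes $F'(x^*)=0$; when $x^*<0$ one actually gets $F'(x^*)<0$, so the honest conclusion is $F'(x^*)\neq 0$, which is all that simplicity of the pole requires. Second, you are overestimating the strip statement. The inequality $\Re F(u+iv)<F(u)$ for $v\neq 0$ that you extract from \eqref{Re} is \emph{global} in $v$, not merely local: combined with $F\leq 0$ on the entire real segment $(\m,x^*]$ (say $x^*>0$), it gives $\Re F<0$ on the whole half-strip $\{\m<\Re x\leq x^*\}$ at once, so there are no complex zeros there, period --- no ``upgrading'' is needed. Only the thin extensions $\{x^*<\Re x<x^*+\e\}$ and $\{\m-\e<\Re x\leq \m,\ \Im x\neq 0\}$ require separate handling, and both are genuinely local (a simple-zero disk for the first; $A(u)<0$ together with $\Re\sqrt{D_\pm}\geq 0$ for the second). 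This is exactly the ``straightforward analysis'' the paper invokes.
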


\begin{proof}
From \eqref{laplace explicitee}, it is clear that $x$ is a pole of $\phi^{z_0}$ if and only if $(x, Y^-(x), Z^+(x))$ satisfies \eqref{syst_pole}. Note that the function $\gamma(x, Y^-(x), Z^+(x))$ is convex, being the sum of convex functions. Moreover, its explicit expression shows that at least one of the inequalities $\gamma(x, Y^-(x), Z^+(x))|_{x=\m} < 0$ or $\gamma(x, Y^-(x), Z^+(x))|_{x=\M} < 0$ must hold. Hence, $\gamma(x, Y^-(x), Z^+(x))$ has at most one zero on $(\m, \M)$, and $\phi^{z_0}$ has at most one pole in this interval. The last statement of the lemma follows from a straightforward analysis of $\gamma(x, Y^-(x), Z^+(x))$.
\end{proof}

Note that $x^* \neq 0$ since $\gamma(0, Y^-(0), Z^+(0)) = \left(\frac{1+q_2}{2}\right) Y^-(0) + \left(\frac{q_2 - 1}{2}\right) Z^+(0) < 0$.

\begin{rem}[Values of Green's functions at the boundary] %The Green's function $g^{z_0}$ is defined a priori almost everywhere (by essence of the density $p_t^{z_0}$). Though, 
In the case $q = q_0$, the Green's function is continuous on the axis, meaning that the upper and lower limits $g^{z_0}(u, 0^+)$ and $g^{z_0}(u, 0^-)$ are equal. When $q\neq q_0$, this is no longer the case.
%By Conjecture~\ref{conj1}, the Green's function $g^{z_0}$ is continuous away from the axis $\{y = 0\}$.
Using the initial value theorem for Laplace transforms together with \eqref{eq fonctionnelle}, we obtain for all $u \in \R$ with $z_0 \neq (u, 0),$
\begin{equation}
\Sigma_{22}^+\left(\frac{1-q_2}{2}\right)g^{z_0}(u,0^+) = \Sigma_{22}^-\left(\frac{1+q_2}{2}\right)g^{z_0}(u,0^-).
\end{equation}
By the same argument as in the proof of Lemma~\ref{Tlocal_densite}, we also have
\begin{equation}
\phi^{z_0}(x) = \int_\R \frac{\Sigma_{22}^-g^{z_0}(u,0^-) + \Sigma_{22}^+g^{z_0}(u,0^+)}{2}e^{-xu}du
\end{equation}
which can be interpreted as the Laplace transform of the Green's functions on the axis.
In fact, if we adopt the convention $g^{z_0}(u,0) = \frac{\Sigma_{22}^-g^{z_0}(u,0^-) + \Sigma_{22}^+g^{z_0}(u,0^+)}{\Sigma_{22}^- + \Sigma_{22}^+}$, then $\phi^{z_0}$ is the Laplace transform of $\frac{\Sigma_{22}^- + \Sigma_{22}^+}{2} g^{z_0}(u, 0)$ exactly as in the case $q = q_0$. Theorem~\ref{alpha=0} for $\alpha = \alpha_0 \in \{0, \pi\}$ could be extended to the case of any $q\in \R \times (-1,1)$ in this sense. 
\end{rem}

\subsection{Generalisation of Theorems~\ref{thm:1} to \ref{thm:4} and introduction of a new region for angles}\label{sub:discussion}
\subsubsection{Case without pole}\label{subsub:touttout}If $x^* = +\infty$, then all the arguments remain unchanged, and Theorems~\ref{thm:1} to \ref{thm:4} hold for general $q \in \R \times (-1, 1)$.

\subsubsection{Notation if $\phi^{z_0}$ has a pole} Now suppose that $\phi^{z_0}$ has a pole $x^*$, and let us introduce a new region of angles for the asymptotic analysis. For simplicity, we assume that $x^* \neq \m$ and $x^* \neq \M$.

We recall the notation $\alpha_{\mu^+} = \arctan(\mu_2^+/\mu_1^+) \in (0, \pi)$ and $\alpha_{\mu^-} = \arctan(\mu_2^-/\mu_1^-) + 2\pi \in (3\pi/2, 2\pi)$. Note that $x(\alpha) > 0$ for $\alpha \in (0, \alpha_{\mu^+}) \cup (\alpha_{\mu^-}, 2\pi)$, and that $x(\alpha) < 0$ for $\alpha \in (\alpha_{\mu^+}, \pi) \cup (\pi, \alpha_{\mu^-})$ (see Section~\ref{sub:theorems}). Let $\alpha^*_- \in (0, \pi)$ and $\alpha^*_+ \in (\pi, 2\pi)$ be the angles defined by
\begin{equation}
    (x(\alpha^*_+), y(\alpha^*_+)) = (x^*, Y^+(x^*)), \quad (x(\alpha^*_-), y(\alpha^*_-)) = (x^*, Z^-(x^*)).
\end{equation}
(see Figure~\ref{fig:alphaxetoile}). 
\begin{figure}
    \centering
    \includegraphics[scale=0.7]{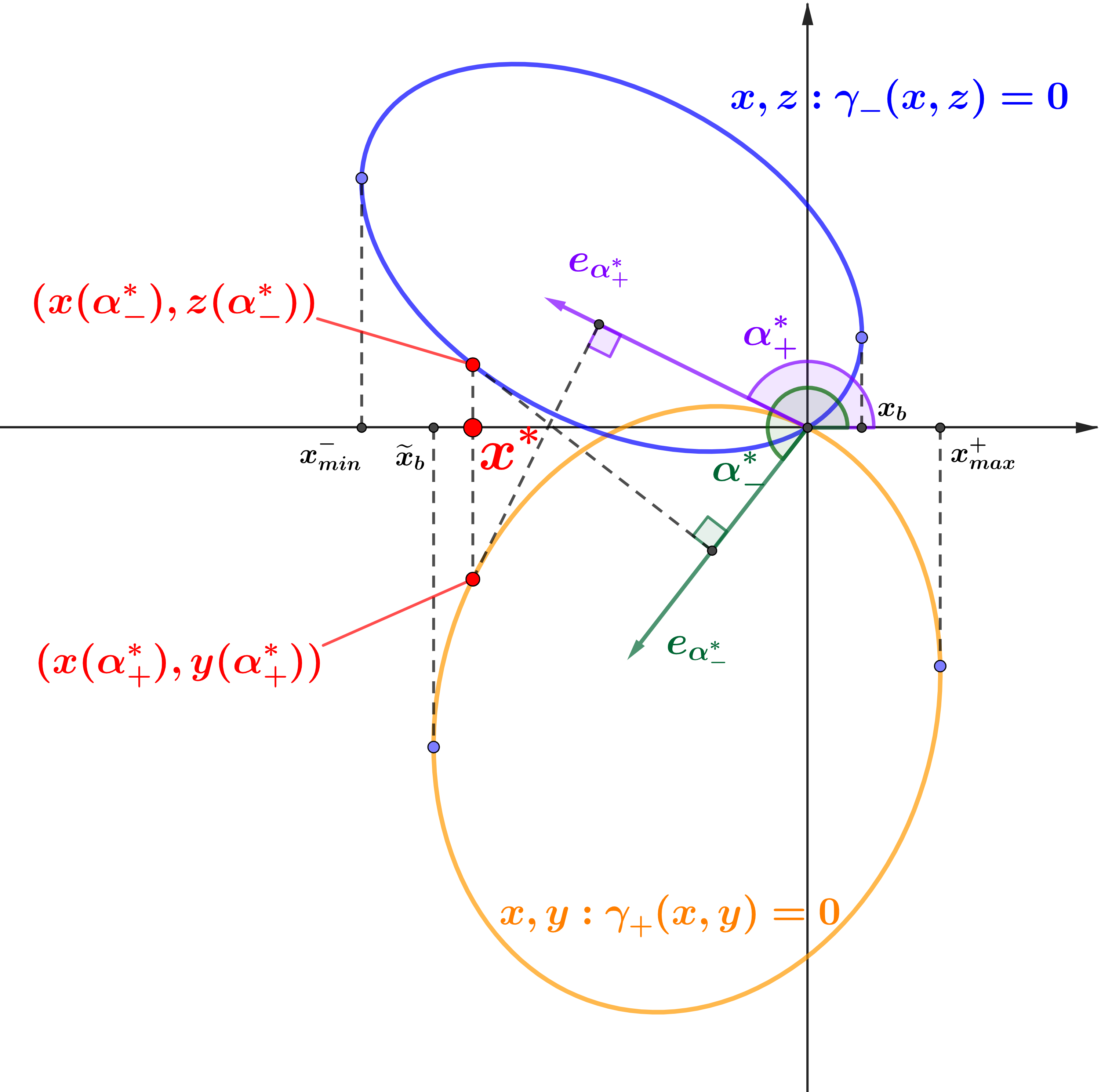}
    \caption{Definition of $\alpha^*_\pm$ from the data $x^*.$ In this case, $x^* \in (\m,0).$}
    \label{fig:alphaxetoile}
\end{figure}
From the above remarks on the sign of $x(\alpha)$, we observe that:
\begin{itemize}
    \item If $x^* \in (0,\M),$ then $\alpha^*_+ \in (0, \alpha_{\mu^+})$ and $\alpha^*_- \in (\alpha_{\mu^-},2\pi)$.
    \item If $x^* \in (\m,0),$ then $\alpha^*_+ \in (\alpha_{\mu^+},\pi)$ and $\alpha^*_- \in (\pi,\alpha_{\mu^-})$.
\end{itemize}
This defines a new (pink) region of angles; see Figure~\ref{fig:soleil3} for the case $x^* \in (0, \M)$.

\subsubsection{Generalisation of Theorems~\ref{thm:1} to \ref{thm:4} when $\phi^{z_0}$ has a pole}\label{subsub:tout}Outside the pink region, the same method as in the case $q = q_0$ applies. In these areas, the contour deformations do not involve the pole of $\phi^{z_0}$, so the asymptotics in Theorems~\ref{thm:1} to \ref{thm:4} remain valid in the corresponding non-pink regions (see Figure~\ref{fig:soleil3} for the case $x^* > 0$ and $\m = x_{min}^+$).

\subsection{Study of asymptotics (strictly) inside the dotted pink region}\label{sub:asafterpole}
Without loss of generality, we focus on the case where $x^* \in (0, \M)$. In this section, we study the asymptotics of the Green's functions $g^{z_0}$ in the directions $\alpha_0 \in (\alpha_-^*, 2\pi) \cup [0, \alpha_+^*)$.

%We claim that if $x^* \in (0, \M)$ (resp. $x^* \in (\m,0)$, then it will change some asymptotics in some directions in $(0,\alpha_{\mu^+})$ and $(\alpha_{\mu^-}, 0)$ (resp. $(\alpha_{\mu^+},\pi)$ and $(-\pi,\alpha_{\mu^-})$). 
Note that Lemmas~\ref{lem:lapinv}, \ref{double_simple}, and the constructions in Section~\ref{sub:pathsaddlepoint} still hold, namely that $g^{z_0} = I_1^{z_0} + I_2^{z_0}$.
 %For Sections~\ref{sub:saddle}, \ref{sub:alphab} and \ref{sub:apresalphab}, the difference appears in the changing paths, where a pole term can appear while applying the residue theorem.  

 %Let us look at directions $\alpha \in [0,\pi]$Looking at changing paths made in Sections~\ref{sub:saddle}, \ref{sub:alphab} and \ref{sub:apresalphab} (and especially Lemmas~\ref{lem:chemin}, \ref{découpage}, \ref{découpage2}), we see that there is an additional term due to the pole precisely in directions $(0,\alpha^*_+]$ and $[\alpha^*_-, 0)$ are affected by the reasoning. More precisely we have the following lemma

\begin{lemma}[Changing path for directions $\alpha_0 \in [0,\alpha^*_+)$]\label{lem:direcccccc}
Suppose $x^* \in (0, \M)$. Let $x^* < x_c < \M$. Then, for $a \in \R$, $b > 0$, and $z_0 \in \R^2$ with $(a,b) \neq z_0$, we have:
\begin{equation}\label{eq:residuI1}
    I_1^{z_0}(a,b)  = res_{x=x^*}\phi^{z_0}(x)\frac{\gamma(x^*,Y^+(x^*),Z^+(x^*))}{\partial_y\gamma_+(x^*,Y^+(x^*))}e^{-ax^*-bY^+(x^*)}. 
\end{equation}
$$+\,\frac{1}{2i\pi}\int_{x_c + i\R}\frac{\phi^{z_0}(x)\gamma(x,Y^+(x),Z^+(x))e^{-ax-bY^+(x)}}{\partial_y\gamma_+(x,Y^+(x))} dx$$
where $\gamma(x^*,Y^+(x^*),Z^+(x^*)) \neq 0.$
Furthermore,
\begin{equation}\label{I2pole}
I_2^{z_0}(a,b) = \frac{1}{2i\pi}\int_{x_c + i\R}\frac{e^{(a_0-a)x}\left(
e^{(b_0-b)Y^+(x)}\fc_{b_0>0} 
+ e^{b_0Z^+(x) - bY^+(x)}\fc_{b_0 < 0}
\right)}{\partial_y\gamma_+(x,Y^+(x))} dx \quad \textnormal{if}\; b > b_0
\end{equation}
 % (which is true in particular if $b < b_0$),
 and
\begin{equation}\label{I2popole}
I_2^{z_0}(a,b) = \frac{1}{2i\pi}\int_{Y^+(x_c) + i\R}\frac{e^{(a_0-a)X^+(y) + (b_0-b)y}}{\partial_x\gamma_+(X^+(y),y)} dy \quad \textnormal{if}\; a > a_0.
\end{equation}
\end{lemma}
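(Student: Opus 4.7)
The strategy is to apply the residue theorem to shift the vertical integration contour in \eqref{I1} from $\Re(x)=-\e$ across the pole $x^*$ to $\Re(x)=x_c$, and to perform analogous pole-free contour shifts for the two representations of $I_2^{z_0}$. My first step will be to verify that the integrand of \eqref{I1} is meromorphic on an open strip containing both $-\e+i\R$ and $x_c+i\R$, with $x^*$ as its unique simple pole. The branches $Y^+(x)$ and $Z^+(x)$ are holomorphic on $\C\setminus((-\infty,\m]\cup[\M,+\infty))$, hence regular on this strip since $-\e>\m$ and $x_c<\M$; the derivative $\partial_y\gamma_+(x,Y^+(x))$ vanishes only at $x\in\{x^+_{min},x^+_{max}\}$, both lying outside the strip; and the last statement of the previous proposition ensures that $x^*$ is the unique pole of $\phi^{z_0}$ in this region.

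Next, I will show that the leading coefficient of the residue, namely $\gamma(x^*,Y^+(x^*),Z^+(x^*))$, is nonzero. Using the linearity of $\gamma$ in its second argument together with the defining identity $\gamma(x^*,Y^-(x^*),Z^+(x^*))=0$, one obtains
\[
\gamma(x^*,Y^+(x^*),Z^+(x^*))=\frac{1+q_2}{2}\bigl(Y^+(x^*)-Y^-(x^*)\bigr),
\]
which is strictly positive since $x^*\in(\m,\M)\subset(x^+_{min},x^+_{max})$ forces $Y^+(x^*)>Y^-(x^*)$, while the assumption $q_2>-1$ gives $\frac{1+q_2}{2}>0$.

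Then I will close the contour along the rectangle with vertices $-\e\pm iR$ and $x_c\pm iR$ and let $R\to\infty$. The two horizontal sides vanish in the limit by the same mechanism used for \eqref{bout_neg}: formula \eqref{Re} yields $\Re(Y^+(u+iR))\geq cR$ uniformly in $u\in[-\e,x_c]$, while \eqref{laplace explicitee} keeps $\phi^{z_0}(u+iR)$ bounded, so the integrand decays like $e^{-cbR}$ along each horizontal edge. Applying the residue theorem and computing the residue of the full integrand at $x^*$ as the product of $\mathrm{Res}_{x=x^*}\phi^{z_0}(x)$ and the regular factor $\frac{\gamma(x^*,Y^+(x^*),Z^+(x^*))}{\partial_y\gamma_+(x^*,Y^+(x^*))}e^{-ax^*-bY^+(x^*)}$ then yields \eqref{eq:residuI1}.

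Finally, identities \eqref{I2pole} and \eqref{I2popole} will be obtained by the same kind of contour shift applied to \eqref{I2} and \eqref{I2''}. These integrands do not involve $\phi^{z_0}$, so no pole is present and no residue contribution appears. The main technical point I anticipate lies in \eqref{I2popole}, where the shift is performed in the $y$-plane from $-\e+i\R$ to $Y^+(x_c)+i\R$; one has to verify that $X^+(y)$ remains holomorphic along the intermediate strip, which follows from $Y^+(x_c)\in(y_{min},y_{max})$ (ensured by $x_c\in(x^+_{min},x^+_{max})$), and to check the decay on the horizontal pieces using the linear growth of $\Re(X^+(u+iR))$ in $R$ together with the sign hypothesis $a>a_0$.
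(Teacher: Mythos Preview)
Your proof is correct and follows essentially the same approach as the paper: a rectangular contour shift with the residue theorem, invoking the decay estimate \eqref{bout_neg} on the horizontal sides, exactly as in Lemma~\ref{découpage2}. Your argument for $\gamma(x^*,Y^+(x^*),Z^+(x^*))\neq 0$ via the explicit identity $\gamma(x^*,Y^+(x^*),Z^+(x^*))=\tfrac{1+q_2}{2}(Y^+(x^*)-Y^-(x^*))$ is in fact cleaner than the paper's one-line remark that both $\gamma(x,Y^+(x),Z^+(x))$ and $\gamma(x,Y^-(x),Z^+(x))$ cannot vanish simultaneously away from $x^+_{min},x^+_{max}$.
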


\begin{proof}
The deformation of the integration path is justified using the same arguments as in Lemma~\ref{découpage2}. Equation $\gamma(x^*,Y^+(x^*),Z^+(x^*)) \neq 0$ follows from the fact that it is not possible to have both $\gamma(x,Y^+(x),Z^+(x)) = 0$ and $\gamma(x,Y^-(x),Z^+(x)) = 0$ simultaneously, except possibly at $x = x_{min}^+$ and $x = x_{max}^+$.
\end{proof}

The term $I_1^{z_0}$ is decomposed into the sum of two terms, namely the residue at the pole and a remaining integral term.  
In the next lemma, we show that this integral term and also $I_2(a,b)$ are negligible in the asymptotics compared with the residue at the pole.

\begin{lemma}[Negligibility of integrals compared to the pole]\label{lem:neglig12}
Suppose $x^* \in (0, \M)$. Let $z_0 \in \R^2$, $\alpha_0 \in [0,\alpha^*_+)$, and let $x_c$ be such that $x^* < x_c < \min(x(\alpha_0),\M)$. Let $r = \sqrt{a^2 + b^2}$ and let $\alpha(a,b) \in (0,\pi)$ denote the angle satisfying $\cos(\alpha) = \frac{a}{\sqrt{a^2 + b^2}}$ and $\sin(\alpha) = \frac{b}{\sqrt{a^2 + b^2}}$. Then, for $\eta > 0$ small enough, there exist constants $r_0 > 0$ and $D > 0$ such that for all $(a,b)$ satisfying $r > r_0$, $\alpha(a,b) > 0$, and $|\alpha(a,b) - \alpha_0| < \eta$, we have:
$$\Big|\frac{1}{2i\pi}\int_{x_c + i\R}\frac{\phi^{z_0}(x)\gamma(x,Y^+(x),Z^+(x))e^{-ax-bY^+(x)}}{\partial_y\gamma_+(x,Y^+(x))} dx\Big| + |I_2(a,b)| \leq D e^{-r(\cos(\alpha)x_c + \sin(\alpha)Y^+(x_c))}$$
\end{lemma}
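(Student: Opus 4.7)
The plan is to establish a bound on each integral by $e^{-r(\cos(\alpha)x_c+\sin(\alpha)Y^+(x_c))}$ times an absolutely integrable function on the vertical contour, following the strategy already used for Lemma~\ref{lem:neglig1}. The key geometric fact is that, on the arc $(x^*,x(\alpha_0))$, the function $x\mapsto \cos(\alpha)x+\sin(\alpha)Y^+(x)$ is strictly increasing (its derivative vanishes only at the saddle $x(\alpha) > x(\alpha_0) > x_c$), so the exponential decay at $x_c>x^*$ is strictly stronger than at $x^*$. Since $x^* < x_c < \M$, the integrand in~\eqref{eq:residuI1} is holomorphic along the whole vertical line $x_c+i\R$, and $|\phi^{z_0}|$ remains finite there.

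First I would factor out the exponential on the contour. For $x=x_c+iv$, the real-part formula~\eqref{Re} gives
\[
|e^{-ax-bY^+(x)}| = e^{-ax_c - b\Re Y^+(x_c+iv)} \leq e^{-ax_c - bY^+(x_c)},
\]
since $v\mapsto \Re Y^+(x_c+iv)$ attains its minimum $Y^+(x_c)$ at $v=0$ and grows linearly as $|v|\to\infty$. Combining this with the explicit formula~\eqref{laplace explicitee}, the linear growth in $|v|$ of $|\gamma(x_c+iv,Y^-(x_c+iv),Z^+(x_c+iv))|$ (which is bounded away from $0$ because $x_c\neq x^*$), and the analogous real-part estimates for $Y^-$ and $Z^+$ (which provide exponential decay of $|e^{Y^-(x_c+iv)b_0}|$ for $b_0>0$ and of $|e^{Z^+(x_c+iv)b_0}|$ for $b_0<0$), the prefactor to $e^{-ax_c-bY^+(x_c)}$ is $L^1$ in~$v$. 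When $b_0\neq 0$ and $\sin(\alpha_0)>0$, this yields the required bound directly. In the borderline cases $b_0=0$ and/or $\alpha_0=0$, I would perform one integration by parts in $v$ using the smooth factor $e^{-iav}$ — exactly as in the proof of Lemma~\ref{lem:neglig1} and in \cite[Lemma 7.3]{Franceschi_2024} — which transfers one derivative onto the rational part, produces a $1/v^2$ integrand, and a prefactor $1/|a|$ that stays bounded because $a=r\cos(\alpha)\geq r_0\cos(\alpha_0+\eta)>0$ on the range $\alpha_0\in[0,\alpha_+^*)$ considered (recall $\alpha_+^*<\alpha_{\mu^+}\leq \pi/2$ when $x^*>0$).

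For $I_2^{z_0}(a,b)$ I would use representation~\eqref{I2pole} when $b>b_0$ (which holds eventually for $b_0\leq 0$, and for $b_0>0$ as soon as $r\sin(\alpha)>b_0$), and representation~\eqref{I2popole} when $a>a_0$ (which covers the remaining case $b_0>0$ with $\alpha$ small, since $a=r\cos(\alpha)\to +\infty$): in the first case the factor $e^{(b_0-b)Y^+(x_c+iv)}$ provides exactly the same exponential bound as before, and in the second case the symmetric argument along the vertical contour $Y^+(x_c)+i\R$ in the $y$-plane applies, using the analogue of~\eqref{Re} for $X^+$ together with $X^+(Y^+(x_c))=x_c$. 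The main obstacle is to guarantee that the integration-by-parts estimate is uniform in the degenerate regime $\alpha\to 0^+$ combined with $b_0=0$, where neither $e^{-bY^+}$ nor $e^{b_0 Y^-}$ produces decay in~$v$; control of the first $v$-derivative of the integrand relies then solely on the explicit rational structure of~\eqref{laplace explicitee} and constitutes a technical but routine adaptation of the argument developed for Lemma~\ref{lem:neglig1}.
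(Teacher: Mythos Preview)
Your approach is correct and is essentially the one the paper has in mind: the paper's proof of this lemma consists of the single sentence ``The proof is identical to that of Lemma~\ref{neglig2}'', and what you have written is precisely a reconstruction of that argument (real-part estimate~\eqref{Re} to control the exponential on the vertical line, explicit form~\eqref{laplace explicitee} of $\phi^{z_0}$ for integrability in $v$, integration by parts in the degenerate case $\alpha_0=0$ or $b_0=0$). The only organisational difference is your handling of $I_2^{z_0}$ when $b_0>0$: the paper's proof of Lemma~\ref{neglig2} invokes the full saddle-point asymptotics for $I_2^{z_0}$ (via representation~\eqref{I22''}) to obtain the stronger bound $O\big(r^{-1/2}e^{-r(\cos(\alpha)x(\alpha)+\sin(\alpha)y(\alpha))}\big)$, which is then a fortiori dominated by the target; you instead bound $I_2^{z_0}$ directly on the contour $Y^+(x_c)+i\R$ using~\eqref{I2popole}. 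Your route is slightly more elementary and avoids re-invoking the saddle-point machinery, at the cost of a marginally weaker (but sufficient) bound.
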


\begin{proof}
The proof is identical to that of Lemma~\ref{neglig2}.
\end{proof}

We now prove the corresponding theorem for the dotted pink region (see Figure~\ref{fig:soleil3}).

\begin{theorem}[Asymptotics strictly inside the dotted pink region]\label{thm:7}
 Suppose that $x^* \in (0, \M)$. Let $z_0 \in \R^2$ and $\alpha_0 \in [0,\alpha^*_+)$. Then: 
\begin{equation}
    g^{z_0}(r\cos(\alpha), r\sin(\alpha)) \underset{r\to+\infty\atop \alpha\to \alpha_0, \alpha > 0}{\sim} C^+_{*}f_{*}(z_0) e^{-r(\cos(\alpha)x^* + \sin(\alpha)Y^+(x^*))}
\end{equation}
where \begin{equation}
    f_{*}(a_0,b_0) = \begin{cases}
e^{x^*a_0 + Y^-(x^*)b_0} & \text{if } b_0 \geq 0 \\
e^{x^*a_0 + Z^+(x^*)b_0} & \text{if } b_0 < 0.
\end{cases}
\end{equation}
and 
\begin{equation}\label{eq:C+*}
    C^+_{*} = \frac{\gamma(x^*,Y^+(x^*),Z^+(x^*))}{\partial_y\gamma_+(x^*,Y^+(x^*))\frac{d}{dx}\gamma(x,Y^-(x), Z^+(x))\big|_{x=x^*}}.
\end{equation}
The symmetric result holds for angles $\alpha_0 \in (\alpha^*_-, 2\pi].$
\end{theorem}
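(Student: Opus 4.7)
The plan is to combine the Laplace-inversion decomposition $g^{z_0}(a,b) = I_1^{z_0}(a,b) + I_2^{z_0}(a,b)$ from Lemma~\ref{double_simple} with the contour deformation across the pole $x^*$ furnished by Lemma~\ref{lem:direcccccc}. Since $\alpha_0 \in [0,\alpha^*_+)$ and $\alpha \in (0,\alpha_{\mu^+}) \mapsto x(\alpha)$ is a strictly decreasing $C^\infty$ diffeomorphism onto $(0,x_{max}^+)$ with $x(\alpha^*_+) = x^*$, one has $x(\alpha_0) > x^*$; I would fix $x_c$ with $x^* < x_c < \min(x(\alpha_0), \M)$.

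Applying Lemma~\ref{lem:direcccccc} to deform the $I_1^{z_0}$ contour past the simple pole of $\phi^{z_0}$ at $x^*$ yields a residue term plus an integral on $x_c + i\R$. Using the explicit residue of $\phi^{z_0}$ stated just after \eqref{syst_pole} together with \eqref{eq:C+*} and the definition of $f_*(z_0)$, the residue contribution evaluates, after combining signs, to
\[
C^+_* f_*(z_0)\, e^{-r(\cos(\alpha)\, x^* + \sin(\alpha)\, Y^+(x^*))}.
\]

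Next, by Lemma~\ref{lem:neglig12}, both the remaining integral on $x_c + i\R$ (from $I_1^{z_0}$) and $I_2^{z_0}(a,b)$ itself are bounded by $D\, e^{-r(\cos(\alpha)\, x_c + \sin(\alpha)\, Y^+(x_c))}$. The crucial step is to check the strict exponential dominance
\[
\cos(\alpha)\, x_c + \sin(\alpha)\, Y^+(x_c) \;>\; \cos(\alpha)\, x^* + \sin(\alpha)\, Y^+(x^*)
\]
uniformly for $\alpha$ in a neighbourhood of $\alpha_0$. At $\alpha = \alpha_0$ this holds because $x \mapsto \cos(\alpha_0) x + \sin(\alpha_0) Y^+(x)$ attains its unique maximum at $x(\alpha_0)$ and is therefore strictly increasing on $(-\infty, x(\alpha_0))$, which contains $x^* < x_c$; by continuity the inequality persists for $\alpha$ close to $\alpha_0$. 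The degenerate case $\alpha_0 = 0$ is no obstruction, as the inequality reduces to $x_c > x^*$, still valid by the choice of $x_c$. The residue term is thus exponentially dominant and produces the stated asymptotics. The symmetric case $\alpha_0 \in (\alpha^*_-, 2\pi]$ follows by the analogous argument on the lower half-plane, deforming the contour past the pole in the representation built from $Z^-$ rather than $Y^+$.

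The main obstacle is the careful bookkeeping of sign conventions in extracting the residue: the residue of $\phi^{z_0}$ at $x^*$ carries an intrinsic negative sign inherited from the $-1/\gamma(\cdot,Y^-,Z^+)$ prefactor in \eqref{laplace explicitee}, and the orientation of the contour shift in the residue theorem produces a further sign. These must cancel correctly so that the resulting coefficient $C^+_*$ appears with the positive sign demanded by the positivity of $g^{z_0}$; the remaining ingredients (existence of $x_c$, the negligibility bounds, and the well-definedness of $C^+_*$ via non-vanishing of each factor in \eqref{eq:C+*}) all follow from the lemmas already in place.
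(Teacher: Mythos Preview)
Your proposal is correct and follows essentially the same approach as the paper's proof: choose $x_c$ with $x^*<x_c<\min(x(\alpha_0),\M)$, apply Lemma~\ref{lem:direcccccc} to extract the residue at $x^*$, use Lemma~\ref{lem:neglig12} to bound the remaining integrals, and compare exponents to conclude that the residue term dominates. Your write-up is in fact more detailed than the paper's, which simply asserts the inequality $\cos(\alpha_0)x^*+\sin(\alpha_0)Y^+(x^*)<\cos(\alpha_0)x_c+\sin(\alpha_0)Y^+(x_c)$ from the choice of $x_c$ without spelling out the monotonicity argument or the $\alpha_0=0$ case.
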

\begin{proof}
Choose $x^* < x_c < \min(x(\alpha_0),\M)$ and consider the representation $g^{z_0} = I_1^{z_0} + I_2^{z_0}$ from Lemma~\ref{lem:direcccccc}. By the choice of $x_c$, we have $\cos(\alpha_0)x^* + \sin(\alpha_0)Y^+(x^*) < \cos(\alpha_0)x_c + \sin(\alpha_0)Y^+(x_c)$. Then, by Lemma~\ref{lem:neglig12}, both the integral term in \eqref{eq:residuI1} and the integral in \eqref{I2pole} are negligible compared with the residue as $\alpha \to \alpha_0$ and $r \to +\infty$. The result follows immediately.
\end{proof}
\begin{figure}
    \centering
    \includegraphics[width=0.8\linewidth]{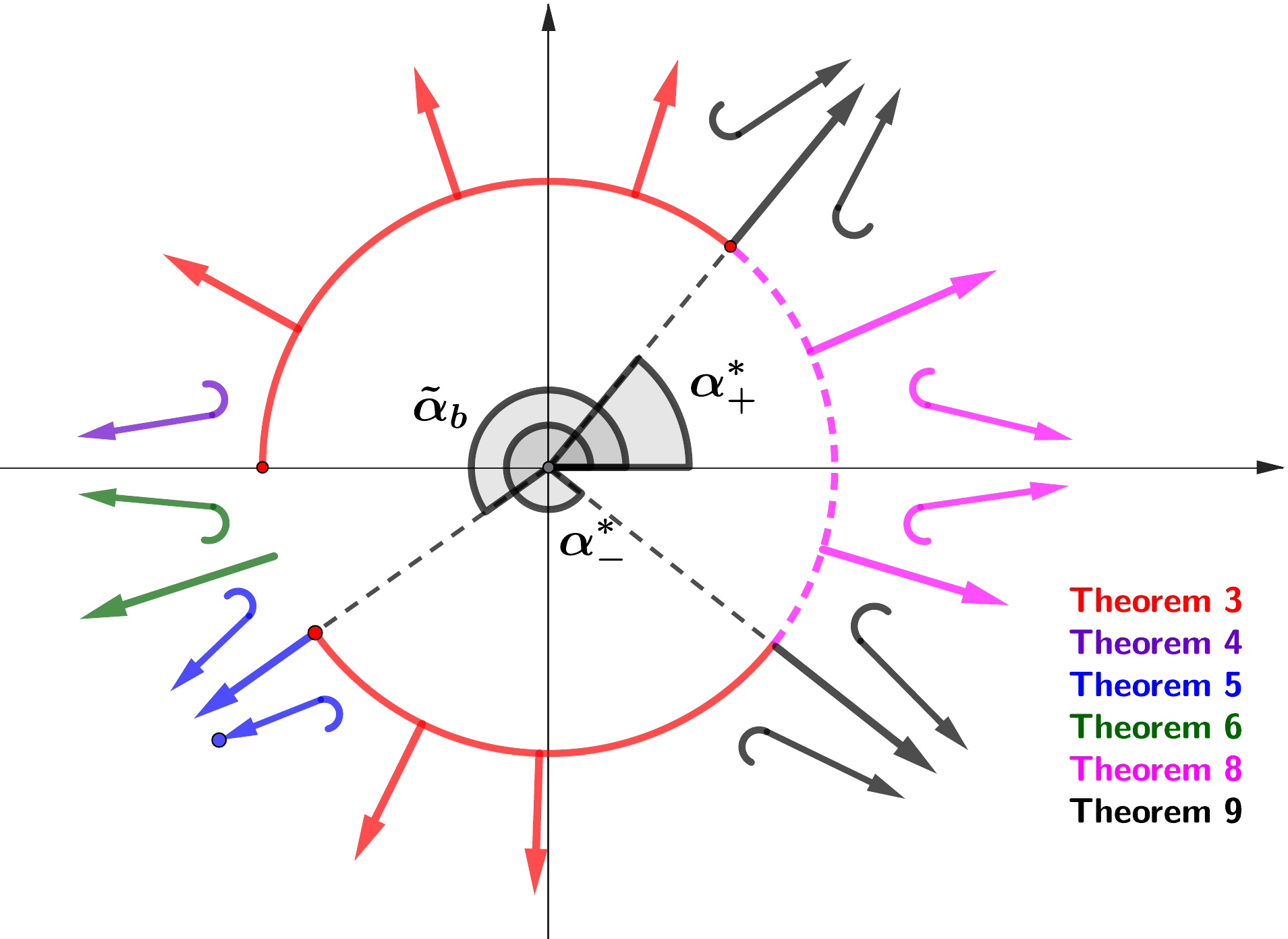}
    \caption{Summary of asymptotics for the extended results. In this case, $\m = x_{min}^+$ and $x^* \in (0,x_b)$. We omit $\alpha_b$ for readability.}
    \label{fig:soleil3}
\end{figure}

\begin{figure}[hbtp]
    \centering
    \includegraphics[width=0.6\linewidth]{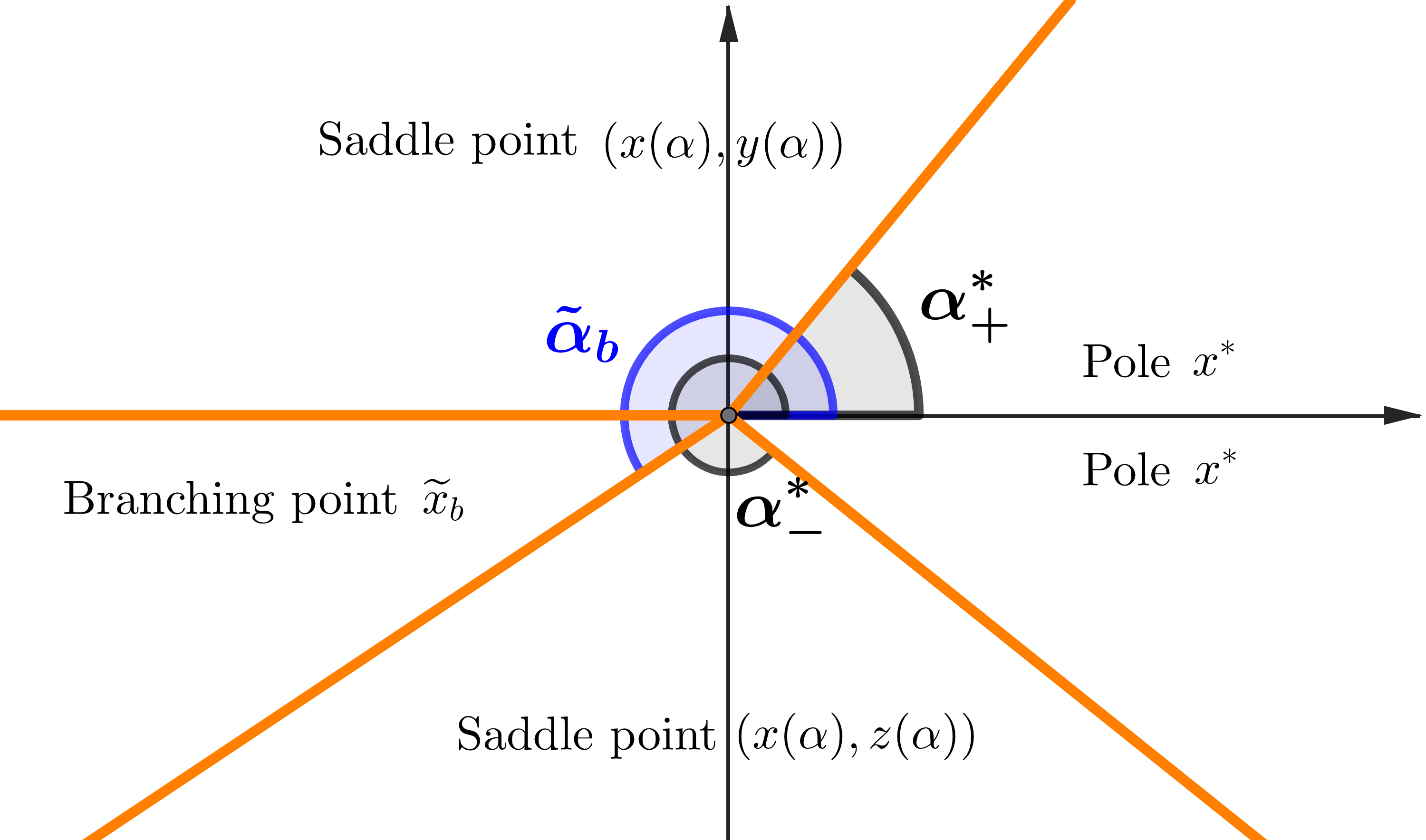}
    \caption{Type of asymptotics according to directions: saddle point type, pole or branching point.}
    \label{fig:typepasymptotique2}
\end{figure}

\subsection{Asymptotics in directions $\alpha^*_\pm$}\label{sub:as:pole}
We now consider the asymptotics in the directions $\alpha^*_\pm$ when $x^* \neq +\infty$. For simplicity, we assume that $x^* \in (0,\M)$. This “pole-type” asymptotics has been studied in detail in~\cite[Section 10]{Franceschi_2024}, and the same arguments apply here. Therefore, we state the following theorem without providing a proof. Let $\textnormal{erf}$ denote the  function defined by
$\textnormal{erf}(x) = \frac{2}{\sqrt{\pi}} \int_0^x \exp(-s^2)ds$.

\begin{theorem}[Asymptotics as $\alpha \to \alpha^*_\pm$]\label{thm:8}
Suppose that $x^* \in (0, \M).$
Then, the following asymptotics hold: %as $r \to \infty$ and $\alpha(a,b) \to \alpha^*$
\begin{itemize} 

\item[$(i)$] If $\alpha > \alpha^*_+$ and $r(\alpha -\alpha^*_+)^2 \to \infty$, then
\begin{equation}
     g^{z_0}(r\cos(\alpha), r\sin(\alpha)) \underset{r\to+\infty\atop \alpha\to \alpha^*_+}{\sim}C_*'f_*(z_0) \frac{e^{-r(\cos(\alpha)x(\alpha) + \sin(\alpha)y(\alpha))}}{\sqrt{r}  (x(\alpha)-x^*)}
\end{equation}
where $$C_*'= -C^+(\alpha^*_+)\frac{\gamma(x^*,Y^+(x^*),Z^+(x^*))}
{\frac{d}{dx}\gamma(x,Y^-(x), Z^+(x))\big|_{x=x^*}} > 0$$   
and where $C^+(\alpha^*_+)$ is given by~\eqref{C(alpha)}.

\item[$(ii)$]  If $\alpha >\alpha^*_+$ and $r(\alpha -\alpha^*_+)^2 \to c>0$, then 
\begin{equation}
    g^{z_0}(r\cos(\alpha), r\sin(\alpha)) \underset{r\to+\infty\atop \alpha\to \alpha^*_+}{\sim} \frac{1}{2}C_*^+ \Big(1-\textnormal{erf}\Big(\sqrt{c} A(\alpha^*_+ )\Big)\Big) f_*(z_0)e^{ -r(\cos(\alpha)x^* + \sin(\alpha) y^*)} .
\end{equation}
where $C^+_*$ is given by \eqref{eq:C+*}, $A(\alpha^*_+) = \frac{-x'(\alpha^*_+)}{x'_\omega(0,\alpha^*_+)}$ and where $x'_\omega(0,\alpha^*_+)$ is defined in \eqref{zop}.

\item[$(iii)$] If $r (\alpha -\alpha^*_+)^2 \to 0$, then 
\begin{equation}
    g^{z_0}(r\cos(\alpha), r\sin(\alpha)) \underset{r\to+\infty\atop \alpha\to \alpha^*_+}{\sim} \frac{1}{2}C_*^+ f_*(z_0)e^{ -r(\cos(\alpha)x^* + \sin(\alpha) y^*)} .
\end{equation}

\item[$(iv)$]  If $\alpha <\alpha^*_+$ and $r (\alpha -\alpha^*_+)^2 \to c>0$, then 
\begin{equation}
    g^{z_0}(r\cos(\alpha), r\sin(\alpha)) \underset{r\to+\infty\atop \alpha\to \alpha^*_+}{\sim} \frac{1}{2}C_*^+ \Big(1+\textnormal{erf}\Big(\sqrt{c} A(\alpha^*_+ )\Big)\Big) f_*(z_0)e^{ -r(\cos(\alpha)x^* + \sin(\alpha) y^*)} .
\end{equation}

\item[$(v)$]  If  $\alpha<\alpha^*_+$ and $r (\alpha -\alpha^*_+)^2 \to \infty$, then
\begin{equation}
    g^{z_0}(r\cos(\alpha), r\sin(\alpha)) \underset{r\to+\infty\atop \alpha\to \alpha^*_+}{\sim} C_*^+ f_*(z_0)e^{ -r(\cos(\alpha)x^* + \sin(\alpha) y^*)} .
\end{equation}
\end{itemize} 
The symmetric asymptotics hold as $\alpha \to \alpha^*_-$. 
\end{theorem}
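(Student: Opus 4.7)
The plan is to adapt the saddle-point-near-a-pole technique developed in \cite[Section 10]{Franceschi_2024} to our setting. Starting from the representation $g^{z_0} = I_1^{z_0} + I_2^{z_0}$ of Lemma~\ref{double_simple}, I would deform the integration contour of $I_1^{z_0}$ to the steepest descent path $\Gamma_{x,\alpha} \cup S^\pm_{x,\alpha}$ constructed in Section~\ref{sub:pathsaddlepoint}. Since the pole $x^*$ of $\phi^{z_0}$ lies in $(0,\M)$ and $x(\alpha)-x^*$ changes sign exactly as $\alpha$ crosses $\alpha^*_+$, this deformation picks up the residue $C^+_* f_*(z_0) e^{-r(\cos\alpha\,x^* + \sin\alpha\,Y^+(x^*))}$ precisely when $\alpha < \alpha^*_+$. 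By arguments analogous to those of Lemmas~\ref{lem:neglig1} and~\ref{lem:neglig12}, the tails $S^\pm_{x,\alpha}$ and the term $I_2^{z_0}$ contribute only lower-order terms in every regime considered.

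The core of the analysis is the integral along $\Gamma_{x,\alpha}$ when the pole and the saddle are close. Parametrizing $\Gamma_{x,\alpha}$ by $\omega = it$ with $F(x(it,\alpha),\alpha) = -t^2$, and locating the pole via the implicit function theorem applied to $x(\omega,\alpha) = x^*$, one gets a smooth function $\omega^*(\alpha)$ with $\omega^*(\alpha^*_+) = 0$ and
\[
\omega^*(\alpha) = A(\alpha^*_+)(\alpha - \alpha^*_+) + O\!\left((\alpha - \alpha^*_+)^2\right).
\]
After factoring out the smooth part, the integral reduces (up to an $o(1)$ error absorbed by the exponential prefactor) to a model integral of the type
\[
J(\alpha,r) := \int_{-\varepsilon}^{\varepsilon} \frac{\Psi(it,\alpha)\, e^{-rt^2}}{it - \omega^*(\alpha)}\, i\, dt,
\]
with $\Psi$ holomorphic near $(0,\alpha^*_+)$ and $\Psi(0,\alpha^*_+)$ encoding the constant $C^+_*$ from \eqref{eq:C+*}.

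The asymptotics of $J(\alpha,r)$ as $r\to+\infty$ follow from the classical Van der Waerden / Bleistein formula for Gaussian integrals with a nearby pole, producing a closed expression in terms of $\operatorname{erf}\!\bigl(\sqrt r\,A(\alpha^*_+)|\alpha-\alpha^*_+|\bigr)$. The five cases of the statement correspond exactly to the five behaviours of $\sqrt r\,\omega^*(\alpha)$: when $\sqrt r\,|\omega^*(\alpha)|\to\infty$, the error function saturates to $\pm 1$, yielding either the full residue (case (v), $\alpha<\alpha^*_+$) or no residue and the saddle contribution with amplitude modified by the factor $1/(x(\alpha)-x^*)$ (case (i), $\alpha>\alpha^*_+$); the transitional regimes (ii)--(iv) give the convex combinations $\tfrac12 C^+_* (1 \pm \operatorname{erf}(\sqrt c\,A(\alpha^*_+)))$, with case (iii) producing the symmetric value $\tfrac12 C^+_*$ as the pole passes through the saddle. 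The symmetric statement at $\alpha^*_-$ is obtained by swapping the roles of $\gamma_+$ and $\gamma_-$ and working with the lower-half-plane representation of $I_2^{z_0}$.

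The principal obstacle is not conceptual but bookkeeping: one must verify that the constant $C_*'$ in case~(i) arises consistently as the product of the usual saddle-point constant $C^+(\alpha^*_+)$ with the residue-like factor, and that the normalization of $A(\alpha^*_+)$ matches the implicit differentiation of $x(\omega,\alpha)=x^*$ through the relation \eqref{zop} and \eqref{eq_link_cste}. Since the local model integral and all the geometric constructions are exactly those of \cite[Section 10]{Franceschi_2024}, with only the kernel $\phi^{z_0}$ replaced by our explicit expression \eqref{laplace explicitee}, the arguments transfer verbatim once the local correspondence between $x$ and $\omega$ is set up.
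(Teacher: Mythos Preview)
Your proposal is correct and follows exactly the approach the paper indicates: the paper does not give an independent proof of this theorem but states that the pole-type asymptotics have been studied in detail in \cite[Section~10]{Franceschi_2024} and that the same arguments apply here. Your outline of the contour deformation with residue pickup, the localization of the pole via $\omega^*(\alpha)$, and the reduction to the Van der Waerden/Bleistein model integral with the five regimes governed by $\sqrt r\,\omega^*(\alpha)$ is precisely the content of that reference adapted to the present kernel.
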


\subsection{Martin boundary, general case}\label{sub:martingeneral}
%In the last Sections..., we considered specific directions where the pole $x^*$ (when $x^*\neq+\infty$) has an impact on the asymptotics. What we claim is that all the other directions have the same behaviour as we established in Theorems~\ref{thm:1} to \ref{thm:4}. The correspondence between angles and Theorems are described in Figure~\ref{fig:soleil3} for the specific case $x^*\in(0,\M), \m = x_{min}^+$. If $x^* = +\infty$, then asymptotics are the same as in Theorems~\ref{thm:1} to \ref{thm:4}. 
The study of the asymptotics carried out in Sections~\ref{sub:asafterpole} and~\ref{sub:as:pole}, together with the results established in Sections~\ref{subsub:tout} and~\ref{subsub:touttout}, leads to the following theorem on the Martin boundary.
 %Its proof is a consequence of all Theorems~\ref{thm:1}, \ref{thm:2}, \ref{thm3}, \ref{thm:4}, \ref{thm:7}, \ref{thm:8}.

\begin{theorem}[Martin Boundary and harmonic functions for general $q$]
\label{th3new}
If $x^* = +\infty$, then Theorem~\ref{thm5} applies. Assume now that $x^* \in (0, \M)$. %and that $x_{min}^- < x_{min}^+$ (corresponding to the setting of Figure~\ref{fig:soleil3}). 
Then, the Martin boundary $\Gamma$ is given by
\begin{equation}\label{front_Martin2}
\Gamma = \left[\{e^{i\alpha}\}_{\alpha \in [\alpha^*_+,\alpha^*_-] \cap \mathcal M} \cup \{ue^{i\tilde\alpha_b}\}_{u\in[1,2]}\right]/\mathcal{R}_1 \;\sim\; \mathbb{S}^1
\end{equation}
where $\mathcal{R}_1$ is the equivalence relation given by $e^{i\alpha^*_-}\,\mathcal{R}_1\,e^{i\alpha^*_+}$, $e^{i\pi}\,\mathcal{R}_1\,2e^{i\tilde\alpha_b}$, and $[x\mathcal{R}_1y \Longleftrightarrow x=y]$ for the other points (see Figure~\ref{fig:martingeneral}).
Furthermore, the minimal Martin boundary, denoted by $\Gamma_{min}$, is given by
\begin{equation}
\Gamma_{min} = \left[\{e^{i\alpha}\}_{\alpha \in [\alpha^*_+,\alpha^*_-] \cap \mathcal M}\right] /\mathcal{R}_2 \;\sim\; [0,1]
\end{equation}
where $\mathcal{R}_2$ is the equivalence relation given by $e^{i\alpha^*_-}\,\mathcal{R}_2\,e^{i\alpha^*_+}$ and $[x\mathcal{R}_2y \Longleftrightarrow x=y]$ for the other points (see again Figure~\ref{fig:martingeneral}). Moreover, for each positive harmonic function $h$ (with respect to the process), there exists a unique Radon measure $\nu$ on $[\alpha^*_+, \alpha^*_-)\cap \mathcal M$ such that
\begin{equation}\label{eq:representation2}
    \forall z \in \R^2, \quad h(z) = \int_{[\alpha^*_+, \alpha^*_-)\cap \mathcal M} \frac{h_\alpha(z)}{h_\alpha(0)} \, d\nu(\alpha) 
\end{equation}
where the conventions $\frac{h_{\alpha^*_+}(z_0)}{h_{\alpha^*_+}(0)} = \frac{f_*(z_0)}{f_*(0)}$ and $\frac{h_{\pi}(z_0)}{h_{\pi}(0)} = \frac{f_\pi(z_0)}{f_\pi(0)}$ are used, since 
$$\underset{\alpha \to \alpha^*_+\atop \alpha \in \mathcal M}{\lim} \frac{h_\alpha(z_0)}{h_\alpha(0)} \left(= \underset{\alpha \to \alpha^*_-\atop \alpha \in \mathcal M}{\lim} \frac{h_\alpha(z_0)}{h_\alpha(0)}\right) = \frac{f_*(z_0)}{f_*(0)}\quad \textnormal{and}\quad \frac{h_\alpha(z_0)}{h_\alpha(0)} \underset{\alpha \to \pi \atop \alpha \in \mathcal M}{\longrightarrow} \frac{f_\pi(z_0)}{f_\pi(0)}.$$
Finally, the case $x^* \in (\m, 0)$ is symmetric.
\end{theorem}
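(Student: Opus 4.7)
The plan is to follow the three-step strategy used for Theorem \ref{thm5}, adapted to accommodate the new harmonic function $f_*$ coming from the pole $x^*$. If $x^* = +\infty$, there is no pole and the statement reduces to Theorem \ref{thm5} verbatim. Assume henceforth $x^* \in (0, \M)$, the case $x^* \in (\m, 0)$ being fully symmetric.

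First I would catalogue all Martin functions $\lim g^{z_0}(z)/g(0,z)$ using Section \ref{subsub:tout} combined with Theorems \ref{thm:7} and \ref{thm:8}. Outside the pink region, i.e., for $\alpha_0 \in [\alpha^*_+, \alpha^*_-]$, the deformation of the contour does not cross the pole and Theorems \ref{alpha=0}--\ref{thm:4} give $h_\alpha/h_\alpha(0)$ on $\mathcal{M} \cap (\alpha^*_+, \alpha^*_-)$, plus $f_\pi/f_\pi(0)$ in directions of $[0,2\pi]\setminus\mathcal{M}$ and at $\alpha = \pi$, plus convex combinations of $h_{\tilde\alpha_b}/h_{\tilde\alpha_b}(0)$ and $f_\pi/f_\pi(0)$ at $\tilde\alpha_b$, the latter producing the stretched arc $\{u e^{i\tilde\alpha_b}\}_{u \in [1,2]}$. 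Strictly inside the pink region, Theorem \ref{thm:7} gives the single kernel $f_*/f_*(0)$. At the boundary angles $\alpha^*_\pm$, the sub-cases (ii)--(v) of Theorem \ref{thm:8} all yield scalar multiples of $f_*$, while sub-case (i) recovers the saddle-point regime and yields the limit $\lim_{\alpha \to \alpha^*_\pm} h_\alpha(z_0)/h_\alpha(0)$. A short calculation using \eqref{Harm1} and the vanishing of $\gamma(x, Y^-(x), Z^+(x))$ at $x = x^*$ shows that, after normalization, the exponential $e^{x(\alpha)a_0 + y(\alpha)b_0}$ disappears and the limit equals $f_*(z_0)/f_*(0)$. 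This produces the identification $e^{i\alpha^*_+}\,\mathcal{R}_1\,e^{i\alpha^*_-}$ and delivers the description of $\Gamma$.

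Next I would prove the harmonicity of $f_*$ by adapting Lemma \ref{cestbienharmonique}. Picking $z_n = r_n e^{i\alpha_n}$ with $\alpha_n \in [0, \alpha^*_+)$ and $r_n \to +\infty$, Theorem \ref{thm:7} gives $K(z_0, z_n) \to f_*(z_0)/f_*(0)$, and the key point is uniform convergence for $z_0$ in a compact set $U$. This uniformity follows by revisiting the proof of Theorem \ref{thm:7}: the dominant contribution is the explicit residue $\operatorname{res}_{x = x^*}\phi^{z_0}(x)$ whose $z_0$-dependence is exactly $f_*(z_0)$, while the remaining integrals in \eqref{eq:residuI1}, \eqref{I2pole} are dominated by Lemma \ref{lem:neglig12} with constants that can be taken uniform in $z_0 \in U$ by the same boundedness arguments as in Lemma \ref{cestbienharmonique}. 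The dominated convergence theorem applied to the martingale identity $K(z_0, z_n) = \mathbb{E}_{z_0}[K(Z_{\tau_{U^c}}, z_n)]$ then concludes.

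Finally I would establish minimality. For $h_\alpha$ with $\alpha \in \mathcal{M}\cap(\alpha^*_+, \alpha^*_-)$ and for $f_\pi$, the argument of Theorem \ref{thm5} via Lemma \ref{lem:ineqmini} applies unchanged. For $f_*$, suppose $f_* = \int_{[\alpha^*_+, \alpha^*_-] \cap \mathcal{M}} h_\gamma \, d\mu(\gamma) + c_\pi f_\pi$ under the convention $h_{\alpha^*_\pm} := f_*$. Choosing $\beta \in (\alpha^*_+, \alpha^*_-) \cap \mathcal{M} \setminus \{\pi\}$ and evaluating at $(r\cos\beta, r\sin\beta)$, one obtains via \eqref{Harm1} that $h_\gamma$ grows like $\exp(r(\cos\beta \cdot x(\gamma) + \sin\beta \cdot y(\gamma)))$ with rate maximal at $\gamma = \beta$; meanwhile $f_*$ grows like $\exp(r(\cos\beta \cdot x^* + \sin\beta \cdot Y^-(x^*)))$, which is strictly slower since $(x^*, Y^-(x^*))$ does not maximize the linear form on the ellipse $\gamma_+ = 0$ (the maximum is attained only at $(x(\beta), y(\beta))$). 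Mimicking the proof of Theorem \ref{thm5}, the analogue of \eqref{ineq:hgamma1}--\eqref{ineq:halpha2} produces an estimate $\mu((\beta - \varepsilon, \beta + \varepsilon)) \leq C e^{-\eta r}$ valid for all large $r$, forcing $\mu$ to vanish on $(\alpha^*_+, \alpha^*_-) \setminus \{\alpha^*_\pm\}$. An evaluation at $\beta = \pi$ then eliminates $c_\pi$, and $\mu = \delta_{\alpha^*_+}$ gives $f_* = f_*$. Once minimality is established, the integral representation \eqref{eq:representation2} follows from the standard Martin representation theorem \cite[Theorem 3]{kunitaWatanabe1965}.

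The main obstacle will be the analysis at the transition angles $\alpha^*_\pm$: the individual harmonic functions $h_\alpha$ blow up as $\alpha \to \alpha^*_\pm$ while $h_\alpha/h_\alpha(0)$ converges smoothly to $f_*/f_*(0)$, so the cancellation between the two exponentials $e^{x(\alpha)a_0 + y(\alpha)b_0}$ and $e^{x(\alpha)a_0 + Y^-(x(\alpha))b_0}$ must be tracked carefully to ensure both that the Martin kernels match (Step 1) and that the growth rate comparison needed for minimality of $f_*$ (Step 3) survives in a uniform fashion in a neighbourhood of the pole direction.
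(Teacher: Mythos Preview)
Your proposal is correct and follows exactly the approach the paper intends: Theorem~\ref{th3new} is stated without an explicit proof, as a direct consequence of the asymptotic results in Sections~\ref{sub:discussion}--\ref{sub:as:pole} combined with the template of Section~\ref{sec:5}, and your three steps (cataloguing Martin kernels, harmonicity via uniform convergence, minimality via exponential-rate comparison) reproduce that template faithfully. One small clarification: in Step~1, all five sub-cases of Theorem~\ref{thm:8} already yield $f_*(z_0)/f_*(0)$ directly as the Martin kernel, so the ``short calculation'' showing $\lim_{\alpha\to\alpha^*_+} h_\alpha/h_\alpha(0)=f_*/f_*(0)$ is needed only to verify the continuity claim in the statement (and hence the identification $e^{i\alpha^*_+}\mathcal R_1 e^{i\alpha^*_-}$), not to identify the boundary point at $\alpha^*_\pm$ itself.
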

\begin{figure}[hbtp]
\centering
     \begin{subfigure}[b]{0.47\textwidth} 
\includegraphics[scale=0.65]{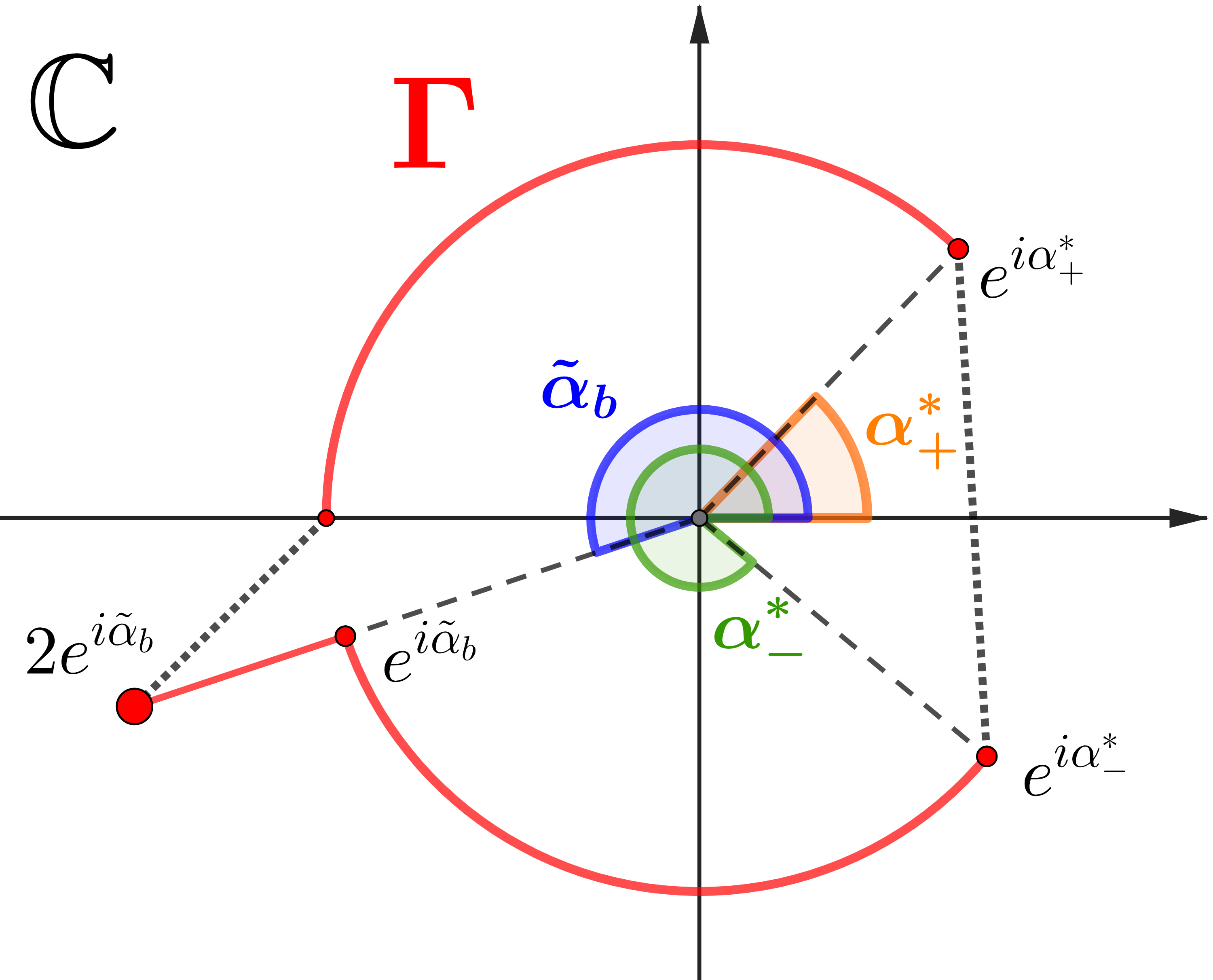}
\caption{Martin Boundary.}
     \end{subfigure}
     \hfill
     \centering
     \begin{subfigure}[b]{0.47\textwidth}
\includegraphics[scale=0.65]{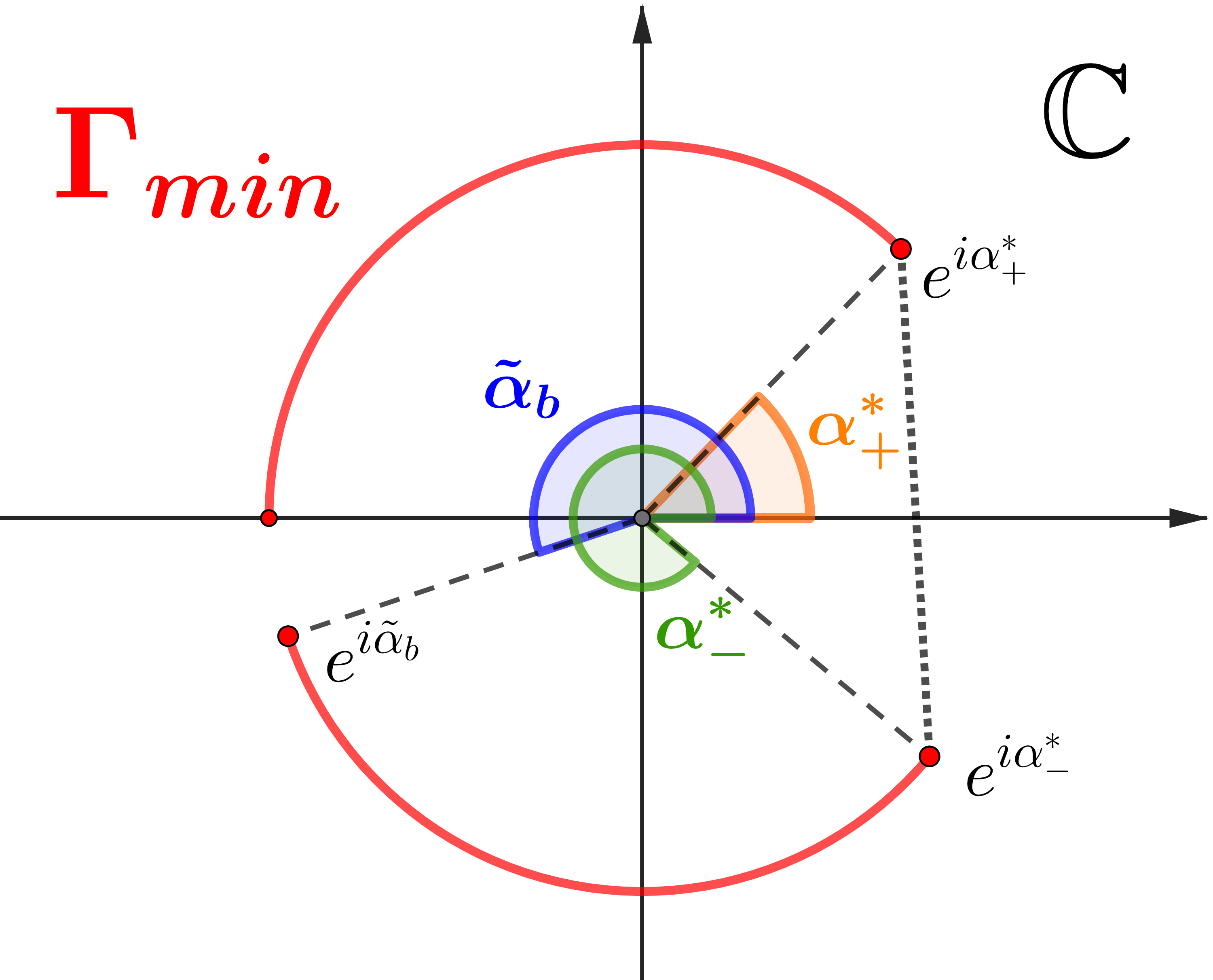}
\caption{
Minimal Martin Boundary.}
     \end{subfigure}
\caption{Martin Boundary and Minimal Martin Boundary for the general process in the case $x^* \in (0, \M)$ and $x_{min}^- < x_{min}^+$.}\label{fig:martingeneral}
\end{figure}
\newpage
\appendix

\section{Two-sided Tauberian theorem}
Tauberian theorems are known for Laplace transforms on $\R^+$, giving a link between the asymptotics of a function and its Laplace transform (cf \cite[Theorem 37.1]{doetsch_introduction_1974} or \cite[Lemma C.2]{dai_reflecting_2011}). In fact, the proof can be directly adapted to two-sided Laplace transforms without any technical issue. 
\begin{lemma}\label{lapinv}
Let $f :\R \longrightarrow \R^+$ be a continuous function and $\phi(z):= \int_\R e^{xz}f(x)dx$. Suppose that there are $a < 0 < b$ such that $\phi$ converges absolutely in $a<\Re(z)<b$. For $\delta > 0$ and $s \in \C$, let $G^+_\delta(s) = \{z \in \C, \Re(z) > 0, z\neq s, |\arg(z-s)| > \delta\}$ (where the complex argument is taken in $(-\pi,\pi]$). Suppose that there exists $0 \leq \delta< \pi/2$ such that
\begin{itemize}
\item Function $\phi$ has a holomorphic continuation on $G^+_\delta(b)$.
\item The following asymptotics hold: $$\phi(z) \underset{|z|\to + \infty\atop z \in G^+_\delta(b)}\longrightarrow 0.$$
\item There exists $\lambda \in \R$ and constants $c \in \C$, $A \neq 0$ such that
\begin{equation}\label{aslap}(\phi(z) -c)(b-z)^\lambda \underset{z\to b\atop z\in G^+_\delta(b)}{\longrightarrow}A.\end{equation}
\end{itemize}
Then: $$f(x) \underset{x \to +\infty}{\sim}\frac{A}{\Gamma(\lambda)}e^{-bx}x^{\lambda - 1} $$
\end{lemma}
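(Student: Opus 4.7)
The plan is to reduce to the classical one-sided Tauberian theorem (e.g.\ \cite[Theorem 37.1]{doetsch_introduction_1974}) by isolating the part of $\phi$ that is responsible for the singularity at $b$. Split $f = f\fc_{[0,+\infty)} + f\fc_{(-\infty,0)}$ and write correspondingly $\phi(z) = \phi_+(z) + \phi_-(z)$ with
\[
\phi_+(z) = \int_0^{+\infty} e^{xz} f(x)\,dx, \qquad \phi_-(z) = \int_{-\infty}^0 e^{xz} f(x)\,dx.
\]
The key observation is that $\phi_-$ converges and is holomorphic on the entire half-plane $\{\Re(z) > a\}$: for $x \leq 0$, the integrand $e^{xz}f(x)$ decays as soon as $\Re(z) > a$. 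In particular $\phi_-$ is analytic at $z = b$, so the singular behaviour in \eqref{aslap} is carried entirely by $\phi_+$. Setting $c' := c - \phi_-(b)$ and Taylor-expanding $\phi_-$ around $b$, the estimate $(\phi_-(z) - \phi_-(b))(b-z)^\lambda = O((b-z)^{\lambda+1}) \to 0$ yields
\[
(\phi_+(z) - c')\,(b-z)^\lambda \longrightarrow A \qquad \text{as } z \to b,\ z \in G^+_\delta(b).
\]

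Next, I would substitute $s := b - z$ and set $F(x) := e^{bx} f(x)$, which is continuous and non-negative on $[0, +\infty)$. The function $\Phi(s) := \phi_+(b-s) = \int_0^{+\infty} e^{-sx} F(x)\,dx$ is then a standard one-sided Laplace transform, and the previous step reads $(\Phi(s) - c')\,s^\lambda \to A$ as $s \to 0$ in the image of $G^+_\delta(b)$ under $z \mapsto b - z$. A short computation shows that this image is, near the origin, a slit sector around the positive $s$-axis of half-opening $\pi - \delta$, which is exactly the Hankel-type neighbourhood required by the classical theorem. The hypothesis that $\phi(z) \to 0$ as $|z| \to \infty$ in $G^+_\delta(b)$ transfers to $\Phi$ along that contour, using that $\phi_-(b-s) \to 0$ as $|s| \to \infty$ in the relevant half-plane (since $\phi_-$ is itself a one-sided Laplace transform of $f\fc_{(-\infty,0)}$). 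Applying the classical Tauberian theorem to $\Phi$ then gives
\[
F(x) \underset{x \to +\infty}{\sim} \frac{A}{\Gamma(\lambda)}\,x^{\lambda - 1},
\]
and multiplying by $e^{-bx}$ produces the claimed asymptotic $f(x) \sim \frac{A}{\Gamma(\lambda)}\,e^{-bx}\,x^{\lambda - 1}$.

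The reduction above is essentially bookkeeping, and as the authors remark after the statement, no substantive new obstacle arises compared with the one-sided case. The only point deserving attention is the transfer of the decay and holomorphicity hypotheses through the splitting $\phi = \phi_+ + \phi_-$; but this is immediate from the fact that $\phi_-$ is holomorphic on $\{\Re(z) > a\}$ with standard decay as $|\Im(z)| \to \infty$, so all the technical conditions of \cite[Theorem 37.1]{doetsch_introduction_1974} are inherited by $\Phi$ from the corresponding conditions on $\phi$.
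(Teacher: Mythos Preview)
The paper does not actually supply a proof of this lemma; it merely states that the classical one-sided Tauberian theorem (Doetsch, Theorem~37.1) ``can be directly adapted to two-sided Laplace transforms without any technical issue.'' Your reduction---splitting $\phi = \phi_+ + \phi_-$, observing that $\phi_-$ is holomorphic on the whole half-plane $\{\Re(z) > a\}$ and hence carries none of the singular behaviour at $b$, and then invoking the one-sided theorem for $\Phi(s) = \phi_+(b-s)$---is precisely the natural adaptation and is correct in substance.

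Two small points are worth tightening. First, the step $(\phi_-(z) - \phi_-(b))(b-z)^\lambda = O((b-z)^{\lambda+1}) \to 0$ implicitly uses $\lambda > -1$; for smaller $\lambda$ one would subtract a higher-order Taylor jet of $\phi_-$ at $b$ and adjust $c'$ accordingly. In the paper's application $\lambda = -1/2$, so this is harmless. Second, the claim that $\phi_-(z) \to 0$ as $|z| \to \infty$ in $G^+_\delta(b)$ deserves one line of justification: since $\Re(z) > 0 > a$ throughout, boundedness by $\int_{-\infty}^0 f < \infty$ plus Riemann--Lebesgue on vertical lines (and dominated convergence as $\Re(z) \to +\infty$) suffice. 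Neither point is a genuine gap.
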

Obviously, the symmetric lemma for the asymptotics of $f$ in $-\infty$ holds.

\section{An integral asymptotic lemma}
\begin{lemma}\label{lemme_technique}
The following asymptotics hold as $q \to +\infty$
$$\int_0^1 \sqrt{1-s}e^{qs^2}ds \underset{q\to+\infty}{\sim} \frac{\sqrt \pi}{4\sqrt 2}\frac{e^{q}}{q^{3/2}}.$$
\end{lemma}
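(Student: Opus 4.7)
The plan is to apply a standard Laplace-type expansion near the boundary point $s=1$, where the exponential factor $e^{qs^2}$ attains its maximum. First I would perform the substitution $u = 1-s$, which gives
\begin{equation*}
\int_0^1 \sqrt{1-s}\, e^{qs^2}\, ds = e^q \int_0^1 \sqrt{u}\, e^{-qu(2-u)}\, du,
\end{equation*}
reducing the problem to showing that the remaining integral behaves like $\dfrac{\sqrt{\pi}}{4\sqrt{2}}\, q^{-3/2}$ as $q\to+\infty$.

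Next I would rescale with $t = 2qu$ to obtain
\begin{equation*}
\int_0^1 \sqrt{u}\, e^{-2qu + qu^2}\, du
= \frac{1}{(2q)^{3/2}} \int_0^{2q} \sqrt{t}\, \exp\!\left(-t + \frac{t^2}{4q}\right) dt.
\end{equation*}
The key observation is that for $t \in [0, 2q]$ one has $1 - t/(4q) \geq 1/2$, hence
\begin{equation*}
-t + \frac{t^2}{4q} = -t\left(1 - \frac{t}{4q}\right) \leq -\frac{t}{2},
\end{equation*}
which provides the uniform domination $\sqrt{t}\, e^{-t + t^2/(4q)} \leq \sqrt{t}\, e^{-t/2}$, an integrable majorant independent of $q$.

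Since for each fixed $t \geq 0$ we have $e^{-t + t^2/(4q)} \to e^{-t}$ as $q \to \infty$, the dominated convergence theorem yields
\begin{equation*}
\int_0^{2q} \sqrt{t}\, e^{-t + t^2/(4q)}\, dt \xrightarrow[q\to+\infty]{} \int_0^{+\infty} \sqrt{t}\, e^{-t}\, dt = \Gamma(3/2) = \frac{\sqrt{\pi}}{2}.
\end{equation*}
Combining everything,
\begin{equation*}
\int_0^1 \sqrt{1-s}\, e^{qs^2}\, ds \underset{q\to+\infty}{\sim} e^q \cdot \frac{\sqrt{\pi}/2}{(2q)^{3/2}} = \frac{\sqrt{\pi}}{4\sqrt{2}}\, \frac{e^q}{q^{3/2}},
\end{equation*}
as required. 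There is no real obstacle here; the only subtlety is verifying the integrable domination on the growing interval $[0,2q]$, which the elementary inequality above handles cleanly.
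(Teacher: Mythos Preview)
Your proof is correct and follows essentially the same Laplace-at-the-boundary strategy as the paper: both localize near $s=1$, rescale to reveal the integral $\int_0^\infty \sqrt{t}\,e^{-t}\,dt=\Gamma(3/2)=\sqrt{\pi}/2$, and read off the constant. Your execution is in fact slightly cleaner than the paper's, since the linear substitution $u=1-s$ followed by $t=2qu$ together with the uniform bound $-t+t^2/(4q)\le -t/2$ on $[0,2q]$ lets you treat the whole interval at once via dominated convergence, whereas the paper uses the substitution $t=q-qs^2$ and an auxiliary $\epsilon$-cutoff with a Taylor-type estimate on $\sqrt{1-\sqrt{1-s}}/\sqrt{1-s}$.
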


\begin{proof}
It suffices to establish that, for some $\epsilon>0$, the integral from
 over $[1-\epsilon, 1]$ has the same asymptotic. Since the asymptotic of the integral over $[0, 1-\epsilon ]$ is bounded by $O (\exp (q(1-\epsilon)^2)))$, as $q \to \infty.$ Let us look at
$$J(q, \epsilon)= \exp(-q)\int_{1-\epsilon} ^1 \sqrt{1-s} \exp(q s^2) ds$$
for some $\epsilon>0$ small enough. After the change of variables $t= q-qs^2$, it becomes
$$ J(q, \epsilon)= \frac{1}{2 q} \int_0^{q (1- (1-\epsilon)^2)}
  \frac{ \sqrt{ 1-\sqrt{  1-t/q}   }  }{\sqrt{1-t/q}} \exp(-t)dt. $$
For any $\epsilon>0$  small enough there exists a constant $C>0$ such that
$$ \Big|  \frac{ \sqrt{ 1-\sqrt{  1-s}   }  }{\sqrt{1-s}} -   \sqrt{\frac{s}{2}} \Big| \leq C s \sqrt{ \frac{s}{2}} , \  \  \forall s \in[ 0, 1-(1-\epsilon)^2]. $$
Then
$$ J(q, \epsilon)=  \frac{1}{2 q} \int\limits_0^{ q (1- (1-\epsilon)^2)} \sqrt{\frac{ t}{2q}}
  \exp(-t) dt + R(q, \epsilon)$$
  where
$$|R(q, \epsilon)| \leq  \frac{1}{2 q} \int_0^\infty \sqrt{\frac{ t}{2q}}  \frac{t}{q}\exp(-t)dt = O(q^{-5/2}), \  \  \ q \to \infty$$
  is of smaller order than the first term. It remains to compute
  $$    \int_0^{ q (1- (1-\epsilon)^2)} \sqrt{t}
  \exp(-t) dt \to \frac{\sqrt{\pi}}{2}, \  \ q \to \infty.$$
\end{proof}

{\subsection*{Acknowledgments}
S.F. and M.P. thank Antoine Lejay for very instructive discussions on stochastic processes in discontinuous layered media and porous barriers.
I.K. and M.P. are grateful to Lorenzo Zambotti for generously sharing his expertise on the links between PDEs and stochastic processes, and in particular for his useful suggestions regarding the analysis of transition densities.
I.K. is indebted to Irina Ignatiouk-Robert and Kilian Raschel for enriching discussions on discrete random walks analogous to the diffusion processes studied in the present paper.}

\newpage
\bibliographystyle{abbrv} 

\end{document}